\def\Z{\mathbb{Z}}
\def\R{\mathbb{R}}
\def\N{\mathbb{N}}
\def\K{\mathbb{K}}
\def\F{\mathbb{F}}
\def\id{\mathcal{R}}
\def\A{\mathcal{A}}
\def\deg{\mathrm{deg}}
\def\int{\mathrm{Int}}
\def\dim{\mathrm{dim}}
\newcommand{\rd}{\color{red}}
\newcommand{\bk}{\color{black}}
\newcommand{\bl}{\color{blue}}
\newcommand{\gn}{\color{ForestGreen}}
\definecolor{LightRed}{rgb}{1, 0.5, 0.5} 
\definecolor{DarkRed}{rgb}{0.5, 0, 0}
\definecolor{LightBlue}{rgb}{0.5, 0.5, 1} 
\definecolor{DarkBlue}{rgb}{0, 0, 0.5}
\definecolor{LightGreen}{rgb}{0.5, 1, 0.5} 
\definecolor{DarkGreen}{rgb}{0, 0.3, 0}
\newcommand{\lrd}{\color{LightRed}}
\newcommand{\drd}{\color{DarkRed}}
\newcommand{\lbl}{\color{LightBlue}}
\newcommand{\dbl}{\color{DarkBlue}}
\newcommand{\lgn}{\color{LightGreen}}
\newcommand{\dgn}{\color{DarkGreen}}
\newtheorem{theorem}{Theorem}[section]
\newtheorem{lemma}[theorem]{Lemma}
\newtheorem{corollary}[theorem]{Corollary}
\newtheorem{proposition}[theorem]{Proposition}
\newtheorem{example}[theorem]{Example}
\newtheorem{conjecture}[theorem]{Conjecture}
\theoremstyle{definition}
\newtheorem{definition}[theorem]{Definition}
\theoremstyle{remark}
\newtheorem{remark}[theorem]{Remark}
\newcommand{\subalign}[1]{%
  \vcenter{%
    \Let@ \restore@math@cr \default@tag
    \baselineskip\fontdimen10 \scriptfont\tw@
    \advance\baselineskip\fontdimen12 \scriptfont\tw@
    \lineskip\thr@@\fontdimen8 \scriptfont\thr@@
    \lineskiplimit\lineskip
    \ialign{\hfil$\m@th\scriptstyle##$&$\m@th\scriptstyle{}##$\hfil\crcr
      #1\crcr
    }%
  }%
}
\def\@tocline#1#2#3#4#5#6#7{\relax
  \ifnum #1>\c@tocdepth 
  \else
    \par \addpenalty\@secpenalty\addvspace{#2}%
    \begingroup \hyphenpenalty\@M
    \@ifempty{#4}{%
      \@tempdima\csname r@tocindent\number#1\endcsname\relax
    }{%
      \@tempdima#4\relax
    }%
    \parindent\z@ \leftskip#3\relax \advance\leftskip\@tempdima\relax
    \rightskip\@pnumwidth plus4em \parfillskip-\@pnumwidth
    #5\leavevmode\hskip-\@tempdima
      \ifcase #1
       \or\or \hskip 1em \or \hskip 2em \else \hskip 3em \fi%
      #6\nobreak\relax
    \dotfill\hbox to\@pnumwidth{\@tocpagenum{#7}}\par
    \nobreak
    \endgroup
  \fi}
\numberwithin{equation}{section}
\newcommand{\nth}{\textup{th}}
\title{Fractals Emerging from the Toepltiz Determinants of the $p$-Cantor Sequence}
\author{Noy Soffer Aranov}
\email{noy.sofferaranov@tugraz.at}
\address{Graz University of Technology, Institute of Analysis and Number Theory, 8010 Graz, Austria}
\author{Steven Robertson}
\email{
steven.robertson@manchester.ac.uk
}
\address{Department of Mathematics, University of Manchester, Manchester, United Kingdom}
\begin{document}
\setcounter{MaxMatrixCols}{15}
\begin{abstract}
    \noindent  This is the first of a pair of papers, whose collective goal is to disprove a conjecture of Kemarsky, Paulin, and Shapira (KPS) on the \textit{escape of mass of Laurent series}. This paper lays the foundations on which its sibling, \cite{SAR}, builds.\\

    \noindent In particular, the $p$-Cantor sequence is introduced which generalises the classical Cantor sequence into a $p$-automatic sequence for any odd prime $p$. Two main results are then established, both of which play a key role in the disproof of the KPS conjecture.\\

    \noindent First, the two-dimensional sequence comprised of the Toeplitz determinants of the $p$-Cantor sequence over $\F_p$ is extensively studied. Indeed, the so-called \textit{profile} of this sequence (which encodes the zero regions) is shown to be $[p,p]$-automatic. In the process of deriving this, the theory of so-called \textit{number walls} is developed greatly. Many of these results are stated in full generality, as the authors expect them to be useful when tackling similar problems going forward.\\
    
    \noindent Secondly, a natural process is described that converts number wall of an automatic sequence into a unique fractal. When this sequence is the aforementioned $p$-\textit{Cantor sequence}, this fractal is shown to have Hausdorff dimension $\log((p^2+1)/2)/\log(p).$
\end{abstract}
\subjclass[2010]{11J61,  11J70, 11R58,  15B05, 37P20, 28A80, 68R15}
\keywords{Continued Fraction Expansion, Escape of Mass, Automatic Sequence, $p$-Cantor Sequence, Hausdorff Dimension, Number Wall}
\maketitle

\tableofcontents
\section{Introduction}
\noindent Throughout this paper, \begin{itemize}
    \item $q$ is a positive power of a prime,
    \item $\F_q$ is the finite field of cardinality $q$,
    \item $\mathcal{R}$ is an arbitrary integral domain.
\end{itemize}

\subsection{Motivation}\hfill\\
The following is a brief introduction to the subject of \textit{escape of mass}. For more information, see \cite{AS,PS,EW, KPS, SAR} and the references contained therein. 
\subsubsection{\textbf{Escape of Mass over }$\R$}\hfill\\
\noindent Every real irrational number $\alpha$ has a unique continued fraction given by \[\alpha=a_0+\frac{1}{a_1+\frac{1}{a_2+\frac{1}{\ddots}}}=[a_0;a_1,a_2,\dots],\] where $a_0\in\Z$ and $a_i\in\N$ for $i\ge1$. A classical theorem of Lagrange states that $\alpha\in\R$ is a quadratic irrational if and only if its continued fraction expansion is eventually periodic. That is, there exists $m_\alpha, \ell_\alpha\in\N$ such that \[\alpha=[b_0;b_1,\dots,b_{m_\alpha},\overline{a_{0}, a_1,\dots,a_{\ell_{\alpha}}}]\] where $b_0\in\Z$, $b_i\in\N$ for $i\ge1$ and $a_i\in\N$ for $i\ge0$. \\

\noindent The set of quadratic irrationals is closed under multiplication by primes. That is, for any prime $p$ and quadratic irrational $\alpha\in\R$, one has that $p\alpha$ is also a quadratic irrational. It is natural then, to ask how the continued fraction expansion of $p^k\alpha$ evolves as $k\in\N$ grows to infinity.  \\

\noindent In particular, it is known that no single term in the periodic part of the continued fraction expansion of $p^k\alpha$ makes up a positive logarithmic proportion of the total mass.

\begin{theorem}{\cite[Theorem 1.2]{AS}}
\label{thm:ASEsc}
    Let $\alpha\in \mathbb{R}$ be a quadratic irrational and let $p$ be a prime. Additionally, for $k\in\N$ let $\ell_k$ be the length of the periodic part of the continued fraction expansion of $p^k\alpha$. Then, 
    \begin{equation}
    \label{eqn:noEscR}
\lim_{k\rightarrow\infty}\frac{\max_{i=1,\dots,\ell_k}\log\left(a_i(p^k\alpha)\right)}{\sum_{i=1}^{\ell_k}\log(a_i(p^k\alpha))}=0.
\end{equation}
\end{theorem}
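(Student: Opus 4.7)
\noindent The plan is to exploit the classical relationship between periodic continued fractions of real quadratic irrationals and fundamental units in orders of the underlying real quadratic field $K := \Q(\alpha)$. I would obtain an upper bound on the numerator of \eqref{eqn:noEscR} that is linear in $k$, and a lower bound on the denominator that is exponential in $k$ (with base $p$); the ratio therefore tends to $0$.

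\medskip

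\noindent For the numerator, I would invoke a classical Lagrange-type bound: every partial quotient of a real quadratic irrational $\beta$ is at most $2\sqrt{D_\beta}$, where $D_\beta$ is the discriminant of its minimal polynomial over $\Z$. Applied to $\beta = p^k\alpha$, whose minimal polynomial has discriminant $p^{2k}\operatorname{disc}(\alpha)$, this immediately yields
\begin{equation*}
    \max_{i=1,\dots,\ell_k}\log\bigl(a_i(p^k\alpha)\bigr) \;\le\; k\log p + O_\alpha(1).
\end{equation*}

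\medskip

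\noindent For the denominator, I would combine two ingredients. First, the standard continuant identities for the convergents $p_n/q_n$ of $p^k\alpha$, together with the identification of the fundamental unit $\varepsilon_k$ of the order $\mathcal{O}_k \subseteq K$ naturally associated to $p^k\alpha$ with a suitable expression in $p_{\ell_k-1}$ and $q_{\ell_k-1}$, yield the asymptotic
\begin{equation*}
    \sum_{i=1}^{\ell_k}\log\bigl(a_i(p^k\alpha)\bigr) \;\asymp_\alpha\; \log\varepsilon_k.
\end{equation*}
Second, writing $\varepsilon_K$ for the fundamental unit of the maximal order $\mathcal{O}_K$, one has $\varepsilon_k = \varepsilon_K^{n_k}$, where $n_k$ is the order of the image of $\varepsilon_K$ in the finite quotient $(\mathcal{O}_K/p^k\mathcal{O}_K)^\times/(\Z/p^k\Z)^\times$, whose cardinality grows as $\asymp p^k$. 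Standard $p$-adic analysis (via the logarithm on principal units), combined with the non-vanishing of the $p$-adic logarithm of $\varepsilon_K$, then gives $n_k \gg_\alpha p^k$, and hence $\log\varepsilon_k \gg_\alpha p^k$.

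\medskip

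\noindent Combining the two estimates, the ratio in \eqref{eqn:noEscR} is $O_\alpha(k\,p^{-k})$, which tends to $0$. The principal obstacle is the lower bound $n_k \gg p^k$: this is clean in the unramified split and inert cases, but the ramified case, and the degenerate possibility that the $p$-adic logarithm of $\varepsilon_K$ vanishes to high order, require careful local analysis. This is the step I expect to demand the most care; the numerator bound, by contrast, follows from an essentially elementary Diophantine inequality.
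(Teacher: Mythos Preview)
The paper does not prove this theorem; it is quoted from \cite{AS} as a background result to set up the function-field analogue (Theorem~\ref{thm:PS} and Conjecture~\ref{conj:KPS}). There is therefore no proof in the present paper to compare your argument against.

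Your outline is nonetheless a correct elementary route to the statement. The numerator bound via the discriminant is standard. For the denominator, the identification $\sum_i \log a_i(p^k\alpha) \asymp \log \varepsilon_k$ is classical Pell theory, and the growth $n_k \gg_{K,p} p^k$ amounts to a rank-of-apparition estimate for the Lucas sequence $(b_n)$ defined by $\varepsilon_K^{\,n} = a_n + b_n\omega$: since $\varepsilon_K$ has infinite order one has $b_n \neq 0$ for all $n\ge 1$, so $v_p(b_{r(p)})$ is a fixed finite constant and $r(p^k) = r(p)\cdot p^{k-O(1)}$. The split/inert/ramified trichotomy you flag therefore affects only the implied constants, not the conclusion. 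The original treatment in \cite{AS} is dynamical, deducing non-escape of mass from equidistribution of Hecke translates of periodic geodesics on the modular surface; your algebraic approach is more direct for this particular corollary but does not yield the stronger distributional results established there.
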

\noindent That is, the sequence $\{p^k\alpha\}_{k\ge0}$ has \textbf{no escape of mass}.
\subsubsection{\textbf{Escape of Mass over }$\F_p(\!(t^{-1})\!)$}\hfill\\
\noindent Any statement over $\R$ has a natural analogue over function fields. That is, one considers polynomials (formal Laurent series, respectively) in place of integers (real numbers, respectively). More formally, let \[\F_q[t]=\left\{\sum_{n=0}^{h}a_nt^{n}:a_n\in \mathbb{F}_q,\,h\in \mathbb{N},\, a_h\neq0\right\}\] be the ring of polynomials over $\F_q$ and let \[\F_q\left(\!\left(t^{-1}\right)\!\right)=\left\{\sum_{n=-h}^{\infty}a_nt^{-n}:a_n\in \mathbb{F}_q,h\in \mathbb{Z},a_{-h}\neq 0\right\}\]be the field of formal Laurent series over $\F_q$. The counterpart to prime numbers is given by the set of irreducible polynomials. \\

\noindent In analogy with the reals, every Laurent series $\Theta(t)\in\F_q\left(\!\left(t^{-1}\right)\!\right)$ has a unique continued fraction expansion given by\begin{equation}\nonumber
    \Theta(t)=A_0^{[\Theta]}(t)+\frac{1}{A_1^{[\Theta]}(t)+\frac{1}{A_2^{[\Theta]}(t)+\frac{1}{\ddots}}}=\left[A_0^{[\Theta]}(t);A_1^{[\Theta]}(t),A_2^{[\Theta]}(t),\dots\right],
\end{equation}where $A_i^{[\Theta]}(t)\in \mathbb{F}_q[t]$ for $i\geq 0$ and $\deg\left(A_i^{[\Theta]}(t)\right)\geq 1$ for $i\geq 1$. \\

\noindent Additionally, a Laurent series if called \textbf{irrational} if it is not expressible as the ratio of two polynomials.\\

\noindent Continuing the similarities with the real case, $\Theta(t)\in\F_q(\!(t^{-1})\!)$ is a quadratic irrational if and only if its continued fraction expansion is eventually periodic \cite[Theorem 3.2]{DAFFsurvey}. That is, if there exist $m_{\Theta},\ell_{\Theta}\geq 1$, and there exist polynomials $B_i^{[\Theta]}(t)\in\F_q[t]$ for $0\le i \le m_{\Theta}\in\N$ and $A_i^{[\Theta]}(t)\in\F_q[t]$ for $0\le i \le \ell_{\Theta}\in\N$ such that
\begin{equation}\nonumber
    \Theta(t)=\left[B_0^{[\Theta]}(t);B_1^{[\Theta]}(t),\dots,B_{m_{\Theta}}^{[\Theta]}(t),\overline{A_0^{[\Theta]}(t),\dots ,A_{\ell_{\Theta}}^{[\Theta]}(t)}\right].
\end{equation}
\noindent However, the analogy between $\F_q(\!(t^{-1})\!)$ and $\R$ breaks down in view of Theorem \ref{thm:ASEsc}.
\begin{theorem}{\cite[Theorem 1]{PS}}
    \label{thm:PS}
    Let $P(t)\in \mathbb{F}_q[t]$ be an irreducible polynomial, let $\Theta(t)\in \mathbb{F}_q(\!(t^{-1})\!)$ be a quadratic irrational, and let $\ell_k$ be the length of the periodic part of the continued fraction expansion of $P^k(t)\cdot \Theta(t)$. Then, there exists $0<c\leq 1$, depending only on $P(t)$ and $\Theta(t)$, such that \begin{equation}\label{eqn:MassEqn}
    \lim_{n\rightarrow \infty}\liminf_{k\rightarrow \infty}\frac{\max\left\{\deg\left(A_{i}^{[\Theta\cdot P^k]}(t)\right)-n,0\right\}}{\sum_{i=1}^{\ell_{k}}\deg\left(A_{i}^{[\Theta\cdot P^k]}(t)\right)}\geq c.
    \end{equation}
\end{theorem}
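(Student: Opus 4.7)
\textbf{Proof plan for Theorem \ref{thm:PS}.}

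The plan is to relate the continued fraction expansion of a quadratic irrational Laurent series to the unit group of an associated quadratic $\F_q[t]$-order, and to exploit the way this order shrinks when $\Theta$ is replaced by $P^k\Theta$.

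First I would fix $\Theta$ with minimal polynomial $X^2 + BX + C$ over $\F_q(t)$ and set $\Lambda_\Theta = \F_q[t]\cdot 1 + \F_q[t]\cdot \Theta$, with associated order $\mathcal{O}_\Theta := \{x \in \F_q(t,\Theta) : x\Lambda_\Theta \subseteq \Lambda_\Theta\}$. A Lagrange-type computation identifies the period sum $\sum_{i=1}^{\ell_\Theta} \deg A_i^{[\Theta]}(t)$ with the degree of a fundamental unit $\varepsilon_\Theta \in \mathcal{O}_\Theta^\times$, up to an additive constant depending only on $\Theta$. Consequently, \eqref{eqn:MassEqn} reduces to the question: does a single partial quotient in the period of $P^k\Theta$ absorb a fixed positive fraction of $\deg \varepsilon_{P^k\Theta}$, uniformly in $k$?

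Next I would analyse $\mathcal{O}_{P^k\Theta}$, which turns out to be the $P$-adic suborder $\F_q[t] + P^k\mathcal{O}_\Theta$ of conductor $P^k$. The short exact sequence linking $\mathcal{O}_{P^k\Theta}^\times$ to $\mathcal{O}_\Theta^\times$ through the finite group $(\mathcal{O}_\Theta/P^k\mathcal{O}_\Theta)^\times/(\F_q[t]/P^k)^\times$ should show that $\deg \varepsilon_{P^k\Theta}$ is comparable to $k\deg P$, with constants depending only on the splitting behaviour of $P$ in $\F_q(t,\Theta)$. This pins down the denominator in \eqref{eqn:MassEqn} and simultaneously supplies an explicit form $\varepsilon_{P^k\Theta} = P^k \alpha + \beta$ with $\alpha,\beta \in \mathcal{O}_\Theta$ of bounded degree.

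I would then use this explicit unit to produce a single partial quotient of degree $\gtrsim k\deg P$ in the expansion of $P^k\Theta$: the action of $\varepsilon_{P^k\Theta}$ yields a convergent $p_n/q_n$ satisfying $\deg(P^k\Theta - p_n/q_n) \le -2\deg q_n - ck$ for some $c>0$ independent of $k$, and the best-approximation property then forces $\deg A_{n+1}^{[P^k\Theta]}(t) \ge ck$. The main obstacle is matching constants precisely: one must verify that this $c$ is, up to a positive multiplicative factor, the same constant controlling $\deg \varepsilon_{P^k\Theta}$ from above, so that the ratio in \eqref{eqn:MassEqn} stays bounded away from zero uniformly in $k$. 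This is the arithmetic heart of the argument, and it is precisely where the ultrametric nature of $\F_q(\!(t^{-1})\!)$ (which permits a single extreme approximation without it being diluted across many moderate ones, in contrast to the Archimedean setting of Theorem \ref{thm:ASEsc}) plays the decisive role.
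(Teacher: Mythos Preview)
The paper does not contain a proof of Theorem~\ref{thm:PS}. This theorem is quoted from \cite[Theorem 1]{PS} as background motivation for the KPS conjecture; the present paper neither proves nor sketches it. Consequently there is no ``paper's own proof'' against which to compare your proposal.

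As a standalone sketch, your algebraic approach via orders and fundamental units is a plausible route to results of this flavour, and the identification of the period sum with the degree of a fundamental unit is indeed the right bridge. That said, the original argument in \cite{PS} is dynamical rather than purely algebraic: it is phrased in terms of escape of mass for sequences of periodic measures on $\mathrm{PGL}_2(\F_q[t])\backslash \mathrm{PGL}_2(\F_q(\!(t^{-1})\!))$ under the diagonal flow, and the positive lower bound $c$ arises from a cusp-excursion estimate for Hecke translates of closed geodesics. Your outline would need substantial work at the step where you claim a single convergent with $\deg(P^k\Theta - p_n/q_n) \le -2\deg q_n - ck$; extracting this directly from the shape $\varepsilon_{P^k\Theta} = P^k\alpha + \beta$ is not automatic, and the ``matching constants'' issue you flag is genuine. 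But since the paper under review offers no argument here, none of this bears on the comparison you were asked to make.
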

\noindent In fact, the above theorem is stronger than the precise function field analogue of Theorem \ref{thm:ASEsc} due to the additional `minus $n$' term. Indeed, Theorem \ref{thm:PS} is saying that one can subtract \textit{any} natural number $n$ from the numerator of equation (\ref{eqn:noEscR}) and the analogue of Theorem \ref{thm:ASEsc} is still not true. This led Kemarsky, Paulin, and Shapira to make the following conjecture:
\begin{conjecture}{\cite[Conjecture 6]{KPS}}
    \label{conj:KPS}
    For every irreducible polynomial $P(t)\in \mathbb{F}_q[t]$ and every quadratic irrational Laurent series $\Theta(t)\in \mathbb{F}_q(\!(t^{-1})\!)$, one has \begin{equation}\nonumber\label{eqn:MassEqn}
    \lim_{n\rightarrow \infty}\liminf_{k\rightarrow \infty}\frac{\max\left\{\deg\left(A_{i}^{[\Theta\cdot P^k]}(t)\right)-n,0\right\}}{\sum_{i=1}^{\ell_{k}}\deg\left(A_{i}^{[\Theta\cdot P^k]}(t)\right)}=1,
    \end{equation}
    where, $\ell_k$ is the length of the periodic part of the continued fraction expansion of $P^k(t)\cdot \Theta(t)$.
\end{conjecture}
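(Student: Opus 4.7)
Since Conjecture \ref{conj:KPS} is scheduled to be disproved in this paper together with its companion \cite{SAR}, I outline a plan for producing a counterexample rather than a direct proof. The starting point is the classical correspondence between the partial quotients of a Laurent series $\Theta(t) = \sum_{n \ge -h} c_n t^{-n}$ and the Hankel/Toeplitz determinants of its coefficient sequence: the degree of $A_i^{[\Theta]}(t)$ equals the width of the $i$-th ``zero window'' encountered along the relevant edge of the number wall of $(c_n)_{n \ge 0}$. Multiplying $\Theta$ by $t^k$ shifts the coefficient sequence, so the degrees of the partial quotients of $t^k\Theta$ are read off from a horizontally translated strip of the same two-dimensional number wall. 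It therefore suffices to produce a single quadratic irrational whose number wall exhibits, at every horizontal shift $k$, a positive logarithmic proportion of bounded-width zero windows, as this prevents the numerator in Conjecture \ref{conj:KPS} from swallowing the denominator in the limit.

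The natural candidate is a quadratic irrational $\Theta(t) \in \F_p(\!(t^{-1})\!)$ whose coefficient sequence is (a mild modification of) the $p$-Cantor sequence introduced in this paper. Christol's theorem guarantees that algebraic Laurent series over $\F_p(t)$ have $p$-automatic coefficient sequences, so seeking an explicit quadratic surd producing the $p$-Cantor pattern is reasonable, and pinning down such a $\Theta$ is one of the inputs to be supplied by \cite{SAR}. With $\Theta$ fixed, the first main result of the present paper applies: the profile of the Toeplitz determinants of the $p$-Cantor sequence is $[p,p]$-automatic, so a single finite two-dimensional substitution encodes the location and shape of every zero region in the number wall. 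This produces, for each shift $k$, a symbolic description of the degree sequence of the partial quotients of $t^k\Theta$, from which one can in principle compute the $\liminf_{k\to\infty}$ appearing in the conjecture.

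The disproof is completed by converting this symbolic description into sharp metric information. The second main result computes the Hausdorff dimension of the fractal associated to the number wall of the $p$-Cantor sequence to be $\log((p^2+1)/2)/\log(p)$, which is strictly less than $2$. Rescaling the number wall under the self-similarity forced by $[p,p]$-automaticity should then yield an explicit constant $c^{\star} < 1$ to which the liminf converges, with the gap between $c^{\star}$ and $1$ reflecting precisely the proportion of the unit square that the zero regions fail to fill. Stability of $c^{\star}$ under subtracting any fixed $n$ from the numerator is essentially automatic, since $p$-automatic profiles produce zero windows at all scales simultaneously.

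The principal obstacle, in my view, is the identification step: matching the $p$-Cantor sequence to the coefficients of a genuine quadratic, as opposed to higher-degree algebraic, Laurent series is not a formality, and any failure of this matching forces one to a modified sequence whose number wall must then be shown to preserve both the $[p,p]$-automatic profile and the Hausdorff dimension computation. A secondary difficulty is translating the ``global'' density encoded by the Hausdorff dimension into the uniform ``$\liminf$ over $k$'' demanded by the conjecture, which requires shift-by-shift control of zero window distributions rather than a merely averaged statement. This is precisely the interface where the combinatorial preparation of the present paper is designed to feed into the quantitative analysis of \cite{SAR}.
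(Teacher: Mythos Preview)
Your outline correctly identifies the broad strategy---using the number-wall/partial-quotient correspondence together with the automaticity of the $p$-Cantor wall---but misplaces the difficulties in two respects.

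First, the ``identification step'' you flag as the principal obstacle is already handled in this paper: Lemma~\ref{quad} proves $\Theta_p(t) = (1+t^{-2})^{-1/2}$, which is manifestly a quadratic irrational over $\F_p(t)$ (it satisfies $\Theta_p^2(1+t^{-2})=1$). So the $p$-Cantor sequence \emph{is} the coefficient sequence of an explicit quadratic surd, and no modification or further ``matching'' is required. This is not deferred to \cite{SAR}.

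Second, your proposal to extract the limiting constant $c^\star$ from the Hausdorff dimension computation conflates two separate results. Theorem~\ref{thm: frac_nw} is presented as a result of independent interest---a fractal-geometric byproduct of the automaticity---not as the mechanism of the disproof. The quantitative control of partial-quotient degrees along shifts is to be obtained in \cite{SAR} directly from the explicit morphism $\Phi_p$ of Theorem~\ref{thm: morphism}, not by passing through a dimension calculation. A Hausdorff dimension strictly below $2$ does not by itself deliver the uniform shift-by-shift lower bound on the proportion of bounded-degree partial quotients that the conjecture demands; you acknowledge as much in your final paragraph, but then leave the mechanism unspecified.

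Finally, your outline tacitly addresses only $P(t)=t$ (horizontal shifts of the wall), whereas the paper announces a disproof for \emph{every} irreducible $P(t)\in\F_q[t]$; the reduction from general $P(t)$ is a genuine additional step, also deferred to \cite{SAR}.
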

\subsection{Overview of Main Results}\hfill\\

\noindent In 2025, Nesharim, Shapira and the first named author \cite{AN} disproved Conjecture \ref{conj:KPS} over $\F_2$ in the case $P(t)=t$. The combined goal of both this paper and \cite{SAR} is to disprove Conjecture \ref{conj:KPS} in \textit{all remaining cases}. \begin{theorem}
    Conjecture \ref{conj:KPS} fails over any choice finite field $\F_p$ and for every irreducible polynomial $P(t)\in\F_q[t]$. 
\end{theorem}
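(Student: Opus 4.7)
The plan is to disprove Conjecture \ref{conj:KPS} by constructing, for every finite field $\F_q$ and every irreducible polynomial $P(t)\in\F_q[t]$, a quadratic irrational $\Theta(t)$ whose iterates $P^k\Theta$ spread their continued fraction mass across partial quotients at many scales. Modulo reductions carried out in the sibling paper \cite{SAR} (passing to the splitting field of $P$ and applying an affine change of variable in $t$), it suffices to produce, for each odd prime $p$, a single quadratic irrational $\Theta(t)\in\F_p(\!(t^{-1})\!)$ whose iterates $t^k\Theta$ violate the conjecture; the case $p=2$ is already handled in \cite{AN}.

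The bridge between the continued fraction side and the combinatorial side of the argument is the classical correspondence between partial quotients of a Laurent series $\Theta(t)=\sum_n c_n t^{-n}$ and zero windows in the \emph{number wall} of its coefficient sequence, namely the two-dimensional array of Toeplitz determinants of $(c_n)$. Each maximal square of zero determinants corresponds to a partial quotient whose degree equals its side length plus one, and multiplication by $t^k$ corresponds to a shift of the input sequence. Hence disproving Conjecture \ref{conj:KPS} reduces to exhibiting a coefficient sequence whose number wall has zero windows of many sizes, arranged in a shift-robust and scale-invariant pattern.

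The $p$-Cantor sequence is introduced as precisely such a coefficient sequence. The first main theorem of this paper shows that the profile of its number wall (encoding the positions and sizes of zero windows) is $[p,p]$-automatic, giving a finite-state description of the zero structure at every scale. The second main theorem shows that the profile renormalises to a genuine fractal of Hausdorff dimension $\log((p^2+1)/2)/\log(p)\in(1,2)$. Because this dimension is strictly greater than $1$, zero windows of arbitrarily large side length occur in every horizontal band and every shift of the wall; because it is strictly less than $2$, their distribution is sparse enough to be quantitatively controlled.

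The main obstacle is converting this fractal dimension bound into a quantitative contradiction of (\ref{eqn:MassEqn}). The conjecture asserts that a double limit of truncated, normalised degree sums tends to $1$, so one must carefully track how the full \emph{distribution} of zero window sizes (rather than merely their existence) contributes to the numerator and denominator for each truncation level $n$ and each multiplier $t^k$. The $[p,p]$-automaticity turns this into a finite-state bookkeeping problem, and the Hausdorff dimension supplies a uniform sub-maximal growth rate forcing the double limit to be strictly less than $1$. This final quantitative step is the technical heart of the disproof and is carried out in \cite{SAR}; the present paper provides the structural results on the number wall of the $p$-Cantor sequence on which that step depends.
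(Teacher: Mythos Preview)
Your outline is essentially the same as the paper's own account: the theorem is not proved in this paper at all, but is explicitly deferred to the companion paper \cite{SAR}, with the present paper supplying the two structural ingredients you name (the $[p,p]$-automaticity of the profile of $W_p(\mathbf{C}^{(p)})$ and the Hausdorff dimension $\log((p^2+1)/2)/\log p$ of the associated fractal), together with the identification $\Theta_p(t)=(1+t^{-2})^{-1/2}$ as a quadratic irrational. Your division of labour between the two papers is accurate.

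Two points where your sketch is speculative rather than confirmed by this paper. First, the reduction you describe (``passing to the splitting field of $P$ and applying an affine change of variable in $t$'' to reduce to $P(t)=t$) is not stated here; the paper only says that the $p$-Cantor sequence furnishes the counterexample in characteristic $p$ for every irreducible $P$, without indicating the mechanism. Second, your heuristic for the role of the dimension bound is not quite right as stated: the fractal $\mathcal{F}(\mathbf{C}^{(p)},\F_p)$ is built from the \emph{nonzero} entries of the wall, so $\dim_H<2$ says the nonzero region is thin (windows are abundant), while $\dim_H>1$ says the nonzero region is still substantial enough that bounded-degree partial quotients retain a positive share of the mass. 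The paper does not spell out how either inequality is actually used in the escape-of-mass computation; that is entirely in \cite{SAR}, so your interpretation, while plausible, cannot be checked against anything written here.
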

\noindent Not only is Conjecture \ref{conj:KPS} shown to be false, an explicit counterexample is provided for each of the possible cases. When $\F_q$ has characteristic equal to a prime $p$, these counterexamples are given by the so-called $p$-\textit{Cantor sequence}. This family of sequences is introduced in Definition \ref{pcant}, and it provides a generalisation of the classical Cantor sequence. \\

\noindent The disproof of Conjecture \ref{conj:KPS} was too long and varied to reasonably fit into a single paper. Therefore, the decision was made to split it in two.  This paper is dedicated to establishing all of the necessary groundwork so that the second paper, \cite{SAR}, is able to focus solely on Conjecture \ref{conj:KPS} itself. \\

\noindent All this is not to say that the results in this paper are of interest only in the context of Conjecture \ref{conj:KPS}; this is not the case. Indeed, this work contains two main theorems, each of which are valuable in their own right. These are the subjects of subsections \ref{Sect: 1.3} and \ref{Sect: 1.4}, respectively. 
  
\subsection{Main Result 1: The Number Wall of the $p$-Cantor Sequence}\label{Sect: 1.3}\hfill\\
\noindent The disproof of Conjecture \ref{conj:KPS} relies on so-called \textit{number walls}. To define these, one must first define Toeplitz matrices. 
\subsubsection{\textbf{Toeplitz Matrices Generated by Sequences.}}\hfill\\
\noindent Let $m$ be a natural number. A square matrix $M=(s_{i,j})_{0\le i,j \le m}$ is called \textbf{Toeplitz} if $s_{i,j}=s_{i+1,j+1}$ for all $0\le i,j \le m-1$. That is, all entries in any given diagonal of $M$ are equal to one another.\\

\noindent There is an immediate link between sequences and Toeplitz matrices: given a doubly infinite sequence $\mathbf{\textbf{S}}:= (s_i)_{i\in\mathbb{Z}}$ over some integral domain $\id$, a natural number $m$ and an integer $n$, define the $(m+1)\times (m+1)$ Toeplitz matrix $T_\textbf{S}(n;m):= (s_{i-j+n})_{0\le i,j \le m}$ as\\
\begin{align}T_\textbf{S}(n;m):=\begin{pmatrix}
s_n & s_{n+1} & \dots & s_{n+m}\\
s_{n-1} & s_{n} & \dots & s_{n+m-1}\\
\vdots &\vdots&\ddots& \vdots\\
s_{n-m} & s_{n-m+1} & \dots & s_{n} 
\end{pmatrix}.\label{eqn: toe_def}\end{align} From $\textbf{S}$, one constructs the infinite family of Toeplitz matrices $(T_{\textbf{S}}(n;m))_{n\in\Z,m\in\N}$. 

\subsubsection{\textbf{The Number Wall of a Sequence}}\hfill\\
\noindent Attempting to examine the properties of any individual matrix in $(T_{\textbf{S}}(n;m))_{n\in\Z,m\in\N}$ quickly becomes unwieldy. Instead, one studies the entire family of Toeplitz matrices at once by considering the so-called \textit{number wall of} $\textbf{S}$: \begin{definition}\label{nw}
Let $\mathbf{S}=(s_i)_{i\in\mathbb{Z}}$ be a doubly infinite sequence over an integral domain $\mathcal{R}$. The \textbf{number wall} of the sequence $\mathbf{S}$ is defined as the two dimensional array of numbers $W_\id(\mathbf{S})=(W_{\id}(\mathbf{S})[m,n])_{m,n\in\mathbb{Z}}$ with \begin{equation*}
    W_{\id}(\mathbf{S})[m,n]=\begin{cases}\det(T_S(n;m)) &\textup{ if } m\ge0,\\
    1 & \textup{ if } m=-1,\\
    0 & \textup{ if } m<-1. \end{cases}
\end{equation*}  
Above, $W_\id(\textbf{S})[m,n]$ is the entry of the number wall in row $m$ and column $n$. When $\id=\F_q$ for some prime power $q\in\N$, $W_\id(\mathbf{S})$ is abbreviated to $W_q(\mathbf{S})$.
\end{definition} 
\noindent To stay consistent with standard matrix notation, $m$ increases as the rows go down the page and $n$ increases from left to right. \\

\noindent Although Toeplitz matrices have been the focus of study since the $19^\nth$ century, the first instance of \textit{number walls} in the literature is due to Conway and Guy \cite[pp. 85--89]{CG}. Since then, the subject has been greatly developed by Lunnon \cite{L01,L09} and has been applied to problems in Diophantine approximation \cite{ANL,R23,RG, P(t)_LC}, dynamics \cite{AN} and discrepancy theory \cite{Rob24}. \\

\noindent There is also precedent for the study of number walls in their own right. For example, the number wall of the famous \textit{Thue-Morse} sequence over $\F_2$ was the subject of a celebrated paper by Allouche, Peyri\`ere, Wen and Wen \cite{APWW}, and the number wall of the classical Cantor sequence has been studied by Wen and Wu \cite{WW}.

\subsubsection{\textbf{Number Walls of Automatic Sequences}}\hfill\\
\noindent For the purposes of this introduction, a rough definition of automatic sequences is provided. For a rigorous definition, see Section \ref{Sect: 2}. \\

\noindent A sequence defined over a finite alphabet is said to be \textit{automatic} if it is generated by a substitution rule\footnote{That is, a uniform $k$-morphism for some $k\in\N$. Formal language is used from Section \ref{Sect: 2} onward.}. 

\begin{example}\label{Ex: Cantor}The \textbf{Cantor Sequence}\footnote{\hyperlink{https://oeis.org/A088917}{A088917} in the Online Encyclopedia of Integer Sequences} is constructed by the rule $\phi:\{0,1\}\to\{0,1\}^3$,\[\phi:1\mapsto 101, ~~~ \phi:0\mapsto 000.\] To attain the Cantor sequence, $\phi$ is applied to 1 infinitely many times, as illustrated below: \begin{alignat*}{5}\rd1\bk\xrightarrow{\phi}& \rd101\bk&\\&\rd1\bl0\gn1\bk \xrightarrow{\phi} \rd101\bl000\gn101&\\&\hspace{1.2cm}\lrd1\rd0\drd1\lbl0\bl0\dbl0\lgn1\gn0\dgn1\bk\xrightarrow{\phi}\lrd101\rd000\drd101\lbl000\bl000\dbl000\lgn101\gn000\dgn101\bk\xrightarrow{\phi}\cdots\end{alignat*}
\noindent Above, each colour represents an application of $\phi$, with the input (output respectively) being on the left (right, respectively) of the arrow. \\
\end{example}
\noindent The definition of automatic sequences has a natural extension into two dimensions: a two-dimensional sequence is automatic if it is generated by a substitution rule that replaces each letter with a $d\times k$ rectangle of letters, for some natural numbers $d,k$ such that $d,k\ge2$.\\

\noindent There are numerous examples of number walls inheriting automaticity from the sequences that generate them. For example, the number walls of the Thue-Morse sequence \cite{APWW, AN}, the Paperfolding sequence \cite{ANL, GaRo, P(t)_LC} and the Cantor sequence \cite{WW} are known to be two dimensional automatic sequences. This motivated the following conjecture by Garrett and the second named author.

\begin{conjecture}[{{\cite[Conjecture 5.1]{RG}}}]\label{conj: auto_nw}
    Let $q$ be a positive power of a prime and let $\textbf{S}$ be an automatic sequence over $\F_q$. Then, $W_q(\textbf{S})$ is itself a 2-dimensional automatic sequence.
\end{conjecture}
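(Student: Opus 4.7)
The plan is to establish the conjecture via the $[k,k]$-kernel characterisation of two-dimensional automatic sequences, where $k$ is (a power of) the length of the substitution generating $\textbf{S}$. That is, one shows that the collection of sub-arrays
\begin{equation*}
\left\{\left(W_q(\textbf{S})[k^a m + b,\, k^a n + c]\right)_{m,n\in\Z} : a\in\N,\ 0\le b, c < k^a\right\}
\end{equation*}
is finite up to equality. Since the top row of $W_q(\textbf{S})$ is the sequence $\textbf{S}$ itself, automaticity of $\textbf{S}$ already controls the row $m=0$; the task is to propagate this control downward through the wall.

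The central tool for that propagation is the Desnanot--Jacobi determinant identity, known in the number wall literature as the \emph{Frame theorem}, which reads
\begin{equation*}
W_q(\textbf{S})[m,n]^2 = W_q(\textbf{S})[m,n-1]\cdot W_q(\textbf{S})[m,n+1] - W_q(\textbf{S})[m-1,n]\cdot W_q(\textbf{S})[m+1,n].
\end{equation*}
Away from vanishing entries, this relation lets one compute the $(m+1)$-th row of the number wall from the previous two rows. Combined with the finiteness of the $k$-kernel of $\textbf{S}$, one would try to show that a single application of the substitution rule of $\textbf{S}$ -- which dilates the horizontal direction by $k$ -- forces a corresponding $k$-fold dilation structure in the vertical direction, closing the two-dimensional kernel.

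The main obstacle, and the reason the statement remains only a conjecture, is the failure of the Frame identity at zero entries. Over $\F_q$ the zeros of $W_q(\textbf{S})$ organise into square \emph{windows} whose positions are recorded by the \emph{profile} of the wall, and these windows must be handled by Lunnon's longer-range analogues of the Frame theorem. The proof would therefore split into two subproblems: first, show that the profile of $W_q(\textbf{S})$ is itself $[k,k]$-automatic, so that windows can be located substitutionally; second, show that the values on the boundaries of each window are determined by finitely many local data that respect the substitution, and hence propagate across the windows. The first subproblem is precisely what the present paper solves for the specific case of the $p$-Cantor sequence. A uniform combinatorial classification of window interactions for an arbitrary automatic $\textbf{S}$ is what I would expect to be the hardest step of any proof of the full conjecture.
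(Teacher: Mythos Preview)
The statement you were asked to prove is a \emph{conjecture}, not a theorem: the paper does not contain a proof of it, and explicitly presents it as open. What the paper actually establishes is the much more restricted Theorem~\ref{thm: morphism}, namely that the \emph{profile} (not the full number wall) of the specific $p$-Cantor sequence is $[p,p]$-automatic. So there is no ``paper's own proof'' to compare your proposal against.

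That said, your proposal is not a proof either, and you are honest about this: you outline a kernel-based strategy, invoke the Frame relation, and then correctly identify the obstruction --- the behaviour at windows --- as the reason the statement remains conjectural. Your diagnosis matches the paper's implicit stance: the paper's contribution is exactly to carry out your ``first subproblem'' (automaticity of the profile) for one explicit infinite family, and the machinery it develops in Sections~\ref{sect: 7}--\ref{Sect: 9} (rotations, reflections, the Construction Lemmata) is precisely the kind of window-boundary bookkeeping you flag as the hard step. One small correction: your displayed Frame identity has a sign error relative to the paper's \hyperlink{FC1}{FC1}, which reads $W[m-2,n]\cdot W[m,n] = W[m-1,n]^2 - W[m-1,n-1]\cdot W[m-1,n+1]$. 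More substantively, you should be aware that even for the $p$-Cantor case the paper does not claim automaticity of the full $\F_p$-valued wall, only of its zero/nonzero pattern; pushing from profile to values would require tracking the geometric-transform parameters $(r,a)$ through the recursion, which the paper stops short of doing.
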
  

\noindent Conjecture \ref{conj: auto_nw} has since appeared in \cite{P(t)_LC} and \cite{AN}. \\ 

\noindent The first main result of this work improves upon what is known by providing further evidence towards Conjecture \ref{conj: auto_nw} for the \textit{infinite family} of so-called $p$-\textit{Cantor sequences}. The definition of these sequences is postponed to Section \ref{Sect: 3} as it requires a rigorous introduction to automatic sequences and is not required to state the results of this paper. \\

\noindent The theorem below is presented in plain language so to avoid cluttering the introduction with definitions. To see this theorem written formally, see Theorem \ref{thm: morphism}.\\

\noindent \textbf{Main Result 1:} Let $p$ be any odd prime, and let $W$ be the number wall of the $p$-Cantor sequence over $\F_p$ when no distinction is made between nonzero entries. Then, $W$ is a 2-dimensional automatic sequence defined on an alphabet of 12 letters. \\

\noindent The above result marks the first time the number wall of \textit{infinitely many} different non-trivial sequences has been fully understood. Furthermore, the proof is not computer assisted, which is a departure from most other number wall results. \\

\noindent For pictures of the number wall of the $p$-Cantor sequence over $\F_p$, see Appendix \hyperref[Sect: Appendix_B]{B}.\\

\noindent In order to prove Theorem \ref{thm: morphism}, the theory of number walls has been developed greatly. Indeed, Section \ref{sect: 7} contains numerous lemmata that are stated in full generality and are applicable to \textit{any} number wall. It is the author's belief that these will be valuable to anyone studying number walls in the future.

\subsection{Main Result 2: Number Walls and Fractals}\label{Sect: 1.4}\hfill\\

\noindent The uses of Toeplitz matrices are as vast as they are varied, with applications in both applied mathematics (error correcting codes \cite{StBa}, numerical analysis \cite{GoGL} and statistics \cite{Da}, for example) and pure mathematics (operator space theory \cite{Gi}, Functional Analysis \cite{AlSi} and linear algebra \cite{Gr}, for example). The goal of this subsection is to build another such connection, this time between Toeplitz matrices and fractal geometry.\\

\noindent To describe the connection between number walls and fractals, the following fundamental theorem is required.

\begin{theorem}[Square Window Theorem, Lunnon, \cite{L09}]\label{window}
Zero entries in a Number Wall can only occur within windows; that is, within
square regions with horizontal and vertical edges.
\end{theorem}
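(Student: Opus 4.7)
The plan is to deduce the theorem from the single quadratic recurrence satisfied by the entries of any number wall, namely the specialisation of the Desnanot--Jacobi identity (Dodgson condensation, also known as Lewis Carroll's identity) to the family $\{T_\mathbf{S}(n;m)\}$. Starting from the $(m+2)\times(m+2)$ matrix $A=T_\mathbf{S}(n;m+1)$, one identifies its central $m\times m$ submatrix (remove rows $1,m+2$ and columns $1,m+2$) with $T_\mathbf{S}(n;m-1)$, its ``NW'' and ``SE'' corner submatrices with $T_\mathbf{S}(n;m)$, and its ``NE'' and ``SW'' corner submatrices with $T_\mathbf{S}(n+1;m)$ and $T_\mathbf{S}(n-1;m)$ respectively; these identifications are immediate from the constant-along-diagonals pattern. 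Substituting into Desnanot--Jacobi gives the \emph{frame relation}
\begin{equation*}
W_\id(\mathbf{S})[m,n]^2 \;=\; W_\id(\mathbf{S})[m-1,n]\,W_\id(\mathbf{S})[m+1,n] \;+\; W_\id(\mathbf{S})[m,n-1]\,W_\id(\mathbf{S})[m,n+1],
\end{equation*}
valid for every $m\ge 1$ and $n\in\Z$ in the integral domain $\id$.

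First I would establish that every maximal connected region $R$ of zero entries is rectangular with horizontal and vertical sides. If not, the boundary of $R$ exhibits a ``jog'', and one can locate a zero entry exactly three of whose four orthogonal neighbours lie in $R$ (hence vanish) while the fourth does not. Feeding this configuration into the frame relation yields an identity of the form $(\text{nonzero})\cdot(\text{nonzero})=0$, contradicting that $\id$ has no zero divisors. Hence $R$ is forced to be an $a\times b$ axis-aligned rectangle.

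Second, I would show $a=b$ by analysing the ``boundary ring'' of nonzero entries immediately surrounding $R$. Apply the frame relation to an entry lying one step outside a side of $R$, so that exactly one of its orthogonal neighbours belongs to $R$ and therefore vanishes: the relation collapses to a product of two entries equal to the square of a third, producing a geometric-type recurrence along that side. The same recurrence along an adjacent, perpendicular side, combined with a matching condition at the corner they share, forces the number of steps along the two sides to coincide, giving $a=b$.

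The principal obstacle is the second step: inside $R$ the frame relation degenerates to $0=0+0$, so every shape constraint must be extracted from the thin ring of nonzero entries on the perimeter, where only one equation is available per entry and several boundary entries must be tracked simultaneously before useful information emerges. Should the direct boundary analysis become unwieldy, a robust alternative is to reinterpret $W_\id(\mathbf{S})[m,n]$ through the Hankel--Pad\'e correspondence: the classical block structure theorem for the Pad\'e table (proved via the Euclidean algorithm on the associated generating series) packages both rectangularity and equality of sides into a single statement, and transporting this argument back to number walls would yield a uniform proof of the Square Window Theorem in one stroke.
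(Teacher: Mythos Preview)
The paper does not prove this theorem; it is cited from Lunnon without proof, so there is no in-paper argument to compare against. I therefore assess your proposal on its own terms.

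Your derivation of the frame relation via Desnanot--Jacobi is correct and is indeed the key local identity. However, the rectangularity step is misstated: a zero entry with exactly three of its four neighbours in $R$ yields $0 = 0\cdot 0 + 0\cdot(\text{nonzero}) = 0$, which is no contradiction at all. The genuine obstruction sits at the dual configurations. At a \emph{nonzero} entry bordering a concave corner of $R$ (zero neighbours on two perpendicular axes) the relation collapses to $(\text{nonzero})^2 = 0+0$; at a zero ``protrusion'' (both neighbours on one axis nonzero while one on the other axis lies in $R$) it gives $0 = (\text{nonzero})(\text{nonzero}) + 0$. With this correction the local argument for rectangularity does go through over an integral domain.

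The substantive gap is the squareness step. Corner-matching from the frame relation alone cannot force $a=b$. Around an $a\times b$ rectangular zero block, applying FC1 along each side of the inner frame forces that side to be a geometric progression with some ratio $P,Q,R,S$, and compatibility at the four shared corners yields only a single relation of the type $P^{b+1}R^{a+1}=Q^{a+1}S^{b+1}$. This is one equation in four unknowns and imposes no link between $a$ and $b$; in particular $P=Q=R=S=1$ satisfies it for every $(a,b)$. Squareness is therefore not a consequence of the quadratic recurrence: it genuinely requires the Toeplitz determinant structure (equivalently, the rank/linear-recurrence interpretation of the vanishing, or the block structure of the Pad\'e table). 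You correctly flag this as the principal obstacle, and your fallback via the Hankel--Pad\'e correspondence is exactly the right route; but the primary boundary-analysis plan cannot be completed without importing that extra structure.
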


\noindent Let $X\subset\mathcal{R}$ be a finite set, $k\ge2$ be a natural number and let $\textbf{S}=(s_i)_{i\ge0}$ be an automatic sequence generated by applying some substitution rule $\phi:X\to X^k$ to some $\sigma\in X$, where the first letter of $\phi(\sigma)$ is $\sigma$ itself. That is, $\phi(\sigma)=\sigma w_2\dots w_{k-1}$ for some $w_2,\dots,w_{k-1}\in X$, and  $$\textbf{S}=\lim_{r\to\infty} \phi^r(\sigma),~~~\text{ where }~~~\phi^r(\sigma)=\overbrace{\phi(\phi(\cdots\phi}^{r~\text{ times}}(\sigma)\cdots)).$$
\noindent For any $r\in\N$, it is clear that $\phi^r(\sigma)$ has length $k^r$. Consider the sequence of length $3k^r$ given by \[\widetilde{\textbf{S}}_r=(x_i)_{-k^r\le i <2k^r}~~~\text{ where }~~~ x_i=\begin{cases}0&\text{ if }i<0\text{ or }i\ge k^r\\ s_{i} &\text{ otherwise}.\end{cases}\]Then, by the Square Window Theorem (Theorem \ref{window}), the number wall of $\widetilde{\textbf{S}}_r$ has the following appearance. \begin{figure}[H]
    \centering
    \includegraphics[width=0.5\linewidth]{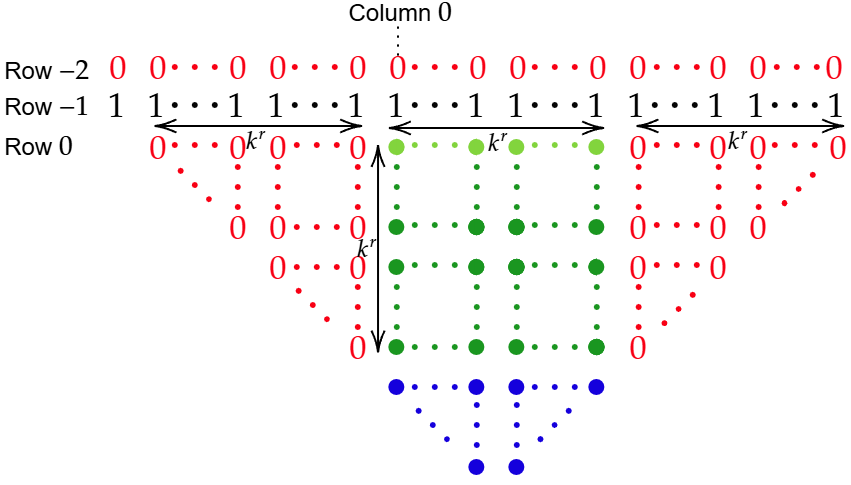}
    \caption{The number wall of $\widetilde{\textbf{S}}_r$. Each entry is either denoted with a number (where it is known) or a coloured dot (where it depends on $\textbf{S}$). The sequence $(s_i)_{0\le i <k^r}$ is coloured in light green. The other colours serve only to distinguish one part of the number wall from another. }
    \label{fig: 1.2}
\end{figure}
\noindent Next, consider only the square portion of the number wall given by $(W_\mathcal{R}(\widetilde{\textbf{S}}_r)[m,n])_{0\le m,n<k^r}$. That is, the green portion in Figure \ref{fig: 1.2}. This is used to construct a unique set $\mathcal{J}_r\subset [0,1]^2$ in the following way \\

\noindent Let $0\le m,n<k^r$ be natural numbers and let $$I_{m,n}=\left[\frac{m}{k^r},\frac{m+1}{k^r}\right)\times \left[\frac{n}{k^r},\frac{n+1}{k^r}\right).$$ Then, define \begin{equation*}\mathcal{J}_r:=\bigcup_{\substack{0\le m,n <k^r\\W_\mathcal{R}(\widetilde{\textbf{S}}_r)[m,n]\neq0}} I_{m,n}.\end{equation*}
\noindent Finally, the fractal set is defined as \begin{equation}\label{eqn: fractal_def}\mathcal{F}(\textbf{S},\mathcal{R}):=\bigcap_{r\ge0}\mathcal{J}_r.\end{equation}
\noindent The second main result of this work calculates the \textit{Hausdorff dimension} (denoted $\dim_H$) of the Fractal generated by the $p$-Cantor sequence. For more information on Hausdorff dimension, see \cite[Chapter 2]{Fal}.\\

\noindent \begin{theorem}[\textbf{Main Result 2}]\label{thm: frac_nw}
    Let $p$ be an odd prime and $C^{(p)}$ be the $p$-Cantor sequence over $\F_p$. Then, \[\dim_H\left(\mathcal{F}\left(C^{(p)},\F_p\right)\right)=\frac{\log\left(\frac{p^2+1}{2}\right)}{\log \left(p\right)}\]
\end{theorem}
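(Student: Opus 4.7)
The plan is to leverage Main Result 1, which establishes that the number wall $W_{\F_p}(C^{(p)})$---with only the zero/non-zero distinction retained---is a $[p,p]$-automatic 2D sequence on a 12-letter alphabet generated by some substitution $\phi$. This automaticity endows $\mathcal{F}(C^{(p)},\F_p)$ with a graph-directed self-similar structure: each cell of $\mathcal{J}_r$ carrying letter $\ell$ subdivides into $p^2$ cells of $\mathcal{J}_{r+1}$, and exactly those subcells indexed by non-zero positions of $\phi(\ell)$ persist. Theorem \ref{window} (the Square Window Theorem) ensures that these subdivisions are consistent across scales, so $\mathcal{F}(C^{(p)},\F_p)$ is precisely the attractor of the resulting graph-directed iterated function system (GIFS).

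From the substitution furnished by Theorem \ref{thm: morphism}, I would extract the $12\times 12$ non-negative integer incidence matrix $M=(M_{\ell,\ell'})$, where $M_{\ell,\ell'}$ records the number of occurrences of letter $\ell'$ among the non-zero positions of $\phi(\ell)$. Since every contraction in the GIFS has ratio $1/p$ and maps onto a distinct $p$-adic subsquare, the Open Set Condition is immediate. Mauldin--Williams theory (cf.~\cite{Fal}) then yields
\[
\dim_H \mathcal{F}(C^{(p)},\F_p) \;=\; \frac{\log \rho(M)}{\log p},
\]
where $\rho(M)$ is the Perron--Frobenius spectral radius of $M$. Thus the proof reduces to verifying $\rho(M)=(p^2+1)/2$.

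To compute $\rho(M)$, I would stratify the 12 letters according to their geometric role in the number wall (for instance, generic non-zero letters, letters on the border of a window, and letters interior to a window). This should exhibit $M$ in a convenient block-triangular form, with the dominant eigenvalue arising from a small sub-block corresponding to the generically non-zero letters, whose characteristic polynomial should factor to reveal $(p^2+1)/2$. A sanity check at $p=3$ gives $\log 5/\log 3$, matching the Hausdorff dimension of the classical Cantor number wall fractal studied by Wen and Wu \cite{WW}.

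The principal obstacle is producing a single $p$-uniform argument that $\rho(M)=(p^2+1)/2$, since the entries of $M$ depend on the odd prime $p$ and the alphabet of Main Result 1 already contains 12 letters. A $p$-parametrised reading of the substitution from Theorem \ref{thm: morphism}, combined with an explicit Perron-eigenvector ansatz exploiting the known symmetries of the number wall, should resolve this. A minor secondary verification is that the boundaries $\partial I_{m,n}$ contribute nothing to $\dim_H$, which is automatic as they form a countable union of line segments.
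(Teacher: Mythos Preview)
Your approach is correct but takes a genuinely different route from the paper's. The paper does \emph{not} invoke Mauldin--Williams or compute any spectral radius. Instead, it sandwiches $\mathcal{F}(C^{(p)},\F_p)$ between two auxiliary fractals $\mathcal{F}_{0,\infty}\subseteq\mathcal{F}(C^{(p)},\F_p)\subseteq\mathcal{F}_{F,\infty}$, each built from a simplified two-letter morphism obtained by collapsing the 12-letter alphabet: for $\mathcal{F}_{0,\infty}$ one identifies $A$ with $B$ and sends all other letters to $0$; for $\mathcal{F}_{F,\infty}$ one identifies $A$ with $B$ and replaces the $0$'s in $\Phi_p(A)$ by $F$'s. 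The lower bound $\dim_H\mathcal{F}_{0,\infty}\ge\log((p^2+1)/2)/\log p$ comes from the Mass Distribution Principle after the elementary count $\#\mathcal{N}_k=((p^2+1)/2)^k$, and the upper bound $\dim_H\mathcal{F}_{F,\infty}\le\log((p^2+1)/2)/\log p$ from a direct box-covering estimate on the number $a_k$ of non-zero cells, which satisfies a linear recursion with leading term $(p^2+1)/2$.

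Your GIFS route works and is in some ways cleaner: the incidence matrix is block upper-triangular in the ordering $\{A,B\},\,F,\,\{C_*\},\,\{E_*\}$, with diagonal blocks of spectral radii $(p^2+1)/2$, $0$, $1$, $p$ respectively (the $\{A,B\}$-block is $\left(\begin{smallmatrix}((p+1)/2)^2 & ((p-1)/2)^2\\ ((p-1)/2)^2 & ((p+1)/2)^2\end{smallmatrix}\right)$ with eigenvalues $(p^2+1)/2$ and $p$), so $\rho(M)=(p^2+1)/2$ and no eigenvector ansatz is needed. One caveat worth making explicit: the system is \emph{not} strongly connected, so the plain Mauldin--Williams formula $\dim_H=\log\rho(M)/\log p$ requires the observation that all SCCs are reachable from the seed $A$ and that the dominant block $\{A,B\}$ is the one containing $A$; the block-triangular structure then gives $\rho(M)=\max_C\rho(M_C)$. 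The paper's sandwich argument trades this structural bookkeeping for two explicit elementary counts, keeping everything within the basic toolkit of Falconer's Propositions~4.1--4.2 and avoiding graph-directed machinery altogether.
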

\subsection*{Structure of the Paper}\hfill\\
The following flow diagram illustrates the structure of the paper. Here, an arrow going from Section A to Section B indicates that the information in Section A is required to understand Section B. \begin{figure}[H]
    \centering
    \includegraphics[width=0.75\linewidth]{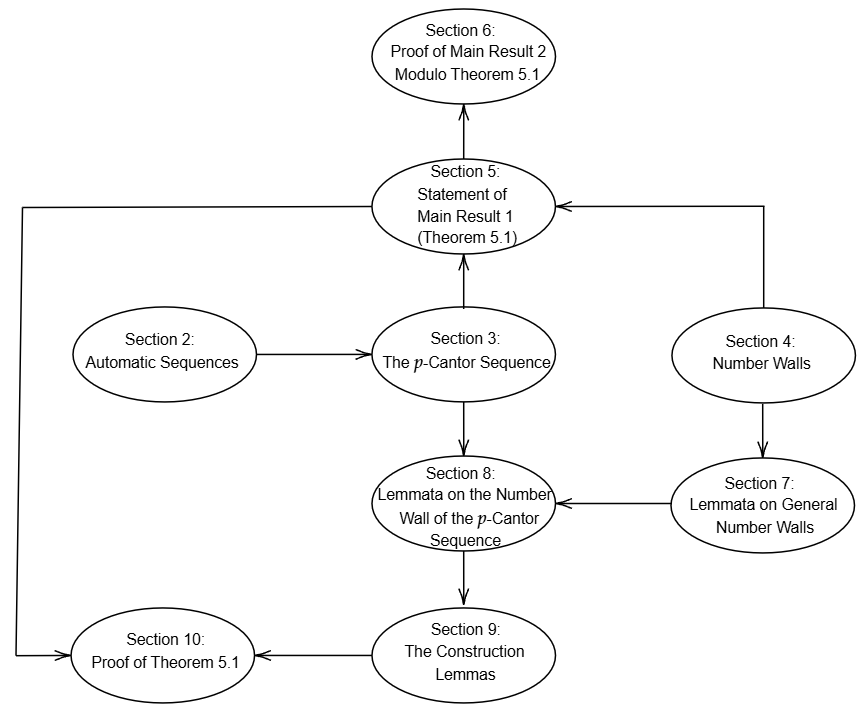}
\end{figure}

\subsection*{Acknowledgements}\hfill\\
The first named author would like to thank Uri Shapira for proposing a question which led to this line of research. She would also like to thank Erez Nesharim for multiple discussions which led to the ideas in this paper. The second named author would like to thank Fred Lunnon for his discussions relating to the number wall of the $p$-Cantor sequence. Both authors would also like to thank Faustin Adiceam for suggesting that they work together on this question. 

\section{The Essentials of Automatic Sequences}\label{Sect: 2}
\noindent This section provides a formal introduction to one and two-dimensional automatic sequences, which are crucial to the definition of the $p$-Cantor sequence and the statement of Main Result 1 (Theorem \ref{thm: morphism}). The content contained within this section closely follows \cite{AllSha}. 
 \subsubsection*{\textbf{Alphabets and Words}}\hfill\\\noindent To begin, the following elementary definitions are made.\\

\noindent A \textbf{finite alphabet} $\Sigma$ is a finite collection of symbols. Given a finite alphabet $\Sigma$, a \textbf{finite word} over $\Sigma$ is a concatenation of finitely many letters $\sigma\in\Sigma$. The set of all finite words made from $\Sigma$ is denoted by $\Sigma^*$. To combine words, the following operation is defined. \begin{definition}\label{def: concat}
    Let $\sigma_0\cdots\sigma_n,~ \sigma'_0\cdots\sigma'_{m}\in\Sigma^*$ be words. Define the \textbf{concatenation operation} $*$ as $$*:\Sigma^*\to\Sigma^*,\hspace{2cm}\sigma_0\cdots \sigma_n*\sigma'_0\cdots\sigma'_{m}=\sigma_0\cdots\sigma_n\sigma'_0\cdots\sigma'_{m}.$$
\end{definition}  \noindent The set $\Sigma^*$ is a monoid where the identity is given by the \textbf{empty word} $\epsilon$, defined by the relationship $w*\epsilon=w$ for every finite word $w\in \Sigma^*$.  Similarly, an \textbf{infinite word} is a concatenation of countably many letters from the alphabet. Denote the set of infinite words made from $\Sigma$ as $\Sigma^\omega$. In most cases, one is interested in both finite and infinite words. Hence, define $\Sigma^\infty:=\Sigma^*\cup\Sigma^\omega.$\\

\noindent For the majority of this work, the focus is on sequences over $\F_p$ for some odd prime $p$. It is often useful to think of those sequences instead as infinite words over the alphabet comprised of the elements of $\F_p$. For this reason, define the finite alphabet \begin{equation}\label{eqn: gamma_p}\Gamma_p=\{0,1,2,\dots,p-1\}.\end{equation}

  \subsubsection*{\textbf{Codings and uniform morphisms}}\hfill\\
\noindent The \textbf{length} of a word $w\in\Sigma^*$, denoted $L(w)$, is a function $L:\Sigma^*\to\N$ that returns the number of (non-empty) letters in $w$. By definition, $L(\epsilon)=0$.

\begin{definition}
\noindent Given finite alphabets $\Sigma$ and $\Delta$, a function $\tau:\Sigma^\infty\to\Delta^\infty$ is called a \textbf{coding}\footnote{Typically, a coding is defined as a uniform 1-morphism. However, for the purposes of this paper, it is more convenient to define it as above. The role that codings play in the construction of automatic sequences has not changed.} if for any word $w=\sigma_0\sigma_1\cdots\in\Sigma^\infty$,  $\tau(\sigma_0\sigma_1\dots)=\tau(\sigma_0)*\tau(\sigma_1)*\cdots$. Note that, it is sufficient to define $\tau$ on the letters of $\Sigma$.  \\

\noindent If $d$ is a natural number, then $\tau$ is a $d$-\textbf{coding} if $L(\tau(w))=d\cdot L(w)$ for every word $w\in\Sigma^*$. In the case where $\Delta=\Sigma$, the $d$-coding $\tau$ is instead called a \textbf{uniform }$d$-\textbf{morphism}. Usually, uniform morphisms are denoted by $\phi:\Sigma^\infty\to\Sigma^\infty$. Furthermore, for a natural number $i<L(w)$, let $w_i$ denote the $i^\nth$ letter of $w$.  \\

\noindent Let $d\in\N$ and let $\phi:\Sigma^\infty\to\Sigma^\infty$ be a uniform $d$-morphism. A letter $\sigma\in\Sigma$ is called $\phi$-\textbf{prolongable} if it is the first letter of its own image under $\phi$: that is, $\phi(\sigma)_0=\sigma$. If an infinite word $w=\sigma_0\sigma_1\sigma_2\cdots$ in $\Sigma^\omega$ satisfies $\phi(w)=w$, then $\sigma_0$ is $\phi$-prolongable. In this case, $w$ is called a \textbf{fixed point} of $\phi$ and denoted by $w=\phi^\infty(\sigma_0)$. From another point of view, one defines $\phi^\infty(\sigma_0)$ inductively by\begin{equation*}
    \phi^\infty(\sigma_0)=\lim_{n\to\infty} \phi^n(\sigma_0), ~~~\text{ where }\phi^n(\sigma)=\phi(\phi^{n-1}(\sigma_0)) ~~~\text{ and }~~~ \phi^0(\sigma_0)=\sigma_0.
\end{equation*}
\end{definition}

  \subsubsection*{\textbf{Cobham's Theorem and Automatic Sequences}}\hfill\\
\noindent Originally, a sequence was called \textit{automatic} if it was the output of some deterministic finite automaton (see \cite[Chapter 4]{AllSha} for details). However, the following theorem of Cobham \cite{C} (rephrased in modern language in \cite[Theorem 6.3.2]{AllSha}) provides a more practical definition of an automatic sequence:
\begin{theorem}[Cobham's Theorem]
    A sequence $\textbf{S}$ is \textbf{automatic} if and only if it there exist natural numbers $k$ and $d$ and finite alphabets $\Sigma$ and $\Delta$, such that $\textbf{S}$ is the fixed point of some uniform $k$-morphism $\phi:\Sigma^\infty\to\Sigma^\infty$ under a $d$-coding $\tau:\Sigma^\infty\to\Delta^\infty$.
\end{theorem}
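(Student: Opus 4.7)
The plan is to prove the two directions of this equivalence separately, taking as the baseline definition of automaticity that a sequence $\textbf{S} = (s_n)_{n \ge 0}$ is automatic iff there exist a positive integer $k$, a finite state set $Q$, a transition function $\delta : Q \times \{0,1,\dots,k-1\} \to Q$, an initial state $q_0 \in Q$ and an output map $\mu : Q \to \Delta$ with $s_n = \mu\bigl(\delta^*(q_0, (n)_k)\bigr)$, where $(n)_k$ denotes the base-$k$ expansion of $n$ and $\delta^*$ is the natural extension of $\delta$ to words. This is precisely what the cited reference \cite[Chapter 4]{AllSha} provides.

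For the direction ``DFAO implies morphism with coding,'' I would first carry out the standard normalisation ensuring $\delta(q_0, 0) = q_0$, so that padding the base-$k$ expansion with leading zeros does not change the output; this costs at most one additional state and preserves $\textbf{S}$. Then I would define a uniform $k$-morphism $\phi : Q^\infty \to Q^\infty$ via $\phi(q) := \delta(q,0)\,\delta(q,1)\cdots\delta(q,k-1)$. Since $\delta(q_0, 0) = q_0$, the letter $q_0$ is $\phi$-prolongable and $\phi^\infty(q_0)$ exists. The crucial identity, proved by induction on the length of $(n)_k$, is that the $n$-th letter of $\phi^\infty(q_0)$ equals $\delta^*(q_0, (n)_k)$. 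Taking $\tau := \mu$ as a $1$-coding (so $d = 1$) then yields $\tau(\phi^\infty(q_0)) = \textbf{S}$.

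For the converse ``morphism with coding implies DFAO,'' I would first handle the case $d = 1$ by reversing the above construction: from $\textbf{S} = \tau(\phi^\infty(\sigma_0))$ with $\phi$ a uniform $k$-morphism and $\tau$ a $1$-coding, build a DFAO with state set $\Sigma$, initial state $\sigma_0$, transitions $\delta(\sigma, j) := \phi(\sigma)_j$, and output $\mu := \tau$. The same inductive identity shows this DFAO reproduces $\textbf{S}$ on base-$k$ inputs. For $d \ge 2$, I would reduce to $d = 1$ by enlarging the alphabet to $\Sigma \times \{0,\dots,d-1\}$ and tracking both the position of a letter in the underlying fixed point and its offset within the corresponding $d$-block under $\tau$. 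Concretely, writing $n = dm + r$ with $0 \le r < d$, one has $s_n = \tau(\phi^\infty(\sigma_0)_m)_r$, and the pair $(m, r)$ is recoverable from the base-$(kd)$ expansion of $n$ by splitting its least significant digit into $k$- and $d$-components. A careful encoding shows $\textbf{S}$ is $(kd)$-automatic in the DFAO sense.

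The main obstacle is largely bookkeeping: the $d > 1$ case of the converse requires careful management of how the coding interacts with the base expansion, and the forward direction's leading-zero normalisation must be executed without disturbing the sequence. Once these are dispatched, the theorem rests on the clean combinatorial identity $\phi^\infty(q_0)_n = \delta^*(q_0, (n)_k)$, which dictionary-translates between the morphism and automaton viewpoints. An alternative, more conceptual approach would be to define the $k$-kernel $\{(s_{k^e n + r})_{n \ge 0} : e \ge 0,\ 0 \le r < k^e\}$ of $\textbf{S}$ and show it is finite in both set-ups; I prefer the direct construction above as it is more elementary and aligns naturally with the morphism formalism already introduced in the paper.
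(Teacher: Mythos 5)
The paper offers no proof of this statement: it is quoted from Cobham via \cite{AllSha}, so your proposal can only be measured against the standard argument. Your forward direction (normalise so that $\delta(q_0,0)=q_0$, set $\phi(q)=\delta(q,0)\cdots\delta(q,k-1)$, prove $\phi^\infty(q_0)_n=\delta^*(q_0,(n)_k)$ by induction, take $\tau=\mu$ as a $1$-coding) and your $d=1$ converse are exactly the textbook proof and are correct.

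The $d\ge 2$ case of the converse contains a genuine gap, not mere bookkeeping: the sequence $\textbf{S}=\tau(\phi^\infty(\sigma_0))$ is in general \emph{not} $(kd)$-automatic, so the reduction you propose targets a false statement and cannot be completed by any ``careful encoding.'' Concretely, take $k=2$, $d=3$, let $w$ be the Thue--Morse word and $\tau(a)=aaa$; then $s_n=w_{\lfloor n/3\rfloor}$ is $2$-automatic (division by a fixed constant is computed digit-by-digit, most significant first, by a finite transducer carrying the remainder in its state) and aperiodic, hence by Cobham's \emph{other} theorem on multiplicatively independent bases it is not $6$-automatic. The mechanism breaks exactly where you place the difficulty: writing $n=dm+r$, the least significant base-$kd$ digit of $n$ is $d(m\bmod k)+r$ and does split as you claim, but $\lfloor n/(kd)\rfloor=\lfloor m/k\rfloor$, so recovering the remaining base-$k$ digits of $m$ would require converting $\lfloor m/k\rfloor$ from base $kd$ to base $k$, which is not a finite-state operation (otherwise every $2$-automatic sequence would be $6$-automatic). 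The correct target is $k$-automaticity: since $s_n=\tau\bigl(w_{\lfloor n/d\rfloor}\bigr)_{n\bmod d}$, it suffices to invoke the closure of $k$-automatic sequences under $a_n\mapsto a_{\lfloor n/d\rfloor}$ and under pointwise pairing with the periodic sequence $n\bmod d$ followed by a letter-to-letter map; both are standard closure properties (provable via the $k$-kernel, via the logical characterisation, or via the transducer just described). This distinction matters for the paper itself: the $p$-Singer sequence is the image of a fixed point of a $p$-morphism under a $2p$-coding and is asserted there to be $p$-automatic, which is what the corrected argument delivers, whereas your construction would only ever aim at base $2p^2$.
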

  \subsubsection*{\textbf{Example: The Thue-Morse Sequence}}\hfill\\
\noindent The simplest example of a non-periodic automatic sequence is given by the Thue-Morse sequence. \begin{example}\label{Ex: TM}
    Let $\Sigma=\Gamma_2=\{0,1\}$ be a finite alphabet. Define the \textbf{Thue-Mose} sequence as $\phi^\infty(0)$ where $\phi:\Sigma^\infty\to\Sigma^\infty$ is a 2-morphism defined as \begin{align*}
        \phi(0)=01~~~\text{ and }~~~\phi(1)=10.
    \end{align*} 
\end{example}
\noindent To make the Thue-Morse sequence comply with Cobham's Theorem, one uses the trivial 1-coding with $\Delta=\Sigma$. Some $k$-automatic sequences are impossible to state without a non-trivial $d$-coding; the $p$-Singer sequence (see Section \ref{Sect: 4}) is one such example. 

  \subsubsection*{\textbf{Two-Dimensional Words}}\hfill\\

\noindent Automatic sequences have a natural generalisation to higher dimensions. For the purposes of this work, only two-dimensional automatic sequences are required and thus, are defined below. \\

\noindent Let $\Sigma$ be a finite alphabet. A \textbf{two-dimensional finite word} $\mathbf{w}$ over $\Sigma$ is defined as a matrix $$\textbf{w}=(\sigma_{m,n})_{0\le m \le M, 0\le n \le N} ~~~~\text{ where }~~~~ M,N\ge0.$$ To be consistent with standard matrix terminology, $\sigma_{m,n}\in\Sigma$ denotes the entry of $\mathbf{w}$ in row $m$ and column $n$. \\

\noindent Let $\Sigma^*_2$ denote the set of finite two-dimensional words over $\Sigma$. Similarly, let $\Sigma^\omega_2$ be the set of two-dimensional infinite words, each of which is an infinite two-dimensional matrix $(\sigma_{m,n})_{m,n\in\N}$. Finally, define $\Sigma_2^\infty:=\Sigma_2^*\cup\Sigma_2^\omega$. 
  \subsubsection*{\textbf{Two-Dimensional Morphisms and Codings}}\hfill\\
\noindent Given $k$ and, $l\in\N$, a \textbf{two-dimensional uniform $[k,l]$-coding }$\tau:\Sigma_2^\infty\to\Sigma_2^\infty$ is a function that satisfies the following two properties: \begin{enumerate}
    \item For any letter $\sigma\in\Sigma$, $\tau(\sigma)$ returns a $k\times l$ matrix in $\Sigma^*_2$.
    \item Let $M,N\in\N$ and let $\textbf{w}=(\sigma_{m,n})_{0\le m \le M,0\le n \le N}\in\Sigma_2^*$ be a two-dimensional word. Then, the function $\tau$ satisfies \[\tau\begin{pmatrix}
        \sigma_{0,0}&\sigma_{0,1}&\cdots &\sigma_{0,N}\\\vdots&&&\vdots\\\sigma_{M,0}&\sigma_{M,1}&\cdots&\sigma_{M,N}
    \end{pmatrix}=~\begin{pmatrix}
        \tau(\sigma_{0,0})&\tau(\sigma_{0,1})&\cdots &\tau(\sigma_{0,N})\\\vdots&&&\vdots\\\tau(\sigma_{M,0})&\tau(\sigma_{M,1})&\cdots&\tau(\sigma_{M,N})
    \end{pmatrix}\]
\end{enumerate} Similarly to the one-dimensional case, a $[k,l]$-coding is called a $[k,l]$-\textbf{morphism} if $\Delta=\Sigma$. \\

\noindent A letter $\sigma_0\in\Sigma$ is called $\phi$\textbf{-prolongable} if $\phi(\sigma_0)_{0,0}=\sigma_0$. Just as in the one-dimensional case, define $\phi^k(\sigma_0)$ iteratively as $\phi^k(\sigma_0)=\phi(\phi^{k-1}(\sigma_0))$ and $\phi^0(\sigma_0)=\sigma_0$. Then, the limiting sequence $\phi^\infty(\sigma_0)=\lim_{k\to\infty}\phi^k(\sigma_0)$ exists if and only if $\sigma_0$ is $\phi$-prolongable. 
  \subsubsection*{\textbf{Example: Thue-Morse in 2-dimensions}}\hfill\\
\noindent The 2-dimensional version of the Thue Morse sequence from Example \ref{Ex: TM} is defined.
\begin{example}\label{2dim}
    Let $\Sigma=\Gamma_2=\{0,1\}$, and define the two-dimensional $[2,2]$-morphism $\phi:\Sigma^\infty_2\to\Sigma^\infty_2$ by \begin{align*}
        &\phi(0)=\begin{matrix}
            0&1\\1&0
        \end{matrix}& &\phi(1)=\begin{matrix}
            1&0\\0&1
        \end{matrix}.&
    \end{align*}
\noindent Then, applying $\phi$ repeatedly to the $\phi$-prolongable element $0\in\Sigma$ gives \begin{align*}
    &\phi^0(0)=0& &\phi^1(0)=\begin{matrix}
            0&1\\1&0
        \end{matrix}& &\phi^2(0)=\begin{matrix}
            0&1&1&0\\1&0&0&1\\1&0&0&1\\0&1&1&0
        \end{matrix}&&\phi^3(0)=\begin{matrix}
            0&1&1&0&1&0&0&1\\1&0&0&1&0&1&1&0\\1&0&0&1&0&1&1&0\\0&1&1&0&1&0&0&1\\1&0&0&1&0&1&1&0\\0&1&1&0&1&0&0&1\\0&1&1&0&1&0&0&1\\1&0&0&1&0&1&1&0
        \end{matrix}.&
\end{align*}
\noindent Once again, the trivial $[1,1]$-coding is applied to generate the 2-dimensional Thue-Morse sequence. However, for the sake of example, let $\Delta=\{a,b\}$ and define the $[1,2]$-coding $\tau:\Sigma_2^\infty\to\Delta_2^\infty$ by \begin{align*}
    &\tau(0)=~\begin{matrix}a&a\end{matrix}& &\tau(1)=~\begin{matrix}b&b\end{matrix}.& 
\end{align*} 
\noindent Then, $\tau$ applied to $\phi^2(0)$ is given below.
\[\tau\left(\phi^2(0)\right)=\begin{matrix}
            a&a&b&b&b&b&a&a\\b&b&a&a&a&a&b&b\\b&b&a&a&a&a&b&b\\a&a&b&b&b&b&a&a
        \end{matrix}.\]
\end{example}
  \subsubsection*{\textbf{Cobham's Theorem in Two Dimensions}}\hfill\\
\noindent The concept of a two-dimensional 2-automatic sequence is defined similarly to the one-dimensional case, using a generalised version of Cobham's Theorem by Allouche and Shallit \cite[Theorem 14.2.3]{AllSha}.\begin{theorem}[Cobham's Theorem for Two-Dimensional Sequences:]
    A two dimensional sequence $\mathbf{S}$ is \textbf{automatic} if and only if there exist natural numbers $k,l,k',l'$ such that $\mathbf{S}$ is the image under a $[k',l']$-coding of a fixed point of a two-dimensional uniform $[k,l]$-morphism.
\end{theorem}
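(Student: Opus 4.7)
The plan is to prove each direction of the biconditional separately, adapting the classical one-dimensional proof of Cobham's theorem. I interpret ``automatic'' in the original sense of a two-dimensional deterministic finite automaton with output (2-DFAO) that reads the base-$k$ digits of the row index and the base-$l$ digits of the column index, with the theorem characterising this class via morphisms and codings.

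For the direction showing that every coded fixed point is automatic, suppose $\mathbf{S}=\tau(\phi^\infty(\sigma_0))$ where $\phi:\Sigma_2^\infty\to\Sigma_2^\infty$ is a uniform $[k,l]$-morphism and $\tau:\Sigma_2^\infty\to\Delta_2^\infty$ is a $[k',l']$-coding. The key observation is that $\phi^r(\sigma_0)$ is a $k^r\times l^r$ array whose $(m,n)$-entry is determined, level by level, by the base-$k$ digits of $m$ and the base-$l$ digits of $n$. Explicitly, writing $m=\sum_{i=0}^{r-1}m_ik^i$ and $n=\sum_{j=0}^{r-1}n_jl^j$, and starting from state $\sigma_0$, the iteration $\sigma\mapsto\phi(\sigma)_{m_i,n_j}$ applied for $i,j=r-1,r-2,\ldots,0$ produces precisely $\phi^\infty(\sigma_0)_{m,n}$. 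This defines a 2-DFAO on state space $\Sigma$. The coding $\tau$ is then absorbed by appending one further digit pair of bases $k'$ and $l'$ that selects the appropriate entry of the block $\tau(\sigma)$, yielding an overall 2-DFAO in bases $(kk',ll')$.

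For the converse, given a 2-DFAO with finite state set $Q$, transition function $\delta:Q\times\{0,\ldots,k-1\}\times\{0,\ldots,l-1\}\to Q$, initial state $q_0$, and output map $\mu:Q\to\Delta$, I would define the $[k,l]$-morphism $\phi$ on each $q\in Q$ by
\begin{equation*}
\phi(q) \;=\; \bigl(\delta(q,(i,j))\bigr)_{0\le i<k,\,0\le j<l}.
\end{equation*}
A straightforward induction on $r$ then shows that the $(m,n)$-entry of $\phi^r(q_0)$ equals the state reached by the automaton after processing the base-$k$ digits of $m$ and the base-$l$ digits of $n$ (reading from most to least significant). Applying the $[1,1]$-coding $\tau=\mu$ recovers $\mathbf{S}=\tau(\phi^\infty(q_0))$, establishing the coded-fixed-point representation with $k'=l'=1$.

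The main technical obstacle is ensuring $\phi$-prolongability of the starting letter, that is, $\delta(q_0,(0,0))=q_0$, which need not hold for a general automaton. The standard workaround is to introduce a marked copy $\widetilde q_0$ of the initial state, extend $\delta$ so that $\widetilde q_0$ mimics $q_0$'s behaviour on every input except $(0,0)$ where it loops to itself, and start the iteration from $\widetilde q_0$; the output function is then extended to treat $\widetilde q_0$ identically to $q_0$. A related subtlety is the treatment of leading zeros: padded digit strings for the same pair $(m,n)$ must give the same terminal state, which reduces to verifying compatibility of $\phi$ with the $(0,0)$-corner of $\phi(q_0)$ and follows automatically once prolongability is arranged. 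With these routine modifications in place, both implications go through and the biconditional follows.
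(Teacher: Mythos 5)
The paper does not actually prove this statement --- it is quoted directly from Allouche and Shallit \cite{AllSha} --- so there is no in-paper argument to compare against; your proposal must stand on its own. Your converse direction (automaton $\Rightarrow$ coded fixed point), including the marked-initial-state device for prolongability and the remark about leading zeros, is the standard construction and is fine as a sketch, as is the digit-unrolling argument for a fixed point composed with a letter-to-letter coding.

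The genuine gap is in the forward direction, precisely where this paper's statement departs from the textbook one: here a ``$[k',l']$-coding'' inflates each letter into a $k'\times l'$ block rather than acting letter-to-letter. Your claim that appending one extra digit pair of bases $k'$ and $l'$ yields ``an overall 2-DFAO in bases $(kk',ll')$'' is false in general: a mixed-radix string whose final digit is base $k'$ and whose remaining digits are base $k$ is not a base-$kk'$ expansion, and the resulting sequence need not be $(kk',ll')$-automatic. Already in one dimension, take $u$ the Thue--Morse sequence ($k=2$) and $\tau$ the $3$-coding $0\mapsto 000$, $1\mapsto 111$; then $\tau(u)_n=u_{\lfloor n/3\rfloor}$ is $2$-automatic but cannot be $6$-automatic, since $2$ and $6$ are multiplicatively independent and $\tau(u)$ is not eventually periodic, so Cobham's base-dependence theorem forbids it. The correct conclusion is that $\tau(\phi^\infty(\sigma_0))$ is automatic in the base $(k,l)$ of the morphism itself, and proving this requires the closure of $(k,l)$-automatic sequences under uniform transductions: for each fixed $(r,s)$ with $0\le r<k'$, $0\le s<l'$, the decimated array $\bigl(\tau(W_{m,n})_{r,s}\bigr)_{m,n}$ is a letter-to-letter coding of the fixed point $W$ and hence $(k,l)$-automatic, and one then needs the (nontrivial) merge lemma that the shuffle of these $k'l'$ arrays along residue classes modulo $(k',l')$ is again $(k,l)$-automatic. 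Without that lemma, or with the incorrect base $(kk',ll')$, the forward implication does not go through for $k',l'>1$.
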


\section{The $p$-Cantor and $p$-Singer Sequences}\label{Sect: 3}

\noindent Let $p$ be an odd prime. The purpose of this section is to introduce the aforementioned $p$-Cantor sequence, along with the so-called $p$-\textit{Singer} sequence. These are both $p$-automatic sequences over $\Gamma_p$, and the former is a generalisation of the `Cantor' sequence seen in Example \ref{Ex: Cantor}. There are four subsections: the first states some preliminary functions, the second and third introduces the $p$-Cantor and $p$-Singer sequences respectively, and the fourth explores the relationship between the two sequences.
\subsection{Preliminary Functions}
\subsubsection*{\textbf{The Choose function for non-natural numbers}}\hfill\\
\noindent The following generalisation of the classical `choose' function is used often: for $a,b\in\R$, let \begin{equation}{a\choose b}=\begin{cases} \frac{a!}{b!\cdot (a-b)!}&\text{ if }a,b\in\N\cup\{0\}\text{ and }a\ge b\\0&\text{ otherwise}.
\end{cases}\label{eqn: choose}\end{equation}
\subsubsection*{\textbf{Shorthand for} $(p-1)/2$}\hfill\\
\noindent For any odd prime $p$ and $k\in\N$, let $$p^{(k)}_2:=\frac{p^k-1}{2}\cdotp$$ Furthermore, denote $p_2:=p^{(1)}_2$. 
\subsubsection*{\textbf{Multiplying a sequence by a constant}}\hfill\\
Let $\textbf{S}=(s_i)_{i\ge0}$ be a sequence over some integral domain $\id$. For any $n\in\id$, define \[n\cdot \textbf{S}=(n\cdot s_i)_{i\ge0}.\]
\subsubsection*{\textbf{Concatenating sequences}}\hfill\\
Given $l_S,l_R\in\N$ and two finite sequences $\mathbf{S}=(s_i)_{0\le i \le l_S}$ and $\textbf{R}=(r_i)_{0\le i \le l_R}$, define $\mathbf{S}\oplus \textbf{R}$  as the concatenation of $\textbf{S}$ and $\textbf{R}$. Explicitly, \begin{equation}\label{eqn: oplus}
        \mathbf{S}\oplus \textbf{R} = (k_i)_{0\le i \le l_S+l_R+1}, ~~~\text{ where }~~~ k_i=\begin{cases}s_i &\text{ if } 0\le i \le l_S,\\ r_{i-l_S-1} &\text{ if } l_S < i \le l_S+l_S+1.\end{cases}
    \end{equation}

\subsubsection*{\textbf{Turning one-sided sequences into two-sided sequences}}\hfill\\
\noindent For an infinite sequence $\textbf{S}=(s_i)_{i\ge0}$, one adapts $\textbf{S}$ into a two-sided infinite sequence as follows; \begin{equation}\textbf{S}^L:= (r_i)_{i\in\Z} ~~~\text{such that}~~~ r_i=\begin{cases}
    s_i &\text{ if }i\ge0\\ 0&\text{ otherwise,}
\end{cases}\label{eqn: ^L}\end{equation} 
\begin{remark}
    Throughout, many variations on a given sequence in a similar style to $\textbf{S}^L$ are defined. For convenience, these are collected into Appendix \hyperref[Sect: Appendix_A]{A}.
\end{remark}
\subsection{The $p$-Cantor Sequence}\hfill\\
\noindent The so-called $p$-Cantor sequence is the protagonist of this paper. The goal of this section is to define it and study its elementary properties.
\subsubsection{\textbf{The Definition of the} $p$\textbf{-Cantor Sequence}}\hfill\\
\noindent Recall the alphabet $\Gamma_p$ from equation (\ref{eqn: gamma_p})
\begin{definition}\label{pcant}
    Let $p$ be an odd prime. Then, for every $n\in\Sigma$ and for $0\le i <p$, define a $p$-morphism $\phi_p:\Gamma_p^\infty\to\Gamma_p^\infty$ as \begin{equation}
        \phi_p(n)_i = n\cdot {p_2 \choose\frac{i}{2}}\mod p.
    \end{equation} 
The \textbf{$\mathbf{p}$-Cantor sequence} $\textbf{C}^{(p)}=\left(c^{(p)}_i\right)_{i\ge0}$ is defined by $$\textbf{C}^{(p)}:=\lim_{n\to\infty}\phi_p^n(1).$$ 
\end{definition}

\begin{example}
    The 7-Cantor sequence is given by the 7-morphism $\phi:\Gamma_7^\infty\to\Gamma_7^\infty$ defined by:\begin{align*}
         &\phi_7(0)=0000000& &\phi_7(1)=1030301& &\phi_7(2)=2060602& \\&\phi_7(3)=3020203& &\phi_7(4)=4050504& &\phi_7(5)=5010105& \\&&&\phi_7(6)=6040406.&
    \end{align*}
\end{example}

\subsubsection{\textbf{The Symmetry of the} $p$\textbf{-Cantor Sequence}}\hfill\\
\noindent One crucial property of the $p$-Cantor sequence is its symmetry:
\begin{lemma}\label{sym}
    For any $k\in\N$, the $p$-Cantor sequence satisfies $c^{(p)}_{i}=c^{(p)}_{p^k-1-i}$ for all $0\le i \le p^k-1$.
\end{lemma}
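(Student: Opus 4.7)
The plan is to proceed by induction on $k$, reducing everything to the classical symmetry of binomial coefficients.

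For the base case $k = 1$, the first $p$ entries of $\textbf{C}^{(p)}$ coincide with $\phi_p(1)$, so the required identity is $\binom{p_2}{i/2} \equiv \binom{p_2}{(p-1-i)/2} \pmod{p}$ for $0 \le i \le p-1$. If $i$ is odd, then (since $p-1$ is even) so is $p-1-i$, and both sides vanish by the convention in equation (\ref{eqn: choose}). If $i = 2j$, then $p-1-i = 2(p_2 - j)$, so the required equality reduces to $\binom{p_2}{j} = \binom{p_2}{p_2 - j}$, which is immediate.

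For the inductive step, suppose the claim holds for some $k \ge 1$. Given $0 \le i \le p^{k+1}-1$, write $i = pm + r$ with $0 \le m \le p^k - 1$ and $0 \le r \le p-1$. Because $\phi_p$ is a uniform $p$-morphism and the first $p^{k+1}$ letters of $\textbf{C}^{(p)}$ form $\phi_p^{k+1}(1) = \phi_p(\phi_p^k(1))$, position $i$ sits in the block replacing the letter at position $m$, so
\[
c^{(p)}_i \;=\; \phi_p\!\left(c^{(p)}_m\right)_r \;=\; c^{(p)}_m \cdot \binom{p_2}{r/2} \pmod{p}.
\]
The complementary index decomposes cleanly as
\[
p^{k+1} - 1 - i \;=\; p\bigl(p^k - 1 - m\bigr) + (p-1-r),
\]
which gives $c^{(p)}_{p^{k+1}-1-i} = c^{(p)}_{p^k-1-m} \cdot \binom{p_2}{(p-1-r)/2} \pmod p$. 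Applying the inductive hypothesis to the first factor and the base case computation to the second, the two expressions for $c^{(p)}_i$ and $c^{(p)}_{p^{k+1}-1-i}$ coincide.

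There is no serious obstacle here; the only thing to notice is the clean index decomposition $p^{k+1}-1-i = p(p^k-1-m)+(p-1-r)$, which exhibits the global symmetry as the composition of the block-level symmetry of $\phi_p(1)$ (a binomial coefficient identity) with the letter-level symmetry of $\phi_p^k(1)$ (the inductive hypothesis). The multiplicative nature of $\phi_p$, i.e.\ that $\phi_p(n) = n \cdot \phi_p(1)$ entrywise, is what allows these two symmetries to multiply cleanly rather than interact in a complicated way.
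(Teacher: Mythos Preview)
Your proof is correct and follows essentially the same approach as the paper: induction on $k$, using the $p$-morphism structure to write $c^{(p)}_{pm+r} = c^{(p)}_m \cdot \binom{p_2}{r/2}$ and then combining the inductive hypothesis with the symmetry $\binom{p_2}{j} = \binom{p_2}{p_2-j}$ of binomial coefficients. The only cosmetic difference is that the paper takes $k=0$ as the (trivial) base case rather than $k=1$.
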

\begin{proof}
\noindent The claim is proved by induction on $k\in\N$.\\

\noindent The base case, given by $k=0$, is trivial. Assume that for some $n\in\N$ and for every integer $0\le i<p^n$, one has $c^{(p)}_i=c^{(p)}_{p^n-1-i}$. Then, by the definition of the $p$-Cantor sequence, one has $$\left(c^{(p)}_{pi},c^{(p)}_{pi+1},\dots,c^{(p)}_{pi+p-1}\right)=\left(c^{(p)}_{i}\cdot{p_2\choose0},0,c^{(p)}_{i}\cdot{p_2\choose1},0,\dots,0,c^{(p)}_{i}\cdot{p_2\choose p_2}\right),$$ and similarly $$\hspace{-5cm}\left(c^{(p)}_{p^{n+1}-pi-p},c^{(p)}_{p^{n+1}-pi-p+1},\dots,c^{(p)}_{p^{n+1}-pi-1}\right)=$$ $$\left(c^{(p)}_{p^n-i-1}\cdot{p_2\choose0},0,c^{(p)}_{p^n-i-1}\cdot{p_2\choose1},0,\dots,0,c^{(p)}_{p^n-i-1}\cdot{p_2\choose p_2}\right).$$ Hence, for $0\le j \le p-1$, $$c^{(p)}_{pi+j}=c_i^{(p)} {p_2\choose \frac{j}{2}}\stackrel{(*)}{=}c_{p^n-i-1}^{(p)}{p_2\choose p_2-\frac{j}{2}}=c^{(p)}_{p^{n+1}-pi-j-1},$$  where $(*)$ is a consequence the induction hypothesis. This concludes the proof.\end{proof} 

\subsubsection{\textbf{The} $p$\textbf{-Cantor Sequence is Self-Similar}}\hfill\\
\noindent For $n\in\N$, let $\textbf{C}^{(p)}_n:= \left(c^{(p)}_i\right)_{0\le i <p^n}$ be the first $p^n$ entries.\\

\noindent The next useful property of the $p$-Cantor sequence is that for every $n\in\N$, $\textbf{C}^{(p)}_n$ is comprised of $p$ copies of $\textbf{C}^{(p)}_{n-1}$, the $j^\nth $ of which is multiplied by ${p_2\choose j/2}$.
\begin{lemma}\label{pcant_geom}
    Let $n\in\N$. Then, $$\textbf{C}^{(p)}_n=\bigoplus_{j=0}^{p-1} {p_2\choose \frac{j}{2}}\cdot \textbf{C}^{(p)}_{n-1}.$$In other words, for every $0\le j \le p-1$, \[\left(c^{(p)}_i\right)_{j\cdot p^{n-1} \le i \le (j+1)p^{n-1}-1}=\left(\begin{pmatrix}p_2\\\frac{j}{2}\end{pmatrix}\cdot c_i\right)_{0 \le i \le p^{n-1}-1}.\] 
\end{lemma}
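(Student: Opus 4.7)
The plan is to prove this by induction on $n$, exploiting the fact that the morphism $\phi_p$ is \emph{homogeneous in its input letter}. That is, for any $c,n\in\Gamma_p$ and any $0\le i<p$,
\[ \phi_p(c\cdot n)_i \;=\; (c\cdot n)\cdot \binom{p_2}{i/2} \;=\; c\cdot \phi_p(n)_i, \]
so $\phi_p(c\cdot \mathbf{S}) = c\cdot \phi_p(\mathbf{S})$ for any finite sequence $\mathbf{S}$ over $\Gamma_p$. Combined with the fact that $\phi_p$ is a uniform $p$-morphism, i.e., $\phi_p$ applied to a concatenation is the concatenation of images, this is essentially all one needs.

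The base case $n=1$ is immediate from the definition: $\mathbf{C}^{(p)}_0 = (1)$ since $c^{(p)}_0 = 1$, so $\bigoplus_{j=0}^{p-1}\binom{p_2}{j/2}\cdot \mathbf{C}^{(p)}_0$ is the length-$p$ word whose $j$-th entry is $\binom{p_2}{j/2}$. This agrees with $\phi_p(1) = \mathbf{C}^{(p)}_1$ by Definition \ref{pcant}.

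For the inductive step, assume the claim holds for $n-1$. Since $\mathbf{C}^{(p)} = \phi_p^\infty(1)$, one has $\mathbf{C}^{(p)}_n = \phi_p(\mathbf{C}^{(p)}_{n-1})$ because $\phi_p$ has expansion rate $p$. Applying $\phi_p$ to both sides of the inductive hypothesis $\mathbf{C}^{(p)}_{n-1} = \bigoplus_{j=0}^{p-1}\binom{p_2}{j/2}\cdot \mathbf{C}^{(p)}_{n-2}$, together with the two properties of $\phi_p$ highlighted above, gives
\[ \mathbf{C}^{(p)}_n \;=\; \phi_p\!\left(\bigoplus_{j=0}^{p-1}\binom{p_2}{j/2}\cdot\mathbf{C}^{(p)}_{n-2}\right) \;=\; \bigoplus_{j=0}^{p-1}\binom{p_2}{j/2}\cdot\phi_p\!\left(\mathbf{C}^{(p)}_{n-2}\right) \;=\; \bigoplus_{j=0}^{p-1}\binom{p_2}{j/2}\cdot\mathbf{C}^{(p)}_{n-1}, \]
which is what is required.

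There is no real obstacle; once the homogeneity observation is isolated, the argument is mechanical. An alternative route would be to first derive the closed form $c^{(p)}_i = \prod_{k=0}^{n-1}\binom{p_2}{d_k/2}$, where $d_{n-1}\cdots d_0$ is the base-$p$ expansion of $i$, and then read off the lemma by separating the top digit $d_{n-1}=j$ from the remaining digits; but this is essentially the same computation expressed non-inductively, and the morphism-based argument above is cleaner and matches the inductive style of Lemma \ref{sym}.
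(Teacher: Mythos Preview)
Your proof is correct and follows essentially the same approach as the paper: induction on $n$, with the key observation that $\phi_p$ respects scalar multiplication (what you call homogeneity, what the paper calls ``$\phi_p$ respects multiplication''). The only cosmetic difference is that you apply $\phi_p$ to the full concatenation at once, whereas the paper works block-by-block for each fixed $j$; the content is identical.
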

\begin{proof}
    The proof proceeds by induction on $n$. The base case, when $n=1$, is immediate by the definition of the $p$-Cantor Sequence (Definition \ref{pcant}). \\

    \noindent Next, since $\textbf{C}^{(p)}$ is the fixed point of the $p$-morphism $\phi_p$ (from Definition \ref{pcant}), one has that for any $a,b\in\N$,  \begin{equation}\label{eqn: lem_3.5_1}
        \phi_p\left(\left(c^{(p)}_i\right)_{a\le i \le b}\right)=\left(c^{(p)}_i\right)_{pa\le i\le pb+p-1}.
    \end{equation} Therefore, if one assumes  that \begin{equation}c^{(p)}_{jp^n\le i \le (j+1)p^n-1}=\left(\begin{pmatrix}p_2\\\frac{j}{2}\end{pmatrix}\cdot c^{(p)}_i\right)_{0\le i \le p^{n-1}-1},\label{eqn: lem_3.5_2}\end{equation}  then \begin{align*}\left(c^{(p)}_i\right)_{jp^{n+1}\le i \le (j+1)p^{n+1}-1} &\substack{(\ref{eqn: lem_3.5_1})\\=}~ \phi_p\left(\left(c^{(p)}_i\right)_{jp^n\le i \le (j+1)p^n-1}\right)\\&\substack{(\ref{eqn: lem_3.5_2})\\=}~\phi_p\left(\begin{pmatrix}p_2\\\frac{j}{2}\end{pmatrix}\cdot (c^{(p)}_i)_{0\le i \le p^{n-1}-1}\right)=\begin{pmatrix}p_2\\\frac{j}{2}\end{pmatrix}\cdot (c^{(p)}_i)_{0\le i \le p^n-1}\mod p\end{align*}as required, where the final equality follows from the fact that $\phi_p$ respects multiplication (Definition \ref{pcant}).
\end{proof}
\begin{corollary}\label{cor: p^h=0}
    For every $h\in\N$ and every $p^h\le i\le 2p^h-1$, $c^{(p)}_{i}=0$.
\end{corollary}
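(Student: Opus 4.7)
The plan is to deduce this directly from the self-similarity lemma (Lemma \ref{pcant_geom}) applied with $n=h+1$ and $j=1$. Indeed, the index range $p^h \le i \le 2p^h - 1$ is precisely the second block in the decomposition
\[
\textbf{C}^{(p)}_{h+1} = \bigoplus_{j=0}^{p-1} {p_2 \choose \tfrac{j}{2}} \cdot \textbf{C}^{(p)}_h,
\]
namely the block corresponding to $j=1$. By Lemma \ref{pcant_geom}, the entries of $\textbf{C}^{(p)}$ in this range are exactly $\binom{p_2}{1/2} \cdot c^{(p)}_{i - p^h}$ for $0 \le i - p^h \le p^h - 1$.

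The key observation is then that $\binom{p_2}{1/2} = 0$. This is immediate from the generalised choose function defined in equation (\ref{eqn: choose}): since $1/2 \notin \N \cup \{0\}$, the function returns $0$ by definition. Multiplying any entry of $\textbf{C}^{(p)}_h$ by this zero coefficient yields $0$, which gives the conclusion.

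So the entire proof is a one-line invocation of Lemma \ref{pcant_geom} together with the convention (\ref{eqn: choose}). There is no real obstacle here — the statement is essentially a special case of the self-similarity lemma, singling out the blocks whose multiplier involves a non-integer lower argument in the binomial coefficient. The same argument would in fact show that every ``odd-indexed'' block (i.e.\ $j \in \{1, 3, 5, \dots, p-2\}$) in the decomposition of $\textbf{C}^{(p)}_{h+1}$ vanishes, which is consistent with the pattern $\phi_p(n) = (n, 0, *, 0, *, 0, \dots, 0, *)$ visible in the definition of $\phi_p$.
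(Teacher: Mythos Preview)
Your proof is correct and is essentially identical to the paper's own proof: both invoke Lemma \ref{pcant_geom} with $n=h+1$, $j=1$ and observe that $\binom{p_2}{1/2}=0$ by the convention in equation (\ref{eqn: choose}).
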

\begin{proof}
    Since $p^h\le i\le 2p^h-1$, Lemma \ref{pcant_geom} implies that $c_{i}^{(p)}={p_2\choose \frac{1}{2}}\cdot c_i^{(p)}=0$ from equation (\ref{eqn: choose}).
\end{proof}
\subsection{The $p$-Singer Sequence}\hfill\\
\noindent It is seen in Section \ref{Sect: 8} that to study the number wall\footnote{See section \ref{Sect: 4}.} of the $p$-Cantor sequence, one must also study the so-called $p$-Singer\footnote{The second-named author thanks Fred Lunnon for the name suggestion.} sequence. Hence, the $p$-Singer sequence is now defined. 

\subsubsection{\textbf{The Definition of the }$p$\textbf{-Singer Sequence}}\hfill\\
\noindent Like the $p$-Cantor sequence, the $p$-Singer sequence is $p$-automatic.
\begin{definition}
    Define a $p$-morphism $\varphi:\Gamma_p^\infty\to\Gamma_p^\infty$ as \begin{equation}\label{p_sing_eqn}
        \varphi_p(n)_i=n\cdot 
            {p_2\choose i}\mod p.
    \end{equation}
    \noindent Additionally, define a $2p$-coding $\tau_p:\Sigma^\infty\to\Sigma^\infty$ as \begin{equation*}
        \tau_p(n)_i=n\cdot 
            {p_2+1\choose\frac{i}{2}}
        \mod p.
    \end{equation*}
    \noindent Then, define the $\mathbf{p}$-\textbf{Singer} sequence $\textbf{S}(p)=\left(s^{(p)}_i\right)_{i\ge0}:=\tau_p\left(\varphi_p^\infty(1)\right)$. Also, define the \textbf{pseudo}-$p$-\textbf{Singer} sequence as $\mathcal{S}(p)=(s_{p,i})_{i\ge0}=\varphi_p^\infty (1)$. That is, $\textbf{S}(p)=\tau_p(\mathcal{S}(p))$. 
\end{definition}
\noindent The pseudo-$p$-Singer sequence is only used briefly in Section \ref{sect: 3.3.3}, and hence, the similarities between the notations $\mathcal{S}(p)$ and $\textbf{S}(p)$ does not cause confusion.
\subsubsection{\textbf{An Example of the} $p$\textbf{-Singer Sequence}}\hfill\\
\noindent As the $p$-Singer sequence is significantly more complicated than the $p$-Cantor sequence, a detailed example is given.
\begin{example}
    Let $p=7$ and recall $\Gamma_7$ from equation (\ref{eqn: gamma_p}). Then the 7-morphism $\varphi_7:\Gamma_7^\infty\to\Gamma_7^\infty$ is given by:\begin{align*}
        &\varphi(0)=0000000& &\varphi(1)=1331000& &\varphi(2)=2662000& \\&\varphi(3)=3223000& &\varphi(4)=4554000&
        &\varphi(5)=5115000& \\&&&\varphi(6)=6446000&
    \end{align*}
    and the 14-coding $\tau_7:\Gamma_7^\infty\to\Gamma_7^\infty$ is given by \begin{align*}
        &\tau_7(0)=00000000000000& &\tau_7(1)=10406040100000& &\tau_7(2)=20105010200000&\\&\tau_7(3)=30504050300000& &\tau_7(4)=40203020400000& &\tau_7(5)=50602060500000&\\
        &&&\tau_7(6)=60301030600000.&
    \end{align*}
    Hence, the pseudo-7-Singer sequence $\mathcal{S}(7)=(s_{7,i})_{i\ge0}=\varphi_7^\infty(1)$ begins as \begin{align*}\varphi_7^0(1)=1,~~\bk\varphi^1_7(1)=133100\bk,~~\bk\varphi_7^2(1)=\rd{1}\bl{3}\rd{3}\bl{1}\rd{0}\bl{0}\rd{0}\gn{3}\bk{2}\gn{2}\bk{3}\gn{0}\bk{0}\gn{0}\rd{3}\bl{2}\rd{2}\bl{3}\rd{0}\bl{0}\rd{0}\gn{1}\bk{3}\gn{3}\bk{1}\gn{0}\bk{0}\gn{0}\rd{0}\bl{0}\rd{0}\bl{0}\rd{0}\bl{0}\rd{0}\gn{0}\bk{0}\gn{0}\bk{0}\gn{0}\bk{0}\gn{0}\rd{0}\bl{0}\rd{0}\bl{0}\rd{0}\bl{0}\rd{0}\bk\end{align*} Above, each pair of colours (red and blue, green and black) denotes the image of each element of $\varphi_7^1(1)$ under $\varphi_7$. For example, the first green and black section is the image of the first $3$ in $\varphi_7^1(1)$ under $\varphi_7$. The colours within each pair serve no other purpose than to distinguish one entry from another. The coloured text below indicates the image of the corresponding number from $\varphi_7^2(1)$ under $\tau_7$.  \[\rd{10406040100000}\bl{30504050300000}\rd{30504050300000}\bl{10406040100000}\rd{00000000000000}\bl{00000000000000}\]\vspace{-0.7cm}\[\rd{00000000000000}\gn{30504050300000}\bk{20105010200000}\gn{20105010200000}\bk{30504050300000}\gn{00000000000000}\]\vspace{-0.7cm}\[\bk{00000000000000}\gn{00000000000000}\rd{30504050300000}\bl{20105010200000}\rd{20105010200000}\bl{30504050300000}\]\vspace{-0.7cm}\[\rd{00000000000000}\bl{00000000000000}\rd{00000000000000}\gn{10406040100000}\bk{30504050300000}\gn{30504050300000}\]\vspace{-0.7cm}\[\bk{10406040100000}\gn{00000000000000}\bk{00000000000000}\gn{00000000000000}\rd{00000000000000}\bl{00000000000000}\]\vspace{-0.7cm}\[\rd{00000000000000}\bl{00000000000000}\rd{00000000000000}\bl{00000000000000}\rd{00000000000000}\gn{00000000000000}\]\vspace{-0.7cm}\[\bk{00000000000000}\gn{00000000000000}\bk{00000000000000}\gn{00000000000000}\bk{00000000000000}\gn{00000000000000}\]\vspace{-0.7cm}\[\rd{00000000000000}\bl{00000000000000}\rd{00000000000000}\bl{00000000000000}\rd{00000000000000}\bl{00000000000000}\]\vspace{-0.7cm}\[\rd{00000000000000}\bk\phantom{00000000000000}\phantom{00000000000000}\phantom{00000000000000}\phantom{00000000000000}\phantom{00000000000000}\]
    \noindent This is the first 243 digits of the 7-Singer sequence, $\textbf{S}(7)=(s^{(7)}_i)_{i\ge0}=\tau_7(\varphi_7^\infty(1))$.
\end{example}

\subsubsection{\textbf{Symmetries of the Pseudo-}$p$\textbf{-Singer Sequence}}\label{sect: 3.3.3}\hfill\\
\noindent The goal is to establish an analogue of Lemma \ref{sym} for the $p$-Singer sequence. Before this can be achieved, a similar lemma must be stated for the \textit{pseudo}-$p$-Singer sequence.
\begin{lemma}\label{lem: sym_sing_1}
    For any $k\in\N$ and any $0\le i \le p_2^{(k)}$, one has that $s_{p,i}=s_{p,p^{(k)}_2-i}$.
\end{lemma}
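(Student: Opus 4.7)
The plan is to proceed by induction on $k$, exploiting the fact that $\mathcal{S}(p) = \varphi_p^\infty(1)$ is a fixed point of the $p$-morphism $\varphi_p$. Unpacking this fixed-point identity yields the recursive relation
\[
s_{p,\, pq + r} \;=\; s_{p, q} \cdot \binom{p_2}{r} \pmod{p}, \qquad q \ge 0, \ \ 0 \le r \le p - 1,
\]
in which $\binom{p_2}{r} = 0$ precisely when $r > p_2$, by the convention in equation (\ref{eqn: choose}). The second key ingredient is the arithmetic identity
\[
p_2^{(k+1)} \;=\; \frac{p^{k+1} - 1}{2} \;=\; p \cdot p_2^{(k)} + p_2,
\]
which lets me match the base-$p$ splitting of $i$ with that of $p_2^{(k+1)} - i$.

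The base case $k = 1$ follows from the classical binomial symmetry $\binom{p_2}{i} = \binom{p_2}{p_2 - i}$, since $s_{p, i} = \binom{p_2}{i}$ for $0 \le i \le p - 1$. For the inductive step, assume the claim for some $k \ge 1$ and fix $0 \le i \le p_2^{(k+1)}$. Writing $i = pq + r$ with $0 \le r \le p - 1$, I split into two cases depending on how $r$ compares to $p_2$.

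If $r \le p_2$, then the bound $i \le p_2^{(k+1)}$ forces $q \le p_2^{(k)}$, no borrow occurs in subtraction, and
\[
p_2^{(k+1)} - i \;=\; p(p_2^{(k)} - q) + (p_2 - r).
\]
Applying the recursive relation to both $s_{p, i}$ and $s_{p,\, p_2^{(k+1)} - i}$, combined with the inductive hypothesis for $q \in [0, p_2^{(k)}]$ and the symmetry $\binom{p_2}{r} = \binom{p_2}{p_2 - r}$, yields the desired equality. If instead $r > p_2$, then $s_{p, i} = 0$ immediately because $\binom{p_2}{r} = 0$. For the other side, $i \le p_2^{(k+1)}$ together with $r > p_2$ now forces $q \le p_2^{(k)} - 1$, and after borrowing one has
\[
p_2^{(k+1)} - i \;=\; p\bigl(p_2^{(k)} - q - 1\bigr) + (p + p_2 - r),
\]
whose last base-$p$ digit $p + p_2 - r$ lies in $\{p_2 + 1, \ldots, p - 1\}$, hence also exceeds $p_2$. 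The recursive relation then gives $s_{p,\, p_2^{(k+1)} - i} = 0$, closing the induction.

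Morally, the statement amounts to the claim that the product over base-$p$ digits of $i$, which expresses $s_{p,i}$ via the recursive relation, is preserved under $i \mapsto p_2^{(k)} - i$, owing to the binomial symmetry $\binom{p_2}{d} = \binom{p_2}{p_2 - d}$. The only genuine subtlety, and the closest thing to an obstacle, is tracking the single borrow that occurs precisely when $i$ has a base-$p$ digit exceeding $p_2$; the case split above is the cleanest way I see to handle this bookkeeping.
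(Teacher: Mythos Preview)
Your proof is correct. Both you and the paper argue by induction on $k$ with the same base case, but the inductive steps differ in presentation. The paper works at the level of words: it writes $\varphi_p(n) = B(n,p_2) * \mathbf{0}_{p_2}$ where $B(n,p_2)$ is the symmetric block $\bigl(n\binom{p_2}{0},\dots,n\binom{p_2}{p_2}\bigr)$, applies $\varphi_p$ to the palindromic word $s_{p,0}\cdots s_{p,p_2^{(k)}}$, strips the trailing $p_2$ zeros, and observes that the resulting length-$(p_2^{(k+1)}+1)$ word is a palindrome built from symmetric blocks in a symmetric pattern. Your argument is the digit-arithmetic translation of this: the recursion $s_{p,pq+r}=s_{p,q}\binom{p_2}{r}$ together with $p_2^{(k+1)}=p\cdot p_2^{(k)}+p_2$ lets you match base-$p$ decompositions of $i$ and $p_2^{(k+1)}-i$, and your case split on $r\le p_2$ versus $r>p_2$ corresponds precisely to whether position $i$ lands inside a $B(\cdot,p_2)$ block or inside a zero-padding block in the paper's picture. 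The paper's version is slightly more conceptual; yours is more explicit about the single piece of bookkeeping (the borrow) that the block viewpoint absorbs silently.
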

\begin{proof}
    \noindent This is done by induction. In the base case where $k=1$, one has $0\le i \le p_2$, and therefore, \[s_{p,i}={p_2\choose i}\mod p={p_2\choose p_2-i}\mod p=s_{p,p_2-i},\] as required.\\
    
    \noindent For the inductive step, assume that the values of $s_{p,i}$ for $0\le i \le p^{(k)}_2$ satisfy $s_{p,i}=s_{p,p^{(k)}_2-i}$. For $n,m\in\N$, define the finite word $B(n,m)$ of length $m+1\in\N$, where for $0\le i \le m$, $$B(n,m)_i=n\cdot{m\choose i}\mod p,$$ and define $\textbf{0}_n=0\dots0$ as the finite word of length $n$ comprised only of zeroes. Note that, for $n\in\Gamma_p$, one has that $\varphi_p(n)=B\left(n,p_2\right)*\textbf{0}_{p_2}$, where $*$ is from Definition \ref{def: concat}. Hence, applying $\varphi_p$ to the word $s^{(p)}_{0}*s^{(p)}_{1}*\dots*s^{(p)}_{p^{(k)}_2}$ yields \[B\left(s^{(p)}_{0},p_2\right)*\textbf{0}_{p_2}*B\left(s^{(p)}_{1},p_2\right)*\textbf{0}_{p_2}*\dots* B\left(s^{(p)}_{p^{(k)}_2}*p_2\right)*\textbf{0}_{p_2}. \] This has length $\frac{p^{k+1}+p}{2}$. Removing the final $p_2$ entries leaves a word of length $\frac{p^{k+1}+1}{2}$, which is symmetric by the inductive hypothesis and the symmetry of the binomial coefficients.
\end{proof}

\subsubsection{\textbf{The Symmetries of the} $p$\textbf{-Singer Sequence}}\hfill\\
\noindent The counterpart to Lemma \ref{sym} is now established.
\begin{lemma}\label{sym_sing}
    For any $k\in\N$, the $p$-Singer sequence $\textbf{S}(p)=\left(s_i^{(p)}\right)_{i\ge0}$ satisfies $s^{(p)}_i=s^{(p)}_{p^k+1-i}$ for all $0\le i \le p^k+1$.
\end{lemma}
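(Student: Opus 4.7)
The plan is to exhibit the prefix $(s^{(p)}_i)_{0 \le i \le p^k+1}$ of $\textbf{S}(p)$ as a concatenation of palindromic building blocks whose order of occurrence is itself palindromic, and then to invoke Lemma \ref{lem: sym_sing_1} at the previous stage $k-1$.

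First I would compute that $(p^k+1)/2 = p \cdot p^{(k-1)}_2 + (p_2+1)$. Since $\tau_p$ is a $2p$-coding, the block $\tau_p(s_{p,q})$ occupies positions $2pq, 2pq+1, \ldots, 2pq + 2p-1$ of $\textbf{S}(p)$, and the identity above tells us precisely where the symmetry point lies inside this block structure: the indices $0 \le i \le p^k+1$ cover exactly the first $p^{(k-1)}_2$ complete $\tau_p$-blocks followed by the first $p+2$ entries of $\tau_p(s_{p, p^{(k-1)}_2})$.

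Next I would verify two structural facts about the image of $\tau_p$. For any $n \in \Gamma_p$, the entries of $\tau_p(n)$ at positions $p+2, \ldots, 2p-1$ all vanish, because $\binom{p_2+1}{i/2}=0$ whenever $i>p+1$. Moreover, the initial segment $P(n)$ consisting of the first $p+2$ entries of $\tau_p(n)$ is a palindrome: odd-indexed entries vanish on both sides, while the even-indexed entries match via the classical symmetry $\binom{p_2+1}{s}=\binom{p_2+1}{p_2+1-s}$. Writing $Z$ for the block of $p-2$ zeros occurring at the tail of each $\tau_p(n)$ and setting $N = p^{(k-1)}_2$, the prefix of length $p^k + 2$ therefore equals
\[
P(s_{p,0}) \cdot Z \cdot P(s_{p,1}) \cdot Z \cdots Z \cdot P(s_{p,N}).
\]

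A string of this shape — palindromic blocks of equal length separated by palindromic gaps of equal length — is itself a palindrome precisely when the sequence of labels $(s_{p,0}, s_{p,1}, \ldots, s_{p,N})$ is a palindrome. This last condition is exactly Lemma \ref{lem: sym_sing_1} applied with $k$ replaced by $k-1$, which yields $s_{p,q} = s_{p, N-q}$ for $0 \le q \le N$. The main technical obstacle is the arithmetic bookkeeping in the first step: one must verify that the symmetry point $(p^k+1)/2$ lands exactly at the end of the ``nonzero portion'' of the $p^{(k-1)}_2$-th $\tau_p$-block, so that the prefix cleanly decomposes as advertised. Once this placement is confirmed, the remainder of the proof is an essentially algebraic consequence of Lemma \ref{lem: sym_sing_1} at the previous index together with the palindromic symmetry of the binomial coefficients $\binom{p_2+1}{\cdot}$.
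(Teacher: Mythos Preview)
Your proposal is correct and follows essentially the same approach as the paper: decompose each $\tau_p(n)$ as a palindromic block $P(n)$ of length $p+2$ followed by $p-2$ zeros, so that the prefix $(s^{(p)}_i)_{0\le i\le p^k+1}$ becomes $P(s_{p,0})\, Z\, P(s_{p,1})\, Z\cdots Z\, P(s_{p,N})$ with $N=p^{(k-1)}_2$, and then conclude via Lemma~\ref{lem: sym_sing_1}. Your index bookkeeping (invoking Lemma~\ref{lem: sym_sing_1} at level $k-1$ to obtain the statement at level $k$) is in fact cleaner than the paper's, which states the decomposition with index $k$ and thereby proves the claim at level $k+1$.
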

\begin{proof}[Proof of Lemma \ref{sym_sing}]
\noindent The claim is proved by induction on $k\in\N$. The base case where $k=1$ is immediate from the symmetry of the binomial coefficients.\\

\noindent A similar argument to that of Lemma \ref{lem: sym_sing_1} is now used to show that $s^{(p)}_i=s^{(p)}_{p^k+1-i}$ for all $i$. For $n,m\in\N$, define the finite word $$S(n,m)=n\cdot{m\choose 0}*0*n\cdot{m\choose 1}*0*n\cdot{m\choose 2}*0*\dots*0*n\cdot{m\choose m}.$$ Above, each $n\cdot {m\choose i}$ term is taken modulo $p$. Note that $S(n,m)$ has length $2m+1$ and is symmetric. For $n\in\Gamma_p$, one has that $\tau_p(n)=S(n,\frac{p+1}{2})*\textbf{0}_{p-2}$, which has length $2p$. Hence, the symmetric (by Lemma \ref{lem: sym_sing_1}) finite word $s_{p,0}* s_{p,1}*\dots*s_{p,p^{(k)}_2}$ is mapped by $\tau_p$ to \[S\left(s_{p,0},\frac{p+1}{2}\right), \textbf{0}_{p-2}, S\left(s_{p,1},\frac{p+1}{2}\right), \textbf{0}_{p-2},\dots, S\left(s_{p,\frac{p^k-1}{2}},\frac{p+1}{2}\right), \textbf{0}_{p-2}. \] This word has length $p^{k+1}+p$. Hence, by Lemma \ref{lem: sym_sing_1}, removing the final $\textbf{0}_{p-2}$ returns a symmetric sequence of length $p^{k+1}+2$ given by $\left(s^{(p)}_i\right)$ for $0\le i \le p^{k+1}+1$.
\end{proof}

\subsection{The Relationship Between the $p$-Cantor and $p$-Singer Sequences}\label{Sect: 3.4}\hfill\\
\noindent The similarities in the construction of the $p$-Cantor and $p$-Singer sequences suggests a possible relationship between the two. In this subsection, this is made explicit through the language of \textit{formal Laurent series}. \\

\noindent To this end, define the field of formal Laurent series over $\F_p$ as \[\F_p((t^{-1}))=\left\{\sum_{i=-h}^\infty a_i t^{-i}: h\in\Z, a_i\in\F_p\right\}.\]
\noindent Next, let \[\Theta_p(t)=\sum_{i=0}^\infty c_i^{(p)} t^{-i}~~~\text{ and }~~~\Xi_p(t)=\sum_{i=0}^\infty s_i^{(p)}t^{-i}.\] This subsection establishes the following lemma: \begin{lemma} \label{lem: inv_LS}
    For any odd prime $p$, \begin{equation} \label{eqn: inv_LS}\Theta_p(t)\cdot \Xi_p(t)=1.\end{equation} That is, the $p$-Cantor and $p$-Singer sequences are `inverse' to one another from the point of view of Laurent series.
\end{lemma}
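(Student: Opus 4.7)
The approach is to translate the self-similarity of the $p$-Cantor and $p$-Singer sequences into functional equations for $\Theta_p(t)$ and $\Xi_p(t)$, iterate those equations into infinite products, and then exploit the Frobenius identity $(1+x)^p = 1 + x^p$ valid in characteristic $p$ in order to force a telescoping cancellation.

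First, I would use Lemma \ref{pcant_geom}, which states that $c^{(p)}_{pi+j} = \binom{p_2}{j/2} c^{(p)}_i$ for $0 \le j \le p-1$ (where $\binom{p_2}{j/2}$ vanishes for odd $j$ by the convention in equation \eqref{eqn: choose}). Splitting $\Theta_p(t)$ according to residues mod $p$ and recognising the resulting inner sum as a binomial expansion yields
\begin{equation*}
\Theta_p(t) \;=\; \Theta_p(t^p)\cdot \sum_{k=0}^{p_2}\binom{p_2}{k} t^{-2k} \;=\; (1+t^{-2})^{p_2}\,\Theta_p(t^p).
\end{equation*}
Iterating and noting $\Theta_p(t^{p^n}) \equiv 1 \pmod{t^{-p^n}}$ gives the product representation $\Theta_p(t) = \prod_{n\ge 0}(1+t^{-2p^n})^{p_2}$.

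Next, I would carry out the analogous analysis for $\Xi_p(t)$. Since $\textbf{S}(p) = \tau_p(\mathcal{S}(p))$, it is natural to first handle the pseudo-$p$-Singer series $\widetilde{\Xi}_p(t) := \sum_{i\ge 0} s_{p,i} t^{-i}$. The identity $s_{p,pi+j} = \binom{p_2}{j}s_{p,i}$ (immediate from \eqref{p_sing_eqn}) yields
\begin{equation*}
\widetilde{\Xi}_p(t) \;=\; (1+t^{-1})^{p_2}\,\widetilde{\Xi}_p(t^p),
\end{equation*}
and the $2p$-coding $\tau_p$ together with $s^{(p)}_{2pi+j} = \binom{p_2+1}{j/2} s_{p,i}$ gives
\begin{equation*}
\Xi_p(t) \;=\; \widetilde{\Xi}_p(t^{2p})\cdot\sum_{k=0}^{p_2+1}\binom{p_2+1}{k} t^{-2k} \;=\; (1+t^{-2})^{p_2+1}\,\widetilde{\Xi}_p(t^{2p}).
\end{equation*}
Substituting $t \mapsto t^{2p}$ into the product form of $\widetilde{\Xi}_p$ produces $\Xi_p(t) = (1+t^{-2})^{p_2+1}\prod_{n\ge 1}(1+t^{-2p^n})^{p_2}$.

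Finally, I would multiply the two product expressions. The exponent of $(1+t^{-2})$ sums to $2p_2+1 = p$, while the exponent of $(1+t^{-2p^n})$ for $n \ge 1$ sums to $2p_2 = p-1$. Invoking Frobenius in characteristic $p$, the factor $(1+t^{-2})^p$ becomes $1+t^{-2p}$, and the identity $(1+x)^{p-1} = (1+x^p)/(1+x)$ turns $\prod_{n\ge 1}(1+t^{-2p^n})^{p-1}$ into $\prod_{n\ge 1}(1+t^{-2p^{n+1}})/(1+t^{-2p^n})$, which telescopes to $1/(1+t^{-2p})$ in the completed Laurent series ring. The two surviving factors cancel and leave $1$, as required.

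The main obstacle will be setting up the functional equations cleanly, in particular keeping track of the distinction between $\widetilde{\Xi}_p$ and $\Xi_p$ and verifying that the exponent ranges in the binomial sums do produce honest $(1+t^{-2})^{p_2}$ and $(1+t^{-2})^{p_2+1}$ factors (this relies on $p_2 + 1 \le p-1$, which uses $p \ge 3$). Once the functional equations are in place, the characteristic-$p$ telescoping is short and unambiguous.
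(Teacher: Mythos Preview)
Your proposal is correct and starts from the same functional equations as the paper: both derive $\Theta_p(t)=(1+t^{-2})^{p_2}\,\Theta_p(t^p)$ from Lemma~\ref{pcant_geom}, and $\Xi_p(t)=(1+t^{-2})^{p_2+1}\,\widetilde{\Xi}_p(t^{2p})$ with $\widetilde{\Xi}_p(t)=(1+t^{-1})^{p_2}\,\widetilde{\Xi}_p(t^p)$ from the definitions of $\varphi_p$ and $\tau_p$. The divergence is only in how these equations are exploited. The paper invokes the Frobenius identity $\Theta(t^p)=\Theta(t)^p$ immediately, so that the functional equation becomes $\Theta_p(t)^{p-1}=(1+t^{-2})^{-p_2}$ and hence $\Theta_p(t)=(1+t^{-2})^{-1/2}$; an identical manipulation gives $\Xi_p(t)=(1+t^{-2})^{1/2}$, and the product is visibly $1$. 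You instead iterate the equations into infinite products $\prod_{n\ge0}(1+t^{-2p^n})^{p_2}$ and $(1+t^{-2})^{p_2+1}\prod_{n\ge1}(1+t^{-2p^n})^{p_2}$, multiply, and telescope using $(1+x)^p=1+x^p$. Both routes are short; the paper's buys explicit closed forms for $\Theta_p$ and $\Xi_p$ individually (Lemmata~\ref{quad} and~\ref{p-sing-inv}), which are of independent interest, while your infinite-product argument sidesteps the need to make sense of the exponent $-1/2$ in characteristic $p$.
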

\noindent Lemma \ref{lem: inv_LS} is an immediate corollary of Lemmata (\ref{quad}) and (\ref{p-sing-inv}).
\subsubsection{\textbf{The Laurent Series} $\Theta_p(t)$}\hfill\\
\noindent To establish Lemma \ref{lem: inv_LS}, a closed form for $\Theta_p(t)$ is computed. 
\begin{lemma}\label{quad}
    For any odd prime $p$, one has $\Theta_p(t)=(1+t^{-2})^{-\frac{1}{2}}\in\F_p((t^{-1}))$. 
\end{lemma}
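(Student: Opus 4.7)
The plan is to compute the truncated generating function
\[\Theta_p^{(n)}(t):=\sum_{i=0}^{p^n-1}c_i^{(p)}t^{-i}\in\F_p[t^{-1}]\]
explicitly, and then pass to the limit in the $t^{-1}$-adic topology on $\F_p[[t^{-1}]]$. The self-similarity of the $p$-Cantor sequence (Lemma \ref{pcant_geom}) directly yields the recursion
\[\Theta_p^{(n+1)}(t)\;=\;\Bigl(\sum_{j=0}^{p-1}\binom{p_2}{j/2}t^{-jp^n}\Bigr)\,\Theta_p^{(n)}(t),\]
because the block of $\textbf{C}^{(p)}_{n+1}$ at positions $[jp^n,(j+1)p^n)$ is $\binom{p_2}{j/2}$ times $\textbf{C}^{(p)}_n$.

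The first key observation is that the inner sum collapses to a square. By the convention (\ref{eqn: choose}), $\binom{p_2}{j/2}=0$ whenever $j$ is odd, so
\[\sum_{j=0}^{p-1}\binom{p_2}{j/2}t^{-jp^n}\;=\;\sum_{k=0}^{p_2}\binom{p_2}{k}t^{-2kp^n}\;=\;\bigl(1+t^{-2p^n}\bigr)^{p_2}.\]
The second key observation is that, in characteristic $p$, Frobenius gives $1+t^{-2p^n}=(1+t^{-2})^{p^n}$, so the recursion becomes
\[\Theta_p^{(n+1)}(t)\;=\;(1+t^{-2})^{p_2 p^{n}}\,\Theta_p^{(n)}(t).\]

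A straightforward induction (with base case $\Theta_p^{(0)}(t)=1$, using that $c_0^{(p)}=1$) then yields the closed form
\[\Theta_p^{(n)}(t)\;=\;(1+t^{-2})^{p_2(1+p+\dots+p^{n-1})}\;=\;(1+t^{-2})^{(p^n-1)/2}.\]
Squaring and multiplying by $(1+t^{-2})$ produces the clean identity
\[\Theta_p^{(n)}(t)^{2}\,(1+t^{-2})\;=\;(1+t^{-2})^{p^n}\;=\;1+t^{-2p^n}.\]

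To finish, I would pass to the limit: since $\Theta_p(t)-\Theta_p^{(n)}(t)\in t^{-p^n}\F_p[[t^{-1}]]$, the left-hand side converges to $\Theta_p(t)^2(1+t^{-2})$ and the right-hand side converges to $1$ in $\F_p[[t^{-1}]]$. Hence $\Theta_p(t)^2(1+t^{-2})=1$, and since $\Theta_p(t)$ has constant term $1$, it is the unique square root with that property, namely $(1+t^{-2})^{-1/2}$. The only subtle step is the first one (recognising $\sum_j\binom{p_2}{j/2}u^j=(1+u^2)^{p_2}$ and using Frobenius); everything else is mechanical.
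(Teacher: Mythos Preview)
Your proof is correct. It differs from the paper's argument mainly in its packaging, though both rest on the same self-similarity and the same binomial identity $\sum_{k}\binom{p_2}{k}u^{2k}=(1+u^2)^{p_2}$.

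The paper works directly with the infinite series: using that $\Theta_p$ is a fixed point of $\phi_p$, it derives the functional equation $\Theta_p(t)=\Theta_p(t)^p(1+t^{-2})^{p_2}$ (via $\Theta_p(t^p)=\Theta_p(t)^p$), rearranges to $\Theta_p(t)^{p-1}=(1+t^{-2})^{-(p-1)/2}$, and then extracts the $(p-1)$-th root. Your route is more constructive: you compute the finite truncations exactly as $\Theta_p^{(n)}(t)=(1+t^{-2})^{(p^n-1)/2}$, obtain the clean identity $\Theta_p^{(n)}(t)^2(1+t^{-2})=1+t^{-2p^n}$, and pass to the $t^{-1}$-adic limit. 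This avoids the (mildly implicit) step of taking a $(p-1)$-th root in $1+t^{-1}\F_p[[t^{-1}]]$, replacing it with a transparent squaring-and-limit argument; it also yields the explicit truncated polynomials as a bonus. Conversely, the paper's version is shorter once one is comfortable with root extraction in the pro-$p$ group of principal units.
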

\begin{proof}

\noindent It suffices to prove that \begin{equation}\label{quad2}\Theta_p(t)=\Theta_p(t)^p \sum_{i=0}^{p_2} {p_2 \choose i}t^{-2i}.\end{equation} Indeed, rearranging this equation gives \begin{equation}\label{quad1}\Theta(t)^{p-1}=\left(\sum_{i=0}^{p_2} {p_2 \choose i}  t^{-2i}\right)^{-1}.\end{equation} The denominator of the right hand side of equation (\ref{quad1}) is the binomial expansion of $$(1+t^{-2})^{p_2}.$$ Thus, \[\Theta_p(t)=(1+t^{-2})^{-\frac{1}{2}},\] as required. All that remains is to prove (\ref{quad2}). To achieve this, one extends the definition of a $p$-morphism to apply to Laurent series in the following natural way: \\
    
    \noindent Let $\phi:\Gamma_p^\infty\to\Gamma_p^\infty$ be a $p$-morphism and let $\Theta(t)=\sum_{i=0}^\infty s_it^{-i}\in\F_p((t^{-1}))$ be a Laurent series whose coefficients are given by a sequence $(s_i)_{i\in\N}$ over $\F_p$. To reduce notation, $x\in\F_p$ is identified with $x\in\Gamma_p$. Then, define \begin{equation}\phi(\Theta(t))=\sum_{i=0}^\infty t^{-pi}\sum_{j=0}^{p-1} \phi(s_i)_j t^{-j}.\label{eqn: morph_to_LS}\end{equation}  Let $\phi_p$ be as in Definition \ref{pcant}. As $\mathbf{C}^{(p)}=\phi_p\left(\mathbf{C}^{(p)}\right)$ by definition, one has that $\Theta_p(t)=\phi_p(\Theta_p(t))$ and hence, \begin{align*}\Theta_p(t):=\sum_{j=0}^\infty c^{(p)}_j t^{-j}=&\sum_{j=0}^\infty \sum_{i=0}^p {p_2\choose \frac{i}{2}}c_j^{(p)}t^{-(pj+i)}\\=&\sum_{j=0}^{\infty}\sum_{i=0}^{p_2} {p_2\choose i}c_j^{(p)}t^{-(pj+2i)}\\
    =&\sum_{i=0}^{p_2} {p_2 \choose i} \sum_{j=0}^\infty c_j^{(p)} t^{-(pj+2i)}\\
    =&\sum_{i=0}^{p_2} {p_2 \choose i} \Theta_p(t^p) t^{-2i}.\end{align*} Finally, as $\Theta(t^p)=\Theta(t)^p$ holds for any Laurent series $\Theta(t)\in\F_p((t^{-1}))$, the proof is complete.
\end{proof}
\subsubsection{\textbf{The} $p$\textbf{-Singer Sequence is the Inverse to the }$p$\textbf{-Cantor Sequence}}\hfill\\

\noindent Recall that $\left(s_{p,i}\right)_{i\ge0}=\varphi^\infty_p(1)$ is the $p$-Singer sequence before the coding $\tau_p$ is applied.
\begin{lemma}\label{p-sing-inv}
   For any odd prime $p$, $\Xi_p(t)=\left(1+t^{-2}\right)^\frac{1}{2}\in\F_p((t^{-1}))$.
\end{lemma}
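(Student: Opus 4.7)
The plan is to mirror the proof of Lemma \ref{quad}: extract a functional equation for an auxiliary Laurent series from the morphism structure, and then transport the answer through the coding $\tau_p$ to obtain $\Xi_p(t)$. Let $\Psi_p(t) := \sum_{i \geq 0} s_{p,i} t^{-i} \in \F_p((t^{-1}))$ be the series attached to the pseudo-$p$-Singer sequence $\mathcal{S}(p) = \varphi_p^\infty(1)$. Since $\mathcal{S}(p)$ is a fixed point of $\varphi_p$ and $\varphi_p(n)_i = n \binom{p_2}{i}$, the same manipulation used in Lemma \ref{quad}, now with equation (\ref{eqn: morph_to_LS}) applied to $\varphi_p$, gives
\[ \Psi_p(t) = \Psi_p(t^p) \sum_{i=0}^{p_2} \binom{p_2}{i} t^{-i} = \Psi_p(t)^p (1+t^{-1})^{p_2}, \]
where the second equality uses the binomial theorem and the Frobenius identity $\Psi_p(t^p) = \Psi_p(t)^p$. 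Rearranging, $\Psi_p(t)^{p-1} = (1+t^{-1})^{-p_2}$, and squaring gives $(\Psi_p(t)^2 (1+t^{-1}))^{p-1} = 1$. As $\Psi_p$ and $1+t^{-1}$ both have constant term $1$, the unique $(p-1)$-st root of $1$ with constant term $1$ forces $\Psi_p(t)^2 = (1+t^{-1})^{-1}$.

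Next I would unwind the coding $\tau_p$ on the level of Laurent series. Each letter $n \in \Gamma_p$ is replaced by a length-$2p$ block whose $i$-th entry is $n \binom{p_2+1}{i/2}$ for even $i$ and $0$ for odd $i$; the positions with $i/2 > p_2+1$ contribute zero automatically because $\binom{p_2+1}{k} = 0$ there. Collecting the contributions block-by-block yields
\[ \Xi_p(t) = \sum_{j \geq 0} s_{p,j} \, t^{-2pj} \sum_{k=0}^{p_2+1} \binom{p_2+1}{k} t^{-2k} = \Psi_p(t^{2p})\,(1+t^{-2})^{p_2+1}. \]
Squaring and using Frobenius once more ($\Psi_p(t^{2p})^2 = \Psi_p(t^2)^{2p}$), together with the identity from the previous step applied with $t \mapsto t^2$ (so that $\Psi_p(t^2)^2 = (1+t^{-2})^{-1}$), gives
\[ \Xi_p(t)^2 = (1+t^{-2})^{-p} \cdot (1+t^{-2})^{2p_2+2} = (1+t^{-2})^{-p + (p-1) + 2} = 1 + t^{-2}. \]
Because $\Xi_p(t)$ has constant term $s_0^{(p)} = 1$, it is the unique square root of $1+t^{-2}$ lying in $1 + t^{-1}\F_p[[t^{-1}]]$, which by definition is $(1+t^{-2})^{1/2}$.

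The only genuinely delicate step is the combinatorial bookkeeping in the unwinding of $\tau_p$: one must verify that after the $2p$-coding, the only surviving positions inside each block are the even ones $0, 2, \dots, p+1$, and that the $p-2$ trailing zeros and the $p$ odd-position zeros are already built into the formula $\binom{p_2+1}{k}$ for $0 \leq k \leq p-1$. Once this rewriting is correctly set up, everything else is forced by the Frobenius endomorphism and the binomial theorem, and the exponent arithmetic $-p + 2p_2 + 2 = 1$ finishes the argument.
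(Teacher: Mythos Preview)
Your proof is correct and follows essentially the same approach as the paper: both introduce the auxiliary series $\Psi_p=\overline{\Xi}$ attached to the pseudo-$p$-Singer sequence, derive the functional equation $\Psi_p(t)=\Psi_p(t)^p(1+t^{-1})^{p_2}$ from the morphism $\varphi_p$, solve it to get $\Psi_p(t)^2=(1+t^{-1})^{-1}$, and then unwind the $2p$-coding $\tau_p$ to obtain $\Xi_p(t)=\Psi_p(t^{2p})(1+t^{-2})^{p_2+1}$ and hence $\Xi_p(t)^2=1+t^{-2}$. The only cosmetic difference is that you square and invoke uniqueness of the square root with constant term $1$, whereas the paper works directly with the formal $\tfrac12$-power and the identity $(1+t^{-2})^p=1+t^{-2p}$.
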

\begin{proof}

    \noindent Define $\overline{\Xi}(t):=\sum_{i=0}^\infty s_{p,i} t^{-i}\in\F_p((t^{-1}))$. As $\left(s_{p,i}\right)_{i\ge0}$ is the fixed point of $\varphi_p$, one has that \begin{align}
        \overline{\Xi}(t)&=\sum_{i=0}^\infty s_{p,i} t^{-pi}\sum_{j=0}^{p-1}{p_2\choose j}\cdot t^{-j}\nonumber\\
        &=\overline{\Xi}(t^p)\cdot(1+t^{-1})^{p_2}\nonumber\\
        &=\overline{\Xi}(t)^p\cdot(1+t^{-1})^{p_2}\nonumber.
    \end{align}
    \noindent Hence,\[\overline\Xi(t)=\left(1+t^{-1}\right)^{-\frac{1}{2}}.\]Applying the coding $\tau_p$ to $\overline{\Xi}(t)$ (in the sense of equation (\ref{eqn: morph_to_LS})) gives the Laurent series \begin{align}\Xi_p(t)&=\sum_{i=0}^\infty s_{p,i}\left(\sum_{j=0}^{p_2+1} {p_2+1\choose j}t^{-2j}\right)t^{-2pi}\\&=\overline{\Xi}(t^{2p})\cdot (1+t^{-2})^{p_2+1}\nonumber\\
    &= \left(\frac{(1+t^{-2})^{p+1}}{1+t^{-2p}}\right)^\frac{1}{2}\nonumber\\
    &= \left(1+t^{-2}\right)^\frac{1}{2}\cdot\left(\frac{(1+t^{-2})^{p}}{1+t^{-2p}}\right)^\frac{1}{2}.\end{align}
    \noindent Since $\Phi(t^p)=\Phi(t)^p$ for any $\Phi(t)\in\F_p((t^{-1}))$,  $\Xi_p(t)=\left(1+t^{-2}\right)^\frac{1}{2}$ as required.
\end{proof}

\section{Introduction to Number Walls}\label{Sect: 4}
\noindent Recall the definition of a number wall (Definition \ref{nw}). The goal of this section is to introduce the fundamental properties of number walls that are used in this work. More specific lemmata on number walls are established in Sections \ref{sect: 7} and \ref{Sect: 8}. 
\subsection{Properties of Number Walls}\hfill\\
\noindent Recall the Square Window Theorem (Theorem \ref{window}). As zero regions must always appear in squares, every such zero region is bordered by a square of nonzero entries. This motivates the following definition.
\begin{definition}
The entries of a number wall surrounding a window are referred to as the \textbf{inner frame}. The entries surrounding the inner frame form the \textbf{outer frame}. 
\end{definition}
\noindent The entries of the inner frame are extremely well structured:
\begin{theorem}[Lunnon, {{\cite[Page 11]{L01}}}]\label{ratio ratio}
The inner frame of a window with side length $l\ge1$ is comprised of 4 geometric sequences. These are along the top, left, right and bottom edges and they have ratios $P,Q,R$ and $S$ respectively with origins at the top left and bottom right. Furthermore, these ratios satisfy the relation \[\frac{PS}{QR}=(-1)^{l}.\]
\end{theorem}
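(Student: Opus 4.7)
The plan is to reduce everything to the fundamental five-diamond identity for number walls, namely
\[W[m,n]^2 = W[m-1,n]\cdot W[m+1,n] + W[m,n-1]\cdot W[m,n+1],\]
which I would derive by applying the classical Dodgson / Jacobi--Desnanot identity to the Toeplitz matrix $T_{\mathbf{S}}(n; m+1)$. The key computation is checking that removing the first/last rows and columns of this matrix yields shifted Toeplitz submatrices whose determinants are precisely the neighbouring entries $W[m,n]$, $W[m, n\pm 1]$, $W[m\pm 1, n]$, and $W[m-1, n]$.

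To establish the geometric structure along each edge of the inner frame, I would then apply this identity at the inner-frame positions themselves. Denote the top edge by $T_j := W[m-1, n+j]$ for $0 \le j \le l-1$, where the window occupies rows $m, \ldots, m+l-1$ and columns $n, \ldots, n+l-1$. Applying the identity at $(m-1, n+j)$ for $1 \le j \le l-2$, the south neighbour $W[m, n+j]$ lies in the zero window and vanishes, leaving $T_j^2 = T_{j-1}\, T_{j+1}$. Hence the ratio $T_{j+1}/T_j$ is constant, so the top edge is geometric with some ratio $P$; the same argument applied at the left, right, and bottom edges produces the three remaining ratios $Q, R, S$ with the origins prescribed in the statement.

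Applying the same identity at the four extremal inner-frame positions (for instance at $T_0$ and $T_{l-1}$ on the top edge) then relates the inner-frame sequences to the four outer-frame corner entries $\alpha := W[m-1, n-1]$, $\beta := W[m-1, n+l]$, $\gamma := W[m+l, n-1]$, and $\delta := W[m+l, n+l]$. Concretely, one obtains $\alpha = T_0/P = L_0/Q$, $\beta = T_0 P^l = R_{l-1} R^l$, $\gamma = L_0 Q^l = B_{l-1} S^l$, and $\delta = R_{l-1}/R = B_{l-1}/S$. Combining these four corner equalities multiplicatively yields the constraint $(PS/QR)^{l+1} = 1$. The base case $l = 1$ is immediate: the five-diamond identity applied at the single zero entry $W[m,n]=0$ directly gives $T_0 B_0 = -L_0 R_0$, which is exactly $PS/QR = -1$.

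The main obstacle is pinning down the precise sign $(-1)^l$ for $l \ge 2$, rather than merely an $(l+1)$-th root of unity. I expect this to require a genuinely new ingredient beyond the five-diamond identity: either a generalised Sylvester / Jacobi-type identity applied to a larger submatrix of $T_{\mathbf{S}}$ straddling the entire window, from which the sign $(-1)^l$ emerges as the parity of a permutation of $l$ corner rows and columns, or equivalently a careful analysis via the Pad\'e approximants of the generating series $\sum s_i t^{-i}$, tracking how the null space of the singular Toeplitz matrices shifts as one moves across the window. Tying this extra input back to the ratios $P, Q, R, S$ should then give the exact sign.
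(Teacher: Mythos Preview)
The paper does not give its own proof of this theorem: it is quoted as a result of Lunnon with a citation to \cite[Page 11]{L01}, and is used throughout as a black box alongside the Frame Constraints (Theorem~\ref{FC}). So there is no ``paper's proof'' to compare against.

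That said, your partial argument is sound as far as it goes. The derivation of the five-diamond identity via Desnanot--Jacobi is the standard route to FC1, and applying it along each inner-frame edge with the adjacent window entry vanishing does force each edge to be geometric. Your corner analysis is also correct and indeed yields only $(PS/QR)^{l+1}=1$; you are right that this alone cannot isolate the sign over a general integral domain.

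The gap you identify is genuine, and your instinct about where the sign comes from is on target. In Lunnon's framework the relation $PS/QR=(-1)^l$ is essentially equivalent to (and in practice derived alongside) the Second Frame Constraint $D_k=(-1)^{lk}B_kC_k/A_k$: taking $k=0$ and $k=1$ and dividing immediately gives $S=(-1)^l QR/P$. The underlying determinant identity is a Sylvester-type expansion of the singular Toeplitz block spanning the whole window, in which the $(-1)^l$ arises from the sign of the permutation exchanging $l$ rows (or from the parity of the rank drop), exactly as you anticipate. So the missing ingredient is not a new idea but rather proving FC2 directly from Toeplitz determinants; once you have FC2, the ratio relation is a two-line consequence.

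One minor indexing point: in the paper's conventions the inner frame has $l+2$ entries per side (indices $0$ through $l+1$), with the four corner entries shared between adjacent edges. Your setup with $l$ entries per side plus separate ``outer-frame corners'' $\alpha,\beta,\gamma,\delta$ is really the same object relabelled, but it would be cleaner to match the paper's indexing if you pursue this.
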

\noindent See Figure \ref{Fig: basicwindow} for an example of a window of side length $l$. For $i\in\{0,\dots,l+1\}$, the inner and outer frames are labelled by the entries $A_i,B_i,C_i,D_i$ and $E_i,F_i,G_i,H_i$ respectively. The ratios of the geometric sequences comprising the inner frame are labelled as $P,Q,R$ and $S$. This notation is used frequently throughout this work.
\begin{figure}[H]
\centering
\includegraphics[width=0.65\textwidth]{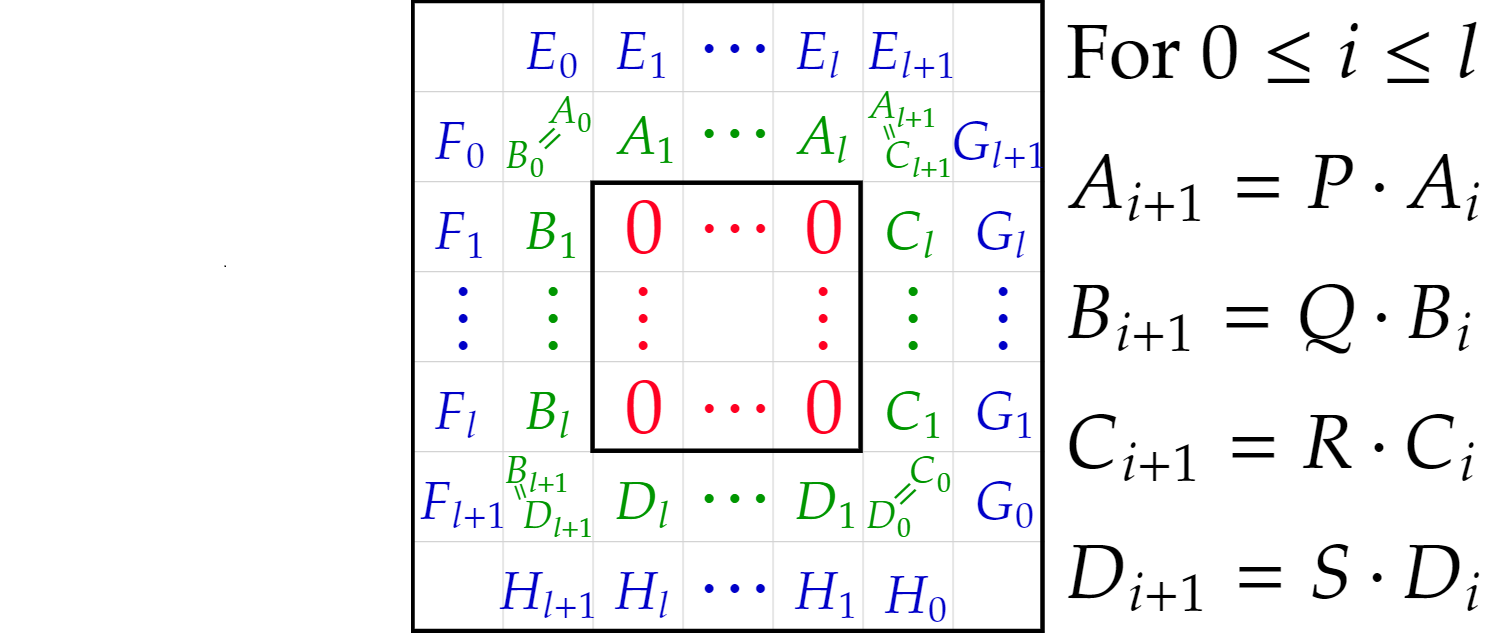}
\caption{Illustration of a window in a number wall. The window, inner frame and outer frame are in red, green and blue, respectively.}
\label{Fig: basicwindow}
\end{figure}
\subsection{The Frame Constraints}\hfill\\
\noindent Calculating a number wall from its definition is a computationally exhausting task. The following theorem gives a simple and far more efficient way to calculate the $m^\nth$ row using the previous rows.
\begin{theorem}[\textbf{Frame Constraints}, Adiceam, Nesharim and Lunnon, {{\cite[Corollary 3.6]{ANL}}}] \label{FC}
Given a doubly infinite sequence $\mathbf{S}=(s_{i})_{i\in\mathbb{Z}}$ over an integral domain $\id$, the number wall $W_{\id}(\mathbf{S}):=(W_\id(\mathbf{S})[m,n])_{n,m\in\mathbb{Z}}$ can be generated by a recurrence in row $m\in\mathbb{Z}$ in terms of the
previous rows. To state this precisely, recall the notation of Figure \ref{Fig: basicwindow} and let $(W_{m,n})_{m,n\in\Z}$ be the 2-dimensional sequence satisfying the initial conditions\begin{equation*}
    W_{m,n}=\begin{cases}
    \begin{aligned}
    &0 &&\textup{ if } m<-1 &\\
    &1 &&\textup{ if } m=-1;&\\
    &s_n &&\textup{ if } m=0;&\end{aligned}\end{cases}
\end{equation*} and the recurrence relations \begin{equation}
    W_{m,n}=\begin{cases}
    \begin{aligned}
    &0 &&\textup{ if } W_{m,n}\textup{ is within a window;}&\\
    &\frac{W_{m-1,n}^2-W_{m-1,n-1}W_{m-1n+1}}{W_{m-2,n}} &&\textup{ if } m>0\textup{ and }W_{m-2,n}\neq0;&\\
    &D_k=\frac{(-1)^{l\cdot k}B_kC_k}{A_k}& &\textup{ if } m>0\textup{ and }W_{m-2,n}=0=W_{m-1,n};&\\
    &H_k=\frac{\frac{QE_k}{A_k}+(-1)^{k}\frac{PF1_k}{B_k}-(-1)^k\frac{SG_k}{C_k}}{R/D_k}& &\textup{ if } m>0 \textup{ and } W_{n,m-2}=0\neq W_{n,m-1}.&
    \end{aligned}
    \end{cases}\label{eqn: FC}
\end{equation}
\noindent Above, the value of $k$ above is found in the natural way from the value of $m,n$ and the side length $l$. Additionally, ``$W_{m,n}$ is within a window" means that if $W_{m,n}\neq0$ then the sequence $(W_{m,n})_{m,n\in\Z}$ would have a non-square zero portion. \\

\noindent Then, for all $m,n\in\Z$, $W_{m,n}=W_\id (\textbf{S})[m,n]$.
\end{theorem}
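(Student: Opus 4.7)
The plan is to treat the four branches of (\ref{eqn: FC}) separately, with Jacobi's (Desnanot-Dodgson) determinant identity as the unifying tool. The first branch, in which $W_{m,n}=0$ because the entry sits inside a window, is immediate from the Square Window Theorem (Theorem \ref{window}) together with the initial conditions. For the second (generic) branch, I would apply Jacobi's identity to the $(m+1)\times(m+1)$ Toeplitz matrix $T_{\textbf{S}}(n;m)$, expanding by removing its first and last rows and columns simultaneously. The key point is that the Toeplitz structure forces each corner $m\times m$ minor to itself be Toeplitz of the form $T_{\textbf{S}}(n';m-1)$: tracking entries $(i,j)\mapsto s_{n+j-i}$ shows that the two minors on the main diagonal are both $T_{\textbf{S}}(n;m-1)$, the NE and SW minors are $T_{\textbf{S}}(n-1;m-1)$ and $T_{\textbf{S}}(n+1;m-1)$, and the central $(m-1)\times(m-1)$ minor is $T_{\textbf{S}}(n;m-2)$. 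Substituting into Jacobi's identity then gives
\[W_{m-2,n}\,W_{m,n} \;=\; W_{m-1,n}^{2} - W_{m-1,n-1}\,W_{m-1,n+1},\]
which establishes the second branch whenever $W_{m-2,n}\neq 0$.

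The third branch, giving $D_k$ along the bottom inner frame of a window, is an entirely internal statement about the inner frame and is immediate from Theorem \ref{ratio ratio}. Parameterising the four geometric edges by their ratios $P,Q,R,S$, the value $D_k$ is determined algebraically by $A_k,B_k,C_k$ together with the constraint $PS/(QR)=(-1)^l$; collecting the signs that arise from this relation yields the factor $(-1)^{lk}$ in the stated formula $D_k=(-1)^{lk}B_kC_k/A_k$.

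The genuinely delicate case is the fourth branch, the $H_k$ formula on the outer frame immediately below a window, since there the ordinary recurrence would require dividing $0$ by $0$. My approach would be a perturbation argument: working over the polynomial ring $\id[\varepsilon]$, deform $\textbf{S}$ to $\textbf{S}_{\varepsilon}$ by altering a single suitably chosen entry so that every Toeplitz determinant becomes a polynomial in $\varepsilon$ of generically nonzero value, dissolving the window for $\varepsilon\neq 0$. The second branch of (\ref{eqn: FC}) then applies throughout the deformed number wall, which lets one compute $H_k(\varepsilon)$ in closed form by iterating the Jacobi recurrence across the former window. Taking $\varepsilon\to 0$, the vanishing orders of the numerators and denominators around the window are governed by the inner-frame ratios $P,Q,R,S$ from Theorem \ref{ratio ratio}, and the apparent poles cancel to leave exactly the cited expression for $H_k$ in terms of $E_k,F_k,G_k$ and $A_k,B_k,C_k,D_k$.

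The main obstacle is the bookkeeping in this final step: one must determine the leading $\varepsilon$-order of each minor $T_{\textbf{S}_{\varepsilon}}(n';m')$ for indices $(n',m')$ surrounding the window, and verify that the pole cancellation produces precisely the stated linear combination with the stated signs. A clean way to organise this is to identify a minimal rank-deficient submatrix at the window and read off the necessary leading coefficients from a basis of its kernel, reducing the computation to a finite algebraic identity that is independent of which perturbation was chosen. Once that identity is verified, combining all four cases yields the inductive step, and comparison with the initial conditions at $m\in\{-1,0\}$ completes the proof that the recurrence reconstructs $W_{\id}(\textbf{S})[m,n]$ for every $m,n\in\Z$.
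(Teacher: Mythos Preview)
The paper does not prove this theorem at all: it is stated with a citation to \cite[Corollary 3.6]{ANL} and used as a black box throughout. So there is no ``paper's own proof'' to compare against; any argument you give is going beyond what the present paper does.

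On the merits of your sketch: the treatment of the first two branches is standard and correct --- the Desnanot--Jacobi identity applied to $T_{\mathbf{S}}(n;m)$ really does collapse to the stated quadratic relation because of the Toeplitz symmetry you describe. Your handling of the third branch via Theorem~\ref{ratio ratio} is essentially right but slightly circular as stated: Theorem~\ref{ratio ratio} already asserts the geometric structure of \emph{all four} inner-frame edges, including the bottom one, so invoking it to ``derive'' $D_k$ is really just checking consistency of the formula $D_k=(-1)^{lk}B_kC_k/A_k$ with the ratio relation $PS/(QR)=(-1)^l$ and the shared corners. That is fine, but you should say so explicitly rather than presenting it as an independent derivation.

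The genuine gap is in the fourth branch. Your $\varepsilon$-perturbation strategy is the right instinct, and versions of it do appear in the literature, but what you have written is a plan rather than a proof: you have not specified which entry of $\mathbf{S}$ to perturb, you have not shown that a single perturbation suffices to dissolve an arbitrary window (it need not --- a window of size $l$ corresponds to a rank drop of $l$, and a rank-one perturbation generically only raises the rank by one), and you have not carried out the leading-order cancellation that produces the specific linear combination in $H_k$. The actual argument in \cite{ANL} (and in Lunnon's earlier work \cite{L01}) proceeds differently, via explicit determinantal identities for bordered Toeplitz matrices rather than a limiting procedure; if you want to pursue the perturbation route you will need to either perturb by a generic rank-$l$ matrix or iterate the argument, and then the bookkeeping you flag becomes substantial.
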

 \noindent The final three equations in (\ref{eqn: FC}) are known as the \hypertarget{FC1}{\textbf{First}}, \hypertarget{FC2}{\textbf{Second}} and \hypertarget{FC3}{\textbf{Third}}\textbf{ Frame Constraint Equations}. These allow the number wall of a sequence to be considered independently of its original definition in terms of Toeplitz matrices. Throughout this paper, the three Frame Constraints are frequently referred to as \hyperlink{FC1}{FC1}, \hyperlink{FC2}{FC2} and \hyperlink{FC3}{FC3} respectively. 

\subsection{Finite Number Walls}\label{Sect: 3.3}\hfill\\
\noindent Whilst a number wall is defined for doubly infinite sequences, the definition can easily be extended to allow for finite sequences. Indeed, given a finite sequence $\mathbf{S}$ over an integral domain $\id$, the \textbf{finite number wall} $W_\id(\mathbf{S})$ is a two-dimensional infinite array defined as follows: \begin{equation*}
    W_\id(\mathbf{S})[m,n]=\begin{cases}\det(T_\textbf{S}(m,n)) &\textup{ if } m\ge0 \text{ and every entry of }T_\textbf{S}(m,n)\text{ is defined},\\
     \text{an unknown variable }&\textup{ if } m\ge0 \text{ and not all entries of }T_\textbf{S}(m,n)\text{ are defined},\\
    1 & \textup{ if } m=-1,\\
    0 & \textup{ if } m<-1. \end{cases}
\end{equation*} An example of a finite number wall is illustrated below. 

\begin{figure}[H]
    \centering
    \includegraphics[width=0.65\linewidth]{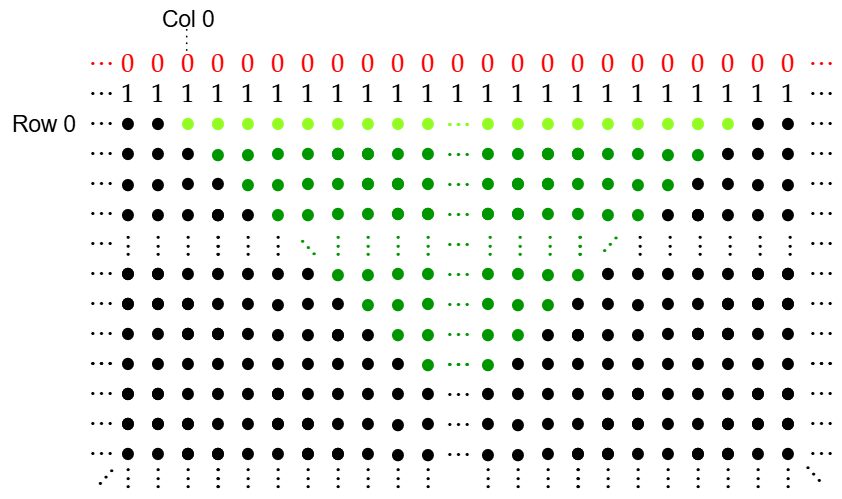}
    \caption{Each dot represents an entry in a number wall. The finite sequence (light green dots) that generates the finite number wall (whole picture) is on row zero. Each dot represents an entry in the finite number wall, with the dark green dots being those that are known explicitly and the black dots being those that are still variables.}
    \label{fin_nw}
\end{figure}  
\noindent It is elementary to see that a finite sequence of length $r$ defines the portion of a finite number wall in the shape of an isosceles triangle with the maximal row index being equal to $\left\lfloor\frac{r-1}{2}\right\rfloor$. Therefore, it may seem paradoxical that a \textit{finite} number wall is actually an \textit{infinite} two-dimensional array instead a finite triangular array. However, defining it this way is advantageous later on when one wishes to rotate, translate, reflect and composite finite number walls. In practice though, only the known entries of $W_\id(\mathbf{S})$ are of interest. Therefore, the unknown entries are often ignored in figures, as below. \begin{figure}[H]
    \centering
    \includegraphics[width=0.35\linewidth]{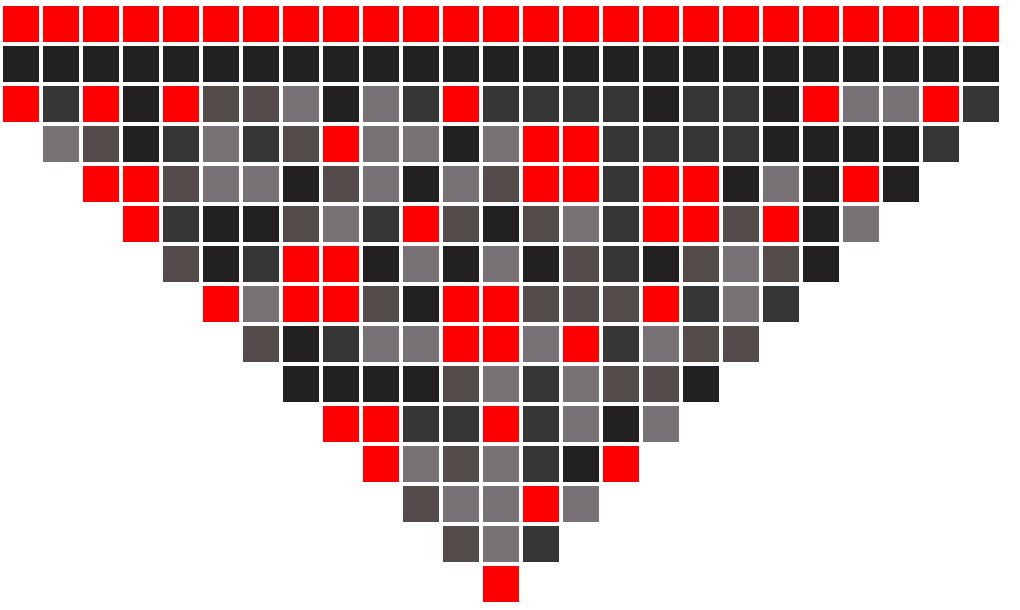}
    \caption{The finite number wall of a sequence over $\F_5$. The zero entries are in red, whereas the numbers 1-4 are illustrated in progressively darker shades of gray. The top row has index $-2$. }
\end{figure}

\subsection{The Profile of a Number Wall}
\noindent In all known applications of number walls, the zero entries carry the most important information. That is, it only matters whether an entry is zero or nonzero, with the precise value in the latter case being irrelevant. This motivates the following definition: 

\begin{definition}\label{profile_def}
    Let $W_\id(\textbf{S})$ be the number wall of a sequence $\textbf{S}$ over an integral domain $\id$. The \textbf{profile} of $W_\id(\textbf{S})$, denoted $\chi(W_\id(\mathbf{S}))$, is a two dimensional array with the same shape as $W_\id(\textbf{S})$ and with the entry in row $m$ and column $n$ being given by\[\chi(W_\id(\mathbf{S}))[m,n]=\begin{cases}
        0&\text{ if }W(\mathbf{S})[m,n]=0\\ X&\text{ otherwise.}
    \end{cases}\] Above, $X$ should be interpreted as an arbitrarily chosen symbol. 
\begin{figure}[H]
    \centering
    \includegraphics[width=0.5\linewidth]{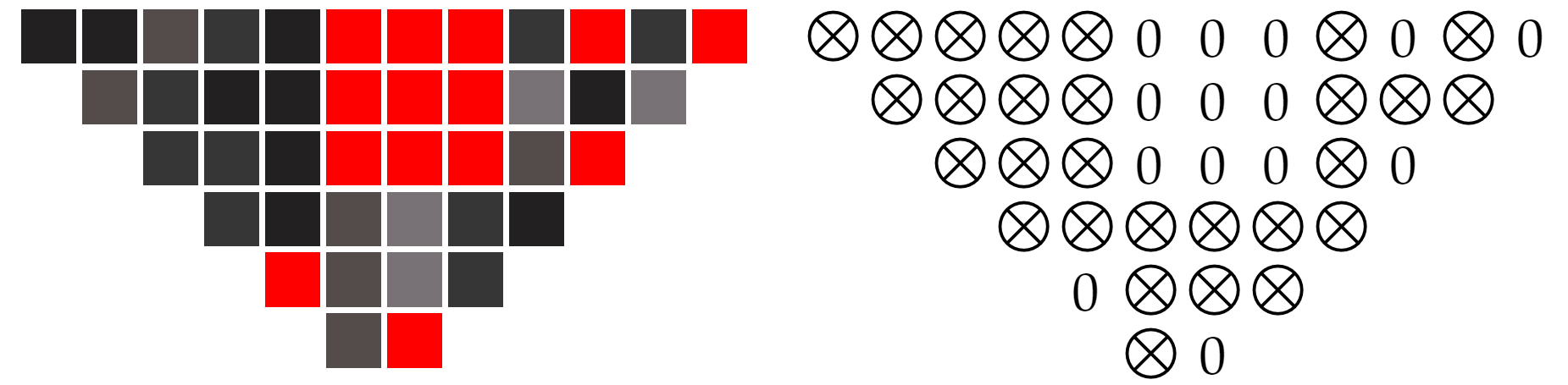}
    \caption{\textbf{Left}: The number wall $W_5(\textbf{S})$ for the sequence $\textbf{S}=(1,1,3,2,1,0,0,0,2,0,2,0)\in\F_5^{12}$, where the zeroes are coloured in red. \textbf{Right}: The profile of $W_5(\textbf{S})$.}
\end{figure}
\end{definition}

\section{The Number Wall of $\mathbf{C}^{(p)}$ as a Two-Dimensional Automatic Sequence}

\noindent  The goal of this section is to define the two-dimensional morphism that generates $\chi\left(W_p\left(\mathbf{C}^{(p)}\right)\right)$. To this end, one must first clarify precisely what part of $\chi\left(W_p\left(\mathbf{C}^{(p)}\right)\right)$ is claimed to be automatic.\\

\noindent For pictures of $W_p\left(\textbf{C}^{(p)}\right)$ itself, see Appendix \hyperref[Sect: Appendix_B]{B}.
\subsection{The Right-Side of $W_p\left(\mathbf{C}^{(p)}\right)$.}\hfill\\
\noindent Recall that the $p$-Cantor sequence $\textbf{C}^{(p)}$ is extended to a doubly infinite sequence $\textbf{C}^{(p,L)}$ by defining $c^{(p)}_{i}=0$ for all $i<0$. Therefore, by the Square Window Theorem (Theorem \ref{window}), $W_p\left(\mathbf{C}^{(p,L)}\right)[m,n]=0$ for all $m\ge0$ and $n<0$. That is, $W_p\left(\mathbf{C}^{(p,L)}\right)$ has an infinite window containing all columns of negative index. Hence, one focuses only on the two-dimensional sequence $\left(W_p\left(\mathbf{C}^{(p,L)}\right)[m,n]\right)_{m,n\ge0}$. 
 \begin{figure}[H]
    \centering
    \includegraphics[width=0.8\linewidth]{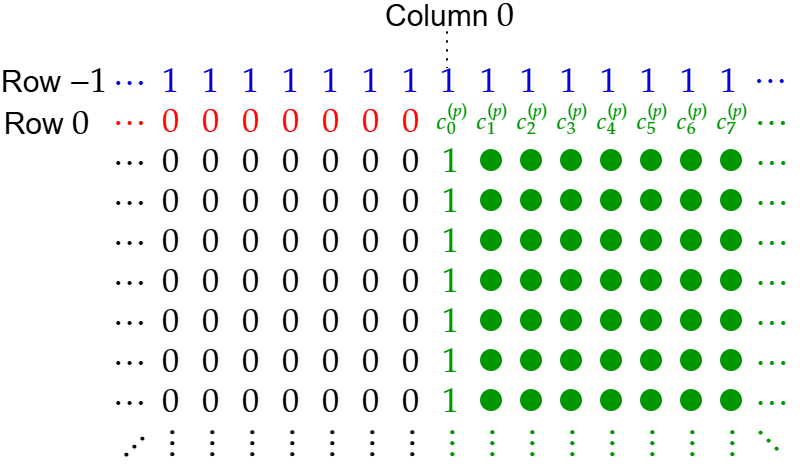}
    \caption{An illustration of $W_p(\mathbf{C}^{(p,L)})$. Here, the dots represent an arbitrary element of $\F_p$. The profile of the green portion is what is shown to be automatic. Row $-1$ is coloured in blue, and $W_p(\mathbf{C}^{(p,L)})[0,n]$ for $n<0$ is in red.  }
\end{figure}

\noindent As seen in Section \ref{Sect: 1.4}, the fractal generated by a number wall of a sequence $\textbf{S}$ depends only on the profile of the number wall, $\chi(W_\K(\textbf{S}))$. Hence, the two-dimensional sequence $$\chi\left(\left(W_p\left(\mathbf{C}^{(p,L)}\right)[m,n]\right)_{m,n\ge0}\right)$$ is the focus of this section. This is shown to be the image of a $[p,p]$-morphism under a $[1,1]$-coding.

\subsection{The Morphism that Generates The Right-Side of $W_p\left(\mathbf{C}^{(p,L)}\right)$}\label{subsect: morphism}\hfill\\
\noindent This subsection details the finite alphabet $\A$, the $[p,p]$-morphism $\Phi_p:\A_2^\infty\to\A_2^\infty$ and the $[1,1]$-coding $\Pi:\A_2^\infty\to\{0,X\}_2^\infty$ that generates $\left(\chi\left(W_p\left(\mathbf{C}^{(p,L)}\right)[m,n]\right)\right)_{m,n\ge0}$. The alphabet $\A$ is defined first.
\subsubsection{\textbf{The Finite Alphabet} $\mathcal{A}$}\label{sec: 5}\hfill\\

\noindent Regardless of the choice of the prime $p$, the alphabet $\A$ consists of $12$ letters - $8$ of which are labelled by the cardinal or intercardinal directions: \begin{equation*}
        \mathcal{A}:=\{A,~ B,~ F,~ 0,~ E_N,~ E_E,~ E_S,~ E_W,~ C_{NE},~ C_{SE},~ C_{SW},~ C_{NW}\}.
    \end{equation*}
\noindent To break the definition of $\Phi_p$ and $\Pi$ up into manageable words, the alphabet $\A$ is split into four sub-alphabets: \begin{align*}
    &\A_{units}:=\{A,B\},& &\A_{zeroes}=\{F,0\},& \\&\A_{edges}=\{E_N,E_E,E_S,E_W\},& &\A_{corners}=\{C_{NE},C_{SE},C_{SW},C_{NW}\}.&
\end{align*} 

\noindent Above, $E$ and $C$ are intended to stand for `edge' and `corner', respectively. For example, $E_N$ and $C_{SW}$ should be read as `northern edge' and `south-western corner'. These labels are explained in sections \ref{sect: 5.zeroes}, \ref{sect: 5.edges} and \ref{sect: 5.corners}. \\

\noindent The image of each letter under $\Phi_p$ is both defined explicitly and illustrated. With the exception of $A,B$ and $F$, the images of all the above letters under $\Phi_p$ are very simple. For those three letters, it is recommended to look at the picture \textit{before} reading the definition.\\

\noindent For the remainder of this section, $0\le m,n\le p-1$ are natural numbers and let $\Phi_p(\cdot)_{m,n}$ be the entry in the $m^\nth$ row and the $n^\nth$ column of the image of the input under $\Phi_p$.
\subsubsection{\textbf{The Image of} $\Phi_p$ \textbf{on} $\A_{units}$}\label{sect: 5.zeroes}\hfill\\
\noindent The $[p,p]$-morphism $\Phi_p$ acts on the letters $A$ and $B$ by
\begin{align*}
    &\Phi_p(A)_{m,n}=\begin{cases}A &\text{ if } m\equiv 0 \equiv n \mod 2\\
    0 &\text{ if } m\equiv 0\not\equiv  n\mod 2\\
    F &\text{ if } m\not\equiv 0\equiv  n\mod 2\\
    B &\text{ if } m\equiv 1\equiv  n\mod 2\\\end{cases}& &\Phi_p(B)_{m,n}=\begin{cases}
    B &\text{ if } m\equiv 0 \equiv n \mod 2\\
    F &\text{ if } m\equiv 0\not\equiv  n\mod 2\\
    0 &\text{ if } m\not\equiv 0\equiv  n\mod 2\\
    A &\text{ if } m\equiv 1\equiv  n\mod 2\\\end{cases}&,
\end{align*}
\noindent that is, \begin{align*}
    &\Phi_p(A):=~\begin{matrix}
        A&0&A&0&A &\cdots& A&0&A\\
        F&B&F&B&F&\cdots &F&B&F\\
        A&0&A&0&A &\cdots& A&0&A\\
        F&B&F&B&F&\cdots &F&B&F\\
        A&0&A&0&A &\cdots& A&0&A\\
        \vdots &\vdots&\vdots&\vdots&\vdots&&\vdots&\vdots&\vdots\\
        A&0&A&0&A &\cdots& A&0&A\\
        F&B&F&B&F&\cdots &F&B&F\\
        A&0&A&0&A &\cdots& A&0&A\\
    \end{matrix}& &\Phi_p(B):=~\begin{matrix}
    B&F&B&F&B&\cdots&B&F&B\\
    0&A&0&A&0&\cdots &0&A&0\\
    B&F&B&F&B&\cdots&B&F&B\\
    0&A&0&A&0&\cdots &0&A&0\\
    B&F&B&F&B&\cdots&B&F&B\\
    \vdots &\vdots&\vdots&\vdots&\vdots&&\vdots&\vdots&\vdots\\
    B&F&B&F&B&\cdots&B&F&B\\
    0&A&0&A&0&\cdots &0&A&0\\
    B&F&B&F&B&\cdots&B&F&B\\
    \end{matrix}.
\end{align*}
\subsubsection{\textbf{The Image of} $\Phi_p$ \textbf{on} $\A_{zeroes}$}\hfill\\
\noindent Define $\Phi_p(F)$ and $\Phi_p(0)$ as\begin{align*}
    &\Phi_p(F)_{m,n}=\begin{cases}
        C_{NW} &\text{ if }~~~m=n=0,\\
        C_{NE} &\text{ if }~~~m=0~\text{ and }~n=p-1,\\
        C_{SW} &\text{ if }~~~m=p-1~\text{ and }~n=0,\\
        C_{SE} &\text{ if }~~~m=n=p-1,\\
        E_N &\text{ if }~~~m=0~\text{ and }~0\neq n \neq p-1,\\
        E_E &\text{ if }~~~0\neq m\neq p-1~\text{ and }~n=p-1,\\
        E_S &\text{ if }~~~m=p-1~\text{ and }~0\neq n \neq p-1,\\
        E_W &\text{ if }~~~0\neq m\neq p-1~\text{ and }~n=0,\\
        0 &\text{ otherwise},
    \end{cases}& &\Phi_p(0)_{m,n}=0 \text{   for all }m,n,&
\end{align*}\noindent  that is,\begin{align*}
    &\Phi_p(F):=~\begin{matrix}
        C_{NW}&E_N&E_N&\cdots&E_N&E_N&C_{NE}\\
        E_W&0&0&\cdots&0&0&E_E\\
        E_W&0&0&\cdots&0&0&E_E\\
        \vdots&\vdots&\vdots&&\vdots&\vdots&\vdots&\\
        E_W&0&0&\cdots&0&0&E_E\\
        E_W&0&0&\cdots&0&0&E_E\\
        C_{SW}&E_S&E_S&\cdots&E_S&E_S&C_{SE}
    \end{matrix}& &\Phi_p(0):=~\begin{matrix}
        0&0&0&\cdots&0&0&0\\
        0&0&0&\cdots&0&0&0\\
        0&0&0&\cdots&0&0&0\\
        \vdots&\vdots&\vdots&&\vdots&\vdots&\vdots&\\
        0&0&0&\cdots&0&0&0\\
        0&0&0&\cdots&0&0&0\\
        0&0&0&\cdots&0&0&0\\
    \end{matrix}&.
\end{align*}
\begin{remark}
    The letter $F$ was chosen to stand for the word ``Frame", as the image of $F$ under $\Phi_p$ is a $p-2 \times p-2$ square of zeroes \textit{framed} by the letters labelled with the corresponding cardinal or intercardinal directions. Similarly, the letters in $\mathcal{A}_{edges}$ and $\mathcal{A}_{corners}$ make up the edges and corners of $\Phi_p(F)$.
\end{remark}  
\subsubsection{\textbf{The Image of} $\Phi_p$\textbf{ on} $\A_{edges}$}\label{sect: 5.edges}\hfill\\
\noindent The $[p,p]$-morphism $\Phi_p$ acts on the elements of $\A_{edges}$ as,  \begin{align*}
    &\Phi_p(E_N)_{m,n}=\begin{cases}E_N &\text{ if } m= 0 \\
    0 &\text{ otherwise } 
\end{cases}& &\Phi_p(E_E)_{m,n}=\begin{cases}
    E_E &\text{ if } n=p-1\\
    0 &\text{ otherwise } \end{cases}&\\&\Phi_p(E_S)_{m,n}=\begin{cases}E_S &\text{ if } m= p-1 \\
    0 &\text{ otherwise } 
\end{cases}& &\Phi_p(E_W)_{m,n}=\begin{cases}
    E_W &\text{ if } n=0\\
    0 &\text{ otherwise } \end{cases}&,
\end{align*}
\noindent that is,
\begin{align*}
    &\begin{matrix}
        &E_N&E_N&E_N&\cdots&E_N&E_N&E_N\\
        &0&0&0&\cdots&0&0&0\\
        &0&0&0&\cdots&0&0&0\\
       \Phi_p(E_N):=&\vdots&\vdots&\vdots&&\vdots&\vdots&\vdots\\
        &0&0&0&\cdots&0&0&0\\
        &0&0&0&\cdots&0&0&0\\
        &0&0&0&\cdots&0&0&0\\
        \\
        &0&0&0&\cdots&0&0&0\\
        &0&0&0&\cdots&0&0&0\\
        &0&0&0&\cdots&0&0&0\\
        \Phi_p(E_S):=&\vdots&\vdots&\vdots&&\vdots&\vdots&\vdots\\
        &0&0&0&\cdots&0&0&0\\
        &0&0&0&\cdots&0&0&0\\
        &E_S&E_S&E_S&\cdots&E_S&E_S&E_S
    \end{matrix}&  &\begin{matrix}
        &0&0&0&\cdots&0&0&E_E\\
        &0&0&0&\cdots&0&0&E_E\\
        &0&0&0&\cdots&0&0&0_E\\
        \Phi_p(E_E):=&\vdots&\vdots&\vdots&&\vdots&\vdots&\vdots\\
        &0&0&0&\cdots&0&0&E_E\\
        &0&0&0&\cdots&0&0&E_E\\
        &0&0&0&\cdots&0&0&E_E\\
        \\
        &E_W&0&0&\cdots&0&0&0\\
        &E_W&0&0&\cdots&0&0&0\\
        &E_W&0&0&\cdots&0&0&0\\
        \Phi_p(E_W):=&\vdots&\vdots&\vdots&&\vdots&\vdots&\vdots\\
        &E_0&0&0&\cdots&0&0&0\\
        &E_W&0&0&\cdots&0&0&0\\
        &E_W&0&0&\cdots&0&0&0
    \end{matrix}&
\end{align*}
\subsubsection{\textbf{The Image of} $\Phi_p$ \textbf{on} $\A_{corners}$}\label{sect: 5.corners}\hfill\\
\noindent The $[p,p]$-morphism $\Phi_p$ acts on the elements of $\A_{corners}$ as  \begin{align*}
    &\Phi_p(C_{NE})_{m,n}=\begin{cases}C_{NE} &\text{ if } m= 0 ~\text{ and }~n=p-1 \\
    E_N &\text{ if } m=0 ~\text{ and }~n\neq p-1\\
    E_E &\text{ if }m\neq0~\text{ and }~n=p-1\\
    0 &\text{ otherwise } 
\end{cases}& &\Phi_p(C_{SE})_{m,n}=\begin{cases}C_{SE} &\text{ if } m=n= p-1 \\
    E_S &\text{ if } m=p-1 \neq n\\
    E_E &\text{ if }m\neq p-1=n~\\
    0 &\text{ otherwise } 
\end{cases}&\\&\Phi_p(C_{SW})_{m,n}=\begin{cases}C_{SW} &\text{ if } m= p-1~\text{ and }~n=0 \\
    E_S &\text{ if } m=p-1 ~\text{ and } ~n\neq0\\
    E_E &\text{ if }m\neq p-1~\text{ and }~n=0\\
    0 &\text{ otherwise } 
\end{cases}& &\Phi_p(C_{NW})_{m,n}=\begin{cases}C_{NW} &\text{ if } m=n= 0 \\
    E_N &\text{ if } m=0 \neq n\\
    E_W &\text{ if }m\neq 0=n~\\
    0 &\text{ otherwise, } 
\end{cases}&
\end{align*}
\noindent that is,
\begin{align*}
    & \Phi_p(C_{NW}):=\hspace{-0.2cm}\begin{matrix}
        C_{NW}&E_N&E_N&\cdots&E_N&E_N&E_N\\
        E_W&0&0&\cdots&0&0&0\\
        E_W&0&0&\cdots&0&0&0\\
       \vdots&\vdots&\vdots&&\vdots&\vdots&\vdots\\
        E_W&0&0&\cdots&0&0&0\\
        E_W&0&0&\cdots&0&0&0\\
        E_W&0&0&\cdots&0&0&0
    \end{matrix}\Phi_p(C_{NE}):=\hspace{-0.2cm}\begin{matrix}
        E_N&E_N&E_N&\cdots&E_N&E_N&C_{NE}\\
        0&0&0&\cdots&0&0&E_E\\
        0&0&0&\cdots&0&0&E_E\\
       \vdots&\vdots&\vdots&&\vdots&\vdots&\vdots\\
        0&0&0&\cdots&0&0&E_E\\
        0&0&0&\cdots&0&0&E_E\\
        0&0&0&\cdots&0&0&E_E
    \end{matrix}&
\end{align*}
\begin{align*}
    &\Phi_p(C_{SW}):=\begin{matrix}
        E_W&0&0&\cdots&0&0&0\\
        E_W&0&0&\cdots&0&0&0\\
        E_W&0&0&\cdots&0&0&0\\
        \vdots&\vdots&\vdots&&\vdots&\vdots&\vdots\\
        E_W&0&0&\cdots&0&0&0\\
        E_W&0&0&\cdots&0&0&0\\
        C_{SW}&E_S&E_S&\cdots&E_S&E_S&E_S
    \end{matrix}~\Phi_p(C_{SE}):=\begin{matrix}
        0&0&0&\cdots&0&0&E_E\\
        0&0&0&\cdots&0&0&E_E\\
        0&0&0&\cdots&0&0&E_E\\
        \vdots&\vdots&\vdots&&\vdots&\vdots&\vdots\\
        0&0&0&\cdots&0&0&E_E\\
        0&0&0&\cdots&0&0&E_E\\
        E_S&E_S&E_S&\cdots&E_S&E_S&C_{SE}
    \end{matrix}&
\end{align*}

\subsubsection{\textbf{The} $[1,1]$\textbf{-coding }$\Pi$ \textbf{on} $\A$}\hfill\\
\noindent Let $L\in\A$. Then, the $[1,1]$-coding $\Pi:\mathcal{A}^\infty_2\to \{0,X\}_2^\infty$ is defined by,  \begin{align*}
    &\Pi(L)_{m,n}=\begin{cases}
    0 &\text{ if } L = 0\\
    X &\text{ otherwise } \end{cases}& 
\end{align*}
\subsubsection{\textbf{Statement of Theorem \ref{thm: morphism}}}\hfill\\
\noindent All the necessary material has been covered to allow for a full statement of Main Result 1.
\begin{theorem}\label{thm: morphism}
    Let $\A$, $\Phi_p:\mathcal{A}^\infty_2\to\A_2^\infty$ and $\Pi:\A_2^\infty\to\{0,X\}_2^\infty$ be as described above. Then, \[\chi\left(\left(W_p\left(\mathbf{C}^{(p,L)}\right)[m,n]\right)_{m,n\in\N}\right)=\Pi(\Phi_p^\infty(A)).\]
\end{theorem}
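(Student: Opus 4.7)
The plan is to proceed by induction on a scale parameter $k \ge 0$, proving that the profile of the top-left $p^k \times p^k$ block of $W_p\bigl(\mathbf{C}^{(p,L)}\bigr)$ coincides with $\Pi(\Phi_p^k(A))$. Since $\Phi_p(A)_{0,0} = A$, the letter $A$ is $\Phi_p$-prolongable, so these finite-scale identities stack into the claimed identity $\chi\bigl(W_p(\mathbf{C}^{(p,L)})[m,n]\bigr)_{m,n \ge 0} = \Pi(\Phi_p^\infty(A))$ upon letting $k \to \infty$. The base case $k = 0$ reduces to $c_0^{(p)} = 1$, which matches $\Pi(A) = X$; a direct computation of the small Toeplitz determinants from the explicit formula $c_i^{(p)} = \binom{p_2}{i/2}$ for $0 \le i < p$ (with $c_i^{(p)} = 0$ for odd $i$, and $c_i^{(p)} = 0$ for $i < 0$) confirms the case $k=1$ and gives a feel for how $0$, $A$, $B$, and $F$ arise.

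The inductive step exploits two sources of self-similarity in tandem. First, Lemma \ref{pcant_geom} says that $\mathbf{C}^{(p)}_{n+1}$ decomposes into $p$ scaled copies of $\mathbf{C}^{(p)}_n$; by multihomogeneity of determinants in the underlying sequence, this lifts to a corresponding self-similarity of the upper strip of $W_p(\mathbf{C}^{(p,L)})$, so that the top $p^k$ rows of the top-left $p^{k+1} \times p^{k+1}$ block consist of $p$ horizontally arranged scaled copies of blocks already understood by the inductive hypothesis. This immediately realises the top meta-row of $\Phi_p^{k+1}(A)$. Second, the Frame Constraints (Theorem \ref{FC}) build every deeper row from the preceding two, so once the top strip is certified, all lower rows are in principle determined.

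The inductive step then becomes a finite case analysis over the twelve letters of $\A$. Each letter encodes a distinct local configuration that can occur in the number wall at scale $p^k$, and the substitution rule $\Phi_p(L)$ describes the $p \times p$ arrangement of finer configurations produced at scale $p^{k+1}$. For the letters in $\A_{zeroes} \setminus \{F\}$ and $\A_{edges} \cup \A_{corners}$, the substitution behaves essentially passively, either propagating an already existing window or extending its immediate perimeter; for such letters the verification is almost tautological once the inner-frame geometry is summarised by the letter. For the units $A$ and $B$, the checkerboard pattern of the image is forced by the parity structure of the $p$-Cantor sequence together with the already-known top-strip self-similarity.

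The main obstacle will be the substitution on $F$, where genuinely new windows of zeros must be certified to appear at the correct positions with the correct side length $(p-2) \cdot p^{k-1}$ and with inner-frame ratios $P,Q,R,S$ satisfying $PS/QR = (-1)^{(p-2)p^{k-1}}$ from Theorem \ref{ratio ratio}. Two mechanisms are needed: a direct vanishing argument for the relevant Toeplitz determinants (using the symmetry of $\mathbf{C}^{(p)}$ from Lemma \ref{sym} together with the zero-block structure of Corollary \ref{cor: p^h=0}), and a ratio-tracking computation that applies the third Frame Constraint (\hyperlink{FC3}{FC3}) to build the surrounding frame precisely as prescribed by $\Phi_p(F)$. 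This is exactly the content that the general-purpose window-propagation lemmata flagged for Section \ref{sect: 7} are designed to supply, so the concrete plan is to treat those lemmata as a black box and reduce the inductive step to a mechanical check that the twelve substitution rules of Sections 5.2.2--5.2.5 are consistent with the Frame Constraints applied to neighbourhoods matching each incoming letter.
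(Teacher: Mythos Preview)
Your outline captures the correct inductive skeleton and correctly identifies the self-similarity of $\mathbf{C}^{(p)}$ (Lemma~\ref{pcant_geom}) and the Frame Constraints as the two engines of the argument. However, there is a genuine gap: the induction hypothesis you propose is too weak to close the inductive step.

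You plan to carry forward only the \emph{profile} of each $p^k\times p^k$ block, and then do a twelve-letter case analysis. But the Frame Constraints are arithmetic identities, not boolean ones: to compute row $m+p^h$ and beyond you need the actual values on rows $m+p^h-2,\,m+p^h-1,\,m+p^h$, not merely whether they vanish. The paper's proof therefore carries a much stronger inductive statement: the boundary rows between consecutive $p^h$-bands are shown to consist of explicit geometric transforms of $\mathbf{C}_h^{(p)}$ and $\mathbf{S}_h^{(p)}$ with parameters $(r_{C,i,j},a_{C,i,j},r_{S,i,j},a_{S,i,j})$ that are tracked exactly and, crucially, verified to be nonzero (this last point is the entire content of \S\ref{Sect: 10.3}, and is not automatic). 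Without this, the ``mechanical check'' you propose cannot be performed, because two blocks with identical profile can produce different profiles one band lower.

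The second omission is the $p$-Singer sequence. In your sketch the letter $B$ is handled by ``parity structure'', but in the paper the regions coded by $B$ are precisely where the $p$-Singer sequence appears: by Wronski's formula (Lemma~\ref{inv}) the column adjacent to a window carries the coefficients of the multiplicative inverse of the generating Laurent series, and Lemma~\ref{lem: inv_LS} identifies that inverse as $\Xi_p(t)$. The identity $\Pi(\Phi_p^h(B))=\rho(\Pi(\Phi_p^h(A)))$ is then realised concretely via Corollary~\ref{cor: rotate_profile}, which equates the profile between windows for the Singer sequence with the rotated profile for the Cantor sequence. This Cantor/Singer alternation is what drives the Construction Lemmata (\S\ref{Sect: 9}) and is the substantive content of the inductive step; the $F$ case you flag as the ``main obstacle'' is actually the easy part, since $\Phi_p^k(F)$ is just a window with a one-entry-thick frame and follows directly from the Square Window Theorem once the $A$ and $B$ blocks on either side are in place.
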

\noindent The proof of Theorem \ref{thm: morphism} is completed in Section \ref{Sect: 10}. The main goal of Sections \ref{sect: 7}-\ref{Sect: 9} is to provide the machinery needed to undertake this proof. In Section \ref{Sect: 6}, Theorem \ref{thm: frac_nw} is deduced from Theorem \ref{thm: morphism}.

\section{Proof of Theorem \ref{thm: frac_nw}}\label{Sect: 6}

\noindent Recall the fractal  $\mathcal{F}\left(\textbf{C}^{(p)},\mathbb{F}_p\right)$, defined in equation (\ref{eqn: fractal_def}). To prove Theorem \ref{thm: frac_nw}, two fractals $\mathcal{F}_{0,\infty}, \mathcal{F}_{F,\infty}\subseteq [0,1]^2$ are constructed, which satisfy the following properties:
\begin{enumerate}
    \hypertarget{link: frac_1}{\item} $\mathcal{F}_{0,\infty}\subseteq \mathcal{F}\left(\textbf{C}^{(p)},\mathbb{F}_p\right)\subseteq \mathcal{F}_{F,\infty}$ and
    \hypertarget{link: frac_2}{\item} $\frac{\log\left(\frac{p^2+1}{2}\right)}{\log p}\le \dim_H(\mathcal{F}_{0,\infty})$ and $\dim_H(\mathcal{F}_{F,\infty})\le\frac{\log\left(\frac{p^2+1}{2}\right)}{\log p}$.
\end{enumerate}

\noindent Claim \hyperlink{link: frac_2}{2} is established first.
\subsection{The Fractal $\mathcal{F}_{0,\infty}$}\hfill\\
\noindent First, a simplified version of the morphism $\Phi_p$ is defined. Let $\Phi_{0,p}:\{0,A\}_2^\infty\rightarrow \{0,A\}^{\infty}_2$ be the $[p,p]$-morphism defined by 
\begin{align}
    \Phi_{0,p}(\mathbf{0}_{p\times p})=\mathbf{0}_{p\times p}\,&& \Phi_{0,p}(A)=\begin{matrix}
        A&0&A&\dots&A&0&A\\
        0&A&0&\dots&0&A&0\\
        A&0&A&\dots&A&0&A\\
        \vdots&\vdots&\vdots&&\vdots&\vdots&\vdots\\
        A&0&A&\dots&A&0&A\\
        0&A&0&\dots&0&A&0\\
        A&0&A&\dots&A&0&A
    \end{matrix}.
\end{align}
To bound $\dim_H\mathcal{F}_{0,\infty}$ from below, \cite[Mass Distribution Principle 4.2]{Fal} is utilized. For $U\subseteq [0,1]^2$, define the \textbf{diameter} of $U$ by $\vert U\vert:=\sup\{\Vert \mathbf{x}-\mathbf{y}\Vert_2:\mathbf{x},\mathbf{y}\in U\}$.
\begin{proposition}{\cite[Mass Distribution Principle 4.2]{Fal}}
\label{prop:MassDist}
    Let $X$ be a set, and let $\mu$ be a mass distribution on $X$. Suppose that there exist $C>0$ and $\varepsilon>0$, such $\mu(U)\leq C\vert U\vert^s$, for any set $U$ with $\vert U\vert<\varepsilon$. Then, $\dim_HX\geq s$.
\end{proposition}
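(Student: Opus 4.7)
The plan is to establish the stronger quantitative statement $\mathcal{H}^s(X)>0$, since by definition $\dim_H X=\sup\{s\geq 0:\mathcal{H}^s(X)>0\}$, and so any positive lower bound on $\mathcal{H}^s(X)$ immediately gives $\dim_H X\geq s$. Recall that for each $\delta>0$, the $\delta$-level Hausdorff pre-measure is
\[\mathcal{H}^s_\delta(X)=\inf\left\{\sum_{i\in\N}\vert U_i\vert^s: X\subseteq \bigcup_{i\in\N}U_i,\ \vert U_i\vert<\delta\right\},\]
and $\mathcal{H}^s(X)=\lim_{\delta\to 0^+}\mathcal{H}^s_\delta(X)$.

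Next, I would fix any $0<\delta<\varepsilon$ and let $\{U_i\}_{i\in\N}$ be an arbitrary admissible $\delta$-cover of $X$. By the hypothesis applied to each $U_i$ (noting that $\vert U_i\vert<\delta<\varepsilon$, so the bound $\mu(U_i)\leq C\vert U_i\vert^s$ is available) together with countable subadditivity of $\mu$,
\[\mu(X)\leq \sum_{i\in\N}\mu(U_i)\leq C\sum_{i\in\N}\vert U_i\vert^s.\]
Since the cover was arbitrary, passing to the infimum over all admissible $\delta$-covers yields $\mathcal{H}^s_\delta(X)\geq \mu(X)/C$. The right-hand side is independent of $\delta$, so letting $\delta\to 0^+$ gives $\mathcal{H}^s(X)\geq \mu(X)/C$. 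By the convention that a mass distribution is a Borel measure with $0<\mu(X)<\infty$, this lower bound is strictly positive, and the conclusion $\dim_H X\geq s$ follows.

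The argument is essentially a tautology once the Hausdorff measure is unpacked; there is no substantive obstacle. The only two points that actually require care are: (i) confirming the convention that a \emph{mass distribution} carries the requirement $\mu(X)>0$, so that the lower bound $\mu(X)/C$ is non-trivial; and (ii) ensuring that the hypothesis $\mu(U)\leq C\vert U\vert^s$ is taken uniformly over \emph{all} sets $U$ with $\vert U\vert<\varepsilon$, so that it can be applied term-by-term to every member of an arbitrary countable $\delta$-cover. Both features are built into the statement as given, so the proof reduces to the three-line computation above.
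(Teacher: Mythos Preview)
Your proof is correct and is precisely the standard argument for the Mass Distribution Principle. Note that the paper does not supply its own proof of this proposition: it is quoted verbatim from Falconer \cite[Mass Distribution Principle 4.2]{Fal} and used as a black box, so there is nothing in the paper to compare against beyond the citation itself.
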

\noindent To apply Proposition \ref{prop:MassDist}, the number of non-zero entries in $\Phi_{0,p}^k(A)$ is computed.
\begin{lemma}
\label{lem:phi_0,p^k[m,n]=0}
    Let $k\in \mathbb{N}$, let $m=\sum_{j=0}^{k-1}m_jp^j$, and let $n=\sum_{j=0}^{k-1}n_jp^j$, where $m_j,n_j\in \{0,1,\dots,p-1\}$. Then, $\Phi_{0,p}^k(A)[m,n]\neq 0$ if and only if $m_j\equiv n_j\mod 2$ for every $j=0,1,\dots,k-1$.
\end{lemma}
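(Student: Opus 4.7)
The plan is to prove the lemma by induction on $k$, exploiting the self-similar block structure produced by iterating $\Phi_{0,p}$.

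The base case $k=1$ follows immediately from the explicit description of $\Phi_{0,p}(A)$: its $(m_0,n_0)$ entry equals $A$ if $m_0\equiv n_0\pmod 2$ and equals $0$ otherwise, so the nonvanishing condition is exactly $m_0\equiv n_0\pmod 2$.

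For the inductive step, assume the claim holds for some $k\ge 1$ and fix $m=\sum_{j=0}^{k}m_jp^j$ and $n=\sum_{j=0}^{k}n_jp^j$ with $0\le m,n<p^{k+1}$. The idea is to use the factorisation
\[
\Phi_{0,p}^{k+1}(A)\;=\;\Phi_{0,p}^{k}\bigl(\Phi_{0,p}(A)\bigr),
\]
which presents $\Phi_{0,p}^{k+1}(A)$ as a $p\times p$ grid of $p^{k}\times p^{k}$ blocks, the $(a,b)$-block being $\Phi_{0,p}^{k}$ applied to the single letter $\Phi_{0,p}(A)[a,b]$. Writing $m=m_k p^k+m'$ and $n=n_k p^k+n'$ with $m'=\sum_{j=0}^{k-1}m_jp^j$ and $n'=\sum_{j=0}^{k-1}n_jp^j$, one obtains
\[
\Phi_{0,p}^{k+1}(A)[m,n]\;=\;\Phi_{0,p}^{k}\bigl(\Phi_{0,p}(A)[m_k,n_k]\bigr)[m',n'].
\]
If $m_k\not\equiv n_k\pmod 2$ the inner letter is $0$, and since $\Phi_{0,p}(0)=\mathbf{0}_{p\times p}$ gives $\Phi_{0,p}^{k}(0)\equiv 0$, the entry vanishes. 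If $m_k\equiv n_k\pmod 2$ the inner letter is $A$, and the entry equals $\Phi_{0,p}^{k}(A)[m',n']$, which by the inductive hypothesis is nonzero iff $m_j\equiv n_j\pmod 2$ for every $j=0,\dots,k-1$. Combining both cases yields the full equivalence for $k+1$.

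The only real content beyond this induction is verifying the block-expansion formula for $\Phi_{0,p}$, which is simply the definition of how a two-dimensional $[p,p]$-morphism iterates; and the trivial observation that $\Phi_{0,p}$ annihilates the symbol $0$. Consequently there is no substantive obstacle, only the bookkeeping of ensuring that the highest digit $m_k$ in the base-$p$ expansion indexes the outermost block after one peels off a single application of $\Phi_{0,p}$ from the front of $\Phi_{0,p}^{k+1}$.
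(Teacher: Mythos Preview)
Your proof is correct and follows essentially the same approach as the paper: an induction on $k$ using the $p\times p$ block decomposition of $\Phi_{0,p}^{k+1}(A)$, where the highest base-$p$ digits $(m_k,n_k)$ select the block and the inductive hypothesis handles the remaining digits. The only cosmetic difference is that the paper introduces a ``prefix'' map to denote $m'$ and $n'$, while you write them out explicitly.
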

\noindent To prove Lemma \ref{lem:phi_0,p^k[m,n]=0}, the following definition is made.
\begin{definition}
    Let $n=\sum_{j=0}^{\lfloor \log_p n\rfloor}n_jp^j$, where $n_j\in \{0,1,\dots,p-1\}$ for every $j\in\{0,\dots \lfloor\log_p n\rfloor\}$. Define the \textbf{prefix} of $n$ by $\operatorname{pref}(n)=\sum_{j=0}^{\lfloor \log_p n\rfloor-1}n_jp^j$.
\end{definition}
\begin{proof}[Proof of Lemma \ref{lem:phi_0,p^k[m,n]=0}]
    The proof proceeds by induction. The case $k=1$ follows from the definition of $\Phi_{0,p}(A)$. Assume that the claim holds for $k$. Let $m,n\leq p^{k+1}$ be of the form $m=\sum_{j=0}^km_jp^j$ and $n=\sum_{j=0}^kn_jp^j$. Note that 
    \begin{equation}
    \label{eqn:phi_0,p^k}
        \Phi_{0,p}^{k+1}(A)=\begin{matrix}
        \Phi_{0,p}^k(A)&\Phi_{0,p}^{k}(0)&\Phi_{0,p}^k(A)&\dots&\Phi_{0,p}^k(A)&\Phi_{0,p}^k(0)&\Phi_{0,p}^k(A)\\
        \Phi_{0,p}^k(0)&\Phi_{0,p}^k(A)&\Phi_{0,p}^k(0)&\dots&\Phi_{0,p}^k(0)&\Phi_{0,p}^k(A)&\Phi_{0,p}^k(0)\\
        \Phi_{0,p}^k(A)&\Phi_{0,p}^k(0)&\Phi_{0,p}^k(A)&\dots&\Phi_{0,p}^k(A)&\Phi_{0,p}^k(0)&\Phi_{0,p}^k(A)\\
        \vdots&\vdots&\vdots&&\vdots&\vdots&\vdots\\
        \Phi_{0,p}^k(A)&\Phi_{0,p}^k(0)&\Phi_{0,p}^k(A)&\dots&\Phi_{0,p}^k(A)&\Phi_{0,p}^k(0)&\Phi_{0,p}^k(A)\\
        \Phi_{0,p}^k(0)&\Phi_{0,p}^k(A)&\Phi_{0,p}^k(0)&\dots&\Phi_{0,p}^k(0)&\Phi_{0,p}^k(A)&\Phi_{0,p}^k(0)\\
        \Phi_{0,p}^k(A)&\Phi_{0,p}^k(0)&\Phi_{0,p}^k(A)&\dots&\Phi_{0,p}^k(A)&\Phi_{0,p}^k(0)&\Phi_{0,p}^k(A)
    \end{matrix}.
    \end{equation}
    As a consequence, if $m_k\neq n_k\mod 2$, then, by \eqref{eqn:phi_0,p^k}, $\Phi_{0,p}^{k+1}(A)[m,n]=0$. Otherwise, if $m_k\equiv n_k\mod 2$, then, by \eqref{eqn:phi_0,p^k} and the induction hypothesis, $\Phi_{0,p}^{k+1}(A)[m,n]=\Phi_{0,p}^k(A)[\operatorname{pref}(m),\operatorname{pref}(n)]$. By the induction hypothesis, $\Phi_{0,p}^k(A)[\operatorname{pref}(m),\operatorname{pref}(n)]\neq 0$ if and only if $m_i\equiv n_i\mod 2$ for every $i=0,\dots,k-1$. As a consequence, $\Phi_{0,p}^{k+1}(A)[m,n]\neq 0$ if and only if $m_i\equiv n_i\mod 2$ for every $i=0,\dots,k$.
\end{proof}
\noindent By Lemma \ref{lem:phi_0,p^k[m,n]=0}, for every $k\in \mathbb{N}$,
$$\mathcal{I}_k:=\bigcup_{\substack{m,n\in\N\\\Phi_{0,p}^k(A)[m,n]\neq 0}}\left[\frac{m}{p^k},\frac{m+1}{p^k}\right)\times \left[\frac{n}{p^k},\frac{n+1}{p^k}\right)=\bigcup_{(m,n)\in \mathcal{N}_k}\left[\frac{m}{p^k},\frac{m+1}{p^k}\right)\times \left[\frac{n}{p^k},\frac{n+1}{p^k}\right),$$
where
\begin{align*}\mathcal{N}_k&=\left\{(m,n)\in \mathbb{N}^2:m=\sum_{i=0}^{k-1}m_ip^i,n=\sum_{i=0}^{k-1}n_ip^i:\forall i=0,\dots,k-1, m_i\equiv n_i\mod 2\right\}\\&\subseteq \{(m,n)\in\N^2: 0\le m,n\le p^k-1\}.\end{align*}
Define
$$\mathcal{F}_{0,\infty}:=\bigcap_{k=1}^{\infty}\mathcal{I}_k=\left\{(x,y)\in[0,1]^2:x=\sum_{n=1}^{\infty}x_np^{-n}, y=\sum_{n=1}^{\infty}y_np^{-n},x_n\equiv y_n\mod 2,\forall n\in\N\right \}.$$
The size of $\mathcal{N}_k$ is now able to be computed.
\begin{lemma}
\label{lem:NumN_r}
    For every $k\in \mathbb{N}$, $\#\mathcal{N}_k=\left(\frac{p^2+1}{2}\right)^k$.    
\end{lemma}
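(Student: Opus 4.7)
The plan is straightforward: exploit the fact that the defining condition of $\mathcal{N}_k$ factors digit-by-digit. By the uniqueness of the base-$p$ representation, the map
\[
(m,n)\mapsto \bigl((m_0,n_0),(m_1,n_1),\dots,(m_{k-1},n_{k-1})\bigr)
\]
is a bijection between $\mathcal{N}_k$ and the set of $k$-tuples of pairs $(m_i,n_i)\in\{0,1,\dots,p-1\}^2$ satisfying $m_i\equiv n_i\pmod{2}$. Since the constraints on distinct digit positions are independent, one has $\#\mathcal{N}_k=N^k$ where $N$ is the number of pairs $(a,b)\in\{0,\dots,p-1\}^2$ with $a\equiv b\pmod 2$.

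The next step is to compute $N$. Since $p$ is odd, the set $\{0,1,\dots,p-1\}$ contains exactly $(p+1)/2$ even elements (namely $0,2,\dots,p-1$) and $(p-1)/2$ odd elements (namely $1,3,\dots,p-2$). A pair $(a,b)$ with $a\equiv b\pmod 2$ is obtained by either picking both coordinates even or both odd, so
\[
N=\left(\frac{p+1}{2}\right)^{\!2}+\left(\frac{p-1}{2}\right)^{\!2}=\frac{(p+1)^2+(p-1)^2}{4}=\frac{2p^2+2}{4}=\frac{p^2+1}{2}.
\]

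Combining the two steps yields $\#\mathcal{N}_k=\bigl((p^2+1)/2\bigr)^k$, as required. There is no substantive obstacle here: the only point to be careful about is invoking the uniqueness of base-$p$ expansions so that the product formula really applies, and ensuring the parity of $p$ is used when splitting $\{0,\dots,p-1\}$ into its even and odd parts. This counting result then feeds directly into the forthcoming application of the Mass Distribution Principle (Proposition~\ref{prop:MassDist}) to lower-bound $\dim_H\mathcal{F}_{0,\infty}$ by $\log((p^2+1)/2)/\log p$.
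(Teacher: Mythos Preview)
Your proof is correct and follows essentially the same approach as the paper's own proof: both count the admissible digit pairs $(m_i,n_i)$ by splitting into the even/even and odd/odd cases to get $\left(\tfrac{p+1}{2}\right)^2+\left(\tfrac{p-1}{2}\right)^2=\tfrac{p^2+1}{2}$, and then use the digit-wise independence to obtain the $k$-th power. Your version is slightly more explicit in invoking the uniqueness of base-$p$ expansions to justify the product formula, but otherwise the arguments coincide.
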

\begin{proof}
Note that there are $\frac{p+1}{2}$ even digits and $\frac{p-1}{2}$ odd digits in the set $\{0,1,\dots,p-1\}$. From the definition of $\mathcal{N}_k$, one has $(m,n) \in\mathcal{N}_k$ if and only if
$n_i=m_i \mod 2$ for every $0\leq i\leq k-1$. For any such $i$, there are $\left(\frac{p-1}{2}\right)^2$ choices for the pair $(n_i,m_i)$ when both $n_i$ and $m_i$ are odd and
$\left(\frac{p+1}{2}\right)^2$ choices for the pair $(n_i,m_i)$, when both $m_i$ and $n_i$ are even. Hence, for every pair $(n_i,m_i)$, there are $\left(\frac{p-1}{2}\right)^2+\left(\frac{p+1}{2}\right)^2=\frac{p^2+1}{2}$ choices for pair $(n_i,m_i)$. Therefore, $\#\mathcal{N}_k=\left(\frac{p^2+1}{2}\right)^k$.
\end{proof}
\noindent Define a mass distribution $\mu$ on $[0,1]^2$, such that for every $m,n\in \{0,1,\dots,p^k-1\}$, we have 
$$\mu\left(\left[\frac{m}{p^k},\frac{m+1}{p^k}\right)\times \left[\frac{n}{p^k},\frac{n+1}{p^k}\right)\right)=\begin{cases}
    \left(\frac{p^2+1}{2}\right)^{-k}&(m,n)\in \mathcal{N}_k,\\
    0&\text{else}.
\end{cases}$$ 
Let $U\subseteq [0,1]^2$ be a set satisfying $p^{-(k+1)}<\vert U\vert<p^{-k}$. Then, $U$ can intersect at most four intervals comprising $\mathcal{I}_k$. Hence, 
$$\mu(U)\leq 4\left(\frac{p^2+1}{2}\right)^{-k}=4\left(p^{-k}\right)^{\frac{\log\left(\frac{p^2+1}{2}\right)}{\log p}}\leq 4\left(p\vert U\vert\right)^{\frac{\log\left(\frac{p^2+1}{2}\right)}{\log p}}.$$
Thus, by Proposition \ref{prop:MassDist},
$$\dim_H(\mathcal{F}_{0,\infty})\geq \frac{\log_p\left(\frac{p^2+1}{2}\right)}{\log p}$$ and the first part of claim \hyperlink{link: frac_2}{2} is established.
\subsection{The Fractal $\mathcal{F}_{F,\infty}$}\hfill\\
\noindent Recall the alphabet $\mathcal{A}$ from Section \ref{sec: 5}. The fractal $\mathcal{F}_{F,\infty}$ is obtained through the $[p,p]$-morphism $$\Phi_{F,p}:(\mathcal{A}\setminus \{B\})_2^{\infty}\rightarrow (\mathcal{A}\setminus \{B\})_2^{\infty},$$ which is defined in the following manner:
\begin{align}
    \Phi_{F,p}(A)=\begin{matrix}
        A&F&A&\dots&A&F&A\\
        F&A&F&\dots&F&A&F\\
        A&F&A&\dots&A&F&A\\
        \vdots&\vdots&\vdots&\dots&\vdots&\vdots&\vdots\\
        A&F&A&\dots&A&F&A\\
        F&A&F&\dots&F&A&F\\
        A&F&A&\dots&A&F&A
    \end{matrix},& & \forall X\in \mathcal{A}\setminus \mathcal{A}_{units},\,~ &\Phi_{F,p}(X)=\Phi_p(X).
\end{align}
By the definition of a $[p,p]$-morphism, 
\begin{equation}
    \Phi_{F,p}^k(A)=\begin{matrix}
        \Phi_{F,p}^{k-1}(A)&\Phi_{F,p}^{k-1}(F)&\Phi_{F,p}^{k-1}(A)&\dots&\Phi_{F,p}^{k-1}(A)&\Phi_{F,p}^{k-1}(F)&\Phi_{F,p}^{k-1}(A)\\
        \Phi_{F,p}^{k-1}(F)&\Phi_{F,p}^{k-1}(A)&\Phi_{F,p}^{k-1}(F)&\dots&\Phi_{F,p}^{k-1}(F)&\Phi_{F,p}^{k-1}(A)&\Phi_{F,p}^{k-1}(F)\\
        \Phi_{F,p}^{k-1}(A)&\Phi_{F,p}^{k-1}(F)&\Phi_{F,p}^{k-1}(A)&\dots&\Phi_{F,p}^{k-1}(A)&\Phi_{F,p}^{k-1}(F)&\Phi_{F,p}^{k-1}(A)\\
        \vdots&\vdots&\vdots&&\vdots&\vdots&\vdots\\
        \Phi_{F,p}^{k-1}(A)&\Phi_{F,p}^{k-1}(F)&\Phi_{F,p}^{k-1}(A)&\dots&\Phi_{F,p}^{k-1}(A)&\Phi_{F,p}^{k-1}(F)&\Phi_{F,p}^{k-1}(A)\\
        \Phi_{F,p}^{k-1}(F)&\Phi_{F,p}^{k-1}(A)&\Phi_{F,p}^{k-1}(F)&\dots&\Phi_{F,p}^{k-1}(F)&\Phi_{F,p}^{k-1}(A)&\Phi_{F,p}^{k-1}(F)\\
        \Phi_{F,p}^{k-1}(A)&\Phi_{F,p}^{k-1}(F)&\Phi_{F,p}^{k-1}(A)&\dots&\Phi_{F,p}^{k-1}(A)&\Phi_{F,p}^{k-1}(F)&\Phi_{F,p}^{k-1}(A)
    \end{matrix}.
\end{equation}
Moreover, by a simple induction one can see that for every $0\leq m,n\leq p^k-1$, 
\begin{align*}
   \Phi_{F,p}^k(F) =\Phi^k_p(F)[m,n]=\begin{cases}
        C_{NW} &\text{ if }~~~m=n=0,\\
        C_{NE} &\text{ if }~~~m=0~\text{ and }~n=p^k-1,\\
        C_{SW} &\text{ if }~~~m=p^k-1~\text{ and }~n=0,\\
        C_{SE} &\text{ if }~~~m=n=p^k-1,\\
        E_N &\text{ if }~~~m=0~\text{ and }~0\neq n \neq p^k-1,\\
        E_E &\text{ if }~~~0\neq m\neq p^k-1~\text{ and }~n=p^k-1,\\
        E_S &\text{ if }~~~m=p^k-1~\text{ and }~0\neq n \neq p^k-1,\\
        E_W &\text{ if }~~~0\neq m\neq p^k-1~\text{ and }~n=0,\\
        0 &\text{ otherwise}.
    \end{cases}
\end{align*}\noindent
Define $$\mathcal{I}_k:=\bigcup_{\substack{0\leq m,n<p^k\\\Phi_{F,p}^k(m,n)\neq 0}}\left[\frac{m}{p^k},\frac{m+1}{p^k}\right)\times \left[\frac{n}{p^k},\frac{n+1}{p^k}\right),$$ and let $\mathcal{F}_{F,\infty}:=\bigcap_{k\geq 1}\mathcal{I}_k$. Now \cite[Proposition 4.1]{Fal} is used to bound the Hausdorff dimension of $\mathcal{F}_{F,\infty}$ from above.
\begin{proposition}{\cite[Proposition 4.1]{Fal}}
\label{prop:DimUpp}
    Suppose a set $X$ can be covered by $n_k$ sets of diameter at most $\delta_k$, where $\delta_k\rightarrow 0$ as $k\rightarrow \infty$. Then,
    $$\dim_HX\leq \liminf_{k\rightarrow \infty}\frac{\log n_k}{-\log \delta_k}.$$
\end{proposition}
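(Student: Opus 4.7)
The plan is to bound the Hausdorff measure $\mathcal{H}^s(X)$ from above for every $s$ strictly greater than $t := \liminf_{k\to\infty} \log n_k / (-\log \delta_k)$, and then invoke the definition $\dim_H X = \inf\{s \geq 0 : \mathcal{H}^s(X) = 0\}$ to conclude. Recall the $\delta$-approximate Hausdorff measure
$$\mathcal{H}^s_\delta(X) = \inf\left\{\sum_i |U_i|^s : X \subseteq \bigcup_i U_i, \; |U_i| \leq \delta\right\},$$
which is monotone non-increasing in $\delta$, so that $\mathcal{H}^s(X) = \lim_{\delta \to 0^+} \mathcal{H}^s_\delta(X) = \sup_{\delta > 0} \mathcal{H}^s_\delta(X)$.

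The first step is to observe that, by this monotonicity, it suffices to exhibit a single subsequence $\delta_{k_j} \to 0$ along which $\mathcal{H}^s_{\delta_{k_j}}(X) \to 0$: for every fixed $\delta_0 > 0$, one would have $\delta_{k_j} < \delta_0$ for large $j$, whence $\mathcal{H}^s_{\delta_0}(X) \leq \mathcal{H}^s_{\delta_{k_j}}(X) \to 0$, forcing $\mathcal{H}^s(X) = 0$ and hence $\dim_H X \leq s$.

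To produce such a subsequence, assume $t < \infty$ (the other case being trivial), fix $s > t$, and insert an auxiliary exponent $s' \in (t, s)$. By the definition of liminf, there is a subsequence $(k_j)$ along which $\log n_{k_j} < s' \cdot (-\log \delta_{k_j})$, equivalently $n_{k_j}\, \delta_{k_j}^{s'} < 1$. Feeding the hypothesised cover of $X$ (consisting of $n_{k_j}$ sets, each of diameter at most $\delta_{k_j}$) into the definition of $\mathcal{H}^s_{\delta_{k_j}}$ yields
$$\mathcal{H}^s_{\delta_{k_j}}(X) \leq n_{k_j}\, \delta_{k_j}^s < \delta_{k_j}^{s - s'},$$
which tends to $0$ as $j \to \infty$ since $s - s' > 0$ and $\delta_{k_j} \to 0$. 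Hence $\mathcal{H}^s(X) = 0$, and letting $s \downarrow t$ gives $\dim_H X \leq t$.

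The main subtlety I expect is precisely the need for the auxiliary exponent $s'$ lying strictly between $t$ and $s$. Running the argument directly with $s = t$, or comparing $n_{k_j}$ to $\delta_{k_j}^{-s}$ without an intermediate exponent, only yields $\mathcal{H}^s_{\delta_{k_j}}(X) < 1$, which is insufficient to force vanishing of $\mathcal{H}^s$. Slipping a genuine gap $s - s' > 0$ into the exponent of $\delta_{k_j}$ is what converts the bound from \emph{finite} into \emph{collapsing to zero}, and is the only non-routine manoeuvre in the proof.
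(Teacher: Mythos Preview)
Your proof is correct and is the standard argument. Note, however, that the paper does not actually prove this proposition: it is quoted verbatim from Falconer's textbook \cite[Proposition~4.1]{Fal} and used as a black box to bound $\dim_H(\mathcal{F}_{F,\infty})$ from above. There is therefore nothing in the paper to compare your argument against; what you have written is essentially the proof one finds in Falconer.
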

\noindent To apply Proposition \ref{prop:DimUpp}, the number of non-zero entries in $\Phi_{F,p}^k(A)$ is counted. Note that for $k\geq 1$, $\Phi_{F,p}^k(F)$ contains $4(p^k-1)$ non-zero entries, and $\Phi_{F,p}^k(A)$ contains $2\frac{p-1}{2}\cdot \frac{p+1}{2}=\frac{p^2-1}{2}$ copies of $\Phi_{F,p}^{k-1}(F)$ and $\left(\frac{p+1}{2}\right)^2+\left(\frac{p-1}{2}\right)^2=\frac{p^2+1}{2}$ copies of $\Phi_{F,p}^{k-1}(A)$. Let $a_k$ denote the number of non-zero entries in $\Phi_{F,p}^k(A)$. Then, 
\begin{align*}
    a_{k+1}=\frac{p^2+1}{2}a_k+2(p^2-1)(p^k-1),\,\,\,&
    a_1=p^2.
\end{align*}
Therefore, 
\begin{align*}
    a_k=&p^2\left(\frac{p^2+1}{2}\right)^{k-1}+2(p^2-1)\sum_{i=1}^{k-1}\left(\frac{p^2+1}{2}\right)^{i-1}(p^{k-i}-1)\\
    =&\left(\frac{p^2+1}{2}\right)^k\left(\frac{2p^2}{p^2+1}+\frac{2}{p-1}-\frac{4(p+1)}{p-1}\left(\frac{2p}{p^2+1}\right)^k+4\left(\frac{2}{p^2+1}\right)^k\right).
\end{align*}
Then, $\mathcal{I}_k$ can be covered with $a_k$ boxes of side length $p^{-k}$, and hence, $\mathcal{F}_{F,\infty}$ can also be covered by $a_k$ boxes of side length $p^{-k}$. Therefore, by Proposition \ref{prop:DimUpp},
\begin{align*}
    \begin{split}
    \dim_H(\mathcal{F}_{F,\infty})\leq& \liminf_{k\rightarrow \infty}\frac{k\log\left(\frac{p^2+1}{2}\right)+\log\left(\frac{2p^2}{p^2+1}+\frac{2}{p-1}-\frac{4(p+1)}{p-1}\left(\frac{2p}{p^2+1}\right)^k+4\left(\frac{2}{p^2+1}\right)^k\right)}{k\log p}\\
    &=\frac{\log\left(\frac{p^2+1}{2}\right)}{\log p},
    \end{split}
\end{align*}
\noindent proving the second inequality in claim \hyperlink{link: frac_2}{2}.
\subsection{Proof of the Inclusion}\hfill\\
Claim \hyperlink{link: frac_1}{1} is now established. Note that to obtain $\Phi_{0,p}$ from $\Phi_p$, one identifies $A$ and $B$ and then, replaces all other letters with $0$. Therefore, $\mathcal{F}_{0,\infty}\subseteq \mathcal{F}(\textbf{C}^{(p)},\mathbb{F}_p)$, and hence, $$\dim_H\left(\mathcal{F}(\textbf{C}^{(p)},\mathbb{F}_p)\right)\geq \dim_H(\mathcal{F}_{0,\infty})\ge\frac{\log\left(\frac{p^2+1}{2}\right)}{\log p}.$$
Similarly, since $\Phi_{F,p}(A)$ is obtained from $\Phi_p(A)$ by replacing $0$ with $F$ and $B$ with $A$, then, $\mathcal{F}(\textbf{C}^{(p)},\F_p)\subseteq \mathcal{F}_{F,\infty}$. Therefore, 
$$\frac{\log\left(\frac{p^2+1}{2}\right)}{\log p}\leq \dim_H\left(\mathcal{F}(C^{(p)},\mathbb{F}_p)\right)\leq \dim_H(\mathcal{F}_{F,\infty})\leq \frac{\log \left(\frac{p^2+1}{2}\right)}{\log p}.$$
Thus, Theorem \ref{thm: frac_nw} follows.

\section{Lemmata on Number Walls of Generic Sequences}\label{sect: 7}
\noindent With the proof of Theorem \ref{thm: frac_nw} now complete, all that remains is to prove Theorem \ref{thm: morphism}. The first step towards this is to establish a series of lemmas around the behaviour of number walls generated by non-specific sequences. Section \ref{Sect: 8} then refines these lemmas to the number wall of the $p$-Cantor and $p$-Singer sequence. \\ 

\noindent Before the aforementioned lemmata are stated, the following natural generalisation of a number wall is defined. 
\subsection{Geometric Transforms of Number Walls}\hfill\\
\noindent Let $\textbf{S}$ be a doubly infinite sequence over a field\footnote{See Remark \ref{remark: field_not_ID} as to why one uses a field $\K$ instead of an integral domain $\id$.} $\K$. By Definition \ref{nw}, $W_\K(\textbf{S})[m,n]=0$ for all $n\in\Z$ and $m<-1$. In the language of number walls, one can consider these rows to comprise an \textit{infinite window} in $W_\K(\textbf{S})$, denoted $\mathcal{W}_0$. From this perspective, $(W_\K(\textbf{S})[-1,n])_{n\in\Z}$ is southern inner frame of $\mathcal{W}_0$. Also by Definition \ref{nw}, $W_\K(\textbf{S})[-1,n]=1$ for all $n\in\Z$. But this is merely a convention - $(W_\K(\textbf{S})[-1,n])_{n\in\Z}$ need only be a geometric sequence to comply with Theorem \ref{ratio ratio}. This motivates the following generalisation of the number wall.
\begin{definition}\label{r-a-nw}
     Let $\K$ be a field, let $r\in\K\backslash\{0\}$, $a\in\K$ and let $\mathbf{S}=(s_n)_{n\in\Z}$ be a sequence over $\K$. Additionally, recall the notation from Figure \ref{Fig: basicwindow}. Then, the $(r,a)$\textbf{-Number Wall of }$\mathbf{S}$\textit{ over }$\K$, denoted $W_\K^{(r,a)}(\textbf{S})=(W_{\K}^{(r,a)}(\textbf{S})[m,n])_{n,m\in\Z}$, is defined as the two dimensional sequence generated by applying the Frame Constraints (equations (\ref{eqn: FC})) to the initial conditions \begin{equation*}
        W_{m,n}=W_\K^{(r,a)}(\textbf{S})[m,n]=\begin{cases}
    \begin{aligned}
    &0 &&\textup{ if } m<-1\\
    &a\cdot r^n &&\textup{ if } m=-1;&\\
    &s_n &&\textup{ if } m=0;&\\
    
    \end{aligned}
    \end{cases}
    \end{equation*}\end{definition}
\noindent In other words, Definition \ref{r-a-nw} states that $W_\K^{(r,a)}(\textbf{S})$ is calculated identically to $W_\K(\textbf{S})$, but with $W_\K(\textbf{S})[-1,n]=a\cdot r^n$ instead of $W_\K(\textbf{S})[-1,n]=1$ for all $n\in\Z$. Hence, it is immediate that for any sequence $\textbf{S}$, $W_\K^{(1,1)}(\textbf{S})=W_\K(\textbf{S})$.
\begin{remark}\label{remark: defined}
    Whilst equations (\ref{eqn: FC}) do guarantee that zeroes in an $(r,a)$-number wall appear in square windows, it is not immediate that \hyperlink{FC3}{FC3} remains well defined in this setup. That is, there is no reason \textit{a priori} that the inner frame of any window in an $(r,a)$-number wall is comprised of geometric sequences, which is necessary to define $P,Q,R$ and $S$ in \hyperlink{FC3}{FC3}. This is addressed in Lemma \ref{rswall}.
\end{remark}

\begin{remark}\label{remark: field_not_ID}
    Note, that $(r,a)$-number walls are only defined over fields and not integral domains. Standard number walls are well defined over an integral domain $\id$, since each entry is equal to the determinant of a Toeplitz matrix (Definition \ref{nw}), which are calculated using only addition and multiplication and hence always elements of $\id$. In contrast, $(r,a)$-number walls have no such connection to Toeplitz matrices, and therefore, $\id$ is replaced by a field $\K$ to ensure the division in equation (\ref{eqn: FC}) is well defined.   
\end{remark}
\subsubsection{\textbf{Geometric Transform of a Sequence}}\hfill\\
\noindent The following transformation of a sequence is also crucial to this section. 

\begin{definition}\label{r-a-trans}
    Let $\K$ be a field, let $r,a\in\K$ and let $\mathbf{S}=(s_i)_{i\in\Z}$ be a sequence over $\K$. Then, define the $(r,a)$-\textbf{geometric transform of} $\mathbf{S}$ is the sequence $\textbf{S}{(r,a)}=(a\cdot r^i\cdot s_i)_{i\in\Z}$.
\end{definition}
\subsubsection{\textbf{Relationship between Number Walls and }$(r,a)$\textbf{-Number Walls}}\hfill\\

\noindent For $r_0,a_0, r_1, a_1\in\K$, and $\mathbf{S}$ be a sequence over $\K$, the following lemma states how $W_\K^{(r_0,a_0)}\left(\mathbf{S}{(r_1,a_1)}\right)$ relates to $W_\K(\textbf{S})$.
\begin{lemma}\label{rswall}
    Let $\K$ be a field, let $r_0,a_0,r_1,a_1\in\K$ with $r_0,a_0\neq 0$ and let $\mathbf{S}$ be a sequence over $\K$. Then, the two-dimensional sequence $W_\K^{(r_0,a_0)}\left(\mathbf{S}{(r_1,a_1)}\right)$ is well defined in the sense of Remark \ref{remark: defined} and it satisfies\begin{equation}\label{rs}W_\K^{(r_0,a_0)}\left(\mathbf{S}{(r_1,a_1)}\right)[m,n]=\frac{r_1^{n(m+1)}a_1^{m+1}}{r_0^{nm}a_0^{m}}W_{\K}(\textbf{S})[m,n].\end{equation}
\end{lemma}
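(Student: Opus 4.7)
The plan is to prove simultaneously (i) that $W_\K^{(r_0,a_0)}(\mathbf{S}(r_1,a_1))$ is well-defined, and (ii) the identity (\ref{rs}), by strong induction on the row index $m \geq -1$. For brevity, write $V_{m,n} := W_\K^{(r_0,a_0)}(\mathbf{S}(r_1,a_1))[m,n]$, $W_{m,n}:=W_\K(\mathbf{S})[m,n]$, and set
\[c_{m,n}:=\frac{r_1^{n(m+1)}a_1^{m+1}}{r_0^{nm}a_0^m}.\]
The target identity then reads $V_{m,n}=c_{m,n}W_{m,n}$. The base cases $m<-1$, $m=-1$, $m=0$ are immediate from the initial conditions in Definitions \ref{nw}, \ref{r-a-nw}, and \ref{r-a-trans}: for $m=-1$ both sides equal $a_0r_0^n$, and for $m=0$ both equal $a_1r_1^n s_n$.

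The key mechanical observation that will repeatedly be used is that the ratios
\[\frac{c_{m,n+1}}{c_{m,n}}=\frac{r_1^{m+1}}{r_0^m}\qquad\text{and}\qquad\frac{c_{m+1,n}}{c_{m,n}}=\frac{a_1}{a_0}\left(\frac{r_1}{r_0}\right)^n\]
are independent of $n$ and $m$ respectively. Consequently, multiplication by $c_{\bullet,\bullet}$ sends geometric sequences along a row or column of $W_\K(\mathbf{S})$ to geometric sequences along a row or column of $V$. Since, by the inductive hypothesis, any window that has appeared in $V$ up to row $m-1$ coincides (as a set of positions) with a window of $W_\K(\mathbf{S})$, Theorem \ref{ratio ratio} applied to $W_\K(\mathbf{S})$ together with the above observation guarantees that the inner frame of such a window in $V$ is also geometric, addressing Remark \ref{remark: defined} and licensing the use of \hyperlink{FC3}{FC3}.

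For the inductive step itself, one applies Theorem \ref{FC} in each of its three cases. In the generic case \hyperlink{FC1}{FC1}, direct computation verifies the two elementary identities $c_{m-1,n-1}c_{m-1,n+1}=c_{m-1,n}^2$ and $c_{m-1,n}^2/c_{m-2,n}=c_{m,n}$, which together yield
\[V_{m,n}=\frac{V_{m-1,n}^2-V_{m-1,n-1}V_{m-1,n+1}}{V_{m-2,n}}=c_{m,n}\cdot\frac{W_{m-1,n}^2-W_{m-1,n-1}W_{m-1,n+1}}{W_{m-2,n}}=c_{m,n}W_{m,n}.\]
For \hyperlink{FC2}{FC2}, the relation $D_k=(-1)^{lk}B_kC_k/A_k$ scales correctly because $c_{m_0,n_0+k}\cdot c_{m_0+k,n_0}/c_{m_0,n_0}$ simplifies to exactly the scaling factor at the position of $D_k$, as one verifies from the two boxed ratio identities above.

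The main technical burden is \hyperlink{FC3}{FC3}. Here one must determine how the ratios $P,Q,R,S$ of the inner frame of a window at top-left $(m_0,n_0)$ transform under the scaling $W\mapsto V=cW$. Using the constancy of $c_{m,\bullet+1}/c_{m,\bullet}$ along rows and of $c_{\bullet+1,n}/c_{\bullet,n}$ along columns, one reads off
\[P'=\frac{r_1^{m_0+1}}{r_0^{m_0}}P,\qquad Q'=\frac{a_1}{a_0}\!\left(\frac{r_1}{r_0}\right)^{n_0}\!\!Q,\]
and analogous expressions for $R',S'$. Substituting $A_k=c_{m_0,n_0+k}\widetilde A_k$, $B_k=c_{m_0+k,n_0}\widetilde B_k$, $C_k,D_k,E_k,F_k,G_k$ in terms of their $W$-counterparts into the formula for $H_k$, each of the three summands $QE_k/A_k$, $PF_k/B_k$, $SG_k/C_k$ is itself an $H$-type quantity at a predictable offset, and the $c$-factors combine to give precisely the global factor at the position of $H_k$. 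The only delicate point is ensuring that the sign $(-1)^k$ and the denominator $R/D_k$ carry along without spoiling the cancellation; this reduces to the relation $PS/(QR)=(-1)^l$ from Theorem \ref{ratio ratio}, which holds in both $W$ and $V$ since the scaling factors $c$ conspire to preserve it. Completing this verification closes the induction.
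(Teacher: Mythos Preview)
Your proposal is correct and follows essentially the same approach as the paper: induction on the row index $m$, verifying the scaling identity separately for each of the three Frame Constraint cases, with the well-definedness of \hyperlink{FC3}{FC3} handled via the observation that the scaling factor $c_{m,n}$ preserves geometric sequences along rows and columns.

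Two minor remarks. First, the paper disposes of the degenerate cases $a_1=0$ or $r_1=0$ at the outset (the transformed sequence is then identically zero), which you omit; this is a small gap but easily patched. Second, your closing comment that the \hyperlink{FC3}{FC3} verification ``reduces to the relation $PS/(QR)=(-1)^l$'' is misleading: when one actually carries out the substitution (as the paper does), the three numerator terms $Q^\times E_k^\times/A_k^\times$, $P^\times F_k^\times/B_k^\times$, $S^\times G_k^\times/C_k^\times$ each pick up the \emph{same} scalar multiplier, and this common factor combines directly with the multiplier on $R^\times/D_k^\times$ to produce $c_{m,n}$. No appeal to Theorem~\ref{ratio ratio} is needed at that step, and the signs $(-1)^k$ pass through untouched. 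Your plan would succeed once you simply do the bookkeeping rather than invoke the ratio identity.
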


\begin{proof}

     \noindent If either $a_1=0$ or $r_1=0$, the conclusion follows immediately since $\mathbf{S}{(r_1,a_1)}=\{0\}_{i\in\Z}$, and hence $W_\K^{(r_0,a_0)}\left(\mathbf{S}{(r_1,a_1)}\right)[m,n]=0$ for all $n\in\Z$ and $m\in\Z\backslash\{-1\}$. Therefore, assume that $a_1,r_1\neq0$.\\
     
     \noindent First, it is confirmed that $P,Q,R$ and $S$ from \hyperlink{FC3}{FC3} are well defined. Indeed, assume that \begin{equation*}
        (W_\K(\textbf{S})[m+i,n])_{0\le i \le k\in\N}=x\cdot v^i ~~~\text{ or }~~~ (W_\K(\textbf{S})[m,n+i])_{0\le i \le k\in\N}=y\cdot u^i
    \end{equation*} for some $x,y,v,u\in\K$. In other words, assume that the above parts of the number wall are geometric sequences. Then, assuming equation (\ref{rs}) holds, \begin{align*}
        \left(W^{(r_0,a_0)}_\K\left(\mathbf{S}{(r_1,a_1)}\right)[m+i,n]\right)_{0\le i \le k\in\N}&=\frac{xr_1^{n(m+1)}a_1^{m+1}}{r_0^{nm}a_0^m}\cdot \left(\frac{r_1^n\cdot a_1}{r_0^n\cdot a_0} \cdot v\right)^i & ~~~\text{ or }~~~\\ \left(W^{(r_0,a_0)}_\K\left(\mathbf{S}{(r_1,a_1)}\right)[m,n+i]\right)_{0\le i \le k\in\N}&=\frac{yr_1^{n(m+1)}a_1^{m+1}}{r_0^{nm}a_0^m}\cdot \left(\frac{r_1^{m+1}}{r_0^m}\cdot u\right)^i.
    \end{align*} That is, equation (\ref{rs}) preserves geometric sequences and hence $P,Q,R$ and $S$ are well defined in \hyperlink{FC3}{FC3}.  \\
    
    \noindent Equation (\ref{rs}) is now established by induction on $m$. The base case is given by $m=-1$ and $m=0$, and it follows immediately from Definitions \ref{r-a-nw} and \ref{r-a-trans}. Therefore, let $M$ be a natural number, and assume that for rows $m<M$ formula (\ref{rs}) holds. \\
    
    \noindent Equation (7.1) is now verified for an entry on row $M$ in each of the three of the possible non-trivial cases. The strategy is the same for each: first apply one of the Frame Constraints from Definition \ref{r-a-nw}, then use the induction hypothesis, and finally show that it satisfies Lemma \ref{rswall}.\\

    \noindent \textbf{Case 1:} $W_\K^{(r_0,a_0)}\left(\mathbf{S}{(r_1,a_1)}\right)[{M-2,n}]\neq0$.\\
    \noindent In this case, \hyperlink{FC1}{FC1} is applied, giving $W_\K^{(r_0,a_0)}[M,n]\left(\mathbf{S}{(r_1,a_1)}\right)=$  \begin{equation}\label{rs1}\frac{\left(W_\K^{(r_0,a_0)}\left(\mathbf{S}{(r_1,a_1)}\right)[{M-1,n}]\right)^2-W_\K^{(r_0,a_0)}\left(\mathbf{S}{(r_1,a_1)}\right)[{M-1,n-1}]\cdot W_\K^{(r_0,a_0)}\left(\mathbf{S}{(r_1,a_1)}\right)[{M-1,n+1}]}{W_\K^{(r_0,a_0)}\left(\mathbf{S}{(r_1,a_1)}\right)[{M-2,n}]}.\end{equation} By the inductive hypothesis, \begin{align*}
    &W_\K^{(r_0,a_0)}\left(\mathbf{S}{(r_1,a_1)}\right)[M-1,n]=\frac{r_1^{nM}a_1^M}{r_0^{n(M-1)}a_0^{M-1}}W_\K(\mathbf{S})[M-1,n],& \\&W_\K^{(r_0,a_0)}\left(\mathbf{S}{(r_1,a_1)}\right)[M-2,n]=\frac{r_1^{n(M-1)}a_1^{M-1}}{r_0^{n(M-2)}a_0^{M-2}}W_\K(\mathbf{S})[M-2,n],&\\
    &W_\K^{(r_0,a_0)}\left(\mathbf{S}{(r_1,a_1)}\right)[{M-1,n-1}]=\frac{r_1^{(n-1)M}a_1^M}{r_0^{(n-1)(M-1)}a_0^{M-1}}W_\K(\mathbf{S})[{M-1,n-1}],&\end{align*}\noindent and \begin{align*}&W_\K^{(r_0,a_0)}\left(\mathbf{S}{(r_1,a_1)}\right)[M-1,n+1]=\frac{r_1^{(n+1)M}a_1^M}{r_0^{(n+1)(M-1)}a_0^{M-1}}W_\K(\mathbf{S})[{M-1,n+1}].&
    \end{align*} 
    Therefore, \begin{align*}
        (\ref{rs1})&=\frac{r_1^{n(M+1)}a_1^{M+1}}{r_0^{nM}a_0^M}\cdot \frac{\left(W_\K(\mathbf{S})[{M-1,n}]\right)^2-W_\K(\mathbf{S})[{M-1,n-1}]\cdot W_\K(\mathbf{S})[{M-1,n+1}]}{W_\K(\mathbf{S})[{M-2,n}]}\\&\substack{\hyperlink{FC1}{FC1}\\=}~\frac{r_1^{n(M+1)}a_1^{M+1}}{r_0^{nM}a_0^M}W_\K(\mathbf{S})[{n,M}],
    \end{align*}
    \noindent which finishes the proof in this case.\\

    \noindent \textbf{Case 2:} $W_\K^{(r_0,a_0)}\left(\mathbf{S}{(r_1,a_1)}\right)[{M-2,n}]=0=W_\K^{(r_0,a_0)}\left(\mathbf{S}{(r_1,a_1)}\right)[{M-1,n}]$. This case is a simpler application of the method seen in Case 3 and hence, for the sake of avoiding repetition, the proof is omitted. The reader can verify equation (\ref{rs}) in this case by following along with Case 3, making only 2 minor changes: \begin{itemize}
        \item Every instance of $M$ replaced with $M+1$;
        \item \hyperlink{FC2}{FC2} is used in place of \hyperlink{FC3}{FC3}.
    \end{itemize}
    
    \noindent \textbf{Case 3:} $W^{(r_0,a_0)}\left(\mathbf{S}{(r_1,a_1)}\right)[{M-2,n}]=0\neq W^{(r_0,a_0)}\left(\mathbf{S}{(r_1,a_1)}\right)[{M-1,n}]$ \\
    \noindent In this case, one applies \hyperlink{FC3}{FC3}. 
    \noindent Given a window in $W_\K^{(r_0,a_0)}\left(\mathbf{S}{(r_1,a_1)}\right)$ of size $l\in\N$, let $A^{\times}_k$ be the entry of the inner frame that corresponds to $A_k$ (recall Figure \ref{Fig: basicwindow}). Define similarly for other values of the inner and outer frame. \\
    
    \noindent Recall Figure \ref{Fig: basicwindow}. Using notation from \hyperlink{FC3}{F3}, the goal is to calculate $W_\K^{(r_0,a_0)}\left(\mathbf{S}{(r_1,a_1)}\right)[{M,n}]=H_k^\times$. To do so, one requires the values of $k,A^{\times}_k,B^{\times}_k,C^{\times}_k,D^{\times}_k,E^{\times}_k,F^{\times}_k,G^{\times}_k,$ $P^{\times}, Q^{\times},R^{\times}$ and $S^{\times}$, where  the latter four values are the ratios for the geometric sequences $(A_i^{\times})_{0\le i \le l+1}$, $(B_i^{\times})_{0\le i \le l+1}$, $(C_i^{\times})_{0\le i \le l+1}$ and $(D_i^{\times})_{0\le i \le l+1}$ respectively. The following diagram illustrates which rows and columns of $W_\K^{(r_0,a_0)}\left(\mathbf{S}{(r_1,a_1)}\right)$ these are located in. 
    \begin{figure}[H]
        \centering
        \includegraphics[width=0.7\linewidth]{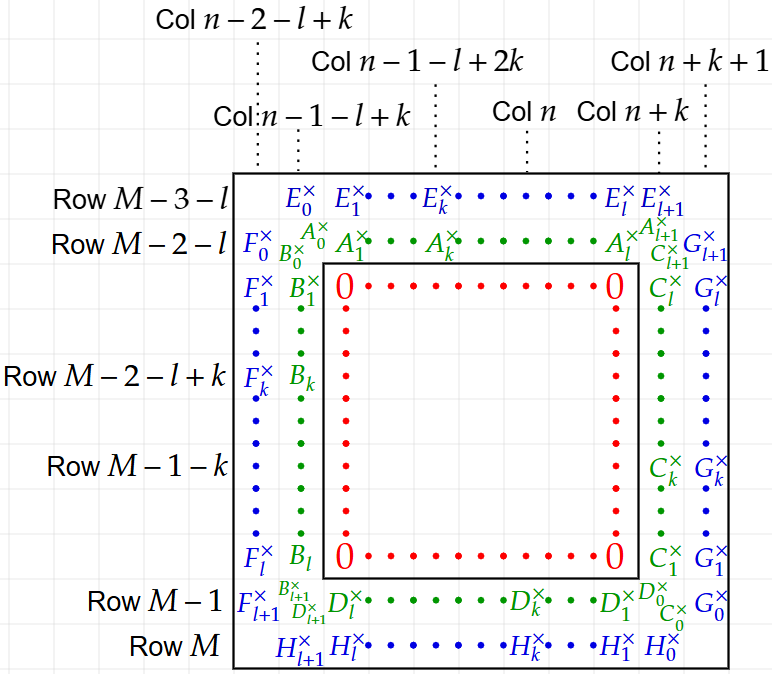}
        \caption{If $H_k$ is on row $M$ and column $n$, then this diagram shows the relative position of the other entries in the inner and outer frame.}
        \label{fig:rs window}
    \end{figure}
    \noindent Figure \ref{fig:rs window} shows that, for instance $$D_k^{\times}=W_\K^{(r_0,a_0)}\left(\mathbf{S}{(r_1,a_1)}\right)[M-1,n].$$ One then applies the induction hypothesis and the fact that equation (\ref{rs}) preserves geometric sequences to calculate that \begin{align*}
        W_\K^{(r_0,a_0)}\left(\mathbf{S}{(r_1,a_1)}\right)[M-1,n]&=\frac{r_1^{nM}a_1^{M}}{r_0^{n(M-1)}a_0^{(M-1)}}W_\K(\mathbf{S})[M-1,n]\\&=\frac{r_1^{nM}a_1^{M}}{r_0^{n(M-1)}a_0^{(M-1)}}D_k.\end{align*}
      \noindent By the same method, one obtains that\begin{align*}
        &A^{\times}_k=\frac{r_1^{(n-1-l+2k)(M-1-l)}a_1^{M-1-l}}{r_0^{(n-1-l+2k)(M-2-l)}a_0^{(M-2-l)}}A_k&&P^{\times}=\frac{r_1^{M-1-l}}{r_0^{M-2-l}}P& \\ &E^{\times}_k=\frac{r_1^{(n-1-l+2k)(M-2-l)}a_1^{M-2-l}}{r_0^{(n-1-l+2k)(M-3-l)}a_0^{M-3-l}}E_k& \\&B^{\times}_k=\frac{r_1^{(n-1-l+k)(M-1-l+k)}a_1^{(M-1-l+k)}}{r_0^{(n-1-l+k)(M-2-l+k)}a_0^{(M-2-l+k)}}B_k&&Q^{\times}=\frac{r_1^{n-1-l+k}a_1}{r_0^{n-1-l+k}a_0}Q& \\&F^{\times}_k=\frac{r_1^{(n-2-l+k)(M-1-l+k)}a_1^{(M-1-l+k)}}{r_0^{(n-2-l+k)(M-2-l+k)}a_0^{(M-2-l+k)}}F_k& \\&C^{\times}_k=\frac{r_1^{(n+k)(M-k)}a_1^{M-k}}{r_0^{(n+k)(M-1-k)}a_0^{(M-1-k)}}C_k& &R^{\times}=\frac{r_0^{n+k}a_0}{r_1^{n+k}a_1}R&\\&G^{\times}_k=\frac{r_1^{(n+k+1)(M-k)}a_1^{(M-k)}}{r_0^{(n+k+1)(M-1-k)}a_0^{(M-1-k)}}G_k& \\ &D^{\times}_k=\frac{r_1^{nM}a_1^{M}}{r_0^{n(M-1)}a_0^{(M-1)}}D_k& 
        &S^{\times}=\frac{r_0^{M-1}}{r_1^{M}}\mathbf{S}&
    \end{align*} 
    \noindent Finally, one calculates $H_k^{\times}$ using \hyperlink{FC3}{FC3}: \[H_k^{\times}=\frac{\frac{Q^{\times}E_k^{\times}}{A_k^{\times}}+(-1)^{k}\frac{P^{\times}F_k^{\times}}{B^{\times}_k}-(-1)^k\frac{S^{\times}G^{\times}_k}{C^{\times}_k}}{R^{\times}/D^{\times}_k}\cdotp\] Substituting the above values gives \begin{align*}
        &\frac{Q^{\times}E_k^{\times}}{A_k^{\times}}=\frac{r_0^{k} }{r_1^{k}}\cdot \frac{QE_k}{A_k}& &\frac{P^{\times}F_k^{\times}}{B_k^{\times}}=\frac{r_0^{k} }{r_1^{k}}\cdot \frac{PF_k}{B_k}& \\&\frac{S^{\times}G_k^{\times}}{C_k^{\times}}=\frac{r_0^{k} }{r_1^{k}}\cdot \frac{SG_k}{C_k}& &\frac{R^{\times}}{D_k^{\times}}=\frac{r_0^{nM+k} a_0^M}{r_1^{(n(M+1)+k)}a_1^{m+1}}\cdot \frac{R}{D_k}&
    \end{align*}
    and hence \begin{align*}H_k^\times&=\frac{r_1^{n(m+1)}a_1^{m+1}}{r_{0}^{nm}a_0^m}\cdot \frac{\frac{QE_k}{A_k}+(-1)^{k}\frac{PF_k}{B_k}-(-1)^k\frac{SG_k}{C_k}}{R/D_k}\\&\substack{\hyperlink{FC3}{FC3}\\=}~~\frac{r_1^{n(m+1)}a_1^{m+1}}{r_{0}^{nm}a_0^m}\cdot H_k\end{align*} as required.

\end{proof}
\noindent Lemma \ref{rswall} implies the following corollary that relates the profile of $W_\K(\textbf{S})$ to the profile of $W_\K^{(r_0,a_0)}(\mathbf{S}{(r_1,a_1)})$.
\begin{corollary}\label{cor: profile}
For any nonzero choice of nonzero $r_0,r_1, a_0$ and $a_1$ in $\K$, one has that \[\chi\left(W_\K^{(r_0,a_0)}\left(\mathbf{S}{(r_1,a_1)}\right)\right)=\chi(W_\K(\mathbf{S})).\]
\end{corollary}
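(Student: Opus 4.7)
The plan is to derive this corollary directly from the explicit formula established in Lemma \ref{rswall}, together with a separate verification of the rows $m \le -1$ which are not governed by that formula. The underlying observation is that the profile of a number wall detects only the distinction between zero and nonzero entries, so any pointwise multiplication by a nonzero scalar leaves the profile invariant.

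First I would dispose of the degenerate rows. For $m < -1$, Definition \ref{r-a-nw} forces both $W_\K(\mathbf{S})[m,n]=0$ and $W_\K^{(r_0,a_0)}\left(\mathbf{S}{(r_1,a_1)}\right)[m,n]=0$, so the profiles trivially agree in these rows. For $m=-1$, one has $W_\K(\mathbf{S})[-1,n]=1\neq 0$ and $W_\K^{(r_0,a_0)}\left(\mathbf{S}{(r_1,a_1)}\right)[-1,n]=a_0\cdot r_0^n$, which is nonzero by the assumption that $r_0$ and $a_0$ are nonzero. Hence row $-1$ is entirely marked $X$ on both sides.

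For $m\ge 0$ I would simply invoke Lemma \ref{rswall}, which yields
\[
W_\K^{(r_0,a_0)}\left(\mathbf{S}{(r_1,a_1)}\right)[m,n]=\frac{r_1^{n(m+1)}a_1^{m+1}}{r_0^{nm}a_0^{m}}\cdot W_\K(\mathbf{S})[m,n].
\]
Because $r_0,r_1,a_0,a_1\in\K^\times$ and $\K$ is a field, the scalar $\frac{r_1^{n(m+1)}a_1^{m+1}}{r_0^{nm}a_0^{m}}$ is a nonzero element of $\K$ for every pair $(m,n)\in\Z_{\ge 0}\times\Z$. Therefore $W_\K^{(r_0,a_0)}\left(\mathbf{S}{(r_1,a_1)}\right)[m,n]=0$ if and only if $W_\K(\mathbf{S})[m,n]=0$, and so the entries of the two profiles agree at $(m,n)$ by Definition \ref{profile_def}.

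There is no real obstacle here: the entire content of the corollary is packaged inside Lemma \ref{rswall}, and the only thing one has to check beyond invoking it is that the prefactor never vanishes, which is immediate from the nonvanishing hypothesis on $r_0,r_1,a_0,a_1$. The (minor) subtlety to be mindful of is not forgetting to handle the rows $m\le -1$ separately, since Lemma \ref{rswall} is stated only in the regime where the recurrence from the Frame Constraints applies.
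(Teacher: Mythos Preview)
Your proof is correct and follows the same approach as the paper, which simply notes that the corollary is an immediate consequence of Lemma~\ref{rswall}. Your separate treatment of the rows $m\le -1$ is more careful than strictly necessary---in fact the formula (\ref{rs}) holds verbatim at $m=-1$ (both sides equal $a_0 r_0^{\,n}$) and trivially at $m<-1$ (both sides vanish)---but this extra care does no harm.
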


\subsection{Reflecting Finite Number Walls}\hfill\\
\noindent Just as Lemma \ref{rswall} showed that number walls behave well under geometric transforms, this subsection shows the same but for a different transformation: reflection. There are two axes along which a finite number wall is reflected: the vertical and the horizontal. Here, \textbf{vertical} and \textbf{horizontal} refer to the line of reflection. The former is dealt with first.

\subsubsection{\textbf{Finite Number Walls under Vertical Reflection}}\hfill\\
\noindent Let $\ell\in\N$ and consider a finite sequence $\mathbf{S}=(s_i)_{0\le i \le \ell}$ over a field $\K$. The following lemma shows that $W_\K(\mathbf{S})$ behaves well when the sequence $\mathbf{S}$ is `reflected' vertically around its midpoint.
\begin{lemma}[Vertical Reflections of Finite Number Walls]\label{reflect}
    Let $W_\K(\mathbf{S})$ be the finite number wall of the finite sequence $\mathbf{S}=(s_i)_{0\le i \le \ell}$ over a field $\K$. Define $\overset{\textup{\tiny$\bm\leftrightarrow$}}{\textbf{\textup{S}}}=(\overset{\text{\tiny$\bm\leftrightarrow$}}{s}_{i})_{0\le i \le \ell}$ as the `reflection' of $\mathbf{S}$; that is, $\overset{\text{\tiny$\bm\leftrightarrow$}}{s}_i:=s_{\ell-i}$ for all $0\le i \le \ell$. Then $W_\K\big(\overset{\text{\tiny$\bm\leftrightarrow$}}{\textbf{\textup{S}}}\big)$ is the vertical reflection of $W_\K(\mathbf{S})$. More precisely, let $m$ and $n$ be natural numbers satisfying $0\le m\le \left\lfloor\frac{\ell}{2}\right\rfloor$ and $m\le n\le \ell-m$. Then, $$W_\K\big(\overset{\text{\tiny$\bm\leftrightarrow$}}{\textbf{\textup{S}}}\big)[{m,\ell-n}]=W_\K(\mathbf{S})[{m,n}].$$
\end{lemma}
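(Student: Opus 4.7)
My plan is to reduce the lemma to an elementary statement about the Toeplitz matrices underlying the number wall: namely, that reflecting the sequence around its midpoint induces transposition of the associated Toeplitz matrices, after which the claim follows from the identity $\det(M) = \det(M^T)$. I do not anticipate any significant obstacle; the entire content of the argument is index bookkeeping together with one line of determinant theory.

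First, I would verify that under the hypothesis $0 \le m \le \lfloor \ell/2 \rfloor$ and $m \le n \le \ell - m$, both of the $(m+1) \times (m+1)$ Toeplitz matrices $T_{\textbf{\textup{S}}}(n; m)$ and $T_{\overset{\textup{\tiny$\bm\leftrightarrow$}}{\textbf{\textup{S}}}}(\ell - n; m)$ are fully defined, i.e.\ all of their entries are indexed by elements of $\{0, 1, \dots, \ell\}$. Indeed, the entries of $T_{\textbf{\textup{S}}}(n; m)$ range over $\{s_{n-m}, \dots, s_{n+m}\}$, and the hypothesis is precisely what forces this index range inside $\{0, \dots, \ell\}$; the identical bookkeeping, with $\ell - n$ in place of $n$, handles the reflected matrix. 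This ensures that both quantities in the claimed equality are genuine determinants rather than the ``unknown variable'' entries of a finite number wall described in Section \ref{Sect: 3.3}.

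The core computation is then a direct comparison of entries. Reading off the displayed form of the Toeplitz matrix in the excerpt, the $(i,j)$-entry of $T_{\textbf{\textup{S}}}(n; m)$ is $s_{n + j - i}$. For the reflected sequence, using the defining relation $\overset{\textup{\tiny$\bm\leftrightarrow$}}{s}_k = s_{\ell - k}$, the $(i,j)$-entry of $T_{\overset{\textup{\tiny$\bm\leftrightarrow$}}{\textbf{\textup{S}}}}(\ell - n; m)$ equals
\[
\overset{\textup{\tiny$\bm\leftrightarrow$}}{s}_{(\ell - n) + j - i} \;=\; s_{\ell - ((\ell - n) + j - i)} \;=\; s_{n + i - j},
\]
which is exactly the $(j, i)$-entry of $T_{\textbf{\textup{S}}}(n; m)$. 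Hence $T_{\overset{\textup{\tiny$\bm\leftrightarrow$}}{\textbf{\textup{S}}}}(\ell - n; m) = T_{\textbf{\textup{S}}}(n; m)^T$, and taking determinants of both sides produces $W_\K(\overset{\textup{\tiny$\bm\leftrightarrow$}}{\textbf{\textup{S}}})[m, \ell - n] = W_\K(\textbf{\textup{S}})[m, n]$, as required.

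In short, the lemma is a direct manifestation of the Toeplitz structure's symmetry under reflection. The only subtle point worth flagging at the outset is correctly identifying the reflected column index as $\ell - n$ (so that the diagonals of the two Toeplitz matrices align after transposition) and confirming that this index stays inside the defined portion of the finite number wall; both are immediate from the given constraints on $m$ and $n$.
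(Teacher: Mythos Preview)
Your proposal is correct and follows essentially the same approach as the paper: both arguments observe that $T_{\overset{\textup{\tiny$\bm\leftrightarrow$}}{\textbf{\textup{S}}}}(\ell-n;m)$ is the transpose of $T_{\textbf{\textup{S}}}(n;m)$ and conclude by invariance of the determinant under transposition. The paper merely asserts that the transposition identity is ``easy to verify,'' whereas you actually carry out the index computation and additionally check that both Toeplitz matrices are fully defined under the given constraints on $m$ and $n$; otherwise the arguments are identical.
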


\noindent Lemma \ref{reflect} is illustrated by Figure \ref{reflectpic}:\begin{figure}[H]
    \centering
    \includegraphics[width=1\linewidth]{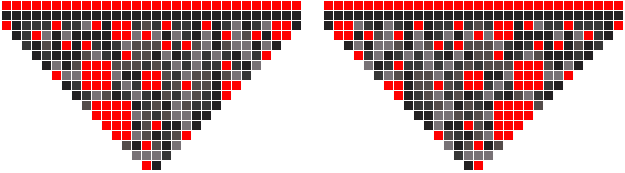}
    \caption{Left: The number wall of a sequence $\mathbf{S}$ over $\F_5$. The zero entries are coloured in red, and the nonzero entries are in shades of grey. Right: The number wall of \(\overset{\textup{\tiny$\bm\leftrightarrow$}}{\textbf{\textup{S}}}\), defined as in Lemma \ref{reflect}.}
    \label{reflectpic}
\end{figure}
\begin{proof}
    
    By Definition \ref{nw}, $$W_\K(\mathbf{S})[m,n]=\det(T_S(n;m)).$$ Similarly, $$W_\K(\overset{\text{\tiny$\bm\leftrightarrow$}}{\textbf{\textup{S}}})[m,\ell-n]=\det(T_{\overset{\text{\tiny$\bm\leftrightarrow$}}{\textbf{\textup{S}}}}(\ell-n;m)).$$ It is easy to verify that, $T_{\overset{\text{\tiny$\bm\leftrightarrow$}}{\textbf{\textup{S}}}}(\ell-n;m)$ is the transpose of $T_S(n;m)$. As the determinant of a matrix is invariant under transposition, the proof is complete.
\end{proof}

\noindent There are two immediate corollaries of Lemma \ref{reflect}: \begin{corollary}[Symmetry of Finite Number Walls Generated by Symmetrical Sequences]
     Let $\mathbf{S}=(s_i)_{0\le i \le \ell}$ be a finite sequence over a field $\K$ satisfying $s_i=s_{\ell-i}$ for every $0\le i \le \ell$. Additionally, define $\overset{\text{\tiny$\bm\leftrightarrow$}}{\textbf{\textup{S}}}=(\overset{\text{\tiny$\bm\leftrightarrow$}}{s}_{i})_{0\le i \le \ell}$ as in Lemma \ref{reflect}.  Then, for all $0\le m\le \left\lfloor\frac{\ell}{2}\right\rfloor$ and $m\le n\le \ell-m$, $$W_\K\big(\overset{\text{\tiny$\bm\leftrightarrow$}}{\textbf{\textup{S}}}\big)[{m,n}]=W_\K(\mathbf{S})[{m,n}].$$
\end{corollary}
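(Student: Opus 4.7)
The proof is essentially immediate from Lemma \ref{reflect} together with the symmetry hypothesis, so the plan is short.

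First I would observe that the hypothesis $s_i = s_{\ell - i}$ for every $0 \le i \le \ell$ says exactly that the reflected sequence $\overset{\text{\tiny$\bm\leftrightarrow$}}{\mathbf{S}} = (s_{\ell - i})_{0 \le i \le \ell}$ coincides with $\mathbf{S}$ entry-by-entry. Hence $\overset{\text{\tiny$\bm\leftrightarrow$}}{\mathbf{S}} = \mathbf{S}$ as finite sequences over $\K$. Since the finite number wall of a sequence is determined (via Definition \ref{nw} and the extension discussed in Section \ref{Sect: 3.3}) entirely by the values of its generating sequence, the equality $W_\K(\overset{\text{\tiny$\bm\leftrightarrow$}}{\mathbf{S}}) = W_\K(\mathbf{S})$ holds at every entry where both are defined, which in particular includes the triangular range $0 \le m \le \lfloor \ell/2\rfloor$, $m \le n \le \ell - m$ from the statement.

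If one wishes to extract the actual \emph{symmetry} content suggested by the title of the corollary, the plan is then to combine the above observation with Lemma \ref{reflect}: substituting $\overset{\text{\tiny$\bm\leftrightarrow$}}{\mathbf{S}} = \mathbf{S}$ into the conclusion $W_\K(\overset{\text{\tiny$\bm\leftrightarrow$}}{\mathbf{S}})[m, \ell - n] = W_\K(\mathbf{S})[m, n]$ of Lemma \ref{reflect} yields
\[
W_\K(\mathbf{S})[m, \ell - n] \;=\; W_\K(\mathbf{S})[m, n]
\]
on the same range of indices, which is the genuine mirror symmetry of the finite number wall about its vertical midline. This step requires no new ideas beyond those already used in the proof of Lemma \ref{reflect}, which exploited only the fact that $T_{\overset{\text{\tiny$\bm\leftrightarrow$}}{\mathbf{S}}}(\ell - n; m)$ is the transpose of $T_{\mathbf{S}}(n; m)$ and that determinants are transposition-invariant.

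There is no real obstacle to overcome: the only substantive input is Lemma \ref{reflect}, and the reduction to the symmetric case is a one-line identification of sequences. I would therefore present the corollary as a two-line deduction, emphasising that the symmetry of $\mathbf{S}$ forces $\overset{\text{\tiny$\bm\leftrightarrow$}}{\mathbf{S}} = \mathbf{S}$ and then simply quoting Lemma \ref{reflect}.
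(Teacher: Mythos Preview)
Your proposal is correct and matches the paper's approach exactly: the paper presents this corollary as immediate from Lemma \ref{reflect} without giving any further argument, and your observation that the symmetry hypothesis forces $\overset{\text{\tiny$\bm\leftrightarrow$}}{\mathbf{S}} = \mathbf{S}$ (whence the number walls agree trivially) is precisely the intended one-line deduction.
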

\noindent The second corollary is obtained by combining Lemma \ref{reflect} and Lemma \ref{rswall}.\begin{corollary}[Reflections of Finite $(r,a)$-Number Walls]\label{lem: reflect_r_a}
    Let $\mathbf{S}=(s_i)_{0\le i \le \ell}$ be a finite sequence over $\K$ and let $r_0,a_0,r_1,a_1\in\K$ be such that $a_0,r_0\neq0$. Furthermore, define $\overset{\text{\tiny$\bm\leftrightarrow$}}{\textbf{\textup{S}}}$ as in Lemma \ref{reflect}. Then, for all $0\le m\le \left\lfloor\frac{\ell}{2}\right\rfloor$ and $m\le n\le \ell-m$,\[W^{(r_0,a_0)}_\K\left(\mathbf{S}{(r_1,a_1)}\right)[m,n] = W_\K^{(r_0^{-1}, a_0r^{\ell}_0)}\left(\overset{\text{\tiny$\bm\leftrightarrow$}}{\textbf{\textup{S}}}^{(r_1^{-1},r_1^{_\ell}a_1)}\right)[m, \ell-n].\]
\end{corollary}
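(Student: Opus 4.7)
The plan is to derive Corollary \ref{lem: reflect_r_a} by composing Lemma \ref{rswall} with Lemma \ref{reflect}: first strip both $(r,a)$-number walls down to ordinary number walls, then match them using the reflection identity, and finally verify that the prefactors agree.

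First, I would apply Lemma \ref{rswall} directly to the left-hand side to obtain
\[
W^{(r_0,a_0)}_\K\!\left(\mathbf{S}{(r_1,a_1)}\right)[m,n] \;=\; \frac{r_1^{n(m+1)}\,a_1^{m+1}}{r_0^{nm}\,a_0^{m}}\; W_\K(\mathbf{S})[m,n].
\]
Then, applying the same lemma to the right-hand side with the parameters $(r_0^{-1},a_0 r_0^{\ell})$ and $(r_1^{-1}, r_1^{\ell} a_1)$ yields
\[
W_\K^{(r_0^{-1}, a_0r_0^{\ell})}\!\left(\overset{\text{\tiny$\bm\leftrightarrow$}}{\textbf{S}}^{(r_1^{-1},r_1^{\ell}a_1)}\right)\![m,\ell-n] \;=\; \frac{(r_1^{-1})^{(\ell-n)(m+1)}(r_1^{\ell}a_1)^{m+1}}{(r_0^{-1})^{(\ell-n)m}(a_0 r_0^{\ell})^{m}}\; W_\K\!\big(\overset{\text{\tiny$\bm\leftrightarrow$}}{\textbf{S}}\big)[m,\ell-n].
\]
Note that the hypotheses $a_0, r_0 \neq 0$ make the inverses well defined, and that the additional case $a_1=0$ or $r_1=0$ is trivial since both sides vanish.

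Next, I would simplify the prefactor on the right. In the numerator the exponents of $r_1$ telescope: $-(\ell-n)(m+1) + \ell(m+1) = n(m+1)$, leaving $r_1^{n(m+1)}a_1^{m+1}$. In the denominator the exponents of $r_0$ telescope similarly: $-(\ell-n)m + \ell m = nm$, leaving $r_0^{nm}a_0^{m}$. Thus the two prefactors match exactly.

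Finally, Lemma \ref{reflect} gives $W_\K\!\big(\overset{\text{\tiny$\bm\leftrightarrow$}}{\textbf{S}}\big)[m,\ell-n] = W_\K(\mathbf{S})[m,n]$ in the admissible range $0 \le m \le \lfloor \ell/2 \rfloor$ and $m \le n \le \ell - m$, which is precisely the range imposed in the statement. Combining this with the matched prefactors yields the claimed identity. No step here is a serious obstacle; the only thing that requires care is bookkeeping the exponents in the two applications of Lemma \ref{rswall} and confirming that the admissibility range from Lemma \ref{reflect} is the one inherited by the corollary.
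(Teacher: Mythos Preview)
Your proof is correct and follows essentially the same approach as the paper: apply Lemma \ref{rswall} to strip off the $(r,a)$-data on each side, invoke Lemma \ref{reflect} to match the underlying number walls, and check that the scalar prefactors agree via the telescoping of exponents. The paper presents this as a chain of equalities starting from the left-hand side, while you compute both sides separately and compare, but the content is identical.
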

\begin{proof}
    By Lemma \ref{rswall}, one has that \begin{align}
 W^{(r_0,a_0)}_\K\left(\mathbf{S}{(r_1,a_1)}\right)[m,n]&=\frac{r_1^{n(m+1)}a_1^{m+1}}{r_0^{nm}a_0^m}W_\K(\mathbf{S})[m,n]\nonumber\\
 &=\frac{(r_1^{-1})^{(\ell-n)(m+1)}\cdot r_1^{\ell(m+1)}\cdot a_1^{m+1}}{(r_0^{-1})^{(\ell-n)m}\cdot r_0^{\ell m}\cdot a_0^m}W_\K(\mathbf{S})[m,n].\label{eqn: r-s-reflect_1}\end{align} \noindent Lemma \ref{reflect} implies that\begin{align}
 (\ref{eqn: r-s-reflect_1})&=\frac{(r_1^{-1})^{(\ell-n)(m+1)}\cdot r_1^{\ell(m+1)}\cdot a_1^{m+1}}{(r_0^{-1})^{(\ell-n)m}\cdot r_0^{\ell m}\cdot a_0^m}W_\K(\overset{\text{\tiny$\bm\leftrightarrow$}}{\textbf{S}})[m,\ell-n]\label{eqn: r-s-reflect_2}\end{align} and a final application of Lemma \ref{rswall} gives \begin{align*} 
(\ref{eqn: r-s-reflect_2})&=W_\K^{(r_0^{-1}, r^{\ell}_0a_0)}\left(\overset{\text{\tiny$\bm\leftrightarrow$}}{\textbf{S}}^{(r_1^{-1},r_1^{_\ell}a_1)}\right)[m, \ell-n] 
    \end{align*}
\end{proof}
\noindent Lemma \ref{lem: reflect_r_a} shows that $(r,a)$-number walls behave well under vertical reflections. This motivates the following definition.\begin{definition}    Let $\mathbf{S}=(s_i)_{0\le i \le l}$ be a finite sequence over a field $\K$ and let $r_0,a_0,r_1,a_1\in\K$. Then, define \begin{equation}\mathcal{V}\left(W^{(r_0,a_0)}_\K\left(\mathbf{S}{(r_1,a_1)}\right)\right) := W_\K^{(r_0^{-1}, a_0r^{\ell}_0)}\left(\overset{\text{\tiny$\bm\leftrightarrow$}}{\textbf{S}}^{(r_1^{-1},r_1^{_\ell}a_1)}\right)\label{eqn: reflect_V}\end{equation} That is, $\mathcal{V}\left({W^{(r_0,a_0)}_\K\left(\mathbf{S}{(r_1,a_1)}\right)}\right)$ is the vertical reflection of $W^{(r_0,a_0)}_\K\left(\mathbf{S}{(r_1,a_1)}\right)$.
\end{definition}
\subsubsection{\textbf{Horizontal Reflections of Finite Number Walls}}\hfill\\
\noindent Recall, that a finite number wall is an infinite 2-dimensional array over $\K$ with only finitely many entries defined (Definition \ref{fin_nw}). This allows horizontal reflections of finite number walls to be well defined.
\begin{definition}\label{def: hor_reflect}
    Let $\textbf{S}$ be a sequence over a field $\K$ and let $\K\times \K $ be the space of 2-dimensional infinite arrays over $\K$. Then, define $\mathcal{H}:\K^{\Z\times \Z}\to\K^{\Z\times \Z}$ as \[\mathcal{H}((W_\K(\textbf{S})[m,n])_{m,n\in\Z}):=(W_\K(\textbf{S})[-m,n])_{m,n\in\Z}.\]
\end{definition}
\begin{remark}
    One rarely considers horizontal reflections of number walls on their own. Instead, they are often paired with a rotation, as defined in the next subsection. 
\end{remark}

\subsection{Rotating Finite Number Walls}\hfill\\
\noindent The goal of this subsection is to show how infinite number walls can often be deconstructed into infinitely many finite numbers walls, each rotated by some multiple of $90^\circ$ and potentially reflected horizontally or vertically. To this end, the following function is introduced that rotates an infinite array over $\K$ (recall that a finite number wall is an infinite array, as in Figure \ref{fin_nw}).
\begin{definition}\label{def:rotate}
    Let $(W_\K(\textbf{S})[m,n])_{m,n\in\Z}\in\K^{\Z\times \Z}$ be the number wall of a sequence $\textbf{S}$ over $\K$ and let $\K^{\Z\times\Z}$ be the space of 2-dimensional arrays over $\K$. Then, define the function $\rho:\K^{\Z\times \Z}\to\K^{\Z\times \Z}$ by \[(\rho(W_\K(\textbf{S}))[m,n])_{m,n\in\Z}:= (W_\K(\textbf{S})[-n,m])_{m,n\in\Z}.\] That is, $\rho$ is the clockwise rotation of $90^\circ$ of $W_\K(\textbf{S})[m,n]$ around $W_\K(\textbf{S})[0,0]$. 
\end{definition}

\noindent The next part of this subsection shows that rotated finite number walls always appear on the edges of windows.
\subsubsection{\textbf{Rotating the Frame Constraints}}\hfill\\
\noindent For an illustration of the following setup, see Figure \ref{fig: rot_lem}.\\

\noindent Let $\mathbf{S}$ be a sequence over a field $\K$. For $m,l\in\N$ and $n\in\Z$, let $(W_\K(\mathbf{S})[m+i,n+j])_{0\le i,j \le l+3}$ be an $(l+4)\times (l+4)$ square of entries in $W_\K(\mathbf{S})$ comprised of the following parts: \begin{itemize}
    \item An $l\times l$ window $\mathcal{W}$ given by $(W_\K(\mathbf{S})[m+i,n+j])_{2\le i,j\le l+1}=(0)_{1\le i,j\le l}$,
    \item The north inner and outer frame of $\mathcal{W}$ are given respectively by \begin{align*}
        &\textbf{A}:=(A_i)_{0\le i \le l+1}:=(W_\K(\mathbf{S})[m+1,n+i])_{1\le i \le l+2}&\end{align*} and\begin{align*} &\textbf{E}:=(E_i)_{0\le i \le l+1}:=(W_\K(\mathbf{S})[m,n+i])_{1\le i \le l+2}.& 
    \end{align*} 
    \item The west inner and outer frame of $\mathcal{W}$ are given respectively by \begin{align*}
        &\textbf{B}:=(B_i)_{0\le i \le l+1}:=(W_\K(\mathbf{S})[m+i,n+1])_{1\le i \le l+2}&\end{align*} and\begin{align*} &\textbf{F}:=(F_i)_{0\le i \le l+1}:=(W_\K(\mathbf{S})[m+i,n])_{1\le i \le l+2}.& 
    \end{align*} 
    \item The east inner and outer frame of $\mathcal{W}$ are given respectively by \begin{align*}
        &\textbf{C}:=(C_{i})_{0\le i \le l+1}:=(W_\K(\mathbf{S})[m+l-i+3, n+l+2])_{1 \le i \le l+2}&\end{align*} and\begin{align*} &\textbf{G}:=(G_{i})_{0\le i \le l+1}:=(W_\K(\mathbf{S})[m+l-i+3, n+l+3])_{1 \le i \le l+2}.& 
    \end{align*}
    \item The south inner and outer frame of $\mathcal{W}$ are given respectively by \begin{align*}
        &\textbf{D}:=(D_{i})_{0\le i \le l+1}:=(W_\K(\mathbf{S})[m+l+2, n+l-i+3])_{1\le i \le l+2}&\end{align*} and\begin{align*} &\textbf{H}:=(H_{i})_{0\le i \le l+1}:=(W_\K(\mathbf{S})[m+l+3, n+l-i+3])_{1\le i \le l+2}.& 
    \end{align*}

\end{itemize} 
\noindent Furthermore, define the inner frame ratios by\begin{align*}
    &P:=\frac{A_1}{A_0}& &Q:=\frac{B_1}{B_0}& &R:=\frac{C_1}{C_0}& &S:=\frac{D_1}{D_0}\cdotp&
\end{align*} 
\subsubsection{\textbf{Triangular Portions Adjacent to the Outer Frame of} $\mathcal{W}$}\hfill\\\noindent Next, define four further portions of $W_\K(\mathbf{S})$ as follows:\begin{align*}
    \mathfrak{P}_{south}&:= \left\{W_\K(\mathbf{S})[m+l+3+i, n+1+j]:{0\le i \le \left\lfloor\frac{l+1}{2}\right\rfloor, ~i \le j \le l+1-i}\right\}\\
    \mathfrak{P}_{east}&:= \left\{W_\K(\mathbf{S})[m+1+i, n+l+3+j]:{0\le j \le \left\lfloor\frac{l+1}{2}\right\rfloor, ~ j\le i \le l+1-j}\right\}\\
    \mathfrak{P}_{north}&:= \left\{W_\K(\mathbf{S})[m-i,n+1+j]:{0\le i \le \left\lfloor\frac{l+1}{2}\right\rfloor, ~i \le j \le l+1-i}\right\}\\
    \mathfrak{P}_{west}&:= \left\{W_\K(\mathbf{S})[m+1+i, n+1-j]:{0\le j \le \left\lfloor\frac{l+1}{2}\right\rfloor, ~ j\le i \le l+1-j}\right\}
\end{align*}

\noindent The definitions from this subsection thus far are illustrated in the following diagram:\begin{figure}[H]
    \centering
    \includegraphics[width=1\linewidth]{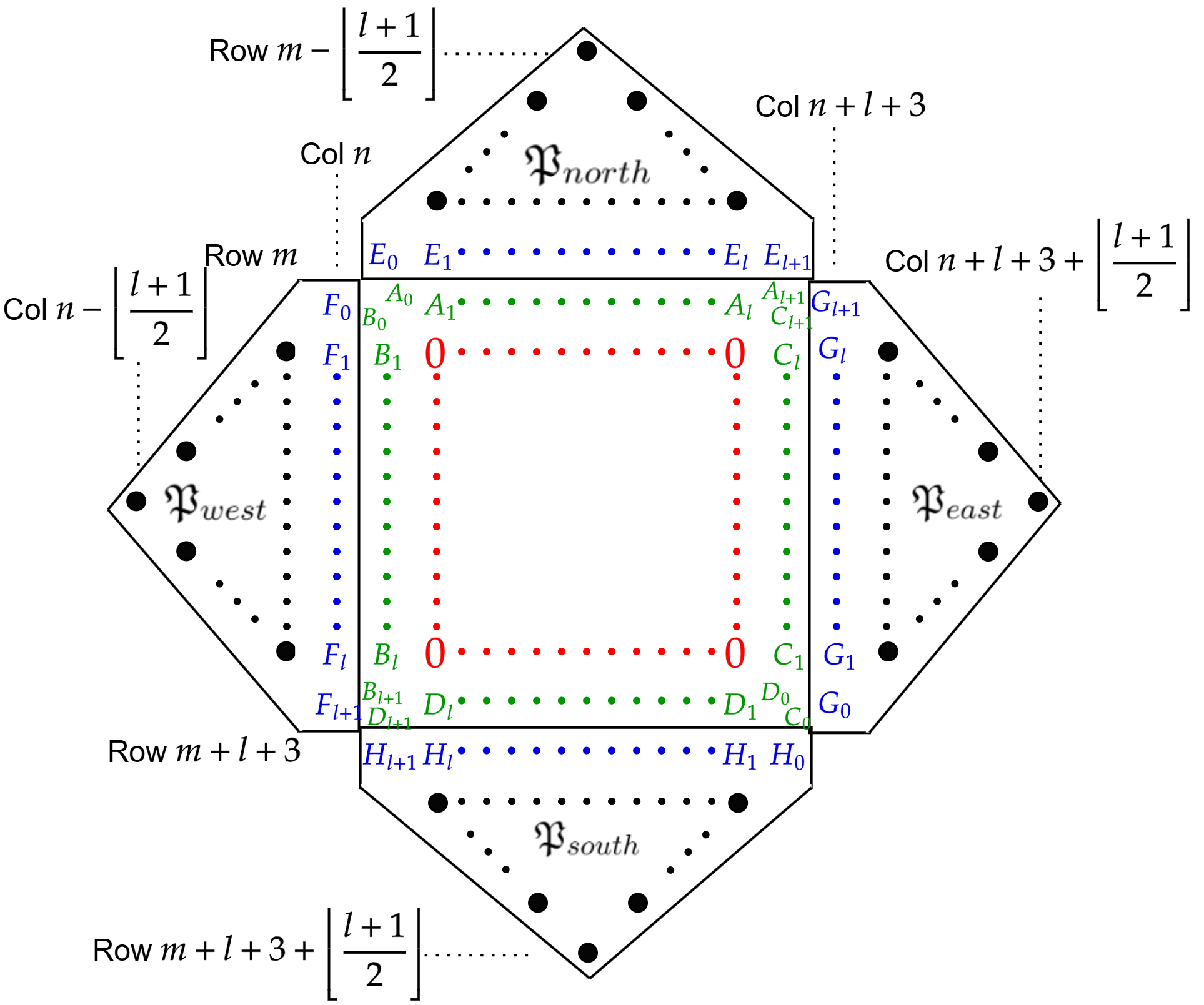}
    \caption{An illustration of the window $\mathcal{W}$ (red square of zeroes), its inner and outer frame (green and blue, respectively) and the portions $\mathfrak{P}_{north}$, $\mathfrak{P}_{east}$, $\mathfrak{P}_{south}$ and $\mathfrak{P}_{west}$ (black outlines).} 
    \label{fig: rot_lem}
\end{figure}

\begin{lemma}\label{lem: rotate}
    Consider the $(l+4)\times (l+4)$ square portion of $W_\K(\mathbf{S})$ as defined above. Then, \begin{align*}
        &\hypertarget{lem1}{1)}~~ W_\K(\textbf{S})[m+l+3+i,n+l+2-j]={W_\K^{(S,D_0)}(\textbf{H})}[i,j]\\&~~~~~\text{ for }~~{0\le i \le \left\lfloor\frac{l+1}{2}\right\rfloor, i \le j \le l+1-i};\\
        &\hypertarget{lem2}{2)}~~ W_\K(\textbf{S})[m+l+2-i,n+l+3+j] = W_\K^{(R,C_0)}(\textbf{G})[j,i]\\&~~~~~\text{ for }~~{0\le j \le \left\lfloor\frac{l+1}{2}\right\rfloor, j \le i \le l+1-j};\\
        &\hypertarget{lem3}{3)}~~ W_\K(\textbf{S})[m-i,n+1+j]={W_\K^{(P,A_0)}(\textbf{E})}[i,j]\\&~~~~~\text{ for }~~{0\le i \le \left\lfloor\frac{l+1}{2}\right\rfloor, i \le j \le l+1-i};\\
        &\hypertarget{lem4}{4)}~~ W_\K(\textbf{S})[m+1+i,n-j] = W_\K^{(Q,B_0)}(\textbf{F})[j,i]\\&~~~~~\text{ for }~~{0\le j \le \left\lfloor\frac{l+1}{2}\right\rfloor, j \le i \le l+1-j}.
    \end{align*}
    \noindent In other words, \begin{align*}
        &1)~~ \mathfrak{P}_{south}=\mathcal{V}\left(\left(W_\K^{(S,D_0)}(\textbf{H})[i,j]\right)_{{0\le i \le \left\lfloor\frac{l+1}{2}\right\rfloor, i \le j \le l+1-i}}\right),&\\
        &2)~~ \mathfrak{P}_{east}=\rho^{-1}\left(\left(W_\K^{(R,C_0)}(\textbf{G})[i,j]\right)_{0\le i \le \left\lfloor\frac{l+1}{2}\right\rfloor, i \le j \le l+1-i}\right),&\\
        &3)~~ \mathfrak{P}_{north} = \mathcal{H}\left(\left(W_\K^{(P,A_0)}(\textbf{E})[i,j]\right)_{0\le i \le \left\lfloor\frac{l+1}{2}\right\rfloor, i \le j \le l+1-i}\right),&\\
        &4)~~ \mathfrak{P}_{west}=\rho\left(\left(W_\K^{(Q,B_0)}(\textbf{F})[i,j]\right)_{0\le i \le \left\lfloor\frac{l+1}{2}\right\rfloor, i \le j \le l+1-i}\right).&
    \end{align*}
\end{lemma}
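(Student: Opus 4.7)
The plan is to compare both sides of each identity via the Frame Constraint Equations (Theorem \ref{FC}): both sides are two-dimensional arrays generated by the same local recurrence, and the goal is to check that they arise from matching initial data. By the $90^\circ$ rotational symmetry of the window $\mathcal{W}$ and its adjacent inner/outer frames, it suffices to prove claim (\hyperlink{lem1}{1}); claims (\hyperlink{lem2}{2}), (\hyperlink{lem3}{3}), (\hyperlink{lem4}{4}) then follow by applying $\rho^{-1}$, $\mathcal{H}$, and $\rho$ respectively to the configuration and invoking the symmetric argument.

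For claim (\hyperlink{lem1}{1}), I would first identify the initial data of the right-hand side. By Definition \ref{r-a-nw}, the finite $(S,D_0)$-number wall of $\textbf{H}$ has row $0$ equal to the generating sequence $\textbf{H}$, row $-1$ equal to the geometric sequence $(D_0 \cdot S^n)_n$, and rows below $-1$ identically zero. A relabelling of the index in the definition of $\textbf{H}$ gives $H_j = W_\K(\textbf{S})[m+l+3, n+l+2-j]$, matching the ``row $0$'' of $\mathfrak{P}_{south}$ read from right to left. Similarly, the inner frame entry $D_j = D_0 \cdot S^j$ at position $W_\K(\textbf{S})[m+l+2, n+l+2-j]$ aligns with the geometric row $-1$ of $W_\K^{(S,D_0)}(\textbf{H})$, and the zero entries inside the window $\mathcal{W}$ correspond to the forced zero entries on rows $\le -2$ of the $(r,a)$-number wall. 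Thus the initial data agree.

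The next step is induction on $i$. For $i \ge 1$ and $j$ with $i \le j \le l+1-i$, Theorem \ref{FC} determines $W_\K(\textbf{S})[m+l+3+i, n+l+2-j]$ from the three cells at row $i-1$ (at columns $j-1,j,j+1$) and the single cell at row $i-2$ (at column $j$). The triangular shape of $\mathfrak{P}_{south}$ is engineered precisely so that all four of these cells lie within the region or in the initial data. By the induction hypothesis, each such cell agrees with its counterpart in $W_\K^{(S,D_0)}(\textbf{H})$; in particular, their zero/nonzero status matches, so whichever of the three frame constraint equations (\hyperlink{FC1}{FC1}, \hyperlink{FC2}{FC2}, \hyperlink{FC3}{FC3}) is triggered on one side is also triggered on the other, and the computed values coincide.

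The main obstacle is verifying the induction step when \hyperlink{FC3}{FC3} applies, since this case involves dividing by inner-frame data of a window encountered during the computation. If such a window $\mathcal{W}'$ arises inside $\mathfrak{P}_{south}$, one must check that the four inner-frame ratios $P,Q,R,S$ and the corresponding outer-frame entries that \hyperlink{FC3}{FC3} requires agree on both sides. The Square Window Theorem (Theorem \ref{window}) forces $\mathcal{W}'$ to be a square, so that its inner and outer frames are either wholly within $\mathfrak{P}_{south}$ (in which case they match cell-by-cell by induction) or bordered by the initial data (inner frame $\textbf{D}$, outer frame $\textbf{H}$, or the zeros of $\mathcal{W}$), which by the matching verified above still determines the ratios consistently. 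Once this is checked, the entire induction closes and claim (\hyperlink{lem1}{1}) holds; the three remaining claims follow by transporting the argument through $\rho^{-1}$, $\mathcal{H}$, and $\rho$, which simultaneously swap the roles of $(\textbf{H},\textbf{D},S,D_0)$ with $(\textbf{G},\textbf{C},R,C_0)$, $(\textbf{E},\textbf{A},P,A_0)$, and $(\textbf{F},\textbf{B},Q,B_0)$ respectively.
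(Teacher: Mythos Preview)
Your approach to Part (\hyperlink{lem1}{1}) is sound and in fact closely parallels how the paper handles Part (\hyperlink{lem2}{2}): match the initial data on rows $-1$ and $0$, then induct using the Frame Constraints. The paper's own argument for Part (\hyperlink{lem1}{1}) is slightly different---it first isolates the locality statement (Lemma~\ref{lem: non_zero_row}) showing that $\mathfrak{P}_{south}$ depends only on $\textbf{D}$, $\textbf{H}$ and the adjacent row of zeros, and then observes that after a coordinate shift this \emph{is} the definition of $W_\K^{(S,D_0)}(\textbf{H})$---but your inductive version would also work.

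The genuine gap is your reduction of Parts (\hyperlink{lem2}{2}), (\hyperlink{lem3}{3}), (\hyperlink{lem4}{4}) to Part (\hyperlink{lem1}{1}) by ``rotational symmetry''. The Frame Constraints (Theorem~\ref{FC}) are \emph{directional}: they compute row $m$ from rows $m-1$ and $m-2$. The configuration around the window $\mathcal{W}$ is indeed $90^\circ$-symmetric, but the recurrence is not. For Part (\hyperlink{lem1}{1}), both $\mathfrak{P}_{south}$ and $W_\K^{(S,D_0)}(\textbf{H})$ are computed in the natural downward direction, so matching initial data suffices. For Part (\hyperlink{lem2}{2}), however, $W_\K^{(R,C_0)}(\textbf{G})$ is computed row by row, while $\mathfrak{P}_{east}$ sits inside $W_\K(\textbf{S})$ and must be reached by \emph{rearranging} the Frame Constraints to propagate rightward, column by column. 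Showing that this rearrangement is consistent---that the rearranged \hyperlink{FC3}{FC3} on one side equals the standard \hyperlink{FC3}{FC3} on the other---is precisely the nontrivial content of the lemma. The paper carries this out explicitly in Case~2 of Part (\hyperlink{lem2}{2}): one identifies a window $\widetilde{\mathcal{W}}$ in $\mathfrak{P}_{east}$ with a window $\mathcal{W}'$ in $W_\K^{(R,C_0)}(\textbf{G})$, writes down the dictionary between their frame data (e.g.\ $\widetilde{P}^{-1}=R'$, $\widetilde{Q}^{-1}=P'$, etc.), and then verifies algebraically, using the ratio identity $PS/QR=(-1)^l$ of Theorem~\ref{ratio ratio}, that the two applications of \hyperlink{FC3}{FC3} agree. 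Without this computation, the ``symmetric argument'' you invoke has no force: you would be assuming exactly what needs to be proved.
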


    \noindent The first step towards proving Lemma \ref{lem: rotate} is establishing that the triangular portions $\mathfrak{P}_{south}$, $\mathfrak{P}_{east}$, $\mathfrak{P}_{north}$ and $\mathfrak{P}_{west}$ depend only on the inner and outer frame of $\mathcal{W}$ that are adjacent to it. This is the goal of the following Lemma, which is illustrated immediately after it is stated. In this upcoming lemma, the $\pm$ and $\mp$ symbols are used to indicate that there are two separate cases.
\begin{lemma} \label{lem: non_zero_row}Let $\K$ be a field and let $\mathbf{S}$ be a doubly infinite sequence over $\K$. If $m,l\in\N$ with $l\ge5$ are such that for every $0\le i < l$, $W_\K(\mathbf{S})[m,n+i]\neq0$, then, the values of \begin{equation*}(W_\K(\mathbf{S})[m\pm j,n+i])_{2 \le j \le \left\lfloor\frac{l-1}{2}\right\rfloor,~ j-1\le i < l-j+1}\label{eqn: lem_7.6}\end{equation*} are calculated solely using $$(W_\K(\mathbf{S})[m\pm j,n+i])_{j \in\{0,1\},~0\le i <l}~~~\text{ and }~~~(W_\K(\mathbf{S})[m\mp 1,n+i])_{~1\le i <l-1}.$$ Similarly, the same is also true when the whole setup is rotated 90 degrees around its centre. That is, if for every $0\le i < l$, $(W_\K(\mathbf{S})[m+i,n])\neq0$, then the entries of $$(W_\K(\mathbf{S})[m+i+j,n\pm j])_{2 \le j \le \left\lfloor\frac{l-1}{2}\right\rfloor,~ j-1\le i < l-j+1}$$ are calculated using only $(W_\K(\mathbf{S})[m+i,n\pm j])_{j\in\{0,1\},~0\le i <l}~~~\text{ and }~~~ (W_\K(\mathbf{S})[m+i,n\mp1])_{0\le i <l}$.\end{lemma}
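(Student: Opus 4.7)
The plan is to prove the lemma by induction on $j \ge 2$, processing the triangular region one row at a time and using the Frame Constraints (Theorem \ref{FC}) to express each new entry in terms of data that has either been provided in the hypothesis or computed at an earlier step. It suffices to treat the ``$+j$'' direction (entries below row $m$); the ``$-j$'' case is symmetric, and the two rotated statements follow from the same argument with rows and columns interchanged.

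For the base case $j = 2$, the hypothesis that $W_\K(\mathbf{S})[m, n+i] \ne 0$ for every $0 \le i < l$ rules out both FC2 and FC3 at $(m+2, n+i)$, since both constraints require the entry two rows above the target to vanish. Thus FC1 is the only constraint that applies, and it expresses $W_\K(\mathbf{S})[m+2, n+i]$ purely in terms of entries on rows $m$ and $m+1$. The column restriction $1 \le i \le l-2$ arising at depth two keeps all four required entries inside the input data.

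For the inductive step with $j \ge 3$ and $j-1 \le i \le l-j$, the proof splits on which constraint applies at $(m+j, n+i)$. If $W_\K(\mathbf{S})[m+j-2, n+i] \ne 0$ then FC1 is used, and the four entries it requires (on rows $m+j-1$ and $m+j-2$ at columns $n+i-1, n+i, n+i+1$) lie in the triangle at strictly smaller depth and are thus already available by the inductive hypothesis. Otherwise $W_\K(\mathbf{S})[m+j-2, n+i] = 0$, so by the Square Window Theorem (Theorem \ref{window}) this zero sits inside some window $\mathcal{W}$, and because row $m$ is entirely nonzero on $[n,n+l-1]$, $\mathcal{W}$ cannot cross row $m$ --- its northern inner frame therefore lies on some row $m+a$ with $a \ge 0$. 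FC2 or FC3 is then applied; the inner frame of $\mathcal{W}$ (and hence the ratios $P, Q, R, S$) is reconstructed from row $m$ together with values on rows $m+1, \ldots, m+j-1$ that sit within the already-computed triangle.

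The main obstacle, and the reason the hypothesis must supply the additional row $m-1$ data in the restricted range $[n+1, n+l-2]$, is the shallowest possible case $a=0$ --- a window $\mathcal{W}$ whose northern inner frame lies exactly on row $m$. In this configuration FC3 requires the northern outer-frame value $E_k$ sitting on row $m-1$ at the column matching the target $H_k = W_\K(\mathbf{S})[m+j, n+i]$. A direct geometric check shows that this column is precisely $n+i$, and the triangle bound $j-1 \le i \le l-j$ with $j \ge 2$ places $n+i$ inside $[n+1, n+l-2]$, which is exactly the restricted range supplied. The delicate part of the proof is the case analysis verifying that the remaining outer-frame entries $F, G$ attached to the west and east of $\mathcal{W}$ are either in rows $m, m+1$ of the input or have been produced by the inductive hypothesis; this relies on the fact that the column span of $\mathcal{W}$ is tightly constrained, since each entry of its northern inner frame must be nonzero and sit in row $m$, forcing $\mathcal{W}$ to lie strictly inside $[n+1, n+l-2]$. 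Once this bookkeeping is completed, the induction closes and the lemma follows.
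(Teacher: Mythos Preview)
Your overall strategy—induction on $j$, splitting on which Frame Constraint applies, and isolating the shallow-window case $a=0$ as the one forcing use of the row-$(m-1)$ data—is exactly the paper's approach. The gap is in your resolution of that shallow case.

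You assert that the northern outer-frame entry $E_k$ needed by \hyperlink{FC3}{FC3} lies at column $n+i$, the same column as the target $H_k$. This is false: under the paper's conventions $E_k$ sits above $A_k$ while $H_k$ sits below $D_k$, and $A_k$, $D_k$ lie at \emph{mirror} columns relative to the window's vertical axis. If $\mathcal W$ has size $l'=j-2$ with leftmost inner-frame column $c_0$, then $H_k$ is at column $c_0+j-1-k$ while $E_k$ is at column $c_0+k$; these coincide only when $2k=j-1$. Your secondary claim—that the nonvanishing of row $m$ on $[n,n+l-1]$ forces $\mathcal W$ to sit inside $[n+1,n+l-2]$—is also unjustified, since the hypothesis says nothing about row $m$ \emph{outside} that range.

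The paper closes this gap differently: rather than locating $E_k$ directly, it argues by contradiction that if the computation of $W_\K(\mathbf S)[m+j,n+i]$ required the value at $(m-1,n)$ (or symmetrically $(m-1,n+l-1)$), then that entry would have to be an $E_k$ for a window with north inner frame on row $m$ and south outer frame through the target, and comparing the forced vertical and horizontal extents contradicts the Square Window Theorem (Theorem~\ref{window}). Replace your ``direct geometric check'' with this contradiction argument, or alternatively observe that since $(m+j-2,n+i)$ must lie \emph{strictly inside} $\mathcal W$ (not on its inner frame), the index $k$ is forced into $[1,l']$, which after a short computation places the column of $E_k$ inside $[n+2,n+l-3]\subset[n+1,n+l-2]$.
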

\noindent Lemma \ref{lem: non_zero_row} is depicted below.
\begin{figure}[H]
    \centering
    \includegraphics[width=1\linewidth]{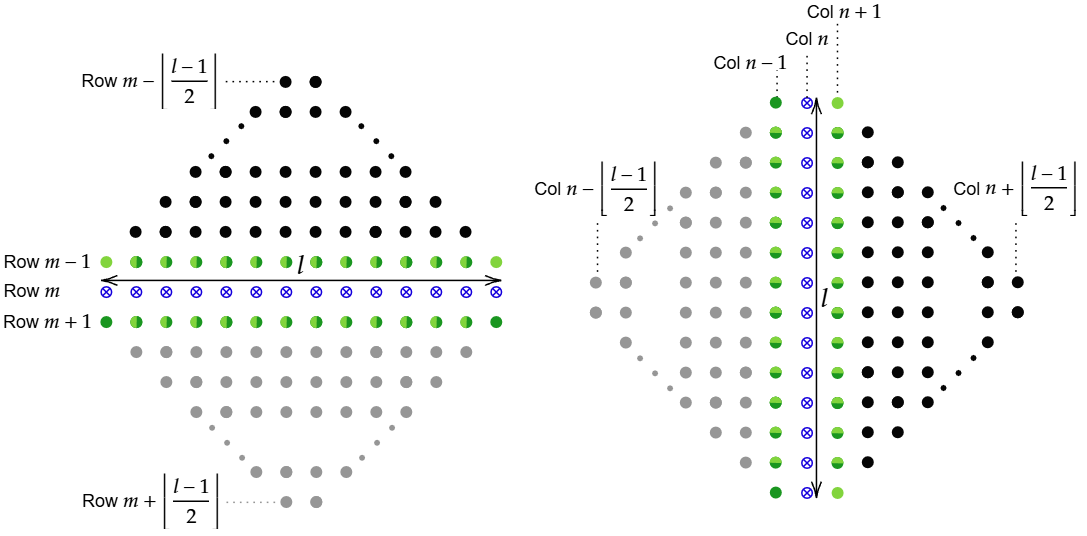}
    \caption{Each dot represents an entry in $W_\K(\mathbf{S})$, with the crossed blue dots being nonzero. Some dots have two different colours. The left image depicts the first part of Lemma \ref{lem: non_zero_row}, and the right image the second part. Lemma \ref{lem: non_zero_row} says that the grey (black, respectively) entries depend only on the dark (light, respectively) green  and blue entries.}
    \label{Fig: 7.2.1}
\end{figure}
\begin{proof}
    \noindent The `plus' case of the first part of Lemma \ref{lem: non_zero_row} is established first. The proof proceeds by induction on $j$, starting with $j=2$. For the base case, recall \hyperlink{FC1}{FC1}. By the assumption that $W_\K(\mathbf{S})[m,n+i]\neq0$ for all $0\le i <l$, \hyperlink{FC1}{FC1} implies $W_K(\mathbf{S})[m+2,n+i]$ depends only on $W_\K(\mathbf{S})[m+ k,n+i]$ for $k\in\{0,1\}$ and $0\le i <l$, as required.\\
    
    \noindent For the induction step, let $j\le \left\lfloor\frac{l-1}{2}\right\rfloor$ be a natural number and assume that the statement of Lemma \ref{lem: non_zero_row} holds for $(W_\K(\mathbf{S})[m+j',n+i])_{2 \le j'<j ,~ j'-1\le i < l-j'+1}$. This is now split into two cases.\begin{itemize}
        \item[] \textbf{Case 1:}   $W_\K(\mathbf{S})[m+j-2,n+i]\neq0$;
        \item[] \textbf{Case 2:} $W_\K(\mathbf{S})[m+j-2,n+i]=0$.
    \end{itemize} 
    \noindent \textbf{Case 1:} The conclusion follows from \hyperlink{FC1}{FC1} as in the base case. \\
    
    \noindent \textbf{Case 2:} There exists some window $\mathcal{W}$ such that $W_\K(\mathbf{S})[m+ j-2,n+i]$ is part of $\mathcal{W}$. Two further subcases are now stated: \begin{itemize}
        \item[]\textbf{Case 2.1:}  $W_\K(\mathbf{S})[m+ j,n+i]=0$ by the Square Window Theorem (Theorem \ref{window});
        \item[] \textbf{Case 2.2:}  $W_\K(\mathbf{S})[m+ j,n+i]$ is part of the inner or outer frame of $\mathcal{W}$.
    \end{itemize}
    \noindent \textbf{Case 2.1:} Since $W_\K[m,n+i]\neq0$ for all $0\le i <\ell$, the top row of $\mathcal{W}$ has index less than $m$ and hence the conclusion follows.\\
    
    \noindent \textbf{Case 2.2:} By \hyperlink{FC2}{FC2} and \hyperlink{FC3}{FC3}, the value of $W_\K(\mathbf{S})[m+ j,n+i]$ depends only on the inner and outer frame of $\mathcal{W}$. Hence, if $W_\K(\mathbf{S})[m+ j,n+i]$ depends on some entry in $(W_\K(\mathbf{S})[m- k,n])_{k\ge2}$, either the inner or the outer frame of $\mathcal{W}$ are part of $(W_\K(\mathbf{S})[m- k,n])_{k\ge2}$. However, this contradicts the assumption that $W_\K(\mathbf{S})[m,n+i]\neq0$ for all $0\le i <l$. \\
    
    \noindent To complete Case 2.2, one must also establish that the value of $W_\K(\mathbf{S})[m+ j, n+i]$ does not depend on $W_\K(\mathbf{S})[m-1,n+i]$ for $i\in\{0,l-1\}$. This is shown by contradiction: indeed, if the calculation of $W_\K(\mathbf{S})[m+ j, n+i]$ depends on $W_\K(\mathbf{S})[m-1,n+i]$ for $i\in\{0,l-1\}$, then this implies the existence of a non-square window. The proof is completed when $i=0$, and the case where $i=l-1$ follows from symmetry of Lemma \ref{reflect}. The following diagram aids in the proof: \begin{figure}[H]
        \centering
        \includegraphics[width=0.5\linewidth]{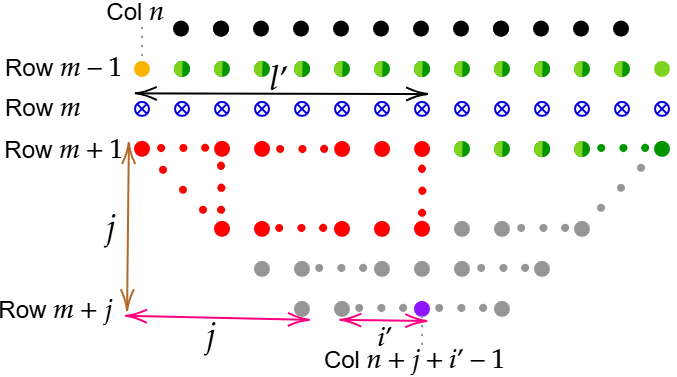}
        \caption{The left side of Figure \ref{Fig: 7.2.1}, with additional details that are described below.}
        \label{Fig :7.2.2}
    \end{figure}

    \noindent Assume that, for some $0\le i'<l-2j$, $W_\K(\mathbf{S})[m+j,n+j+i'-1]$ (the purple dot, above) depended on $W_\K(\mathbf{S})[m-1,n]$ (the yellow dot, above). This necessitates that $W_\K(\mathbf{S})[m-1,n]$ is in the north outer frame of some window, $\mathcal{W}$ of size $l'$ (red dots, above), as the difference in the row index is too great to apply \hyperlink{FC1}{FC1} (from the assumption that $j>2$), and since $W_\K(\mathbf{S})[m-1,n]$ being in any other part of the inner or outer frame of $\mathcal{W}$ would imply that $W_\K(\mathbf{S})[m,n+i']=0$ for some $0\le i' <l'$, contradicting the assumptions of the lemma. \\
    
    \noindent As illustrated by the brown arrow in Figure \ref{Fig :7.2.2}, The vertical length of $\mathcal{W}$ is $j-2$. However, as illustrated by the pink arrows in the same image, the horizontal length of $\mathcal{W}$ is $j+i'$. Therefore, by the Square Window Theorem, $i'=-2$, which is a contradiction. \\

\noindent The proof of the minus case, along with the second part of Lemma \ref{lem: non_zero_row} is fundamentally identical to the first case. The only difference is that one applies rearranged versions of the frame constraints \hyperlink{FC1}{FC1}, \hyperlink{FC2}{FC2} and \hyperlink{FC3}{FC3}.
\end{proof} 
\subsubsection{\textbf{Proof of Lemma \ref{lem: rotate}}}
\begin{proof}[\unskip\nopunct] 
\noindent \textbf{Lemma \ref{lem: rotate} Part {\protect\hyperlink{lem1}{1}}:} Lemma \ref{lem: non_zero_row} shows that the calculation of $\mathfrak{P}_{south}$ depends only on $\textbf{H}$, $\textbf{D}$ and the zeroes on row $m+l+1$. In particular, it does not depend on the value of $m$ nor any other parts of the number wall. Hence, one is able to change every other part of the number wall and, as long as $\textbf{H}$, $\textbf{D}$, and the zeroes on row $m+l+1$ remain in place, the values of every entry in $P_1$ remain unchanged. Therefore, by letting $m=-l-3$ and $n=-1$, and by setting all the entries on row $m'<m+l+3$ to be zero\footnote{The values of $F_{l+1}$ and $G_0$ also need to be changed to $F_{l+1}=S\cdot D_{l+1}$ and $G_0=S^{-1}\cdot D_0$ to match the geometric sequence $(D_i)_{0\le i \le l+1}$}, the calculation of $\mathfrak{P}_{south}$ becomes identical to the calculation of $\mathcal{V}{\left(W_\K^{(S,D_0)}(\textbf{H})[i,j]\right)}$ from the definition of $\mathcal{V}$ (equation (\ref{eqn: reflect_V})).\\

\noindent \textbf{Lemma \ref{lem: rotate} Part {\protect\hyperlink{lem2}{2}}:} As with $\mathfrak{P}_{south}$, Lemma \ref{lem: non_zero_row} shows that $\mathfrak{P}_{east}$ is fully determined given the eastern inner and outer frame of $\mathcal{W}$, along with the eastern most column of $\mathcal{W}$ itself.\\

\noindent For ease of notation, define $\mathfrak{P}_{east}[i,j]:=W_\K(\textbf{S})[m+l+2-i,n+l+3+j]$.\\

\noindent The goal is to establish \begin{equation}\mathfrak{P}_{east}[i,j] = W_\K^{(R,C_0)}(\textbf{G})[j,i]\label{eqn: lem_rot_1}\end{equation} for ${0\le j \le \left\lfloor\frac{l+1}{2}\right\rfloor, j \le i \le l+1-j}$. This is achieved by induction on $j$.\\

\noindent The base case is immediate: namely, $$\mathfrak{P}_{east}[i,0] = W_\K^{(R,C_0)}(\textbf{G})[0,i]=G_i$$ for all $0\le i \le l+1$. Hence, assume that equation (\ref{eqn: lem_rot_1}) holds for all $j\le u\in\N$ and $j\le i \le l+1-j$. \\

\noindent Then, consider the entry $\mathfrak{P}_{east}[i,u+1]$ for some $u+1\le i \le l-u$. This is calculated by rearranging either \hyperlink{FC1}{FC1}, \hyperlink{FC2}{FC2} or \hyperlink{FC3}{FC3}. The plan is to show that the calculation of $W_\K^{(R,C_0)}(\textbf{G})[u+1, i]$ is identical to the calculation of $\mathfrak{P}_{east}[i,u+1]$. This is split into 3 cases, depending on which Frame Constraint is required for the calculation.\\

\noindent \textbf{Case 1:} $\mathfrak{P}_{east}[i,u-1]\neq0$. \\
\noindent In this case, rearranging \hyperlink{FC1}{FC1} implies \begin{align*}
    \mathfrak{P}_{east}[i,u+1]~~&\substack{FC1\\=}~~\frac{\mathfrak{P}_{east}[i,u]^2-\mathfrak{P}_{east}[i-1,u]\cdot \mathfrak{P}_{east}[i+1,u]}{\mathfrak{P}_{east}[i,u-1]}\\
    &=\frac{W_\K^{(R,C_0)}(\textbf{G})[u, i]^2-W_\K^{(R,C_0)}(\textbf{G})[u, i-1]\cdot W_\K^{(R,C_0)}(\textbf{G})[u, i+1]}{W_\K^{(R,C_0)}(\textbf{G})[u-1,i]}\\
    &\substack{FC1\\=}~W_\K^{(R,C_0)}(\textbf{G})[u+1, i]
\end{align*} as required.\\

\noindent \textbf{Case 2:} $\mathfrak{P}_{east}[i,u-1]=0\neq \mathfrak{P}_{east}[i,u]$:\\
\noindent In this case, there is a window $\widetilde{\mathcal{W}}$ of size $1\le l'\le l+2$ in the first $u$ columns of $\mathfrak{P}_{east}$, and $\mathfrak{P}_{east}[i,u+1]$ is in the eastern outer frame of $\widetilde{\mathcal{W}}$. That is, $\mathfrak{P}_{east}[i,u+1]=\widetilde{G}_{\overline{k}}$ for some $0\le \overline{k}\le l+1$, as illustrated below: \begin{figure}[H]
    \centering
    \includegraphics[width=0.34\linewidth]{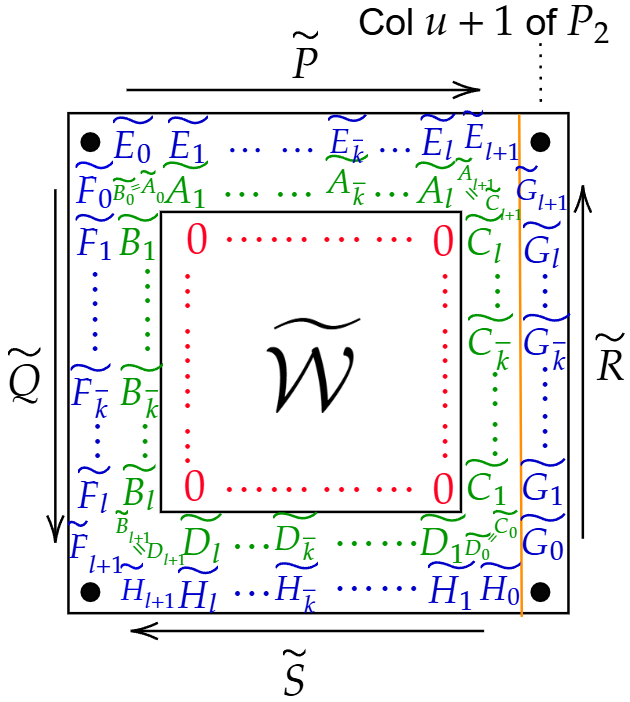}
    \caption{The window $\widetilde{\mathcal{W}}$ and its inner and outer frames. For now, ignore the orange line.}\label{Fig: 7.4.1_alt}
\end{figure}  
\noindent From the induction hypothesis and equation (\ref{eqn: lem_rot_1}), there is also a window $\mathcal{W}'$ of equal size in the first $u$ rows of $W_\K^{(R,C_0)}(\textbf{G})$ and $W_\K^{(R,C_0)}(\textbf{G})[u+1,i]=H'_{k'}$ for some $0\le k' \le l+1$. \begin{figure}[H]
    \centering
    \includegraphics[width=0.8\linewidth]{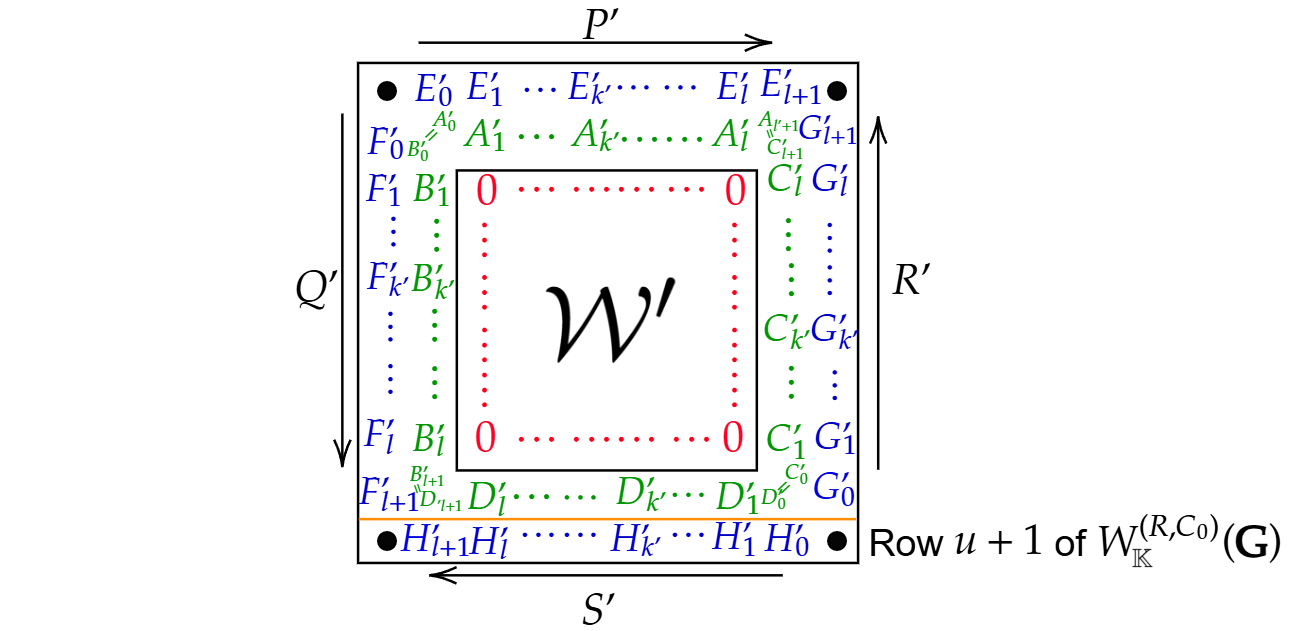}
    \caption{The window $\mathcal{W}'$ and its inner frame. For now, ignore the orange line.}
    \label{Fig: 7.4.2}
\end{figure} \noindent Also by the induction hypothesis, the portion of Figure \ref{Fig: 7.4.1_alt} to the left of the orange line is the $90^\circ$ anticlockwise rotation of the portion of Figure \ref{Fig: 7.4.2} above the orange line, as illustrated below: \begin{figure}[H]
    \centering
    \includegraphics[width=0.8\linewidth]{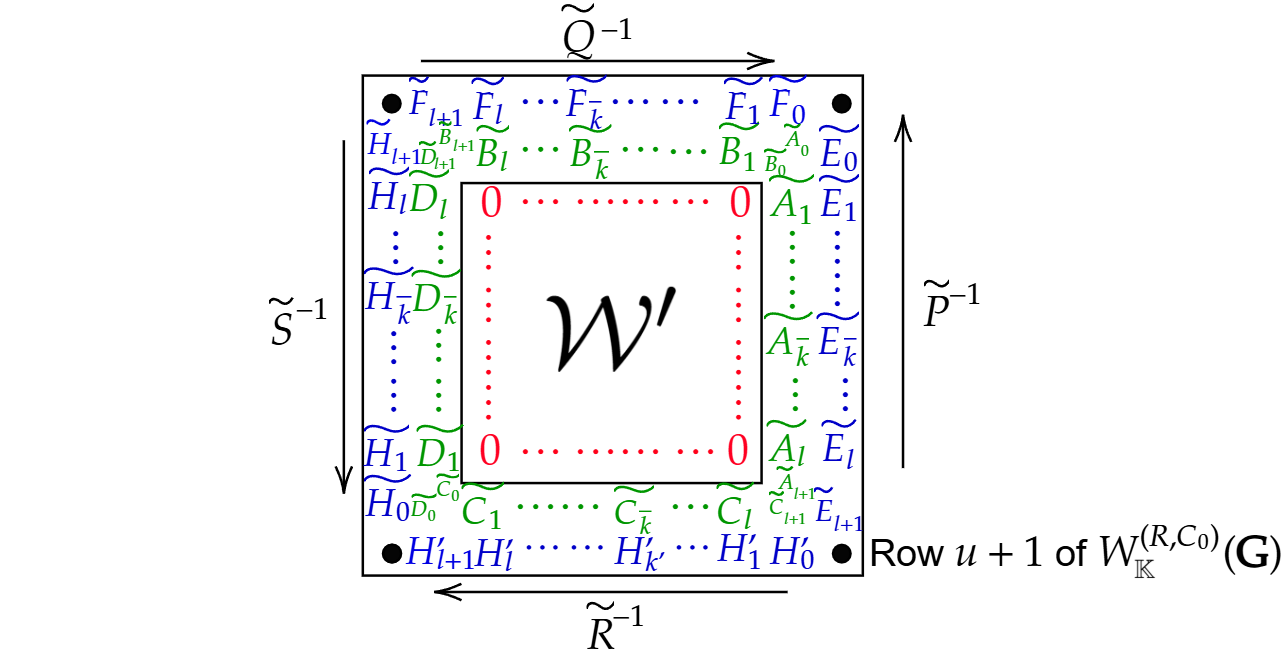}
    \caption{The window $\mathcal{W}'$ with its inner and outer frame relabelled with notation from $\widetilde{\mathcal{W}}$. Note, the south outer frame of $\mathcal{W}'$ (the sequence $(H_i')_{0\le i \le l+1}$) has not been relabelled.  }
\end{figure} Therefore, $\overline{k}=l+1-k'$ and
\begin{align*}
    &\widetilde A_{{\overline{k}}}=C'_{k'}&
    &\widetilde B_{{\overline{k}}}=A_{k'}'& &\widetilde C_{{\overline{k}}}=D'_{k'}& &\widetilde D_{{\overline{k}}}=B_{k'}'&  \\
    & \widetilde E_{{\overline{k}}}=G_{k'}'&
    &\widetilde F_{{\overline{k}}}=E_{k'}'& && &\widetilde H_{{\overline{k}}}=F_{k'}'& \\
    &\widetilde P^{-1}=R'&
    &\widetilde Q^{-1}=P'&  & \widetilde R^{-1}=S'&&\widetilde S^{-1}=Q'& 
\end{align*}
\noindent To complete the inductive step in this case, one needs to confirm that $\widetilde{G}_{\overline{k}}=H'_{k'}$. Indeed, \begin{align}
    \widetilde{G}_{\overline{k}} &~\substack{FC3\\=}~ \frac{(-1)^{\overline{k}}\frac{\widetilde Q\widetilde{ E}_{\overline{k}}}{\widetilde{ A}_{\overline{k}}}+\frac{\widetilde P\widetilde{ F}_{\overline{k}}}{\widetilde{ B}_{\overline{k}}}-(-1)^{\overline{k}}\frac{\widetilde R\widetilde{ H}_{\overline{k}}}{\widetilde{ D}_{\overline{k}}}}{\frac{\widetilde S}{\widetilde{ C}_{\overline{k}}}} \nonumber\\
    &=~ \frac{(-1)^{\overline{k}}\frac{(P')^{-1}G'_{k'}}{C'_{k'}}+\frac{(R')^{-1}E'_{k'}}{A'_{k'}}-(-1)^{\overline{k}}\frac{(S')^{-1}F'_{k'}}{B'_{k'}}}{\frac{(Q')^{-1}}{D'_{k'}}}. \label{eqn: rot_lem_2}
\end{align}

\noindent Substituting in $(P')^{-1}=(-1)^l\cdot \frac{S'}{R'Q'}$ from Theorem \ref{ratio ratio} and multiplying both the numerator and denominator by $R'Q'$ gives 
\begin{align}
    (\ref{eqn: rot_lem_2})&=\frac{(-1)^{\overline{k}-l}\frac{S'G'_{k'}}{C'_{k'}}+\frac{Q'E'_{k'}}{A'_{k'}}-(-1)^{\overline{k}}\frac{R'Q'F'_{k'}}{S'B'_{k'}}}{\frac{R'}{D'_{k'}}}.\label{eqn: rot_lem_3}
\end{align}
\noindent One again using $P'(-1)^{-l}=\frac{R'Q'}{S'}$ from Theorem \ref{ratio ratio} yields \begin{align}
    (\ref{eqn: rot_lem_3})&=\frac{(-1)^{\overline{k}+l}\frac{S'G'_{k'}}{C'_{k'}}+\frac{Q'E'_{k'}}{A'_{k'}}-(-1)^{\overline{k}-l}\frac{P'F'_{k'}}{B'_{k'}}}{\frac{R'}{D'_{k'}}}.\label{eqn: rot_lem_4}
\end{align} 
\noindent Finally, using that $(-1)^{l-\overline{k}}=(-1)^{\overline{k}-l}=(-1)^{\overline{k}+l}$, one obtains 
\begin{align*}
     (\ref{eqn: rot_lem_4})&=\frac{\frac{Q'E'_{k'}}{A'_{k'}}+(-1)^{l+1-\overline{k}}\frac{P'F'_{k'}}{B'_{k'}}-(-1)^{l+1-\overline{k}}\frac{S'G'_{k'}}{C'_{k'}}}{\frac{R'}{D'_{k'}}}\\
     &\substack{FC3\\=}~H'_{\bar{k}}
\end{align*}
\noindent as required, since $\overline{k}=l+1-k'$.\\

\noindent \textbf{Case 3:}  $\mathfrak{P}_{east}[i,u-1]=0= \mathfrak{P}_{east}[i,u]$:\\
\noindent This case is very similar to the previous one, with the only difference being that \hyperlink{FC2}{FC2} is used in place of \hyperlink{FC3}{FC3}. Once again, there is a window $\widetilde{\mathcal{W}}$ of size $1\le l'\le l+2$ in the first $u$ columns of $\mathfrak{P}_{east}$, but this time $\mathfrak{P}_{east}[i,u+1]$ is in the eastern \textit{inner} frame of $\widetilde{\mathcal{W}}$. That is, $\mathfrak{P}_{east}[i,u+1]=\widetilde{C}_{\overline{k}}$ for some $0\le \overline{k}\le l+1$. Also as before, the induction hypothesis implies and equation (\ref{eqn: lem_rot_1}) imply that there is also a window $\mathcal{W}'$ of equal size in the first $u$ rows of $W_\K^{(R,C_0)}(\textbf{G})$ and $W_\K^{(R,C_0)}(\textbf{G})[u+1,i]=D'_{k'}$ for some $0\le k' \le l+1$.\\

\noindent Therefore, the goal is to show that $\widetilde{C}_{\overline{k}}=D'_{k'}$, where $k'=l+1-\overline{k}$. This is achieved via the \hyperlink{FC2}{FC2}:\begin{align}
    \widetilde{C}_{\overline{k}}&~~\substack{FC2\\=}~~\frac{(-1)^{-l\cdot \overline{k}}\widetilde{A}_{\overline{k}}\cdot \widetilde{D}_{\overline{k}}}{\widetilde{B}_{\overline{k}}}\nonumber\\
    &~~=~~\frac{(-1)^{-l\cdot \overline{k}}C'_{k'}\cdot B'_{k'}}{A'_{k'}}\cdotp\label{eqn: rot_lem_5}
\end{align}
\noindent Next, note that by splitting into cases where $l$ is odd or even, one obtains that $(-1)^{l\cdot \overline{k}} = (-1)^{l(l+1-\overline{k})}=(-1)^{l\cdot k'}$. Hence
\begin{align*}
    (\ref{eqn: rot_lem_5})&=\frac{(-1)^{l\cdot k'}C'_{k'}\cdot B'_{k'}}{A'_{k'}}\\
    &\substack{FC2\\=}~~ D'_{k'}
\end{align*}as required.\\

\noindent \textbf{Lemma \ref{lem: rotate} Parts 3 and 4:} \\
\noindent The remainder of the proof is omitted, as is it extremely similar to that of $\mathfrak{P}_{east}$: once again, each part is split into three cases depending on which frame constraint is used. The first case is exactly the same method, up to the rearrangement of \hyperlink{FC1}{FC1}. Both the second and third cases are similar in method, as they imply the existence of two windows $\widetilde{\mathcal{W}}$ and $\mathcal{W}'$. The only difference is in the relationships between the inner and outer frames of $\widetilde{\mathcal{W}}$ and $\mathcal{W}'$. In part 3, one now has that  \begin{align*}
    &\widetilde A_{k}=D'_{k}&
    &\widetilde B_{k}=C_{k}'& &\widetilde C_{k}=B'_{k}& &\widetilde D_{k}=A_{k}'&  \\
    & &
    &\widetilde F_{k}=G_{k}'&  &\widetilde G_{k}=F_{l}'& &\widetilde H_{k}=E_{k}'& \\
    &\widetilde P=S'&
    &\widetilde Q=R'&  & \widetilde R=Q'&&\widetilde S=P'& 
\end{align*}and the goal is to show that $\widetilde{E}_k=H'_{k}$. Note that, in this case $\overline{k}=k'$ and so both notations are replaced by $k$. Similarly for part 4, one has that \begin{align*}
    &\widetilde A_{\overline{k}}=B'_{k'}&
    &\widetilde B_{\overline{k}}=D_{k'}'& &\widetilde C_{\overline{k}}=A'_{k'}& &\widetilde D_{\overline{k}}=C_{k'}'&  \\
    & \widetilde E_{\overline{k}}=F_{k'}'&
    && &\widetilde G_{\overline{k}}=E_{k'}'& &\widetilde H_{\overline{k}}=G_{k'}'&\\
    &\widetilde P^{-1}=Q'&
    &\widetilde Q^{-1}=S'&  & \widetilde P^{-1}=S'&&\widetilde R^{-1}=Q'& 
\end{align*} and the goal is to show that $\widetilde{F}_{\overline{k}}=H'_{k'}$.
\end{proof}

\subsection{Number Walls of One-Sided Sequences}\hfill\\

\noindent The following notation is introduced to save space on upcoming diagrams. \\

\noindent \textbf{Notation:} Let $n\in\N\cup\{\infty\}$ and let $\textbf{S}=(s_i)_{0\le i \le n}$ be a sequence. Furthermore, let $r,a\in\K$. Then, let \begin{equation}
    s^{(r,a)}_i=a\cdot r^i\cdot s_i.\nonumber
\end{equation}
\noindent As seen earlier in Section \ref{Sect: 3.4}, one can associate a sequence $\textbf{S}=(s_i)_{i\in\N}$ over $\K$ to a Laurent series $\Theta(t)=\sum_{i=0}^\infty s_i t^{-i}\in\K((t^{-1}))$. As $\K((t^{-1}))$ is a field, there exists a unique $\Xi(t)\in\K((t^{-1}))$ such that $\Theta(t)\cdot \Xi(t)=1$. Let $\textbf{U}$ be the sequence over $\K$ associated to $\Xi(t)$.\\

\noindent Recall the doubly infinite sequence $\textbf{S}^L$ defined in equation (\ref{eqn: ^L}). The goal of this section is to see how the number wall $W_\K(\textbf{S}^L)$ relates to $W_\K(\textbf{U})$.

\subsubsection{\textbf{Columns 0 and 1 of }$W_\K^{(r_0,a_0)}\left(\mathbf{S}{(r_1,a_1)}\right)$}\hfill\\
\noindent To begin, it is shown that the sequence $\textbf{U}$ appears in $W_\K(\textbf{S})$. 
\begin{lemma}\label{inv}
    Let $a_0,r_0,a_1,r_1\in\K$ such that $a_0,r_0\neq0$ and let $\mathbf{S}=(s_i)_{i\in\Z}$ be a sequence in $\K$ such that $s_0\neq0$ and $s_i=0$ for $i<0$. Furthermore, let $\textbf{U}=(u_i)_{i\ge0}$ be as above. Then,  \begin{equation}\label{eqn: inv1}W_\K^{(r_0,a_0)}\left(\left(\mathbf{S}{(r_1,a_1)}\right)^L\right)[m,0]= a_1s_0 \cdot \left(\frac{a_1s_0}{a_0}\right)^m\end{equation}and\begin{equation}c_{m}:=W_\K^{(r_0,a_0)}\left(\left(\mathbf{S}{(r_1,a_1)}\right)^L\right)[m,1]=-a_1r_1s_0^2\left(\frac{-s_0a_1r_1}{r_0a_0}\right)^{m} u_{m+1}.\label{eqn: inv2}\end{equation}
\end{lemma}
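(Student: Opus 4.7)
The plan is to first apply Lemma \ref{rswall} to reduce both identities to ordinary number walls of $\mathbf{S}$, and then evaluate the resulting two Toeplitz determinants directly from the definition. Since $s_i = 0$ for $i < 0$ is preserved under the geometric transform, Lemma \ref{rswall} specializes here to
\[
W_\K^{(r_0,a_0)}\!\left(\left(\mathbf{S}{(r_1,a_1)}\right)^L\right)[m,n] \;=\; \frac{r_1^{n(m+1)}\, a_1^{m+1}}{r_0^{nm}\, a_0^m}\; W_\K(\mathbf{S})[m,n],
\]
so it is enough to compute $W_\K(\mathbf{S})[m,0] = \det T_{\mathbf{S}}(0;m)$ and $W_\K(\mathbf{S})[m,1] = \det T_{\mathbf{S}}(1;m)$.

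The first determinant is immediate. Since $s_i = 0$ for $i < 0$, the matrix $T_{\mathbf{S}}(0;m)$ is upper triangular with every diagonal entry equal to $s_0$, giving $\det T_{\mathbf{S}}(0;m) = s_0^{m+1}$. Substituting this into the displayed reduction collapses the geometric prefactor and produces equation (\ref{eqn: inv1}) at once.

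For the second determinant, the claim to prove is $\det T_{\mathbf{S}}(1;m) = (-1)^{m+1} s_0^{m+2}\, u_{m+1}$. The key input is the coefficient-wise expansion of the defining relation $\Theta_p(t)\Xi_p(t)=1$, which yields $\sum_{k=0}^{n} s_k u_{n-k} = [n=0]$ for every $n \ge 0$. Writing the first $m+2$ of these equations as a linear system in the unknowns $u_0, \ldots, u_{m+1}$ produces a lower-triangular Toeplitz matrix of determinant $s_0^{m+2}$. Applying Cramer's rule to solve for $u_{m+1}$, the numerator arises by replacing the last column of this matrix with $e_0 = (1,0,\ldots,0)^T$. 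Expanding along that replaced column, only the top entry contributes, leaving a single $(m+1)\times(m+1)$ minor. After the index relabelling $(i,j) \mapsto (i-1,j)$, this minor is precisely $T_{\mathbf{S}}(1;m)^T$, and $\det(A) = \det(A^T)$ together with the cofactor sign $(-1)^{m+1}$ yields the claimed identity.

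The main subtlety is this Cramer's rule step: careful index-tracking is needed to see that the cofactor minor really is the transpose of $T_{\mathbf{S}}(1;m)$, and the sign $(-1)^{m+1}$ must be handled without ambiguity. Once the determinant formula is in hand, equation (\ref{eqn: inv2}) follows from the reduction above by a short bookkeeping of the powers of $r_0, r_1, a_0, a_1, s_0$, using in particular the identity $(-1)^{m+1}(r_1 a_1)^{m+1} = -(r_1 a_1)\cdot(-r_1 a_1)^m$ to absorb the sign into the geometric factor $\bigl(-s_0 a_1 r_1/(r_0 a_0)\bigr)^m$.
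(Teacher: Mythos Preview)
Your proposal is correct and follows essentially the same overall route as the paper: reduce via Lemma~\ref{rswall} to the ordinary number wall of $\mathbf{S}$, then evaluate the two Toeplitz determinants $\det T_{\mathbf{S}}(0;m)$ and $\det T_{\mathbf{S}}(1;m)$ directly.

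There are two minor differences worth noting. For equation~(\ref{eqn: inv1}), the paper does not go through Lemma~\ref{rswall} and the triangular determinant at all, but instead observes that column~$0$ is the eastern inner frame of the infinite window $\{W[m,n]:n<0,\,m\ge0\}$ and hence a geometric sequence determined by its first two entries $a_0$ and $a_1 s_0$; your triangular-determinant argument is equally short and has the advantage of being uniform with the second part. For equation~(\ref{eqn: inv2}), both you and the paper establish the identity $\det T_{\mathbf{S}}(1;m)=(-1)^{m+1}s_0^{m+2}u_{m+1}$ (Wronski's formula, which the paper cites from \cite{Hen}), but by different classical routes: the paper proves it by induction, repeatedly expanding along the first column and invoking the recurrence $u_m=-s_0^{-1}\sum_{j=1}^m s_j u_{m-j}$, whereas you obtain it in one stroke via Cramer's rule applied to the lower-triangular system $\sum_{k=0}^n s_k u_{n-k}=[n=0]$. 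Your Cramer argument is a bit slicker since it avoids the induction, while the paper's version makes the recurrence for $u_m$ more visibly central. One small slip: you write $\Theta_p(t)\Xi_p(t)=1$, which is the $p$-Cantor/$p$-Singer instance from Section~\ref{Sect: 3.4}; in the generality of this lemma the relevant identity is simply $\sum_i s_i t^{-i}\cdot\sum_i u_i t^{-i}=1$, which is exactly what you then use.
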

\begin{proof}
\noindent The setup is depicted below.
    \begin{figure}[H]
    \centering
    \includegraphics[width=0.8\linewidth]{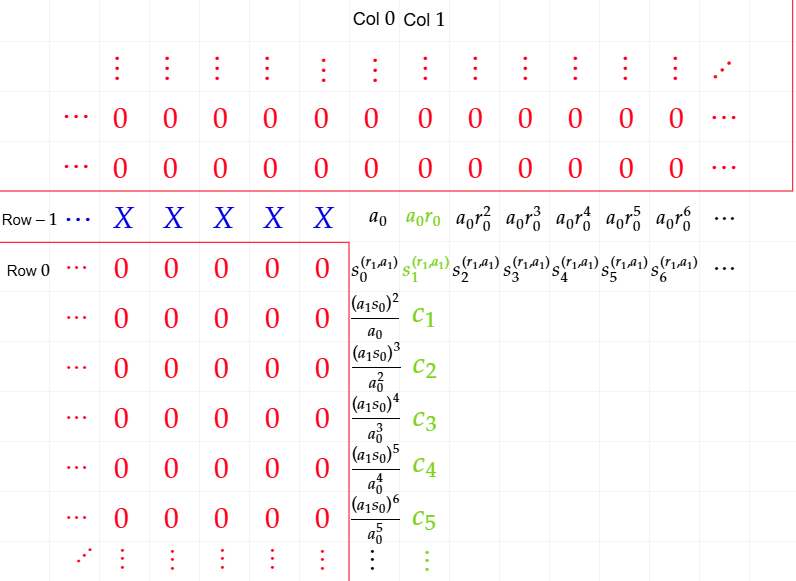}
    \caption{Each square in the grid is an entry in the number wall. The value of the entries denoted with a blue $X$ are easily calculable, but are not ever used in this proof.}\label{invwall1}
    \end{figure}
    \noindent Equation (\ref{eqn: inv1}) follows immediately from the fact that $\left(W_\K^{(r_0,a_0)}\left(\left(\mathbf{S}{(r_1,a_1)}\right)^L\right)[m,0]\right)_{m\ge0}$ is the eastern inner frame of the infinite window $\left(W_\K^{(r_0,a_0)}\left(\left(\mathbf{S}{(r_1,a_1)}\right)^L\right)[m,n]\right)_{n<0, m\ge0}$ and is hence a geometric sequence with initial values $W_\K^{(r_0,a_0)}\left(\left(\mathbf{S}{(r_1,a_1)}\right)^L\right)[-1,0]=a_0$ and \newline$W_\K^{(r_0,a_0)}\left(\left(\mathbf{S}{(r_1,a_1)}\right)^L\right)[0,0]=a_1s_0$.\\

    \noindent Moving on to equation (\ref{eqn: inv2}), one uses the determinant definition of a number wall  (Definition \ref{nw}) and Lemma \ref{rswall} to show that  \begin{equation}\label{invlem1}c_m=a_1r_1\left(\frac{a_1r_1}{r_0a_0}\right)^{m}\begin{vmatrix}
s_1 & s_2 & \dots & \dots & s_{m} & s_{m+1}\\
s_0 & s_1 & \ddots && &s_{m}\\
0 & s_0 & \ddots & \ddots&& \vdots\\
\vdots &\ddots & \ddots & \ddots&\ddots&\vdots\\
\vdots && \ddots & \ddots & \ddots &  s_2\\
0 & \dots & \dots & 0 & s_0 & s_1
    \end{vmatrix}.\end{equation}
    \noindent To conclude the proof, the value of the matrix determinant in (\ref{invlem1}) is shown to be $(-1)^{m+1}s_0^{m+2}u_{m+1}$ by induction. This formulae is already known as Wronski's Formulae \cite[Theorem 1.3]{Hen}, but, since it is elementary, the proof is repeated here for the sake of completion.\\
    
    \noindent First, it is elementary from the definition of $\textbf{U}$ to see that $u_i$ is generated by the following recurrence relation: \begin{equation}\label{birecur}
        u_0=s_0^{-1},~~~~~~~~~u_m=-\frac{\sum_{j=1}^{m} s_j u_{m-j}}{s_0}.
    \end{equation}The initial conditions of the induction are given by the case $m=0$: $$\det(s_1)=s_1\stackrel{(\ref{birecur})}{=}-\frac{s_0u_1}{u_0}=-s_0^{2}u_1.$$ For the inductive step, repeatedly take the determinant along the first column. This yields \begin{align}&\begin{vmatrix}
s_1 & s_2 & \dots & \dots & s_{m} & s_{m+1}\\
s_0 & s_1 & \ddots && &s_{m}\\
0 & s_0 & \ddots & \ddots&& \vdots\\
\vdots &\ddots & \ddots & \ddots&\ddots&\vdots\\
\vdots && \ddots & \ddots & \ddots &  s_2\\
0 & \dots & \dots & 0 & s_0 & s_1
    \end{vmatrix}\nonumber\\&=s_1 (-1)^{m}s_0^{m+1}u_{m} - s_0\begin{vmatrix}
s_2 & s_3 & \dots & \dots & s_{m} & s_{m+1}\\
s_0 & s_1 & \dots &\dots& \dots&s_{m-1}\\
0 & s_0 & \ddots & && \vdots\\
\vdots &\ddots & \ddots & \ddots&\ddots&\vdots\\
\vdots && \ddots & \ddots & \ddots &  s_2\\
0 & \dots & \dots & 0 & s_0 & s_1
    \end{vmatrix}\nonumber\\&=s_1(-1)^{m}s_0^{m+1}u_{m}- s_2s_0 (-1)^{m-1}s_0^{m} u_{m-1}+ s_0^2\begin{vmatrix}s_3 & s_4 & s_5&\dots &s_i\\
    s_0 & s_1 & s_2&\dots &s_{m}\\
    0 & s_0 & s_1 & \dots & s_{m-1}\\
    \vdots &&\ddots&\ddots&\vdots\\
    0&0&\dots&s_0&s_1\end{vmatrix}\nonumber\\
    &= (-1)^{m}s_0^{m+1}\sum_{j=1}^{m+1} s_j  u_{m+1-j}. \label{invlem2} \end{align}
    \noindent By the recurrence relation (\ref{birecur}),
    \begin{equation*}(\ref{invlem2})= (-1)^{m+1} s_0^{m+2}u_{m+1}.\end{equation*}
    Hence, equation (\ref{invlem1}) becomes \[c_m=-\left(\frac{-s_0a_1r_1}{r_0a_0}\right)^{m} a_1r_1s_0^2u_{m+1}\] as required.
\end{proof}
\subsubsection{\textbf{Lemma \ref{inv} Applied to Finite Windows}}\hfill\\
\noindent Lemma \ref{inv} stipulates that the sequence $\mathbf{S}{(r_1,a_1)}$ appear in row zero of the number wall. By thinking of the rows with index $m<-1$ as an infinite window $\mathcal{W}$, the sequence $\mathbf{S}{(r_1,a_1)}$ can be interpreted as the southern outer frame of $\mathcal{W}$. This motivates the following natural generalisation of Lemma \ref{inv}, in which $\mathbf{S}{(r_1,a_1)}$ is a finite sequence appearing in the southern outer frame of a finite window. 
\begin{corollary}\label{fininv}
    \noindent Let $\textbf{X}$ be a sequence over $\K$, and let $l\ge3$ be a natural number. Assume that $W_\K(\textbf{X})$ has two windows, $\mathcal{W}_1$ and $\mathcal{W}_2$, of size $l-2$ and $l$ respectively, such that the north east corner of the inner frame of $\mathcal{W}_1$ overlaps with the south west corner of the inner frame of $\mathcal{W}_2$, as depicted in Figure \ref{inv_wall0}. \\

\noindent Next, let $a_0,r_0,a_1,r_1\in\K$ be such that $a_0,r_0\neq0$, and let $\mathbf{S}=(s_i)_{i\ge0}$ be a sequence over a field $\K$ with $s_0\neq0$. Additionally, let $m$ and $n$ be the respective row and column index of the entry of the number wall that is in the inner frame of both $\mathcal{W}_1$ and $\mathcal{W}_2$. Assume that $$(W_\K(\textbf{X})[m,n+i])_{0\le i \le l}=(a_0r_0^i)_{0\le i\le l}~~\text{ and }~~(W_\K(\textbf{X})[m+1,n+i])_{0\le i \le l}=(a_1 r_1^is_i)_{0\le i \le l}.$$

\noindent Then, \begin{equation}\label{eqn: fininv1.1} \left(W_\K(\textbf{X})[m+i,n]\right)_{0\le i \le l-1}=\left(a_0 \cdot \left(\frac{a_1s_0}{a_0}\right)^i\right)_{0\le i \le l-1}\end{equation}and  \begin{equation}\label{eqn: fininv2.1}(d_{i})_{0\le i \le l}:= \left(W_\K(\textbf{X})[m+i,n+1]\right)_{0\le i \le l}=\left(a_0r_0s_0u_i\left(\frac{-s_0a_1r_1}{r_0a_0}\right)^{i} \right)_{0\le i \le l},\end{equation}\label{inv2}
\noindent where $\textbf{U}=(u_i)_{i\ge0}$ is the unique sequence over $\K$ such that $\sum_{i=0}^\infty s_i t^{-i} \cdot \sum_{j=0}^\infty u_it^{-i}=1$. Finally, one has 
\begin{align}
    &W_\K(\textbf{X})[m+j,n+i+1]=W^{\left(\frac{a_1s_0}{a_0},a_1s_0\right)}_\K\left(\textbf{U}^{\left(\frac{-s_0a_1r_1}{r_0a_0},a_0r_0s_0\right)}\right)[i,j]\label{eqn: fininv3.1}
\end{align}
\noindent for ${0\le i \le \left\lfloor\frac{l+1}{2}\right\rfloor}$ and $ i\le j \le l-i$.
\end{corollary}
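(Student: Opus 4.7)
\noindent The plan is to dispatch the three claims via a combination of Theorem \ref{ratio ratio}, Lemma \ref{inv}, Lemma \ref{rswall}, and a locality argument based on the Frame Constraints. I would first prove (\ref{eqn: fininv1.1}) directly from Theorem \ref{ratio ratio}: the column $(W_\K(\textbf{X})[m+i, n])_{0 \le i \le l-1}$ constitutes the east inner frame of $\mathcal{W}_1$ and is therefore geometric. Its initial value $a_0$ together with its second entry $a_1 s_0$ pin down the common ratio $a_1 s_0/a_0$, giving the claim. I would then observe that (\ref{eqn: fininv2.1}) is the special case $i=0$ of (\ref{eqn: fininv3.1}) (after applying Lemma \ref{rswall} to unwind the $(r,a)$-transform), so that it suffices to establish (\ref{eqn: fininv3.1}).

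\noindent For (\ref{eqn: fininv3.1}), the plan is to reduce to Lemma \ref{inv} via a locality argument. Consider the ``idealized'' number wall $W_\K^{(r_0, a_0)}(\mathbf{S}^{(r_1, a_1) L})$ from Lemma \ref{inv}: it has an infinite window in all rows at most $-2$, its row $-1$ is the geometric sequence $(a_0 r_0^n)_{n \in \Z}$, and its row $0$ is the sequence $(a_1 r_1^i s_i)_{i \ge 0}$ padded with zeros. After shifting indices by $(m+1, n)$, this matches the data of $W_\K(\textbf{X})$ on rows $m$ and $m+1$ at columns $n$ through $n+l$. Using Lemma \ref{lem: non_zero_row} together with an inductive pass through the Frame Constraints (Theorem \ref{FC}), I would verify that each entry $W_\K(\textbf{X})[m+j, n+i+1]$ in the stated triangular range depends only on the data of rows $m$ and $m+1$ at these columns, together with the abundant zeros supplied by $\mathcal{W}_2$ in rows $m-1, \dots, m-l$ at columns $n+1, \dots, n+l$. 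The window $\mathcal{W}_2$ thereby plays exactly the role of the infinite window in the idealized setup, so the Frame Constraint computations in both configurations proceed identically and return the same values. Hence the triangular region of $W_\K(\textbf{X})$ coincides (after reindexing) with the corresponding portion of the idealized number wall. Lemma \ref{inv} identifies column $1$ of the idealized wall with a geometric transform of $\textbf{U}$, and this identification then propagates outward column by column via the Frame Constraints, which, once repackaged through Lemma \ref{rswall}, yields the stated formula over the full triangular range.

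\noindent The main obstacle will be the locality argument itself: one must verify that the Frame Constraint computations never, at any step, depend on data outside the hypotheses, and that the zero regions supplied by $\mathcal{W}_1$ and $\mathcal{W}_2$ are sufficient to confine every dependence appropriately. In particular, the edge cases that arise when one of $s_1, \dots, s_l$ vanishes (forcing \hyperlink{FC2}{FC2} or \hyperlink{FC3}{FC3} to be used in place of \hyperlink{FC1}{FC1}) require extra bookkeeping, since one must check that the inner- and outer-frame entries invoked in those equations also lie within rows $m, m+1$ and the $\mathcal{W}_2$-region, and that the resulting sub-windows coincide with those appearing in the idealized number wall.
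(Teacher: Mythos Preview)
Your plan is close in spirit to the paper's but inverts the logical order in a way that creates a real gap. The paper proves (\ref{eqn: fininv2.1}) first, by an inductive argument showing each $d_i$ is independent of the ``blue dot'' entries $W_\K(\textbf{X})[m-1,n]$ and $W_\K(\textbf{X})[m,n-1]$; only then does it invoke Lemma~\ref{lem: rotate} (applied to the east side of $\mathcal{W}_1$, with $\textbf{C}$ from (\ref{eqn: fininv1.1}) and $\textbf{G}$ from (\ref{eqn: fininv2.1})) to obtain the full triangular region (\ref{eqn: fininv3.1}). You instead try to establish (\ref{eqn: fininv3.1}) directly and read off (\ref{eqn: fininv2.1}) as the case $i=0$, but your locality argument does not reach that far: Lemma~\ref{lem: non_zero_row} applied to the nonzero row $m$ only yields the \emph{south}-pointing triangle (rows up to roughly $m+\lfloor l/2\rfloor$), which does not contain, for example, $d_l=W_\K(\textbf{X})[m+l,n+1]$ at the base of the east-pointing region in (\ref{eqn: fininv3.1}).

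Two further points. First, your comparison with the idealized wall invokes only $\mathcal{W}_2$, but the locality for the lower entries of column $n+1$ also hinges on $\mathcal{W}_1$: when some $d_{i-1}=0$ forces \hyperlink{FC3}{FC3}, the relevant sub-window has its west inner frame on column $n$ and west outer frame on column $n-1$, and it is the zeros inside $\mathcal{W}_1$ at column $n-1$ that make the two configurations agree there. The paper's inductive argument is precisely the verification that the corner entries $E_0,F_0$ at $(m-1,n)$ and $(m,n-1)$ (which differ between the two walls) are never used when computing $H_k$ for the relevant $k$. Second, your ``propagate column by column via the Frame Constraints, repackaged through Lemma~\ref{rswall}'' is exactly the content of Lemma~\ref{lem: rotate}; citing it directly (once (\ref{eqn: fininv1.1}) and (\ref{eqn: fininv2.1}) are in hand) is both cleaner and avoids re-deriving the rotational invariance of the Frame Constraints.
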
 
\begin{figure}[H]
    \centering
    \includegraphics[width=0.85\linewidth]{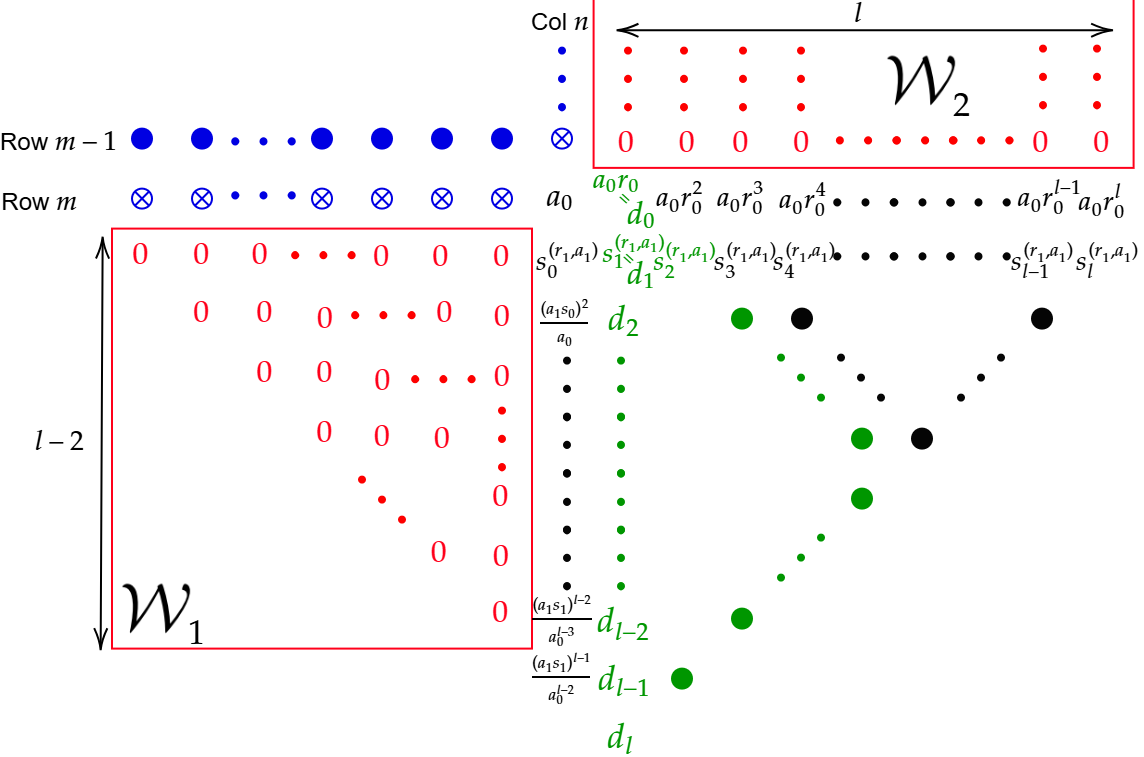}\caption{A drawing of the setup of Corollary \ref{inv2}. Above, $\mathcal{W}_1$ and $\mathcal{W}_2$ are the left an right red squares, respectively, and the crossed dots indicate that an unknown entry is known to not be equal to zero.}\label{inv_wall0}
\end{figure}
\begin{proof}
\noindent First, note that equation (\ref{eqn: fininv3.1}) is achieved by applying Lemma \ref{lem: rotate} with $\textbf{C}$ given by equation (\ref{eqn: fininv1.1}) and $\textbf{G}$ given by equation (\ref{eqn: fininv2.1}). With notation from Lemma \ref{lem: rotate}, the portion of the number wall detailed in equation (\ref{eqn: fininv3.1}) is $\mathfrak{P}_{east}$.\\

\noindent Furthermore, equation (\ref{eqn: fininv1.1}) is immediate as it was in Lemma \ref{inv}. \\

\noindent Therefore, all that remains is to establish equation (\ref{eqn: fininv2.1}). To do this, it is sufficient to show that the value of $d_i$ is independent of the choice of $m$, $n$ and any value denoted by a blue dot in Figure \ref{inv_wall0} (those being any entry of $(W_\K(\textbf{X})[i,j])_{i\le m-1, j\le n-1}$, $(W_\K(\textbf{X})[m,j])_{j\le n-1}$ or $(W_\K(\textbf{X})[i,n])_{i\le m-1}$). To see why this suffices, note that if this is the case, one can take $m=-1$, $n=0$ and pick the values of the blue dots so that Corollary \ref{inv2} follows immediately from Lemma \ref{inv} (where $d_i=c_{i-1}$ with $c_{i-1}$ being from Lemma \ref{inv}). \\

\noindent The proof proceeds by induction. In the base case, the calculation of $d_2$ is independent of $m$, $n$ or any entries marked with a blue dot by \hyperlink{FC1}{FC1}, since $a_0r_0\neq0$.\\

\noindent Assume by induction that $d_2,\dots,d_i$ are calculated without the use of $m$, $n$ and the entries marked by a blue dot. If $d_{i-1}\neq0$, the the independence of $d_{i+1}$ follows from \hyperlink{FC1}{FC1}.\\

\noindent Hence, assume $d_{i-1}=0$. For now, also assume that $d_i\neq0$. Therefore, $d_{i+1}$ is contained in the southern outer frame of a window, $\mathcal{W}_3$. Note that $d_{i+1}$ is in column $n$ and row $m+i+1$. The entry on the same row but column $n-1$ is nonzero (as it is in the eastern inner frame of $\mathcal{W}_3$), and therefore, $d_{i-1}$ is the bottom left corner of a window. Applying \hyperlink{FC3}{FC3} shows that the values of the inner and outer frame that are used in the calculation of $d_{i+1}$ do not include the blue dots. This is depicted below. \begin{figure}[H]
    \centering
    \includegraphics[width=0.85\linewidth]{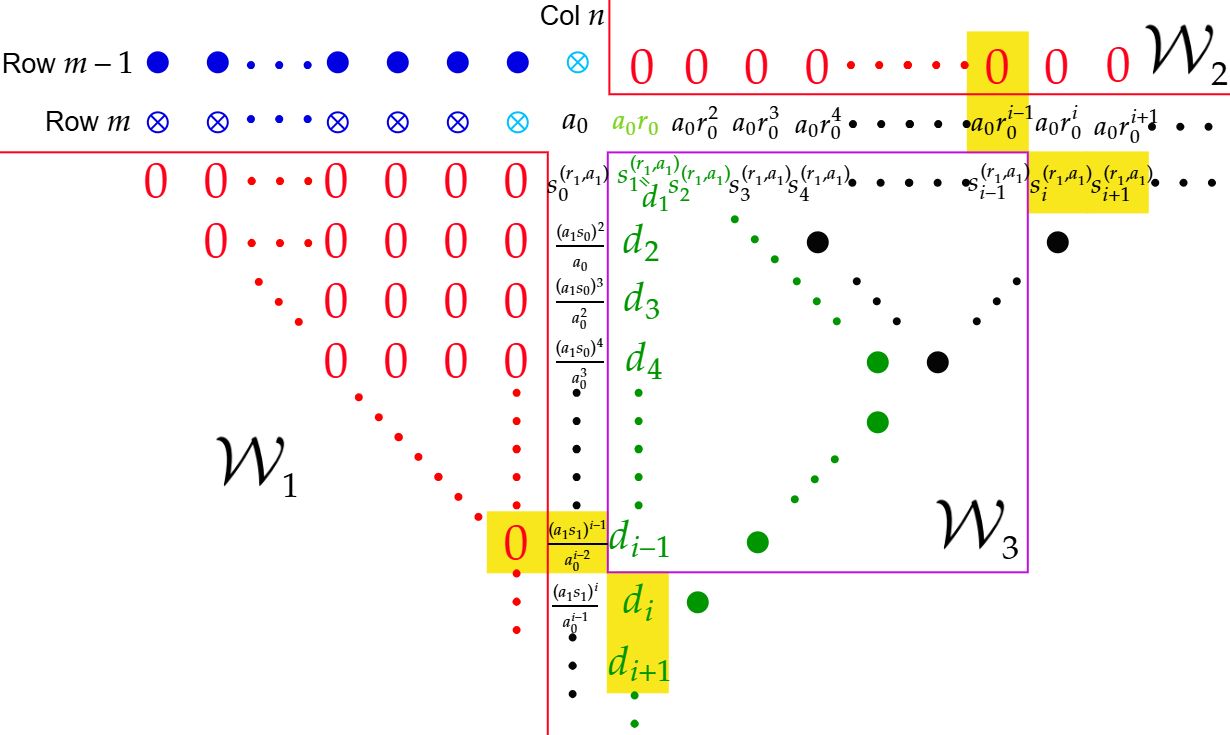}
    \caption{The purple square corresponds to $\mathcal{W}_3$. It is known that $d_{i-1}$ is the bottom right corner of $\mathcal{W}_3$. Depicted above is the \textit{maximum} possible size of $\mathcal{W}_3$ - its actual size is unknown. The entries of the inner and outer frame of $\mathcal{W}_3$ that are used to calculate $d_{i+1}$ in this case are highlighted in yellow. }
\end{figure}\noindent Let $l_3$ be the side length of $\mathcal{W}_3$. With notation from Figure \ref{Fig: basicwindow}, $d_{i+1}$ corresponds to $H_{l_3}$ in the outer frame of $\mathcal{W}_3$. Furthermore, the only blue dots that are contained in the inner or outer frame of $\mathcal{W}_3$ are the ones coloured in light blue ($W_\K(\textbf{X})[m,n-1]$ and $W_\K(\textbf{X})[m-1,n]$). These correspond to $E_0$ and $F_0$. By \hyperlink{FC3}{FC3}, the calculation of $H_{k}$ involves $E_0$ or $F_0$ if and only if $k=0$, which in this case it is not. Hence, $d_{i+1}$ is independent of any blue dot. \\

\noindent When both $d_{i}$ and $d_{i-1}$ are equal to zero, this either means that $d_{i+1}$ is within a window or one must apply \hyperlink{FC2}{FC2}. In the former case, $d_{i+1}=0$, and in the latter case the proof is identical to the case where $d_{i}\neq0$ but with \hyperlink{FC2}{FC2} applied instead of \hyperlink{FC3}{FC3}.\\

\noindent Therefore, the values of $d_i$ are independent of the entries in the number wall denoted by blue dots, as well as the choices of $m$ and $n$, as required.
\end{proof}
\subsubsection{\textbf{Lemma \ref{fininv} Adapted to Infinite Number Walls}}\hfill\\
\noindent By letting $a_0=r_0=a_1=r_1=1$ and taking $l\to\infty$, the following corollary\footnote{This corollary was discovered independently by Erez Nesharim and Viktoria Rudykh at a similar time. A similar result was also shown by Dress \cite[Page 5]{Dress} in 1965.} of Lemma \ref{fininv} is established. 
\begin{corollary}\label{lem: hor_inv_nw}
     Let $\mathbf{S}=(s_i)_{i\in\N}$ be a sequence over $\K$ such that $s_0=1$. Furthermore, define the sequence $\textbf{U}=(u_i)_{\ge0}$ such that \[\sum_{i=0}^\infty s_it^{-i} \cdot \sum_{i=0}^\infty u_i t^{-i}=1.\]Then, \begin{align*}
         &\left(W_\K\left(\mathbf{S}^L\right)[j,i]\right)_{j\ge-1, i\ge0}=\left(W_\K\left(\textbf{U}^{(-1,1)}\right)[i-1,j+1]\right)_{ j\ge-1,i\ge0}.
     \end{align*}
\end{corollary}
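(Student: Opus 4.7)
The plan is to specialise Lemma \ref{inv} and Lemma \ref{lem: rotate} (equivalently, Corollary \ref{fininv}) to the parameters $a_0=r_0=a_1=r_1=1$ and then pass to the limit corresponding to the infinite window that arises on the left of $W_\K(\mathbf{S}^L)$.

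As a first step I would note that $\mathbf{S}^L$ forces an infinite window of zeros in the region $\{(m,n): m\ge 0,\ n<0\}$: for every such $n$ the Toeplitz matrix $T_{\mathbf{S}^L}(n;m)$ has a vanishing lower-triangular block (including its main diagonal), so its determinant is zero. Columns $n=0$ and $n=1$ therefore play the roles of the eastern inner and outer frames of this infinite window, while row $m=-1$, which is constantly $1$, serves as the northern inner frame. Applying Lemma \ref{inv} with the stated parameters (and using $s_0=1$) then computes
\[W_\K(\mathbf{S}^L)[m,0]=1, \qquad W_\K(\mathbf{S}^L)[m,1]=(-1)^{m+1}u_{m+1} \qquad (m\ge 0),\]
and one recognises the second sequence as the shift $(\mathbf{U}^{(-1,1)})_{m+1}$.

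Next, Lemma \ref{lem: rotate} Part \hyperlink{lem2}{2} (equivalently, equation (\ref{eqn: fininv3.1}) of Corollary \ref{fininv}) identifies the entries east of a window with a rotated number wall determined by the window's eastern frames. In our setting these frames are geometric with $R=C_0=1$ and outer frame equal to $\mathbf{U}^{(-1,1)}$ up to a shift. Running the same induction with the constraint window taken to be the infinite one above --- that is, letting $l\to\infty$ in the ranges $i\le j\le l-i$ of (\ref{eqn: fininv3.1}) --- yields
\[W_\K(\mathbf{S}^L)[j-1,i+1]=W_\K(\mathbf{U}^{(-1,1)})[i,j] \qquad \text{for all } 0\le i\le j.\]
Substituting $(j',i')=(j-1,i+1)$ rewrites this as the desired identity on the subregion $\{(j',i'):i'\ge 1,\ j'\ge i'-1\}$, while the boundary $i'=0$ is handled directly by (\ref{eqn: fininv1.1}) together with the convention $W_\K(\cdot)[-1,n]=1$.

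The complementary region $\{(j',i'):i'\ge j'+2\}$ is obtained by running the same argument with the roles of $\mathbf{S}$ and $\mathbf{U}^{(-1,1)}$ exchanged: a short Laurent-series computation shows that the multiplicative inverse of $\mathbf{U}^{(-1,1)}$ is $\mathbf{S}^{(-1,1)}$ and that $(\mathbf{S}^{(-1,1)})^{(-1,1)}=\mathbf{S}$, so the swapped identity reads precisely as the claim on the missing triangle. The sole technical subtlety is the passage $l\to\infty$, since Corollary \ref{fininv} and Lemma \ref{lem: rotate} are phrased for finite windows whereas we work with a single infinite one; but this is routine from the locality of the frame-constraint recurrences, as for every fixed $(j',i')$ the value $W_\K(\mathbf{S}^L)[j',i']$ is determined by only finitely many neighbouring entries, all contained in a sufficiently large finite truncation of the infinite window.
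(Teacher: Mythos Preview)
Your proposal is correct and follows essentially the same route as the paper: specialise Corollary~\ref{fininv} (equivalently Lemma~\ref{inv} together with Lemma~\ref{lem: rotate}) to $a_0=r_0=a_1=r_1=1$ and let the window size $l$ go to infinity. The paper's own proof is the single sentence ``letting $a_0=r_0=a_1=r_1=1$ and taking $l\to\infty$'', so your write-up is in fact more detailed than what appears there.

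One point worth highlighting: your observation that the limit $l\to\infty$ in \eqref{eqn: fininv3.1} only yields the identity on the triangle $i\le j$ (in the rotated coordinates), and that the complementary triangle must be obtained by swapping the roles of $\mathbf{S}$ and $\mathbf{U}^{(-1,1)}$, is a genuine addition. The paper does not spell this out, but it is needed if one wants the equality on the full quadrant $\{j\ge -1,\ i\ge 0\}$ as stated. Your verification that the Laurent-series inverse of $\mathbf{U}^{(-1,1)}$ is $\mathbf{S}^{(-1,1)}$ and that $(\mathbf{S}^{(-1,1)})^{(-1,1)}=\mathbf{S}$ is exactly the right way to close this, and the symmetry of the identity under $(j,i)\leftrightarrow(i-1,j+1)$ then finishes the argument. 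A minor quibble: your stated range after the change of variables should read $j'\ge i'-2$ rather than $j'\ge i'-1$, but this only enlarges the region covered by the direct application and does not affect the argument.
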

\noindent Lemma \ref{lem: hor_inv_nw} is illustrated below.\begin{figure}[H]
    \centering
    \includegraphics[width=0.6\linewidth]{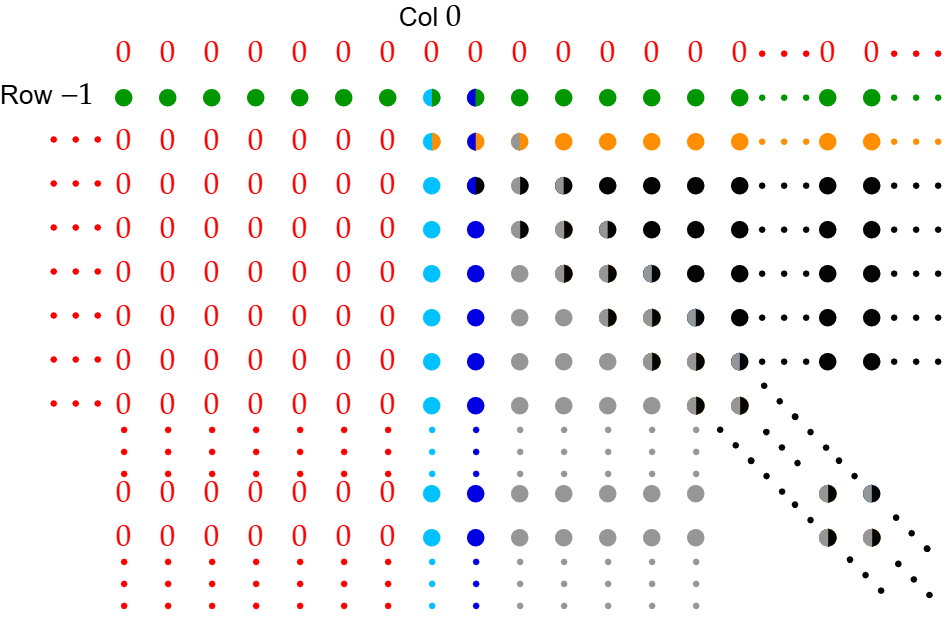}
    \caption{Each dot corresponds to an entry of $W_\K\left(\mathbf{S}^L\right)$. Note, some dots have multiple colours. Each green dot is equal to 1.  The orange dots are the sequence $\mathbf{S}$ and the black dots are the entries of $W_\K\left(\mathbf{S}\right)$. Then, Lemma \ref{lem: hor_inv_nw} states that the light blue dots are all equal to 1, the dark blue dots are the sequence $(-1)^i\cdot  u_i$ and the grey dots the entries of $W_\K\left(U^{\left(-1,1\right)}\right)$, reflected horizontally and then rotated clockwise $90^\circ$.}
\end{figure}

\section{Lemmata on the Number Walls of the $p$-Cantor and $p$-Singer Sequences}\label{Sect: 8}
\noindent This section specialises the lemmata from Section \ref{sect: 7} to the case where the input sequence is either the $p$-Cantor or the $p$-Singer sequence. This additional specificity allows for stronger results that build upon the work in Section \ref{sect: 7} and are crucial for the proof of Theorem \ref{thm: morphism}. From now on, it is necessary to replace the generic field $\K$ with the finite field $\F_p$. Because of this, the following classical result becomes relevant. 
\begin{theorem}[Fermat's Little Theorem (\hypertarget{FLT}{FLT})]
    Let $p$ be a prime. Then, for every $h\in\N$ and $x\in\F_p\backslash\{0\}$, $x^{p^h-1}=1$.
\end{theorem}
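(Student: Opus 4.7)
This is the classical Fermat's Little Theorem, stated here in a slightly generalized form (with $p^h-1$ in place of the usual $p-1$), and my plan is to deduce the generalized statement from the standard one via a simple exponent factorization.

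First, I would establish the base case $h=1$, namely $x^{p-1}=1$ for all $x\in\mathbb{F}_p\setminus\{0\}$. The cleanest route is to observe that the set $\mathbb{F}_p^\times := \mathbb{F}_p\setminus\{0\}$ forms a group under multiplication of order $p-1$, and then apply Lagrange's theorem on finite groups: the order of any element $x\in\mathbb{F}_p^\times$ divides $|\mathbb{F}_p^\times|=p-1$, so $x^{p-1}=1$. (An alternative, more elementary route is to note that the map $y\mapsto xy$ is a bijection on $\mathbb{F}_p^\times$, so $\prod_{y\in\mathbb{F}_p^\times} y = \prod_{y\in\mathbb{F}_p^\times} (xy) = x^{p-1}\prod_{y\in\mathbb{F}_p^\times} y$, and cancelling the nonzero product gives $x^{p-1}=1$.)

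Next, I would reduce the general case to the base case using the geometric series identity
\begin{equation*}
    p^h-1 \;=\; (p-1)\bigl(p^{h-1}+p^{h-2}+\cdots+p+1\bigr).
\end{equation*}
Writing $N_h := p^{h-1}+\cdots+p+1 \in \mathbb{N}$, this gives
\begin{equation*}
    x^{p^h-1} \;=\; \bigl(x^{p-1}\bigr)^{N_h} \;=\; 1^{N_h} \;=\; 1,
\end{equation*}
for any $x\in\mathbb{F}_p\setminus\{0\}$, which is exactly the claim. There is no real obstacle here; the only ingredient beyond elementary algebra is Lagrange's theorem (or its ad-hoc replacement by the bijection argument), and the reduction from $h=1$ to general $h$ is just the factorization of $p^h-1$. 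Since the authors present this as a named classical result that will be cited later, a short two-line proof suffices.
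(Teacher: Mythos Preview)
Your proof is correct and entirely standard. The paper does not actually give a proof of this statement at all---it is simply stated as a classical result and then invoked repeatedly under the label \hyperlink{FLT}{FLT}, so there is nothing to compare your argument against.
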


\subsection*{A Note on Presentation}
\noindent Some of the upcoming lemmata have complex statements. To make each lemma as simple as possible, such results are presented in bullet point format. Additionally, every lemma is illustrated immediately after it is stated. The reader is strongly encouraged to look at the corresponding picture alongside reading the statement of the lemma.  Finally, for $h\in\N$, recall that $\textbf{C}^{(p)}_h=(c^{(p)}_i)_{0\le i <p^h}$ and define \begin{equation}\textbf{S}_h^{(p)}=(s^{(p)}_i)_{0\le i <p^h+2}.\label{eqn: S_n}\end{equation} 
\noindent All the sequences (and their variants) that are used in this paper can be found in Appendix \hyperlink{Sect: Appendix_A}{A}.
\subsection{Number Wall of the One-Sided $p$-Cantor Sequence}\hfill\\
\noindent The first order of business is to establish a version of Lemma \ref{fininv} that is specific to the $p$-Cantor sequence. Both Lemma \ref{fininv} and the upcoming lemma are used in the proof of Theorem \ref{thm: morphism}. 
\begin{lemma}\label{lem: cant_inv_wall}Let\begin{itemize}
    \item $\textbf{X}$ be a sequence over $\F_p$ and $h\in\N$,
    \item $a_0,r_0,a_1,r_1\in\F_p$ such that $a_0,r_0\neq0$,
    \item  $\mathcal{W}_1$ and $\mathcal{W}_2$ be two windows in $W_p(\textbf{X})$ of size $p^h$ and $p^h-2$ respectively, such that \begin{itemize}
        \item  $\mathcal{W}_1$ and $\mathcal{W}_2$ are positioned so that the north east corner of the inner frame of $\mathcal{W}_1$ overlaps with the south west corner of the inner frame of $\mathcal{W}_2$,
        \item $m,n\in\N$ be the row and column index, respectively, of the entry of $W_p(\textbf{X})$ which is in the inner frame of both $\mathcal{W}_1$ and $\mathcal{W}_2$,
        \item the south inner frame of $\mathcal{W}_2$ is the $(r_0,a_0)$ geometric sequence with increasing indices (the sequence goes from \textup{left to right}). That is, $$(W_p(\textbf{X})[m,n+i])_{0\le i \le p^h-1}=(a_0r_0^i)_{0\le i\le p^h-1}.$$
        \item the south outer frame of $\mathcal{W}_2$ is the $(r_1,a_1)$ geometric transform of $\textbf{C}^{(p)}_h$ with increasing indices (the sequence goes from \textup{left to right}). That is, $$(W_p(\textbf{X})[m+1,n+i])_{0\le i \le p^h+1}=(a_1 r_1^ic^{(p)}_i)_{0\le i \le p^h+1}.$$
    \end{itemize}

\end{itemize}
\noindent Then\begin{itemize}
    \item  for every $0\le i \le p^h+1$, one has \begin{equation}\label{eqn: cantinv1} W_\K(\textbf{X})[m+i,n]=a_0\cdot \left(\frac{a_1}{a_0}\right)^i;\end{equation}
    \item for every $0\le i \le p^h+1$, one has\begin{equation}\label{eqn: cantinv2}d_{i}:= W_p(\textbf{X})[m+i,n+1]=a_0r_0\left(\frac{-a_1r_1}{r_0a_0}\right)^{i} s^{(p)}_{i};\end{equation}
    \item for every ${0\le i \le \left\lfloor\frac{p^h+2}{2}\right\rfloor}$ and $ i\le j \le p^h+1-i$, one has\begin{align}
    &W_p(\textbf{X})[m+j,n+i+1]=W^{\left(\frac{a_1}{a_0},a_0\right)}_p\left(\textbf{S}_h^{(p)}{\left(\frac{-a_1r_1}{r_0a_0},a_0r_0\right)}\right)[i,j].\label{eqn: cantinv3}
\end{align}
\end{itemize}

\end{lemma}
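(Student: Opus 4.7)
The plan is to obtain Lemma \ref{lem: cant_inv_wall} as the $p$-Cantor-specific specialisation of Corollary \ref{fininv}, using Lemma \ref{lem: inv_LS} to identify the abstract ``inverse'' sequence appearing in that corollary with the explicit $p$-Singer sequence $\textbf{S}(p)$. Since $c_0^{(p)} = 1$ (so the hypothesis $s_0 \neq 0$ of Corollary \ref{fininv} is satisfied with $s_0 = 1$), the right-hand sides of equations (7.22)--(7.24) collapse to the right-hand sides of (8.2)--(8.4), provided one replaces every occurrence of $u_i$ by $s_i^{(p)}$; the latter substitution is exactly what Lemma \ref{lem: inv_LS} licenses, since it asserts that the coefficient sequences of $\Theta_p(t)$ and $\Xi_p(t)$ are multiplicative inverses in $\mathbb{F}_p(\!(t^{-1})\!)$.

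The first step is a geometric/bookkeeping check reconciling the two statements. Corollary \ref{fininv} is formulated with $\mathcal{W}_1$ of size $l-2$ (the smaller window, to the south-west) and $\mathcal{W}_2$ of size $l$ (to the north-east), whereas Lemma \ref{lem: cant_inv_wall} attaches the labels $\mathcal{W}_1$ (size $p^h$, to the south-west) and $\mathcal{W}_2$ (size $p^h-2$, to the north-east). After confirming that the shared-corner condition is literally the same in both statements, one matches the two setups by taking $l = p^h$ in the corollary and simply relabeling: the corollary's $\mathcal{W}_2$ plays the role of the lemma's $\mathcal{W}_2$ at the north-east. Substituting $s_0=1$ and $u_i=s_i^{(p)}$ into (7.22), (7.23), (7.24) then gives (8.2), (8.3), (8.4) over the restricted ranges $0 \le i \le p^h-1$, $0 \le i \le p^h$, and $0 \le i \le \lfloor (p^h+1)/2\rfloor$, $i \le j \le p^h - i$ respectively.

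The second step is to widen the index ranges to those actually claimed in the lemma (an extension by one or two indices in each formula). For (8.2), the full range $0 \le i \le p^h+1$ is forced immediately by Theorem \ref{ratio ratio}: the column in question is the east inner frame of $\mathcal{W}_1$ (size $p^h$, inner frame of length $p^h+2$), hence a geometric sequence, whose ratio $a_1/a_0$ is determined by the two values at $i=0,1$. For (8.3) at $i=p^h+1$, the entry lies at the SE corner of $\mathcal{W}_1$'s east outer frame; one either invokes the Frame Constraints directly at that position, or observes that the symmetry of the $p$-Singer sequence (Lemma \ref{sym_sing}) gives $s^{(p)}_{p^h+1} = s^{(p)}_0 = 1$, which matches the value independently predicted by the inner-frame ratios from (8.2). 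The full range in (8.4) is then obtained by invoking Lemma \ref{lem: rotate}: this identifies the triangular portion to the east of $(m,n+1)$ with $\mathfrak{P}_{east}$ for $\mathcal{W}_1$, whose data are precisely the east inner frame from (8.2) together with the east outer frame from (8.3). Combined with Lemma \ref{rswall}, this yields the right-hand side of (8.4) as an $(r,a)$-number wall of the geometric transform of $\textbf{S}^{(p)}_h$.

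The main obstacle is not the algebraic substitution (which is mechanical once Lemma \ref{lem: inv_LS} is in hand) but the geometric reconciliation in the first step, together with the boundary extensions in the second. Specifically, one must verify that the two entries $(W_p(\textbf{X})[m+1, n+p^h])$ and $(W_p(\textbf{X})[m+1, n+p^h+1])$ --- which lie strictly outside the outer frame of $\mathcal{W}_2$ and are assumed to be zero because $c^{(p)}_{p^h} = c^{(p)}_{p^h+1}=0$ (Corollary \ref{cor: p^h=0}) --- do not cause any Frame Constraint application to be ill-defined when extending to the boundary of $\mathcal{W}_1$, and that the parameters $(r_0, a_0, r_1, a_1)$ are tracked correctly through any rotation or reflection invoked via Lemma \ref{rswall} or Lemma \ref{lem: rotate}.
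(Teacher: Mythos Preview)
Your overall strategy matches the paper's: reduce to Corollary~\ref{fininv}, identify the abstract inverse sequence with $\textbf{S}(p)$ via Lemma~\ref{lem: inv_LS}, and obtain \eqref{eqn: cantinv3} from \eqref{eqn: cantinv2} through Lemma~\ref{lem: rotate}. Your treatment of \eqref{eqn: cantinv1} (the east inner frame of $\mathcal{W}_1$ is geometric of length $p^h+2$) and of \eqref{eqn: cantinv3} also agrees with the paper.

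The gap is in your first step for \eqref{eqn: cantinv2}. You cannot apply Corollary~\ref{fininv} with $l=p^h$ by ``simply relabeling'': in the Corollary the \emph{smaller} window (size $l-2$) sits to the south-west and the \emph{larger} one (size $l$) to the north-east, whereas Lemma~\ref{lem: cant_inv_wall} has the opposite configuration (south-west window of size $p^h$, north-east of size $p^h-2$). Concretely, the Corollary's row-$m$ hypothesis with $l=p^h$ requires the value of $W_p(\textbf{X})[m,n+p^h]$, but this entry lies strictly to the right of $\mathcal{W}_2$'s south inner frame and is not determined by the Lemma's hypotheses. The paper does not black-box Corollary~\ref{fininv}; instead it \emph{reruns} the independence argument from that Corollary's proof, showing that the $d_i$ do not depend on the unknown ``purple'' entries $(m,n+p^h)$, $(m,n+p^h+1)$, $(m,n+p^h+2)$ nor on certain ``light-blue'' entries. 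Once independence is established, those entries are \emph{chosen} to continue the geometric sequence on row~$m$ (and the light-blue ones to be zero), after which Corollary~\ref{fininv} applies and yields \eqref{eqn: cantinv2} over the full range $0\le i\le p^h+1$ in one stroke---no separate boundary extension is needed.

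The paper flags two places where the rerun differs from the original argument: (i) the zeros $c^{(p)}_{p^h}=c^{(p)}_{p^h+1}=0$ force a new window $\mathcal{W}_4$ (the orange square in Figure~\ref{fig: cantinv1}), so the entries $(W_p(\textbf{X})[m+i,n+p^h-1])_{i=2,3}$ lie in its inner frame and are automatically independent of the purple and light-blue dots; and (ii) since $c^{(p)}_{p^h-1}\neq 0$, the maximal size of the auxiliary window $\mathcal{W}_3$ from the Corollary's proof shrinks by one. Your final paragraph touches on point~(i) but treats it as a well-definedness check rather than as the mechanism that secures independence; the missing row-$m$ data (the purple dots) is not addressed at all.
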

\noindent For the purposes of Figure \ref{fig: cantinv1}, $c^{(r_1,a_1)}_i=c_i^{(p)}\cdot r_1^i\cdot a_1$.
\begin{figure}[H]
    \centering
    \includegraphics[width=0.85\linewidth]{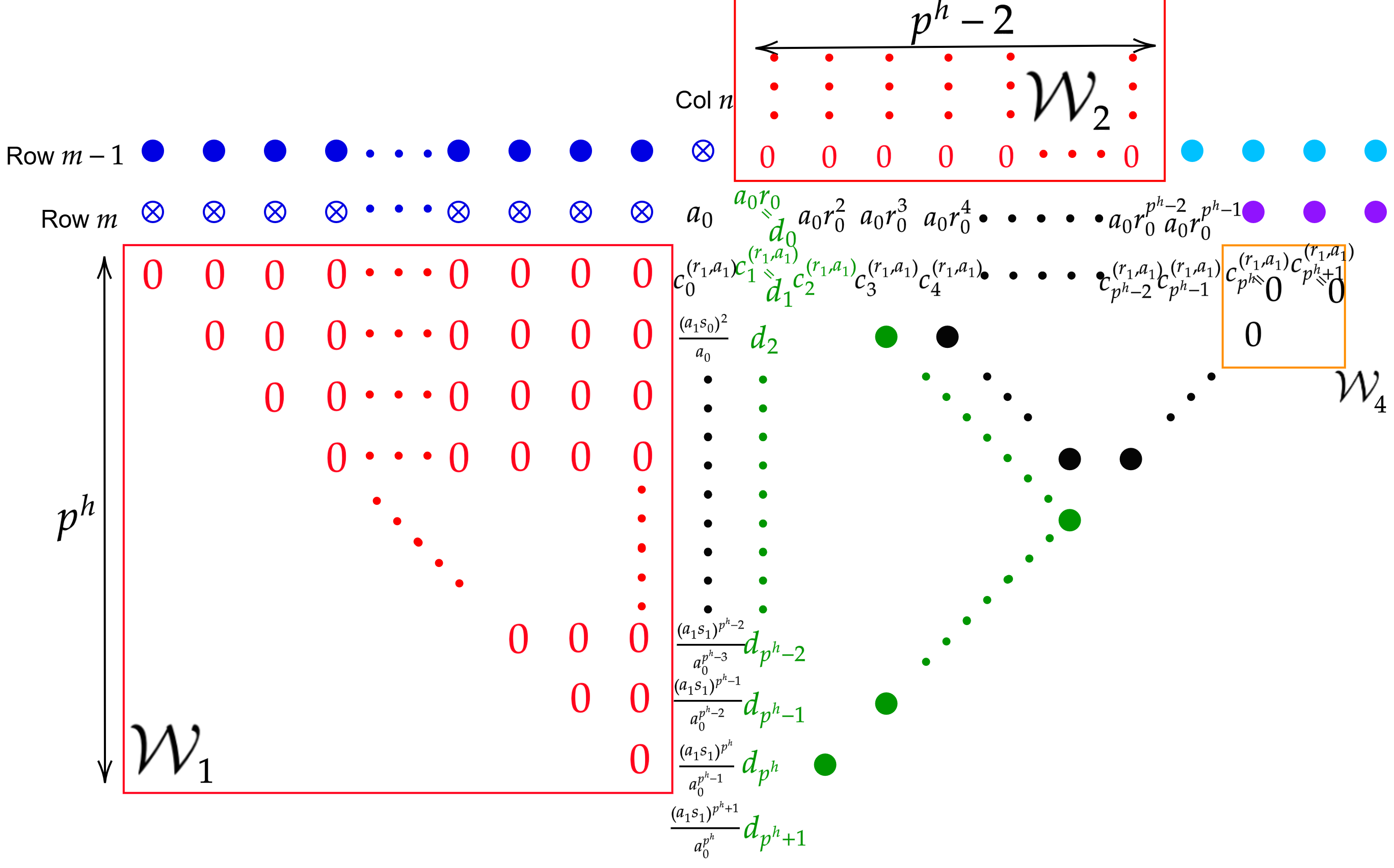}\caption{ An illustration of Lemma \ref{lem: cant_inv_wall}. Above, the crossed dots indicate that an unknown entry is known to not be equal to zero. The significance of the orange square is explained later.}\label{fig: cantinv1}
\end{figure}
\begin{proof}
    To begin, equation (\ref{eqn: cantinv1}) follows from the inner frame of $\mathcal{W}_1$ being a geometric sequence, and equation (\ref{eqn: cantinv3}) follows from equation (\ref{eqn: cantinv2}) and Lemma \ref{lem: rotate}. \\
    
    \noindent The proof of equation (\ref{eqn: cantinv2}) is very similar to that of equation (\ref{eqn: inv2}) in Corollary \ref{fininv} and is therefore mostly omitted, with only the differences being written. The goal is to show that the values of the light-blue, dark-blue or purple dots in Figure \ref{fig: cantinv1} are not used in the calculation of $(W_p(\textbf{X})[m+i,n+j])_{0\le i \le p^h+1,~ 0\le j \le p^h+1-i}$ (that is, the green and black dots in Figure \ref{fig: cantinv1}). This suffices, as in this case one can pick the light-blue dots to be equal to zero and the purple dots (from left to right) to equal $a_0r_0^{p^h}$, $a_0r_0^{p^h+1}$ and $a_0r_0^{p^h+2}$ respectively and then the conclusion follows from Corollary \ref{fininv}. \\

    \noindent There are two main differences from the proof of Corollary \ref{fininv}. The first comes from Lemma \ref{sym} and Corollary \ref{cor: p^h=0} which state that $$c^{(p)}_{p^h-1}=1\neq0= c^{(p)}_{p^h}=c^{(p)}_{p^h+1}.$$ That is, $c^{(p)}_{p^h}$ and $c^{(p)}_{p^h+1}$ are part of a window (call it $\mathcal{W}_4$ and see the orange square in Figure \ref{fig: cantinv1}), implying $(W_p(\textbf{X})[m+i, n+p^h-1])_{i=2,3}$ are in the inner frame of $\mathcal{W}_4$ and hence their value does not depend on the light-blue or purple entries from Figure \ref{fig: cantinv1}. The second difference is that the maximum size of $\mathcal{W}_3$ (recall the proof of Corollary \ref{fininv}) has decreased by 1, since $c_{p^h-1}\neq0$.
\end{proof}
~\\~

\subsection{The $p$-Cantor Sequence between Two Windows}\hfill\\

\noindent Recall the definition of $\oplus$ from equation (\ref{eqn: oplus}). The following lemma considers the number wall of a sequence of the form $$\widetilde{ \mathbf{C}}^{(p)}_h(r,a):=\{0\}_{0\le i \le p^h-1}\oplus \{a\cdot r^i\cdot c^{(p)}_i\}_{0 \le i <p^h} \oplus\{0\}_{0\le i \le p^h-1}.$$ 
\noindent Additionally, recall that $p^{(h)}_2:=\frac{p^h-1}{2}$.
\begin{lemma}\label{invwind}
Let\begin{itemize}
    \item $\textbf{X}$ be a sequence over $\F_p$ of arbitrary length;
    \item $l,m,n$ and $h$ be natural numbers,
    \item $r_0,a_0,r_1,a_1\in\F_p\backslash\{0\}$,
    \item $\mathcal{W}_1$, $\mathcal{W}_2$ and $\mathcal{W}_3$ be windows in $W_p(\textbf{X})$ with side lengths $p^h$, $p^h-2$ and $p^h$, respectively, such that
    \begin{itemize}
        \item $\mathcal{W}_1$ and $\mathcal{W}_3$ have their north west corner on row $m$ and that $\mathcal{W}_1$ is west of $\mathcal{W}_3$,
        \item the south west corner of the inner frame of $\mathcal{W}_2$ overlap with the north east entry of the inner frame of $\mathcal{W}_1$, and let this intersection take place on row $m-1$ and column $n$,
        \item the south east entry in the inner frame of $\mathcal{W}_2$ overlaps with the north west entry of the inner frame of $\mathcal{W}_3$,
        \item  the southern inner frame of $\mathcal{W}_2$ is given by the $(r_0,a_0)$ geometric sequence with decreasing indices (the sequence goes from \textup{right to left}). That is, for $0\le i <p^h$, $$W_p(\textbf{X})[m-1,n+i]=a_0r_0^{p^h-1-i}$$
        \item the southern outer frame of $\mathcal{W}_2$ is given by the $(r_1,a_1)$ geometric transform of $\textbf{C}^{(p)}_h$ with decreasing indices (the sequence goes from \textup{right to left}). That is, for $0\le i <p^h$, $$W_p(\textbf{X})[m,n+i]=a_1r_1^{p^h-1-i}c_{p^h-1-i}^{(p)}.$$
    \end{itemize}

\end{itemize}
Then,
\begin{enumerate}
\hypertarget{lem2.1}{\item}  The eastern outer frame of $\mathcal{W}_1$ (the sequence $(W_p(\textbf{X})[m+i-1,n+1])_{0\le i \le p^h+1}$) and the western outer frame of $\mathcal{W}_3$ (the sequence $(W_p(\textbf{X})[m+i-1,n+p^h-2])_{0\le i \le p^h+1}$) are both geometric transforms of $\textbf{S}_h^{(p)}$.
\hypertarget{lem2.2}{\item}  There is another window in $W_p(\textbf{X})$, denoted by $\mathcal{W}_4$, of size $p^h-2$ such that the north west (north east, respectively) corner of its inner frame overlaps with the south east (south west, respectively) corner of the inner frame of $\mathcal{W}_1$ (\,$\mathcal{W}_3$, respectively).
\hypertarget{lem2.3}{\item}The north outer frame of $\mathcal{W}_4$ is given by the $(r_1^{-1}, a_1)$ geometric transform of $\textbf{C}^{(p)}_h$ with increasing indices (the sequence goes from \textit{left to right}). That is, for $0\le i \le p^h-1$, $$W_p(\textbf{X})[m+p^h-1,n+i]=a_1r_1^{-i}c_i^{(p)}.$$ 
\vspace{-0.5cm}\hypertarget{lem2.4}{\item}  The north inner frame of $\mathcal{W}_4$ is given by the $\left(\frac{r_0}{r_1^2}, \frac{a_1^2}{a_0}\right)$ geometric sequence with increasing indices (the sequence goes from \textit{left to right}). That is, for $0\le i \le p^h-1$, $$W_p(\textbf{X})[m+p^h,n+i]=\frac{a_1^2r_0^i}{a_0r_1^{2i}}.$$
\hypertarget{lem2.5}{\item} when $a_0,r_0,a_1,r_1=1$, the region $(W_p (\textbf{X})[m+i,n+j])_{-1\le i \le p^h,~0\le j \le p^h-1}$ has horizontal and vertical symmetry. That is, for all $0\le i \le p^h+1$ and $0\le j \le p^h-1$, one has\[W_p (\textbf{X})[m+i-1,n+p^h-1-j]=W_p (\textbf{X})[m+i-1,n+j]\]and\[W_p (\textbf{X})[m+i-1,n+j]=W_p (\textbf{X})[m+p^h-i,n+j].\] 
\end{enumerate}

\end{lemma}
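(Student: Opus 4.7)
The overall strategy is to view the configuration as two copies of the setup of Lemma \ref{lem: cant_inv_wall}, one for each of the pairs $(\mathcal{W}_1, \mathcal{W}_2)$ and $(\mathcal{W}_3, \mathcal{W}_2)$, glued together along $\mathcal{W}_2$. The crucial first observation is that by the symmetry of the $p$-Cantor sequence (Lemma \ref{sym}), namely $c^{(p)}_{p^h - 1 - i} = c^{(p)}_i$, the ``reversed'' southern inner and outer frames of $\mathcal{W}_2$ can be rewritten as standard left-to-right geometric sequences and transforms, with the reindexed parameters $a_0' = a_0 r_0^{p^h - 1}$, $r_0' = r_0^{-1}$, $a_1' = a_1 r_1^{p^h - 1}$, $r_1' = r_1^{-1}$. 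These match the hypotheses of Lemma \ref{lem: cant_inv_wall} exactly.

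For Part \hyperlink{lem2.1}{(1)}, applying Lemma \ref{lem: cant_inv_wall} to the pair $(\mathcal{W}_1, \mathcal{W}_2)$ with the reindexed parameters directly yields that the eastern outer frame of $\mathcal{W}_1$ is a geometric transform of $\textbf{S}^{(p)}_h$. Fermat's Little Theorem (\hyperlink{FLT}{FLT}), which gives $r^{p^h - 1} = 1$ for any nonzero $r \in \F_p$, simplifies the resulting constants into the desired clean expression. The analogous statement for the western outer frame of $\mathcal{W}_3$ follows by first applying the vertical reflection of Corollary \ref{lem: reflect_r_a} to swap the roles of east and west, and then invoking Lemma \ref{lem: cant_inv_wall} on the reflected configuration.

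For Parts \hyperlink{lem2.2}{(2)}, \hyperlink{lem2.3}{(3)}, and \hyperlink{lem2.4}{(4)}, equation (\ref{eqn: cantinv3}) of Lemma \ref{lem: cant_inv_wall} describes the triangular region east of $\mathcal{W}_1$ and south of $\mathcal{W}_2$ as a specific $(r,a)$-number wall of $\textbf{S}^{(p)}_h$; the dual triangular region west of $\mathcal{W}_3$ admits an analogous description. Consistency of these two descriptions, enforced by the Frame Constraints (Theorem \ref{FC}), determines the entire region between $\mathcal{W}_1$ and $\mathcal{W}_3$ below $\mathcal{W}_2$. The existence of $\mathcal{W}_4$ then reduces to exhibiting a window of side length $p^h - 2$ in the finite number wall $W_p(\textbf{S}^{(p)}_h)$ at the appropriate position. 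Here, the inverse relationship of Lemma \ref{lem: inv_LS} between $\textbf{S}^{(p)}$ and $\textbf{C}^{(p)}$, combined with Corollary \ref{lem: hor_inv_nw}, translates the presence of the known window $\mathcal{W}_2$ in the ambient configuration into the required window in $W_p(\textbf{S}^{(p)}_h)$. The explicit formulae for the north outer and inner frames of $\mathcal{W}_4$ then follow by combining these identifications with Theorem \ref{ratio ratio}'s constraint $PS/(QR) = (-1)^l$ and a final application of \hyperlink{FLT}{FLT} to clean up the constants.

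Finally, Part \hyperlink{lem2.5}{(5)} is a symmetry argument. When $a_0 = r_0 = a_1 = r_1 = 1$, the boundary data fed into the Frame Constraints is symmetric under both horizontal and vertical reflections about the centre of the configuration; this uses the symmetry of $\textbf{C}^{(p)}$ (Lemma \ref{sym}) for the south outer frame of $\mathcal{W}_2$, together with the symmetry of $\textbf{S}^{(p)}$ (Lemma \ref{sym_sing}) for the east and west outer frames of $\mathcal{W}_1$ and $\mathcal{W}_3$ established in Part \hyperlink{lem2.1}{(1)}. Corollary \ref{lem: reflect_r_a} then propagates these symmetries into the interior. The principal obstacle is Part \hyperlink{lem2.2}{(2)}: while Lemma \ref{lem: cant_inv_wall} and the Frame Constraints determine entries in the region bounded by the three given windows, verifying that the interior of $\mathcal{W}_4$ is genuinely a square of zeroes requires the inverse relationship of Lemma \ref{lem: inv_LS} together with Corollary \ref{lem: hor_inv_nw} to transfer window information from the well-understood $W_p(\textbf{C}^{(p)}_h)$ across to $W_p(\textbf{S}^{(p)}_h)$.
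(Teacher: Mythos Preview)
Your overall decomposition into two applications of Lemma~\ref{lem: cant_inv_wall} is the right starting point and matches the paper's opening move. Part~\hyperlink{lem2.1}{(1)} is essentially as you describe. However, there is a structural divergence from the paper in how Parts~\hyperlink{lem2.2}{(2)}--\hyperlink{lem2.5}{(5)} are organised, and your treatment of Part~\hyperlink{lem2.5}{(5)} has a genuine gap.

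The paper reverses your order: it first reduces to $r_0=a_0=r_1=a_1=1$, proves Part~\hyperlink{lem2.5}{(5)} directly by a row-by-row induction outward from the middle row $m+p_2^{(h)}$, and then reads off Parts~\hyperlink{lem2.2}{(2)}--\hyperlink{lem2.4}{(4)} as immediate consequences of the horizontal symmetry (the bottom of the region is forced to mirror the known top). The general parameter case is recovered at the end via Lemma~\ref{rswall}. Your route instead attempts Parts~\hyperlink{lem2.2}{(2)}--\hyperlink{lem2.4}{(4)} first, using Lemma~\ref{lem: inv_LS} and Corollary~\ref{lem: hor_inv_nw} to transfer a window from $W_p(\mathbf{C}^{(p)}_h)$ into $W_p(\mathbf{S}^{(p)}_h)$. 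This is an interesting idea, but Corollary~\ref{lem: hor_inv_nw} concerns the infinite one-sided wall $W_\K(\mathbf{S}^L)$, and your two rotated triangular regions from equation~(\ref{eqn: cantinv3}) each have width only about $p^h/2$, so a window of side $p^h-2$ does not sit inside either one; the claim that ``consistency enforced by the Frame Constraints determines the entire region'' is precisely what needs proof and is not automatic.

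The real gap is in Part~\hyperlink{lem2.5}{(5)}. Saying that Corollary~\ref{lem: reflect_r_a} ``propagates symmetries into the interior'' is not enough: that corollary handles vertical reflections of genuine $(r,a)$-number walls, whereas here you need \emph{horizontal} symmetry of a region bounded by three windows. The paper's induction has to confront the case where a window inside the region straddles the middle row $m+p_2^{(h)}$; in that situation the two reflected windows $\mathcal{W}$ and $\mathcal{W}'$ coalesce into a single window, and one must separately verify (using $Q^2=R^2=1$ forced by the symmetric inner-frame entries near the midline, together with Theorem~\ref{ratio ratio}) that \hyperlink{FC3}{FC3} still yields $H_k=E'_{k'}$. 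This case analysis is the technical heart of the lemma and is absent from your sketch.
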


\begin{figure}[H]
    \centering
    \includegraphics[width=1\linewidth]{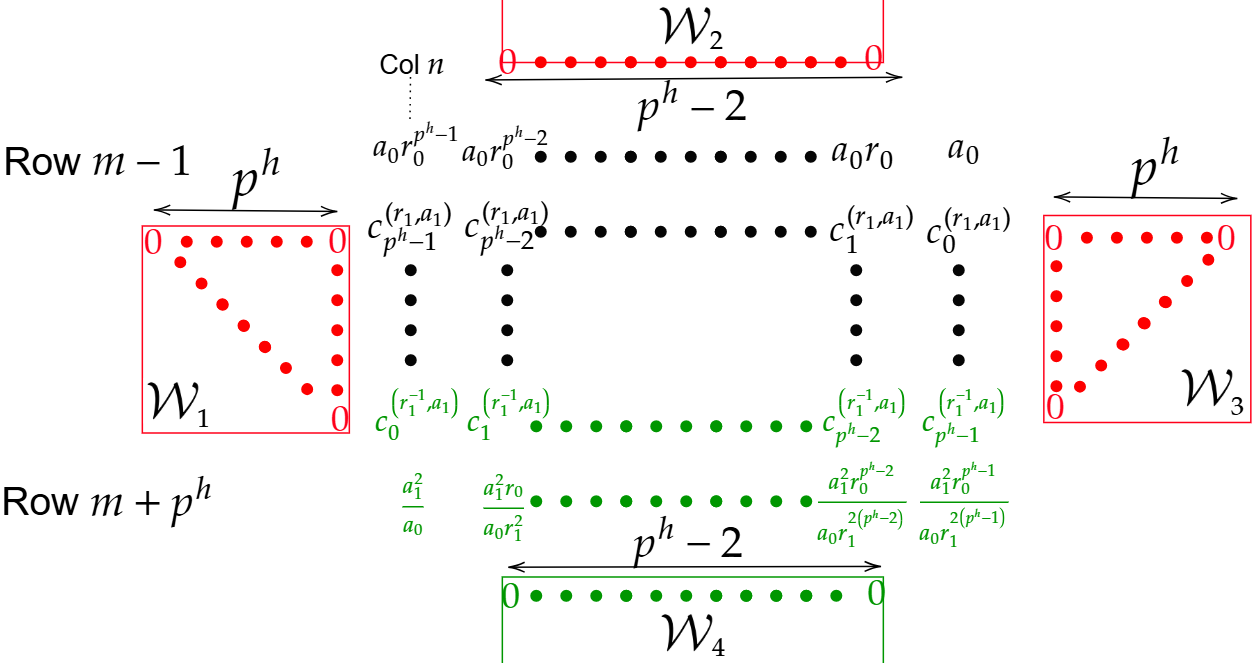}
    \caption{The windows $\mathcal{W}_1$, $\mathcal{W}_2$ and $\mathcal{W}_3$ are drawn in red, and the green entries are the ones to be calculated. The window $\mathcal{W}_4$ is also drawn in green. For the sake of not filling the diagram with entry space, the windows are not drawn to scale. }
    \label{Fig: invwind}
\end{figure}
\begin{proof}
\noindent As the proof is long, it is split into subsections. To begin, assume that $r_0,a_0,r_1,a_1=1$. The full version of Lemma \ref{invwind} is shown to follow from this case later.

\subsubsection*{\textbf{The Eastern Outer Frame of} $\mathcal{W}_1$}\hfill\\
\noindent Note that $c^{(p)}_{p^h}=c^{(p)}_{p^h+1}=0$ for all $h\in\N$ (Corollary \ref{cor: p^h=0}), and hence the conditions of Corollary \ref{lem: cant_inv_wall} are satisfied with $a_0,r_0,a_1,r_1=1$.  Therefore, one has $$W_p(\mathbf{X})[m+i-1,n+1]=(-1)^is^{(p)}_i$$ for $0\le i \le p^h+1$. That is, the first $p^h+1$ entries of the eastern outer frame of $\mathcal{W}_1$ (from top to bottom) is the $(-1,1)$-transform of the first $p^h-1$ entries of the $p$-Singer sequence. This establishes the part \hyperlink{lem2.1}{1} of Lemma \ref{invwind} for $\mathcal{W}_1$, in the case $r_0,a_0,r_1,a_1=1$. Similarly, \[W_\K(\textbf{X})[m+i-1,n]=1\] for $0\le i \le p^h+1$, as it is the geometric sequence with first two entries given by $W_p(\textbf{X})[m-1,n]=1$ and $W_p(\textbf{X})[m,n]=c^{(p)}_0=1$. 
\subsubsection*{\textbf{The Western Outer Frame of }$\mathcal{W}_3$}\hfill\\

\noindent Next, from the vertical symmetry of the $p$-Cantor sequence (Lemma \ref{sym}) and the inherited vertical symmetry of finite number walls (Lemma \ref{reflect}), one has that \[W_p(\mathbf{X})[m+i,n+j]=W_p(\mathbf{X})[m+i,n+p^h-1-j]\] for all $-1\le i \le p^h$ and $0\le j \le p^h+1$. That is, the portion of the finite number wall depicted by Figure \ref{Fig: invwind} satisfies vertical symmetry around its midpoint. Hence, one also has that \[W_p(\mathbf{X})[m+i-1,n+p^h-2]=W_p(\mathbf{X})[m+i-1,n+1]=(-1)^is^{(p)}_i\] for $0\le i \le p^h+1$. This establishes the remainder of part \hyperlink{lem2.1}{1} of Lemma \ref{invwind} in the case $r_0,a_0,r_1,a_1=1$.\\

\noindent Next, note that since $s^{(p)}_{2i+1}=0$ for all $i\in\N$, $\mathbf{S}{(-1,1)}(p)=\textbf{S}(p)$. Therefore, the setup is described by the following illustration.
\begin{figure}[H]
    \centering
    \includegraphics[width=1\linewidth]{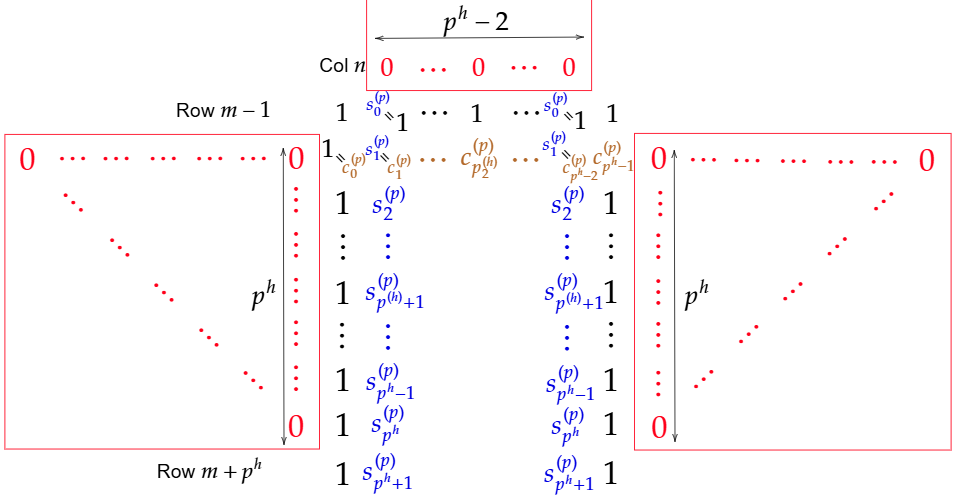}
    \caption{The same setup as Figure \ref{Fig: invwind}, with the additional information that the east/west outer frame of $\mathcal{W}_1$/$\mathcal{W}_3$ is given by the $p$-Singer sequence (blue).}
    \label{fig: 8.1.1}
\end{figure}
\subsubsection*{\textbf{Filing the Space Between Windows}}\hfill\\
\noindent Next, Lemma \ref{lem: rotate} is applied to the east (west, respectively) inner frame of $\mathcal{W}_1$ ($\mathcal{W}_3$, respectively) and to the south inner frame of $\mathcal{W}_2$. This implies that for all $0\le u\le p_2^{(h)}+1$ and for all $u\le v \le p^h+1-u$, one has \[W_p(\mathbf{X})[m+v-1,n+u+1]=W_p(\mathbf{X})[m+v-1,n+p^h-u-2]=W_p\left(\textbf{S}_h^{(p)}\right)[u,v],\] and similarly, for all $0\le u \le p_2^{(h)}$, $u\le v \le p^h-1-u$, one has \[W_p(\mathbf{X})[m+u,n+v]=W_p\left(\textbf{C}^{(p)}_h\right)[u,v].\] \noindent The portion of the number wall given by $(W_p(\mathbf{X})[m+v,n+u])_{-1\le v \le p^h,~ 0\le u \le p^h-1}$  is depicted below. 

\begin{figure}[H]
    \centering
    \includegraphics[width=0.7\linewidth]{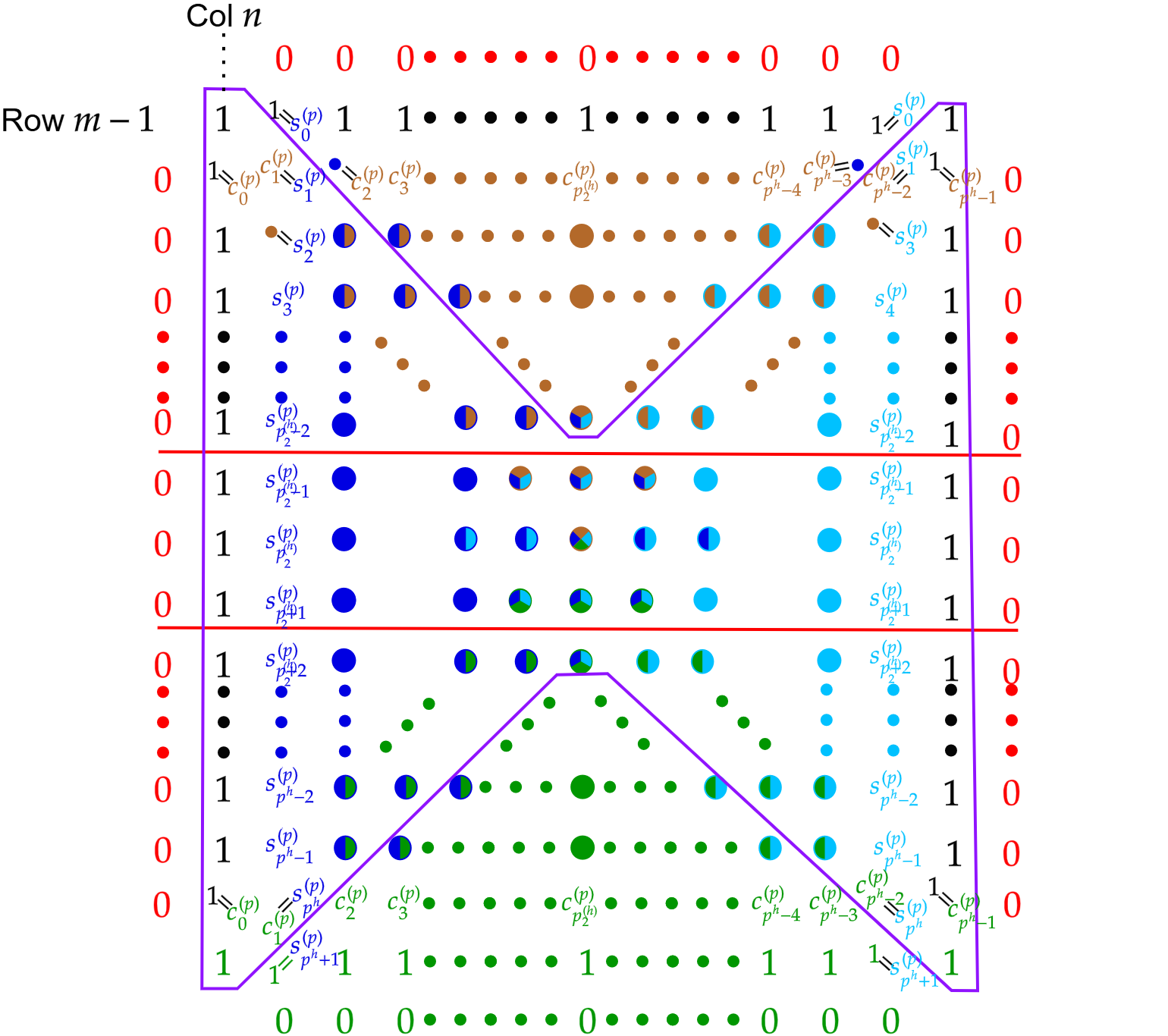}
    \caption{The portion of the number wall given by $(W_p(\mathbf{X})[m+v-1,n+u])_{-1\le v \le p^h,~ 0\le u \le p^h-1}$. The details of this figure are written below.}
    \label{big_n_complex}
\end{figure}
\noindent Figure \ref{big_n_complex} is split into 4 sections, denoted by the brown, light-blue, dark-blue and green entries. Some entries are given explicitly, whilst others are denoted by dots. Furthermore, some dots have more then one colour, indicating they are in more than one section. The brown part is given by $W_p(\textbf{C}^{(p)}_h)$. By Lemma \ref{lem: rotate}, the light and dark blue parts are given by $W_p(\textbf{S}_h^{(p)})$, rotated $90^\circ$ in opposite directions. Lemma \ref{between_two_windows} states that the green section is given by $W_p(\textbf{C}^{(p)}_h)$. Any of the green section outside the purple boundary is yet to be calculated. The inner frame of $\mathcal{W}_1$, $\mathcal{W}_2$ and $\mathcal{W}_3$ is black and the windows themselves are red. The red and purple lines have additional meanings, which are stated later in the proof. 
\subsubsection*{\textbf{Filling out Figure }\ref{big_n_complex}}\hfill\\
\noindent The goal is to establish part \hyperlink{lem2.5}{5} of Lemma \ref{between_two_windows}: that is, the green section of Figure \ref{big_n_complex} is a horizontal reflection of the brown section (which is given by $W_p\left(\textbf{C}^{(p)}_h\right)$. From this, parts \hyperlink{lem2.2}{2}, \hyperlink{lem2.3}{3}, and \hyperlink{lem2.4}{4} of Lemma \ref{between_two_windows} follow (when $r_0,a_0,r_1,a_1=1$). This is done by induction on the rows.\\

\noindent \textbf{The Base Case:} Lemma \ref{sym} states that $s^{(p)}_i=s^{(p)}_{p^h+1-i}$ for $0\le i \le p^h+1$. Hence, by Lemma \ref{reflect} $W_p\left(\textbf{S}_h^{(p)}\right)$ has vertical symmetry, and therefore, by Lemma \ref{lem: rotate}, the blue sections\footnote{Here, ``blue sections" refers to both the light and dark blue sections.} of Figure \ref{big_n_complex} are horizontally symmetric. That is, everything inside the purple border of Figure \ref{big_n_complex} has horizontal and vertical symmetry. In particular, this implies that the three rows in between the red lines in Figure \ref{big_n_complex} have horizontal symmetry. That is, row $m+p^{(h)}_2+1$ is identical to row $m+p^{(h)}_2-1$. This serves as the beginning of an induction on the rows.\\

\noindent \textbf{The Inductive Step:} For some $r\in\N$, assume the rows $m+p^{(h)}_2 \pm r$ rows are horizontally symmetrical. That is, row $m+p^{(h)}_2-r+i$ is identical to row $m+p^{(h)}_2+r-i$ for every $0\le i \le r$. Consider the rows with index $m+p^{(h)}_2\pm(r+1)$. There are two cases: to state them, let $x=W_p(S)[m+p^{(h)}_2+r+1,n+u]$ for some $0\le u \le p^h-1$ be an entry on row $m+p_2^{(h)}+r+1$. \\

\noindent \textbf{Case 1}: The entries of the number wall needed to calculate $x$ are contained between rows $m+p^{(h)}_2-1$ and $m+p^{(h)}_2+r$. \\

\noindent \textbf{Case 2:} Some of the entries of the number wall needed to calculate $x$ are found on a row with index less than $m+p^{(h)}_2-1$.

\subsubsection*{\textbf{Case 1:}}
\noindent Three further sub-cases are needed depending on which Frame Constraint is required to calculate $x$. For the sake of brevity, only the third subcase (corresponding to \hyperlink{FC3}{FC3}) is completed here, as this is the most complex of the three and the other two are very similar. The setup is illustrated below:\begin{figure}[H]
    \centering
    \includegraphics[width=0.45\linewidth]{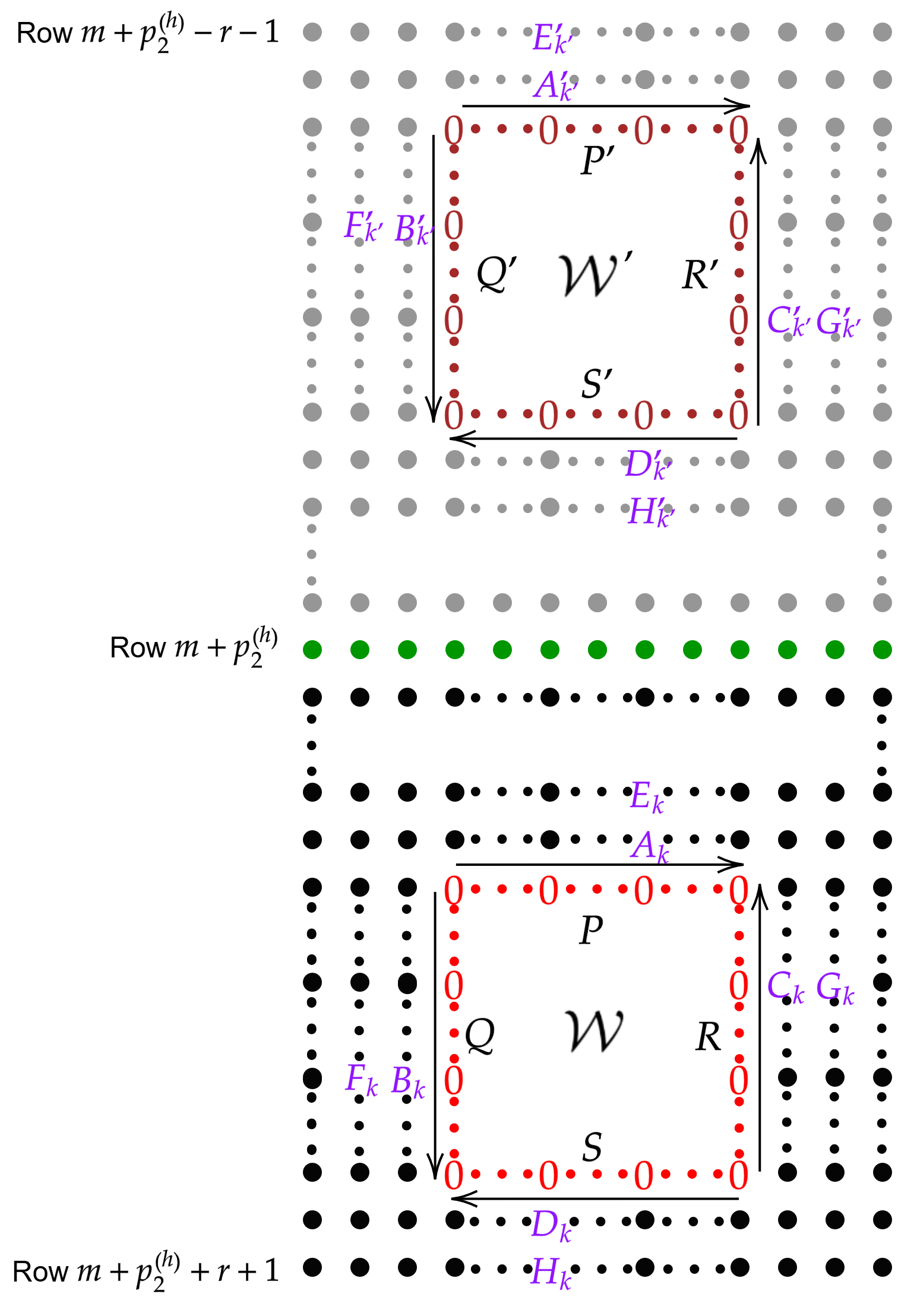}
    \caption{Row $m+p^{(h)}_2$ is coloured in green, with rows $m+p^{(h)}_2\pm i$ in black and grey respectively. Since {{\protect\hyperlink{FC3}{FC3}}} is required, $x$ is equal to $H_k$ in the outer frame of some window $\mathcal{W}$ (light red). By the symmetry from the induction hypothesis, there is also a window $\mathcal{W}'$ (dark red) as depicted. The direction of multiplication in the inner frames (recall, these are geometric sequences) are indicated with the black arrows.}
\label{fig: case_1_invwind}
\end{figure}
\noindent Using the notation from Figure \ref{fig: case_1_invwind}, the induction hypothesis about horizontal symmetry implies that \begin{align*}
    &A_k=D_{k'}'& &B_k=B_{k'}'& &C_k=C_{k'}'& &D_k=A'_{k'}&\\&E_k=H_{k'}'& &F_k=F_{k'}'& &G_k=G_{k'}'& \\
    &P=(S')^{-1}& &Q=(Q')^{-1}& &R=(R')^{-1}& &S=(P')^{-1}&
\end{align*}
\noindent where $k'=l+1-k$ and $l$ is the side length of $\mathcal{W}$. The goal is to show that $H_k=E_k'.$ Indeed, \begin{align}
    H_k&~\substack{\hyperlink{FC3}{FC3}\\=}~\frac{\frac{QE_k}{A_k}+(-1)^{k}\frac{PF_k}{B_k}-(-1)^k\frac{SG_k}{C_k}}{R/D_k}\nonumber\\ &=\frac{\frac{H'_{k'}}{Q'D'_{k'}}+(-1)^k \frac{F'_{k'}}{S'B'_{k'}}-(-1)^k\frac{G'_{k'}}{P'C'_{k'}}}{\frac{1}{R'A'_{k'}}}\cdot\label{eqn: invwind_case_1}
\end{align}Multiplying the numerator and denominator by $R'Q'$ yields \begin{align}
    (\ref{eqn: invwind_case_1})&= \frac{\frac{R'H'_{k'}}{D'_{k'}}+(-1)^k \frac{R'Q'F'_{k'}}{S'B'_{k'}}-(-1)^k\frac{R'Q'G'_{k'}}{P'C'_{k'}}}{\frac{Q'}{A'_{k'}}}\nonumber\\
    &\substack{\text{Theorem }\ref{ratio ratio}\\=} \frac{\frac{R'H'_{k'}}{D'_{k'}}+(-1)^{k-l} \frac{P'F'_{k'}}{B'_{k'}}-(-1)^{k-l}\frac{S'G'_{k'}}{C'_{k'}}}{\frac{Q'}{A'_{k'}}}~\substack{\hyperlink{FC3}{FC3}\\=}~E_{k'}'\nonumber
\end{align}where the final equality comes from $k'=l+1-k$ and $\hyperlink{FC3}{FC3}$.\\
\subsubsection*{\textbf{Case 2:}}
\noindent In this case, one has the same picture as in Figure \ref{fig: case_1_invwind}, but now the top row of $\mathcal{W}$ has index less than $m+p^{(h)}_2$ (that is, the top row is above the green dots). By horizontal symmetry, this means the bottom row of $\mathcal{W}'$ has index greater than $m+p^{(h)}_2$, and hence $\mathcal{W}$ and $\mathcal{W'}$ intersect. By the square window theorem (Theorem \ref{window}), $\mathcal{W}$ and $\mathcal{W}'$ are the same window with top row with index $m+p^{(h)}_2+r-1$ and bottom row with index $m+p^{(h)}_2-r+1$, as illustrated below:
\begin{figure}[H]
    \centering
    \includegraphics[width=0.6\linewidth]{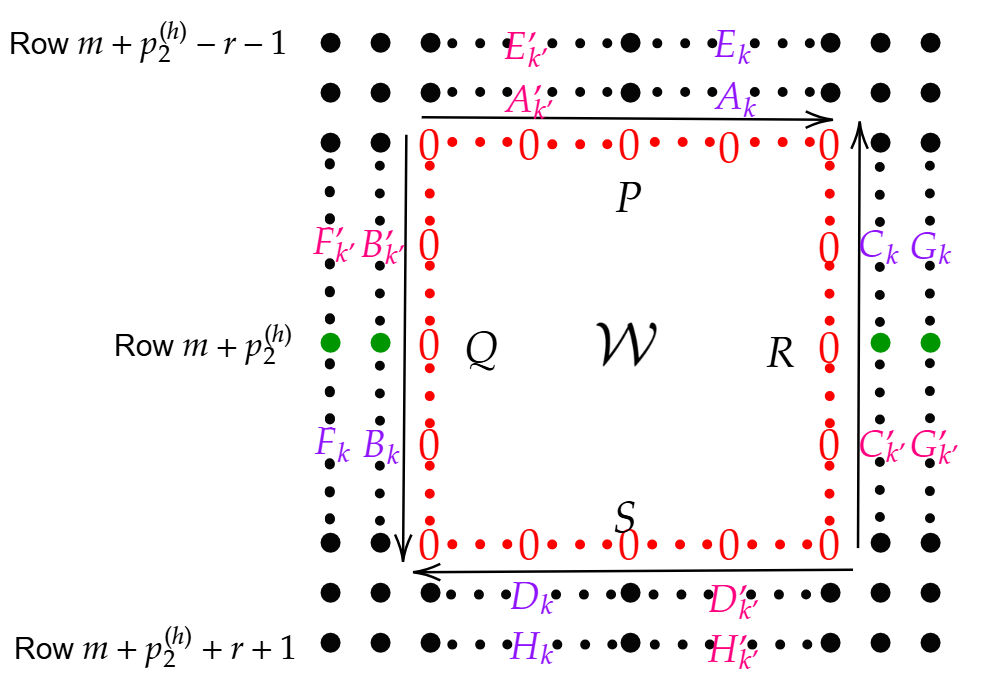}
    \label{fig: case_1_invwind_2}
    \caption{One window of size $2r+1$, denoted $\mathcal{W}$, has its `middle row' on row $m+p^{(h)}_2$ (green). Two different sets of inner and outer frame variables are labelled: one in purple and one in pink.  }
\end{figure}

\noindent In contrast to Case 1, there are only two sub-cases to be considered, given by which of \hyperlink{FC2}{FC2} and \hyperlink{FC3}{FC3} are needed to calculate $x$. For the same reasons as in Case 1, only the sub-case corresponding to \hyperlink{FC3}{FC3} is completed here. Indeed, in this sub-case $x$ is equal to $H_k$ for the window $\mathcal{W}$. The goal is to show that $H_k=E'_{k'}$, where $$k'=l+1-k=2r-k.$$
\noindent Using the notation from Figure \ref{fig: case_1_invwind_2}, the induction hypothesis implies that \begin{align*}
    &A_k=D_{k'}'& &B_k=B_{k'}'& &C_k=C_{k'}'& &D_k=A'_{k'}&\\&E_k=H_{k'}'& &F_k=F_{k'}'& &G_k=G_{k'}'& \\
    &P=S^{-1}&  &Q^2=1& &R^2=1&
\end{align*}
\noindent The latter two equalities come from the entries of the east/west inner frame of $\mathcal{W}$ on rows $m+p_2^{(k)}+i$ for $i\in\{-1,0,1\}$ and the horizontal symmetry from the inductive hypothesis. \\

\noindent Once again, the goal is to show $H_k=E_k'.$ Indeed, \begin{align}
    H_k&~\substack{\hyperlink{FC3}{FC3}\\=}~\frac{\frac{QE_k}{A_k}+(-1)^{k}\frac{PF_k}{B_k}-(-1)^k\frac{SG_k}{C_k}}{R/D_k}\nonumber\\ &=\frac{\frac{QH'_{k'}}{D'_{k'}}+(-1)^k \frac{F'_{k'}}{SB'_{k'}}-(-1)^k\frac{G'_{k'}}{PC'_{k'}}}{\frac{R}{A'_{k'}}}\cdot\label{eqn: invwind_case_3}
\end{align}Multiplying the numerator and denominator by $RQ$ and recalling that $R^2=Q^2=1$ yields \begin{align}
    (\ref{eqn: invwind_case_3})&= \frac{\frac{RH'_{k'}}{D'_{k'}}+(-1)^k \frac{RQF'_{k'}}{SB'_{k'}}-(-1)^k\frac{RQG'_{k'}}{PC'_{k'}}}{\frac{Q}{A'_{k'}}}\nonumber\\
    &\substack{\text{Theorem }\ref{ratio ratio}\\=} \frac{\frac{RH'_{k'}}{D'_{k'}}+(-1)^{k-l} \frac{PF'_{k'}}{B'_{k'}}-(-1)^{k-l}\frac{SG'_{k'}}{C'_{k'}}}{\frac{Q}{A'_{k'}}}=E_{k'}'\nonumber
\end{align}as in Case 1.\\
\subsubsection*{\textbf{Other Values of }$r_0,a_0,r_1$ and $a_1$:}\hfill\\ 
\noindent One now establishes parts \hyperlink{lem2.1}{1}, \hyperlink{lem2.2}{2}, \hyperlink{lem2.3}{3} and \hyperlink{lem2.4}{4} of Lemma \ref{invwind} in full generality. \\

\noindent The plan is to apply Lemma \ref{rswall} to the statement of Lemma \ref{invwind} when $r_0,a_0,r_1,a_1=1$. To do this, note that the proof of Lemma \ref{invwind} so far has been independent of the value of the row index $m$ and the column index $n$. Therefore, it is sufficient to establish the claims for any specific choice of $m$ and $n$. Indeed, let $m=0$ and $n=p^h$. In this case, one is considering the $\left(r_0^{-1},a_0r_0^{2p^h-1}\right)$-number wall of the $\left(r_1^{-1}, a_1r_1^{2p^h-1}\right)$-geometric transform of the sequence \[\widetilde{\mathbf{C}}^{(p)}_h:=\{0\}_{0\le i < p^h}\oplus \{c^{(p)}_i\}_{0 \le i <p^h} \oplus\{0\}_{0\le i < p^h},\] which itself is denoted $\widetilde{\mathbf{C}}_h^{(p)}{\left(r_0^{-1},a_0r_0^{2p^h-1}\right)}.$  For $0\le i \le p^h-1$ one has that\begin{align*}
        W_p^{\left(r_0^{-1},a_0r_0^{2p^h-1}\right)}&\left(\widetilde{\mathbf{C}}_h^{(p)}{\left(r_0^{-1},a_0r_0^{2p^h-1}\right)}\right)\left[p^h-1,p^h+i\right]\\ ~~\substack{\text{Lemma}~ \ref{rswall}\\=} &\frac{r_1^{-(p^h+i)(p^h)}(a_1r_1^{2p^h-1})^{p^h}}{r_0^{-(p^h+i)(p^h-1)}(a_0r_0^{2p^h-1})^{p^h-1}}W_p(\widetilde{\mathbf{C}}_h^{(p)})[p^h-1,p^h+i]\\\substack{\hyperlink{FLT}{FLT}\\=}~~~~&a_1 r_1^{-i}W_p(\widetilde{\mathbf{C}}_h^{(p)})[p^h-1,p^h+i]\\
        \substack{\text{Lemma }\ref{invwind}\\=}&~~a_1r_1^{-i} c^{(p)}_i,
    \end{align*}
    and similarly\begin{align*}
        W_p^{\left(r_0^{-1},a_0r_0^{2p^h-1}\right)}&\left(\widetilde{\mathbf{C}}^{(p)}{\left(r_1^{-1}, a_1r_1^{2p^h-1}\right)}\right)\left[p^h,p^h+i\right]\\ ~~\substack{\text{Lemma} \ref{rswall}\\=} &\frac{r_1^{-(p^h+i)(p^h+1)}(a_1r_1^{2p^h-1})^{p^h+1}}{r_0^{-(p^h+i)(p^h)}(a_0r_0^{2p^h-1})^{p^h}}W_p(\widetilde{\mathbf{C}}^{(p)})[p^h,p^h+i]\\\substack{\hyperlink{FLT}{FLT}\\=}~~~~&\frac{r_1^{-2i}a_1^2}{r_0^{-i}a_0}W_p(\widetilde{\mathbf{C}}^{(p)})[p^h-1+i]\\
        \substack{\text{Lemma }\ref{invwind}\\=}&~~\frac{r_0^{i}a_1^2}{r_1^{2i}a_0}
    \end{align*} as required.
\end{proof}

\subsection{The $p$-Singer Sequence Between two Windows}\hfill\\
\noindent The following lemma swaps the roles of the $p$-Cantor and $p$-Singer sequences in Lemma \ref{invwind}. As in Lemma \ref{invwind} for the purposes of saving space in diagrams, let $s^{(r,a)}_i:=r^i\cdot a\cdot s_i^{(p)}$.
\begin{lemma}
\label{Singer_invwind}
Let\begin{itemize}
    \item $\textbf{X}$ be a sequence over $\F_p$ of arbitrary length,
    \item $l,m,n$ and $h$ be natural numbers,
    \item $r_0,a_0,r_1,a_1\in\F_p\backslash\{0\}$,
    \item $\mathcal{W}_1$, $\mathcal{W}_2$ and $\mathcal{W}_3$ be windows in $W_p(\textbf{X})$ with side lengths $p^h-2$, $p^h$ and $p^h-2$, respectively, such that \begin{itemize}
        \item $\mathcal{W}_1$ and $\mathcal{W}_3$ have their north west corner on row $m$ and that $\mathcal{W}_1$ is west of $\mathcal{W}_3$;
        \item the south west corner of the inner frame of $\mathcal{W}_2$ overlaps with the north east entry of the inner frame of $\mathcal{W}_1$, and that this intersection takes place on row $m-1$ and column $n$,
        \item the south east entry in the inner frame of $\mathcal{W}_2$ overlap with the north west entry of the inner frame of $\mathcal{W}_3$,
        \item  The south inner frame of $\mathcal{W}_2$ is given by the $(r_0,a_0)$ geometric sequence with decreasing indices (the sequence goes from \textup{right to left}). That is, for $0\le i \le p^h+1$, $$W_p(\textbf{X})[m-1,n+i]=a_0r_0^{p^h+1-i}.$$
        \item  The south outer frame of $\mathcal{W}_2$ is given by the $(r_1,a_1)$ geometric transform of $\textbf{C}^{(p)}_h$ with decreasing indices (the sequence goes from \textup{right to left}). That is, for $0\le i \le p^h+1$, $$W_p(\textbf{X})[m,n+i]=a_1r_1^{p^h+1-i}s_{p^h+1-i}^{(p)}.$$
    \end{itemize}
\end{itemize}
Then,
\begin{enumerate}
\item  the eastern outer frame of $\mathcal{W}_1$ (the sequence $(W_p(\textbf{X})[m+i-1,n+1])_{0\le i \le p^h-1}$) and the western outer frame of $\mathcal{W}_3$ (the sequence $(W_p(\textbf{X})[m+i-1,n+p^h])_{0\le i \le p^h-1}$) are both geometric transforms of $\textbf{C}^{(p)}_h$;
\item there is another window in $W_p(\textbf{X})$, denoted $\mathcal{W}_4$, of size $p^h$ such that the north west (north east, respectively) corner of its inner frame overlaps with the south east (south west, respectively) corner of the inner frame of $\mathcal{W}_1$ (\,$\mathcal{W}_3$, respectively);
\item the north outer frame of $\mathcal{W}_4$ is given by the $\left(\frac{r_1}{r_0^2}, \frac{a_0^2r_0^4}{a_1r_1^2}\right)$ geometric transform of $\textbf{S}_h^{(p)}$ with increasing indices (the sequence goes from \textit{left to right}). That is,  for $0\le i \le p^h+1$,  $$W_p(\textbf{X})[m+p^h-3,n+i]=\frac{a_0^2r_0^4}{a_1r_1^2}\cdot \left(\frac{r_1}{r_0^2}\right)^is_i^{(p)}.$$
\item  the north inner frame of $\mathcal{W}_4$ is given by the $\left(r_0^{-1}, r_0^2a_0\right)$ geometric sequence with increasing indices (the sequence goes from \textit{left to right}). That is, for  for $0\le i \le p^h+1$, $$W_p(\textbf{X})[m+p^h,n+i]=r_0^{2-i}a_0.$$
\item when $a_0,r_0,a_1,r_1=1$, the region $(W_p (\textbf{X})[m+i,n+j])_{-1\le i \le p^h-2,~1\le j \le p^h}$ has horizontal and vertical symmetry;
\end{enumerate}
\end{lemma}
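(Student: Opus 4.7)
The structure of Lemma \ref{Singer_invwind} is clearly the ``dual'' of Lemma \ref{invwind}: window sizes are $(p^h-2, p^h, p^h-2)$ instead of $(p^h, p^h-2, p^h)$, the south outer frame of $\mathcal{W}_2$ is a transform of $\mathbf{S}^{(p)}_h$ rather than $\mathbf{C}^{(p)}_h$, and the outer frames of $\mathcal{W}_1, \mathcal{W}_3$ should come out as transforms of $\mathbf{C}^{(p)}_h$ rather than $\mathbf{S}^{(p)}_h$. This mirror symmetry is powered by the fact (Lemma \ref{lem: inv_LS}) that the Cantor and Singer Laurent series are mutually inverse. I will therefore follow the exact skeleton of the proof of Lemma \ref{invwind}, swapping the roles of Cantor and Singer throughout and appealing to the corresponding facts whenever one was invoked.

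First I reduce to the unit case $r_0=a_0=r_1=a_1=1$; at the end, Lemma \ref{rswall} combined with Fermat's Little Theorem at the chosen indices (exactly as in the last portion of the proof of Lemma \ref{invwind}) will recover the general statement. Now, in the unit case, to compute the eastern outer frame of $\mathcal{W}_1$, I apply Corollary \ref{fininv} (or the Singer analogue of Lemma \ref{lem: cant_inv_wall}) with the south outer frame of $\mathcal{W}_2$ being $\mathbf{S}^{(p)}_h$. Since Lemma \ref{lem: inv_LS} gives $\Xi_p(t)^{-1} = \Theta_p(t)$, Corollary \ref{fininv} produces the $p$-Cantor sequence (up to the expected alternating-sign twist $(-1, 1)$) as the eastern outer frame of $\mathcal{W}_1$. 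The key hypothesis of the finite-window corollary is verified using the vanishing pattern of $\mathbf{S}^{(p)}_h$ near its endpoints and the palindromic symmetry $s^{(p)}_i = s^{(p)}_{p^h+1-i}$ from Lemma \ref{sym_sing}, precisely analogous to how Corollary \ref{cor: p^h=0} and Lemma \ref{sym} were used in Lemma \ref{lem: cant_inv_wall}.

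Vertical symmetry (Lemma \ref{sym_sing} together with Lemma \ref{reflect}) then transports this conclusion to the western outer frame of $\mathcal{W}_3$. I then invoke Lemma \ref{lem: rotate} around $\mathcal{W}_1$, $\mathcal{W}_2$, and $\mathcal{W}_3$ to populate the region between the three windows by finite number walls: copies of $W_p(\mathbf{S}^{(p)}_h)$ descending from $\mathcal{W}_2$ and copies of $W_p(\mathbf{C}^{(p)}_h)$ emanating sideways from $\mathcal{W}_1$ and $\mathcal{W}_3$. Up to this point, everything inside a ``purple boundary'' region analogous to Figure \ref{big_n_complex} is determined, and vertical/horizontal symmetry hold inside that region automatically from the symmetries of the constituent sequences.

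The main obstacle, as in Lemma \ref{invwind}, is the inductive extension of horizontal symmetry below the purple boundary, needed to identify the predicted window $\mathcal{W}_4$ and read off its inner and outer frames. I will proceed by induction on the row distance from the axis of horizontal symmetry, splitting into the cases dictated by the three Frame Constraints. The \hyperlink{FC1}{FC1} case is immediate; the \hyperlink{FC2}{FC2} and \hyperlink{FC3}{FC3} cases require the algebraic manipulations with Theorem \ref{ratio ratio} and with $Q^2 = R^2 = 1$ on the symmetry axis, which are essentially identical to those carried out in the proof of Lemma \ref{invwind}. Once horizontal symmetry propagates all the way down, the existence of $\mathcal{W}_4$ follows because the known inner frames of $\mathcal{W}_1$ and $\mathcal{W}_3$, reflected through the symmetry axis, force a square zero region of the claimed size; the claimed forms of its north inner and outer frames (parts 3 and 4) follow from Lemma \ref{lem: rotate} applied to this new window, and a final application of Lemma \ref{rswall} generalises to arbitrary $r_0, a_0, r_1, a_1$.
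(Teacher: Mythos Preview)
Your proposal is correct and follows essentially the same approach as the paper: the paper's proof simply notes that the argument is identical to that of Lemma \ref{invwind}, with Corollary \ref{fininv} applied in place of Lemma \ref{lem: cant_inv_wall}, which is precisely the substitution you identify (together with the corresponding swap of the Cantor/Singer symmetry lemmata).
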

\noindent Lemma \ref{Singer_invwind} is illustrated below. Solely for the purposes of this diagram, let $r_2:=\frac{r_1}{r_0^2}$ and $a_2=\frac{a_0^2r_0^4}{a_1r_1^2}$
\begin{figure}[H]
    \centering
    \includegraphics[width=1\linewidth]{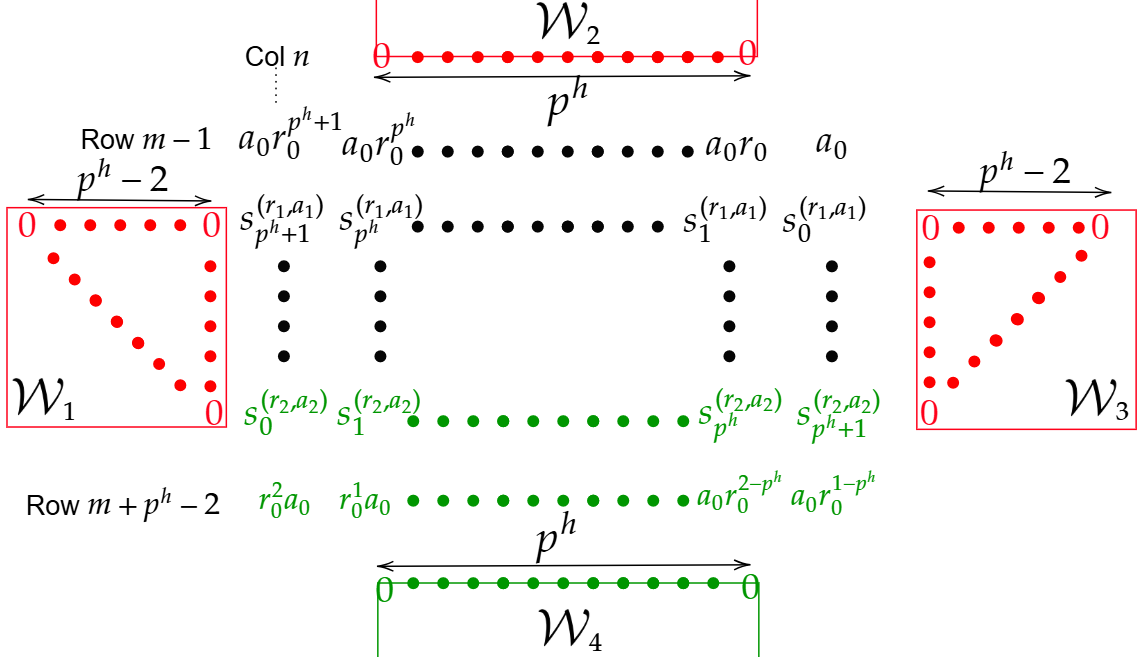}
    \caption{The windows $\mathcal{W}_1$, $\mathcal{W}_2$ and $\mathcal{W}_3$ are drawn in red. The green entries are the ones to be calculated. }
\end{figure}
\noindent  The proof is almost identical to that of Lemma \ref{invwind} and is hence almost entirely omitted. 
\begin{proof}
    \noindent The only difference is that Corollary \ref{fininv} is applied in place of Lemma \ref{lem: cant_inv_wall}.
\end{proof}
\subsubsection{\textbf{Relating Lemmata }\ref{invwind} \textbf{and} \ref{Singer_invwind}\textbf{ through rotation.}}\hfill\\
\noindent A simple corollary of Lemmata \ref{invwind} and \ref{Singer_invwind} is that $\chi\left(W_p\left(\textbf{C}_h^{(p)}\right)\right)$ and $\chi\left(W_p\left(\textbf{S}_h^{(p)}\right)\right)$ are related by a rotation. To state this, recall $\rho$ from Definition \ref{def:rotate}.

\begin{corollary}\label{cor: rotate_profile}
    Let $\textbf{X}_S$ be a sequence over $\F_p$ such that $W_p(\textbf{X}_S)$ has setup identical to that of Lemma \ref{Singer_invwind}. Similarly, let $\textbf{X}_C$ be a sequence over $\F_p$ such that $W_p(\textbf{X}_C)$ has setup identical to that of Lemma \ref{invwind}. Then, \[\chi((W_p(\textbf{X}_C)[m+i,n+j])_{0\le i,j <p^h})=\rho(\chi((W_p(\textbf{X}_S)[m+i-1,n+j+1])_{0\le i,j<p^h}))\]
\end{corollary}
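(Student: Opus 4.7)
The plan is to compare the two $p^h\times p^h$ regions by exploiting the decompositions into four triangular pieces that are implicit in the proofs of Lemmata \ref{invwind} and \ref{Singer_invwind}, and then showing that these decompositions match under the clockwise rotation $\rho$.

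In the Cantor setup $W_p(\mathbf{X}_C)$, apply Lemma \ref{lem: rotate} to each of the four windows $\mathcal{W}_1,\mathcal{W}_2,\mathcal{W}_3,\mathcal{W}_4$ in turn. Combined with parts (1)--(4) of Lemma \ref{invwind}, which identify the outer frames of these windows as geometric transforms of either $\mathbf{C}^{(p)}_h$ or $\mathbf{S}^{(p)}_h$, this produces a decomposition of the region $(W_p(\mathbf{X}_C)[m+i,n+j])_{0\le i,j<p^h}$ into four triangular pieces. The northern triangle (arising as $\mathfrak{P}_{south}$ of $\mathcal{W}_2$, adjacent to its south outer frame) and the southern triangle (arising as $\mathfrak{P}_{north}$ of $\mathcal{W}_4$) are each (reflected) $(r,a)$-finite number walls of $\mathbf{C}^{(p)}_h$. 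The western triangle ($\mathfrak{P}_{east}$ of $\mathcal{W}_1$) and the eastern triangle ($\mathfrak{P}_{west}$ of $\mathcal{W}_3$) are each rotated $(r,a)$-finite number walls of $\mathbf{S}^{(p)}_h$.

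Doing the same for the Singer setup using Lemma \ref{Singer_invwind} decomposes the region $(W_p(\mathbf{X}_S)[m-1+i,n+1+j])_{0\le i,j<p^h}$ into four analogous triangles with the roles of $\mathbf{C}^{(p)}_h$ and $\mathbf{S}^{(p)}_h$ interchanged: the northern and southern triangles are $(r,a)$-finite number walls of $\mathbf{S}^{(p)}_h$, while the eastern and western triangles are rotated $(r,a)$-finite number walls of $\mathbf{C}^{(p)}_h$. Under the clockwise rotation $\rho$, the northern, eastern, southern, and western triangles of the Singer region are sent respectively to the eastern, southern, western, and northern positions. Tracking the extra reflections introduced by the rotation-of-number-walls formulae in Lemma \ref{lem: rotate}, each rotated Singer triangle coincides, as an $(r,a)$-finite number wall, with the corresponding Cantor triangle in the same position: Singer triangles of the Singer region map to Singer triangles of the Cantor region, Cantor triangles to Cantor triangles, with the two representations differing only in the geometric parameters $(r_0,a_0,r_1,a_1)$. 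By Corollary \ref{cor: profile}, the profile of an $(r,a)$-finite number wall is independent of these parameters, and hence the profiles of corresponding triangles coincide. Stitching the four matched triangles together yields the identity of profiles.

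The main obstacle will be the combinatorial bookkeeping: tracking the four triangular pieces through their individual rotation/reflection states and verifying that the composition of $\rho$ with each of $\mathcal{V}$, $\mathcal{H}$, and $\rho$ itself aligns Singer triangles with Cantor triangles in matching positions, with matching underlying sequences. Additionally, since the four triangles together slightly exceed area $p^{2h}$, they overlap in a small central region; one must observe that the overlapping entries are uniquely determined by the Frame Constraints (Theorem \ref{FC}), and so their agreement in both decompositions is automatic.
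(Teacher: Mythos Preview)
Your approach is essentially the paper's: decompose both $p^h\times p^h$ regions into four triangular pieces via Lemma \ref{lem: rotate}, match them under the rotation $\rho$, and invoke profile invariance (Corollary \ref{cor: profile}) to erase the differing geometric parameters. The paper streamlines this by first reducing to $r_0=a_0=r_1=a_1=1$ and then explicitly invoking the symmetry of $\mathbf{C}^{(p)}_h$ and $\mathbf{S}^{(p)}_h$ (Lemmas \ref{sym} and \ref{sym_sing}) together with the endpoint identities $c_0^{(p)}=c_{p^h-1}^{(p)}=s_0^{(p)}=s_{p^h+1}^{(p)}=1$ and $c_1^{(p)}=c_{p^h-2}^{(p)}=s_1^{(p)}=s_{p^h}^{(p)}=0$ to see that the boundary data of the two squares literally coincide after rotation---this is the step hiding behind your phrase ``tracking the extra reflections,'' and you should cite those symmetry lemmas explicitly, since without them a rotation reverses the sequences and Corollary \ref{cor: profile} alone would not apply.
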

\begin{proof}
    First, recall that Corollary \ref{cor: profile} shows that for any sequence $\textbf{S}$ over $\F_p$ and for any $r_0,a_0,r_1,a_1\in\F_p\backslash\{0\}$ one has \[\chi\left(W^{(r_0,a_0)}_p\left(\mathbf{S}{(r_1,a_1)}\right)\right)=\chi(W_p(\textbf{S})).\] Therefore, it is sufficient to prove Corollary \ref{cor: rotate_profile} when $r_0,a_0,r_1,a_1=1$. \\
    
    \noindent Just as in the proof of Lemma \ref{invwind}, $(W_p(\textbf{X}_C)[m+i,n+j])_{0\le i,j <p^h}$ appears as the purple square below  \begin{figure}[H]
        \centering
        \includegraphics[width=0.8\linewidth]{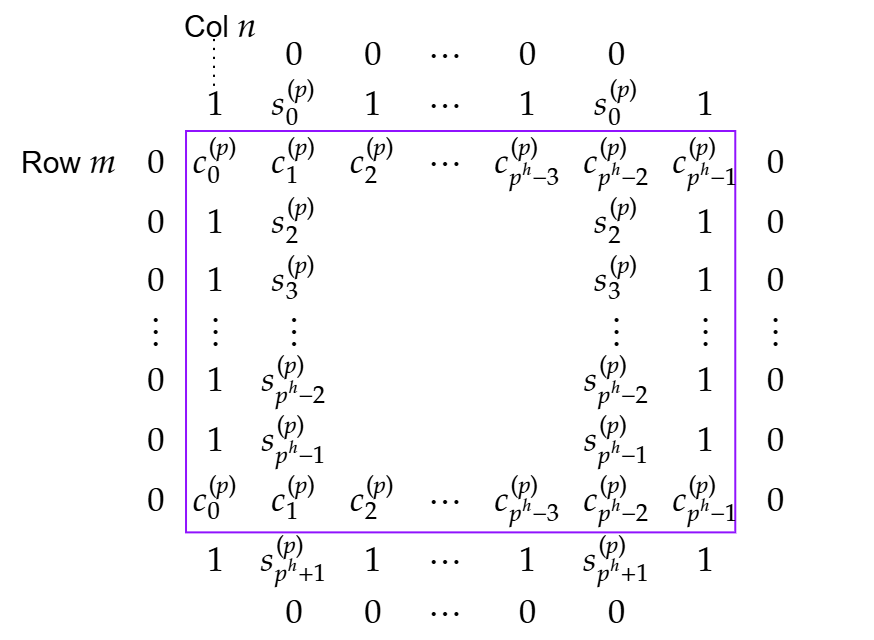}
    \end{figure}\noindent where the interior of this square is filled in by Lemma \ref{lem: rotate} with number walls of $(s_i)_{1\le i \le p^h}$ and $\textbf{C}^{(p)}_h$ as in Figure \ref{big_n_complex}.\\

    \noindent Similarly, Lemma \ref{Singer_invwind} states that $(W_p(\textbf{X}_S)[m+i-1,n+j+1])_{0\le i,j <p^h}$ appears as the green square below.\begin{figure}[H]
        \centering
        \includegraphics[width=0.8\linewidth]{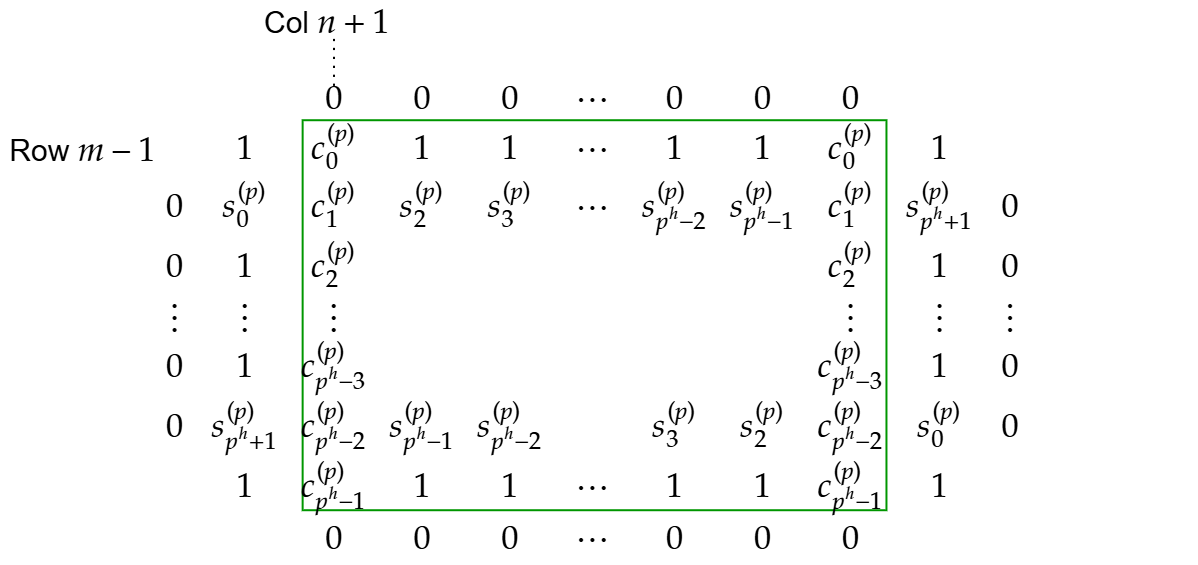}
        \caption{The green square is the $90^\circ$ rotation of the purple square above.}
        \label{Fig: Green_square}
    \end{figure}
    \noindent Since $c_0^{(p)}=c_{p^h-1}^{(p)}=s_{0}^{(p)}=s_{p^h+1}^{(p)}=1$,  $c_1^{(p)}=c_{p^h-2}^{(p)}=s_{1}^{(p)}=s_{p^h}^{(p)}=0$, and the $p$-Cantor and $p$-Singer sequences are symmetric (Lemmas \ref{sym} and \ref{sym_sing}), one sees that the purple square is the $90^\circ$ rotation of the green square. Lemma \ref{lem: rotate} fills in both squares and completes the proof.
\end{proof}

\subsection{A Window Between Two $p$-Cantor Sequences}\hfill\\
\noindent The following lemma is equally as important to the proof of Theorem \ref{thm: morphism} as Lemma \ref{invwind}. Lemma \ref{between_two_windows} is illustrated immediately after it is stated and it is strongly recommended to keep this figure in mind whilst reading.
\begin{lemma}\label{between_two_windows}
\noindent Let
\begin{itemize}
    \item $\textbf{X}$ be a sequence of arbitrary length over $\F_p$,
    \item $i\in\{1,2,3\}$, let $h,m,n\in\N$,
    \item $r_i,a_i,r_L,a_L,r_A,a_A,r_R,a_R\in\F_q\backslash\{0\}$,
    \item $W_p(\textbf{X})[m+1,n+j]=0$, where $j\in\{-p^h, -p^h-1, 2p^h+1, 2p^h+2\}$ (these are the red zeroes on Figure \ref{between_two_windows_fig}),
    \item $\mathcal{W}_i$ be a window in $W_p(\textbf{X})$ of size $p^h-2$ for $i=1,3$ and of size $p^h$ for $i=2$, such that
    \begin{itemize}
        \item  $\mathcal{W}_1$ and $\mathcal{W}_3$ have their bottom left corner on row $m-1$,
        \item  the south east corner of the inner frame of $\mathcal{W}_1$ overlaps with the north west corner of the inner frame of $\mathcal{W}_2$. Let $n$ be the column index of this overlap,
        \item  the south west corner of the inner frame of $\mathcal{W}_3$ overlap with the north east corner of the inner frame of $\mathcal{W}_2$,
         \item the south inner frame of $\mathcal{W}_1$ is the $(r_L,a_L)$ geometric sequence with decreasing indices (the sequence goes from \textup{right to left}). That is, $W_p(\textbf{X})[m,n-i]=a_A\cdot r_L^i$ for $0\le i \le p^h-1$,
        \item the south outer frame of $\mathcal{W}_1$ is the $(r_1,a_1)$-geometric transform of $\textbf{C}^{(p)}_h$ with decreasing indices (the sequence goes from \textup{right to left}). That is, $W_p(\textbf{X})[m+1,n-i]=a_1\cdot r_1^i\cdot c_i^{(p)}$ for $0\le i \le p^h-1$,
        \item the north inner frame of $\mathcal{W}_2$ is the $(r_A,a_A)$ geometric sequence with increasing indices (the sequence goes from \textup{left to right}). That is, $W_p(\textbf{X})[m,n+i]=a_A\cdot r_A^i$ for $0\le i \le p^h+1$,
        \item the north outer frame of $\mathcal{W}_2$ is the $(r_2,a_2)$-geometric transform of $\textbf{S}_h^{(p)}$ with increasing indices (the sequence goes from \textup{left to right}). That is, $W_p(\textbf{X})[m-1,n+i]=a_2\cdot r_2^i\cdot s^{(p)}_i$ for $0\le i \le p^h+1$,
        \item  the south inner frame of $\mathcal{W}_3$ is the $(r_R,a_R)$ geometric sequence with decreasing indices (the sequence goes from \textup{right to left}). That is, $W_p(\textbf{X})[m,n+2p^h-i]=a_R\cdot r_R^i$ for $0\le i \le p^h-1$,
        \item the south outer frame of $\mathcal{W}_3$ is the $(r_3,a_3)$-geometric transform of $\textbf{C}^{(p)}_h$ with decreasing indices (the sequence goes from \textup{right to left}). That is, $W_p(\textbf{X})[m+1,n+2p^h-i]=a_3\cdot r_3^i\cdot c_i^{(p)}$ for $0\le i \le p^h-1$.
    \end{itemize}

\end{itemize}
\noindent Then
\begin{enumerate}
    \hypertarget{lem3.1}{\item} the southern inner frame of $\mathcal{W}_2$ is given by the $\left(\frac{-a_1\cdot a_R}{r_A\cdot a_3\cdot a_A},\frac{a_3^2}{a_R}\right)$ geometric sequence with decreasing indices (the sequence goes from \textup{right to left}). That is, for $0\le i \le p^h+1,$ $$W_p(\textbf{X})[m+p^h+1,n+p^h+1-i]=\frac{a_3^2}{a_R}\cdot \left(\frac{-a_1\cdot a_R}{r_A\cdot a_3\cdot a_A}\right)^i.$$ 
    \hypertarget{lem3.2}{\item} for $0\le i \le p^h+1$, the $i^\nth$ term of the southern outer frame of $\mathcal{W}_2$ is equal to \[W_p(\textbf{X})[m+p^h+2,n+p^h+1-i]=s^{(p)}_i\left(\frac{\frac{a_1\cdot a_2}{a_A^2}\cdot \left(\frac{r_2}{r_A}\right)^i+r_A\cdot r_L\cdot \left(\frac{r_1}{r_L}\right)^i+\frac{a_1\cdot a_R\cdot r_R}{r_A\cdot a_A\cdot a_3\cdot r_3^2}\cdot \left(\frac{r_3}{r_R}\right)^i}{\frac{a_R^2}{a_3^3}\cdot \left(\frac{-a_1\cdot a_R}{r_A\cdot a_3\cdot a_A}\right)^{-i} }\right)\cdotp\]
\end{enumerate}

\end{lemma}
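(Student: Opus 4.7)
The plan is to pin down the south inner and outer frames of $\mathcal{W}_2$ in three steps: (i) derive the west and east inner/outer frames of $\mathcal{W}_2$ by applying Lemma \ref{lem: cant_inv_wall} twice (once after a horizontal reflection, handled by Corollary \ref{lem: reflect_r_a}); (ii) extract the south inner frame from Theorem \ref{ratio ratio} together with Fermat's Little Theorem (\hyperlink{FLT}{FLT}); and (iii) compute the south outer frame via the Third Frame Constraint \hyperlink{FC3}{FC3}. The compatibility relations forced at the shared corners of the three windows---in particular $a_A r_A^2 = a_R$, obtained by matching the north inner frame of $\mathcal{W}_2$ with the south inner frame of $\mathcal{W}_3$ at $(m,n+p^h+1)$ and simplifying via \hyperlink{FLT}{FLT}---are what make the various formulas cohere.

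For Step~1, the pair $(\mathcal{W}_2,\mathcal{W}_1)$ is a horizontal reflection of the configuration in Lemma \ref{lem: cant_inv_wall}; after reflecting, the lemma produces the west inner frame $W_p(\textbf{X})[m+i,n]=a_A(a_1/a_A)^i$ and the west outer frame $W_p(\textbf{X})[m+i,n-1]=a_A r_L(-a_1 r_1/(r_L a_A))^i s^{(p)}_i$ for $0\le i\le p^h+1$. The hypothesised zeros at $W_p(\textbf{X})[m+1, n+j]$ for $j\in\{-p^h,-p^h-1\}$ are precisely the tail $c^{(p)}_{p^h}=c^{(p)}_{p^h+1}=0$ (Corollary \ref{cor: p^h=0}) needed to extend the south outer frame of $\mathcal{W}_1$ to the $p^h+2$ entries required by Lemma \ref{lem: cant_inv_wall}. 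The pair $(\mathcal{W}_2,\mathcal{W}_3)$ matches Lemma \ref{lem: cant_inv_wall} directly and yields the east inner and outer frames of $\mathcal{W}_2$. For Step~2, Theorem \ref{ratio ratio} gives $PS=(-1)^{p^h}QR=-QR$ (since $p^h$ is odd); plugging in $P=r_A$, $Q=a_1/a_A$ and $R=a_R/a_3$ from Step~1 yields the south inner ratio $S=-a_1a_R/(r_A a_3 a_A)$, while the initial value $a_3^2/a_R$ at the SE corner comes from the bottom of the east inner frame after \hyperlink{FLT}{FLT} simplification. This is exactly part~(1).

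For Step~3, I apply \hyperlink{FC3}{FC3} to each $H_k=W_p(\textbf{X})[m+p^h+2,n+p^h+1-k]$, using the north outer value $E_k=a_2 r_2^k s^{(p)}_k$ (given) and the west and east outer values $F_k$, $G_k$ produced in Step~1. Because each of $E_k$, $F_k$, $G_k$ factors as $s^{(p)}_k$ times a pure geometric expression, this common $s^{(p)}_k$ pulls outside the numerator of \hyperlink{FC3}{FC3}, leaving a three-term sum that matches the numerator of part~(2); the denominator $R/D_k$ simplifies to $(a_R^2/a_3^3)(-a_1 a_R/(r_A a_3 a_A))^{-k}$ on the nose. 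The alternating signs $(-1)^k$ in \hyperlink{FC3}{FC3} are harmless, since $s^{(p)}_k=0$ for odd $k$ (by the $\tau_p$ coding) and $(-1)^k=1$ for even $k$. The main obstacle will be the bookkeeping in this final step: $G_k$ lives at row $m+p^h+1-k$, so its exponents naturally have the form $p^h+1-k$, and both the $p$-Singer symmetry (Lemma \ref{sym_sing}) and \hyperlink{FLT}{FLT} are needed to convert these back to exponents of the form $k$ so that the three contributions line up. One must likewise verify that the minus signs appearing inside the geometric ratios $(-a_1 r_1/(r_L a_A))^k$ and $(-a_3 r_R/(r_3 a_R))^{p^h+1-k}$ combine correctly with those in \hyperlink{FC3}{FC3} to produce the all-positive three-term sum in the statement.
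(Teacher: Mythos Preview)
Your proposal is correct and follows essentially the same approach as the paper: compute the west and east inner/outer frames of $\mathcal{W}_2$ via two applications of Lemma~\ref{lem: cant_inv_wall} (one preceded by a left--right reflection), read off $Q$, $R$ and then $S$ from Theorem~\ref{ratio ratio}, and finish with \hyperlink{FC3}{FC3}. One terminological wrinkle: what you call a ``horizontal reflection'' is what the paper calls a \emph{vertical} reflection (Lemma~\ref{reflect} / Corollary~\ref{lem: reflect_r_a}), i.e.\ reflection about a vertical axis; also, the paper obtains the sign-free numerator of part~(2) by direct algebraic cancellation of the $(-1)^k$'s coming from $F_k$ and $G_k$ against those in \hyperlink{FC3}{FC3}, rather than by invoking $s^{(p)}_k=0$ for odd $k$ --- though your parity shortcut is equally valid.
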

\begin{figure}[H]
        \centering
        \includegraphics[width=1\linewidth]{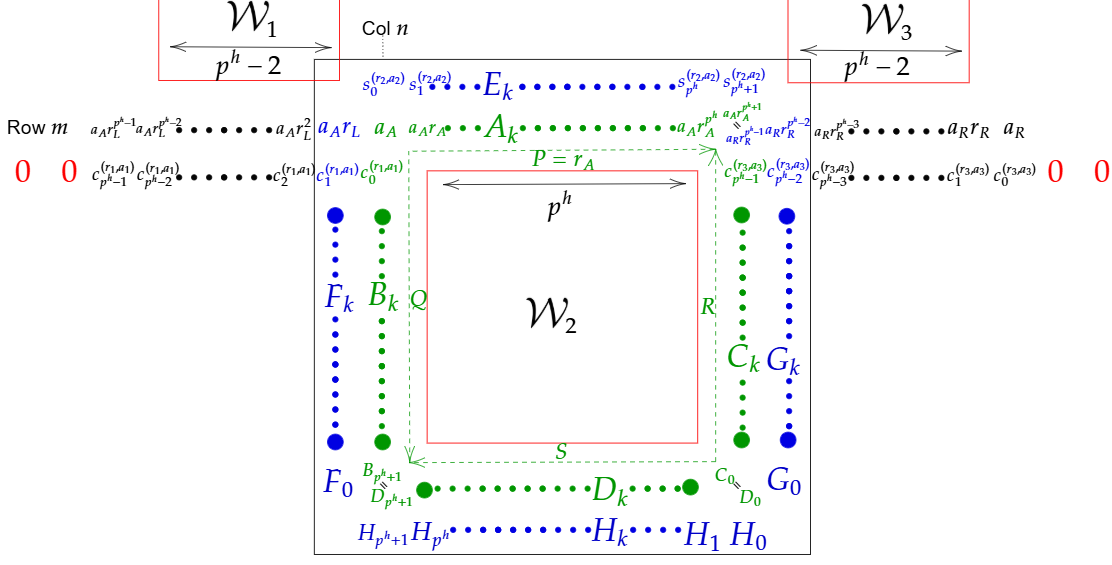}
        \caption{Each window is depicted with a red square. The inner and outer frame of $\mathcal{W}_2$ are coloured in green and blue, respectively. The southern outer frame of $\mathcal{W}_2$ is given by the sequence $(H_i)_{0\le i \le p^h+1}$. The green dotted arrows and their labels show the direction of the geometric sequences comprising the inner frame of $\mathcal{W}_2$.}
        \label{between_two_windows_fig}
        \end{figure}
\begin{proof}
    Using notation form Figure \ref{Fig: basicwindow} applied to $\mathcal{W}_2$, the goal of parts \hyperlink{lem3.1}{1} and \hyperlink{lem3.2}{2} is to calculate the sequences $(D_k)_{0\le k \le p^h+1}$ and $(H_k)_{0\le k \le p^h+1}$, respectively. To calculate each entry in the above sequences, \hyperlink{FC2}{FC2} and \hyperlink{FC3}{FC3} imply that one needs to know the values of $A_k,B_k,C_k,D_k,E_k,F_k,G_k,P,Q,R$ and $S$. Each of these are either known by assumption or are calculated from the previous lemmas in this section. From the statement of the lemma, the following variables are known: \begin{align*}
        &P=r_A& &A_k=a_A\cdot r_A^k& &E_k=s^{(r_2,a_2)}_k=s^{(p)}_k\cdot a_2\cdot r_2^k.&
    \end{align*}
    \noindent Furthermore, for some specific values of $k$, the values of $B_k$ and $C_k$ are known.\begin{align*}
        &B_0=A_0=a_A& &B_1=C^{(\mathbf{r}_1)}_0=a_1& \\&C_{p^h+1}=a_R\cdot r_R^{p^h-1}=a_R& &C_{p^h}=C^{(\mathbf{r}_3)}_{p^h-1}=a_3r_3^{p^h-1}=a_3.&
    \end{align*}
    \noindent Using these, one obtains that \begin{align*}
        &Q=B_1\cdot B_0^{-1}=a_1\cdot a_A^{-1}& &R= C_{p^{h}+1}\cdot C_{p^h}^{-1}=a_R \cdot a_3^{-1}.&
    \end{align*}
    \noindent Therefore, \begin{align}
        &B_k=B_0\cdot Q^k = a_1^k\cdot a_A^{1-k}&\nonumber\\ &C_k=C_{p^h+1}\cdot R^{k-p^h-1}=a_R^{p^h-1}\cdot (a_3\cdot a_R^{-1})^{k-p^h-1}=a_3^{2-k}\cdot a_R^{k-1}.&\label{C_k}
    \end{align}
    \noindent The next variable to be calculated is $F_k$. By applying Lemma \ref{reflect} (vertical reflection of number walls), one is now able to apply Lemma \ref{lem: cant_inv_wall}. To this end, ``$\leftrightarrow$" notation is introduced to make it clear which of the variables from Lemma \ref{lem: cant_inv_wall} correspond to each variable in Lemma \ref{between_two_windows}: the left side of each $\leftrightarrow$ is notation from Lemma \ref{lem: cant_inv_wall} and the right-side is notation from this proof.\begin{align*}
        &a_0\leftrightarrow a_A& &r_0\leftrightarrow r_L& &a_1\leftrightarrow c_0^{(r_1,a_1)}=a_1& &r_1\leftrightarrow r_1& &s_0\leftrightarrow c^{(p)}_0=1& &b_k\leftrightarrow s^{(p)}_k.&
    \end{align*}
    \noindent Therefore, by Lemma \ref{lem: cant_inv_wall}, \begin{align*}
        F_k=a_A\cdot r_L \cdot \left(\frac{-a_1\cdot r_1}{a_A\cdot r_L}\right)^k\cdot s^{(p)}_k.
    \end{align*}
    \noindent Similarly, one uses  Lemma \ref{lem: cant_inv_wall} (without needing to apply Lemma \ref{reflect} first) to calculate $G_k$. Here, \begin{align*}
        &a_0\leftrightarrow A_{p^h+1}=a_R\cdot r_R^{p^h-1}\substack{\hyperlink{FLT}{FLT}\\=}a_R& &r_0\leftrightarrow r_R^{-1}& &a_1\leftrightarrow c^{(r_3,a_3)}_{p^h-1}=a_3\cdot r_3^{p^h-1}\substack{\hyperlink{FLT}{FLT}\\=}a_3& \\&r_1\leftrightarrow r_3^{-1}& &s_0\leftrightarrow c^{(p)}_{p^h-1}=1& &b_k\leftrightarrow s^{(p)}_k.&
    \end{align*} However, note that the index $k$ in $G_k$ increases as the row index decreases. Therefore, one has that \begin{align}
        G_k&=a_R\cdot r_R^{-1}\cdot \left(\frac{-a_3\cdot r_3^{-1}}{r_R^{-1}\cdot a_R}\right)^{p^h+1-k} s^{(p)}_{p^h+1-k}\nonumber\\&\stackrel{\text{ Lemma \ref{sym}}}{=}(a_R\cdot r_R^{-1})^{-1}\cdot (-a_3\cdot r_3^{-1})^{2} \left(\frac{r_R^{-1}\cdot a_R}{-a_3\cdot r_3^{-1}}\right)^{k} s^{(p)}_{k}.\label{G_k}
    \end{align}

    \noindent Next, the values of $D_k$ and $S$ are computed. Using Theorem \ref{ratio ratio}, one has  \begin{equation*}
        S=\frac{QR}{P}\cdot (-1)^{p^h}=\frac{-a_1\cdot a_R}{r_A\cdot a_3\cdot a_A}\cdotp
    \end{equation*} Therefore, using that $D_0=C_0=C_{p^h+1}\cdot R^{-p^h-1}$, one obtains that \begin{align}
        D_k&=D_0\cdot S^k=a_R\cdot \left(\frac{a_R}{a_3}\right)^{-p^h-1} \cdot \left(\frac{-a_1\cdot a_R}{r_A\cdot a_3\cdot a_A}\right)^k\nonumber\\
        &=\frac{a_3^{2}}{a_R}\cdot \left(\frac{-a_1\cdot a_R}{r_A\cdot a_3\cdot a_A}\right)^k\label{D_k}
    \end{align}
\noindent This establishes claim \hyperlink{lem3.1}{1} of Lemma \ref{between_two_windows}. For claim \hyperlink{lem3.2}{2}, recall \hyperlink{FC3}{FC3} which states \begin{align}
        H_k&=\frac{\frac{Q\cdot E_k}{A_k}+(-1)^k\cdot \frac{P\cdot F_k}{B_k}-(-1)^k\cdot \frac{S\cdot G_k}{C_k}}{\frac{R}{D_k}}\nonumber\\
        &=s^{(p)}_k\left(\frac{\frac{a_1\cdot a_2}{a_A^2}\cdot \left(\frac{r_2}{r_A}\right)^k+r_A\cdot r_L\cdot \left(\frac{r_1}{r_L}\right)^k+\frac{a_1\cdot a_R\cdot r_R}{r_A\cdot a_A\cdot a_3\cdot r_3^2}\cdot \left(\frac{r_3}{r_R}\right)^k}{\frac{a_R^2}{a_3^3}\cdot \left(\frac{-a_1\cdot a_R}{r_A\cdot a_3\cdot a_A}\right)^{-k} }\right)\label{between_two_windows_result}
    \end{align}
\end{proof}
\noindent With an additional assumption, Lemma \ref{between_two_windows} is simplified. 
\begin{corollary} \label{between_two_windows_cor}
    Consider the same setup as in Lemma \ref{between_two_windows}, but with the additional condition that there exists an $x\in\F_{p}\backslash\{0\}$ such that \begin{align*}
    \frac{r_1}{r_L}=\frac{r_2}{r_A}=\frac{r_3}{r_R}=x.
    \end{align*}
    \noindent Then, the southern outer frame of $\mathcal{W}_2$ (that is, the sequence $(H_i)_{0\le i \le p^h+1}$ from Figure \ref{between_two_windows_fig}) is a geometric transform of $(s^{(p)}_i)_{0\le i \le p^h+1}$. In particular, \[H_k=s_k^{(p)}\cdot \left(\frac{-x\cdot r_A\cdot a_3\cdot a_A}{a_1\cdot a_R}\right)^k \cdot \frac{a_3^3}{a_R^2}\cdot \left(1+\frac{a_1\cdot a_R\cdot r_R}{r_A\cdot a_A\cdot a_3\cdot r_3^2}\right).\]
\end{corollary}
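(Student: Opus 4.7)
The plan is to directly substitute the common-ratio hypothesis $\frac{r_1}{r_L}=\frac{r_2}{r_A}=\frac{r_3}{r_R}=x$ into the explicit expression for $H_k$ established in part 2 of Lemma~\ref{between_two_windows}. Under this substitution, each of the three parenthesized ratios $(r_i/r_X)^k$ in the numerator of that formula collapses to the single power $x^k$, so the common factor $x^k$ pulls out of the three-term sum. What remains inside the brackets is a constant independent of $k$, which already verifies the qualitative half of the claim: $H_k$ equals $s^{(p)}_k$ times a geometric sequence in $k$, i.e., the southern outer frame of $\mathcal{W}_2$ is a geometric transform of $\mathbf{S}_h^{(p)}$.

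The next step is to combine this extracted $x^k$ with the denominator factor $\left(\frac{-a_1a_R}{r_A a_3 a_A}\right)^{-k}$. Flipping this into the numerator produces $\left(\frac{r_A a_3 a_A}{-a_1 a_R}\right)^k$, and multiplying by $x^k$ gives precisely the ratio $\left(\frac{-x\,r_A a_3 a_A}{a_1 a_R}\right)^k$ appearing in the corollary. The scalar prefactor $\frac{a_3^3}{a_R^2}$ is obtained by inverting the $\frac{a_R^2}{a_3^3}$ in the denominator of Lemma~\ref{between_two_windows}, so the entire geometric part of the expression is accounted for.

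The remaining task is to collapse the constant-in-$k$ coefficient $\frac{a_1 a_2}{a_A^2}+r_A r_L+\frac{a_1 a_R r_R}{r_A a_A a_3 r_3^2}$ into the two-term form $1+\frac{a_1 a_R r_R}{r_A a_A a_3 r_3^2}$ appearing in the claim. This requires exploiting the consistency relations forced by the overlaps of $\mathcal{W}_1,\mathcal{W}_2,\mathcal{W}_3$ together with Fermat's Little Theorem: (i) matching the SE corner of $\mathcal{W}_1$'s inner frame with the NW corner of $\mathcal{W}_2$'s inner frame gives $a_L=a_A$; (ii) the analogous match at the $\mathcal{W}_2,\mathcal{W}_3$ corner combined with FLT gives $a_A r_A^2=a_R$; (iii) at the point $(m-1,n)$ the eastern inner frame of $\mathcal{W}_1$ meets the northern outer frame of $\mathcal{W}_2$, forcing (again via FLT) a relation of the form $r_E=a_A/a_2$ on the east-inner-frame ratio of $\mathcal{W}_1$; and (iv) Theorem~\ref{ratio ratio} applied to $\mathcal{W}_1$ (whose side length $p^h-2$ is odd) then relates $r_L$ to these quantities via the inner-frame ratio identity $PS/(QR)=-1$. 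These relations conspire to reduce the two stray summands $\frac{a_1 a_2}{a_A^2}$ and $r_A r_L$ to the single constant $1$, giving the claimed form.

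The main obstacle is precisely this final bookkeeping step. The formula in Lemma~\ref{between_two_windows} is derived without fully exploiting the compatibility of parameters at the window overlaps, so these implicit consistency relations must be extracted \emph{a posteriori} and tracked carefully in characteristic $p$. Everything else in the proof is a direct algebraic substitution.
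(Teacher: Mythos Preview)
Your first two steps are correct and match the paper exactly: substitute the common-ratio hypothesis into the formula from Lemma~\ref{between_two_windows}, pull out the factor $x^k$, and combine it with the inverted denominator to obtain the geometric ratio $\left(\frac{-x\,r_A a_3 a_A}{a_1 a_R}\right)^k$ together with the scalar prefactor $\frac{a_3^3}{a_R^2}$.

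The gap is in your third step. Your plan to deduce $\frac{a_1 a_2}{a_A^2}+r_A r_L=1$ from overlap relations and Theorem~\ref{ratio ratio} applied to $\mathcal{W}_1$ cannot work as stated: the ratio identity $PS/(QR)=-1$ for $\mathcal{W}_1$ involves four inner-frame ratios, and the hypotheses of Lemma~\ref{between_two_windows} only determine two of them (the south ratio $S_{\mathcal{W}_1}=r_L$ and the east ratio $R_{\mathcal{W}_1}=a_2/a_A$, from the overlap at $(m-1,n)$). The north and west ratios $P_{\mathcal{W}_1},Q_{\mathcal{W}_1}$ are unconstrained by the data, so Theorem~\ref{ratio ratio} alone does not yield a usable relation between $r_L$, $a_2$, $a_A$.

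The paper sidesteps this entirely. After factoring, the bracketed constant is simply the value of $H_k$ at $k=0$; so the paper sets $(*)=H_0$ and computes $H_0$ directly from \hyperlink{FC1}{FC1} at the southeast corner (using the already-known values of $D_0,D_1,G_0,C_1$ from the proof of Lemma~\ref{between_two_windows}). This gives the two-term expression $\frac{a_3^3}{a_R^2}\bigl(1+\frac{a_1 a_R r_R}{r_A a_A a_3 r_3^2}\bigr)$ in one short calculation, with no need to invoke window-overlap constraints at all. (Incidentally, the identity you were after, $\frac{a_1 a_2}{a_A^2}+r_A r_L=1$, \emph{does} hold and follows in one line from \hyperlink{FC1}{FC1} applied at the corner entry $(m,n)$, since $a_1\cdot a_2=a_A^2-(a_A r_L)(a_A r_A)$; this would also close your argument, and is much simpler than the route through Theorem~\ref{ratio ratio}.)
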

\begin{proof}
    \noindent With the additional assumption of $r_1,r_2,r_3,r_L,r_A$ and $r_R$, one factorises equation (\ref{between_two_windows_result}) to obtain \begin{align}
        H_k=s_k^{(p)}\cdot \left(\frac{-x\cdot r_A\cdot a_3\cdot a_A}{a_1\cdot a_R}\right)^k \cdot\underbrace{\left(\frac{\frac{a_1\cdot a_2}{a_A^2} +r_A\cdot r_L +\frac{a_1\cdot a_R\cdot r_R}{r_A\cdot a_A\cdot a_3\cdot r_3^2} }{\frac{a_R^2}{a_3^3} }\right)}_{(*)}\label{eqn: H_0}
    \end{align}
    \noindent When $k=0$, equation (\ref{eqn: H_0}) is equal to $H_0$, which implies that $(*)=H_0$. Using \hyperlink{FC1}{FC1} to calulcate $H_0$ directly, one arrives at \begin{align}
        H_0&=\frac{D_0^2\cdot D_1G_0}{C_1}\cdotp \label{H_0}
    \end{align}
    The proof is completed by applying equations, (\ref{G_k}), (\ref{C_k}) and (\ref{D_k}) to equation (\ref{H_0}).
\end{proof}
\subsection{A Window Between Two $p$-Singer Sequences}\hfill\\
\noindent The following Lemma is similar to Lemma \ref{between_two_windows}, but the roles of the $p$-Cantor and $p$-Singer sequence have been swapped. 
\begin{lemma}\label{between_to_windows_singer}
Let
\begin{itemize}
    \item $\textbf{X}$ be a sequence of arbitrary length over $\F_p$;
    \item $i\in\{1,2,3\}$, let $h,m,n\in\N$;
    \item $r_i,a_i,r_L,a_L,r_A,a_A,r_R,a_R\in\F_q\backslash\{0\}$,
    \item $\mathcal{W}_i$ be a window in $W_p(\textbf{X})$ of size $p^h$ for $i=1,3$ and of size $p^h-2$ for $i=2$, such that\begin{itemize}
         \item $\mathcal{W}_1$ and $\mathcal{W}_3$ have their bottom left corner on row $m-1$;
        \item the south east corner of the inner frame of $\mathcal{W}_1$ overlaps with the north west corner of the inner frame of $\mathcal{W}_2$. Let $n$ be the column index of this overlap;
        \item the south west corner of the inner frame of $\mathcal{W}_3$ overlaps with the north east corner of the inner frame of $\mathcal{W}_2$;
        \item the south inner frame of $\mathcal{W}_1$ is given by the $(r_L,a_L)$ geometric sequence with decreasing indices (the sequence goes from \textup{right to left}). That is, $W_p(\textbf{X})[m,n-i]=a_A\cdot r_L^i$ for $0\le i \le p^h+1$;
        \item the south outer frame of $\mathcal{W}_1$ is the $(r_1,a_1)$-geometric transform of $\textbf{S}^{(p)}_h$ with decreasing indices (the sequence goes from \textup{right to left}). That is, $W_p(\textbf{X})[m+1,n-i]=a_1\cdot r_1^i\cdot s_i^{(p)}$ for $0\le i \le p^h+1$;
        \item the north inner frame of $\mathcal{W}_2$ is given by the $(r_A,a_A)$ geometric sequence with increasing indices (the sequence goes from \textup{left to right}). That is, $W_p(\textbf{X})[m,n+i]=a_A\cdot r_A^i$ for $0\le i \le p^h-1$;
        \item the north outer frame of $\mathcal{W}_2$ is the $(r_2,a_2)$-geometric transform of $\textbf{C}^{(p)}_h$ with increasing indices (the sequence goes from \textup{left to right}). That is, $W_p(\textbf{X})[m-1,n+i]=a_2\cdot r_2^i\cdot c^{(p)}_i$ for $0\le i \le p^h-1$;
        \item  the south inner frame of $\mathcal{W}_3$ is given by the $(r_R,a_R)$ geometric sequence with decreasing indices (the sequence goes from \textup{right to left}).. That is, $W_p(\textbf{X})[m,n+2p^h-i]=a_R\cdot r_R^i$ for $0\le i \le p^h+1$;
        \item the south outer frame of $\mathcal{W}_3$ is the $(r_3,a_3)$-geometric transform of $\textbf{S}_h^{(p)}$ with decreasing indices (the sequence goes from \textup{right to left}). That is, $W_p(\textbf{X})[m+1,n+2p^h-i]=a_3\cdot r_3^i\cdot s_i^{(p)}$ for $0\le i \le p^h+1$;
    \end{itemize}

\end{itemize}
\noindent Then,
\begin{itemize}
    \item the southern inner frame of $\mathcal{W}_2$ is the $\left(\frac{-a_1\cdot a_R\cdot r_R^2}{r_A\cdot a_3\cdot a_A\cdot r_3^2},a_Rr_R^2\right)$ geometric sequence with decreasing indices (the sequence goes from \textup{right to left}). That is, for $0\le i \le p^h-1$, $$W_p(\textbf{X})[m+p^h+1,n+p^h+1-i]=a_Rr_R^2\cdot \left(\frac{-a_1\cdot a_R\cdot r_R^2}{r_A\cdot a_3\cdot a_A\cdot r_3^2}\right)^i,$$ 
    \item  for $0\le i \le p^h-1$, the $i^\nth$ term of the south outer-frame of $\mathcal{W}_2$ is equal to\[W_p[m+p^h,n+p^h-1-i]=c^{(p)}_k\left(\frac{\frac{a_1\cdot a_2}{a_A^2}\cdot \left(\frac{r_2}{r_A}\right)^k+r_A\cdot r_L\cdot \left(\frac{r_1}{r_L}\right)^k+\frac{a_1\cdot a_R\cdot r_R}{a_3\cdot r_3^{2}\cdot r_A\cdot a_A}\cdot \left(\frac{r_3}{r_R}\right)^k}{\frac{1}{r_3^2\cdot a_3}\cdot \left(\frac{-a_1\cdot a_R\cdot r_R^2}{a_3\cdot r_3^2\cdot r_A\cdot a_A}\right)^{-k} }\right)\cdotp\]
\end{itemize}

\end{lemma}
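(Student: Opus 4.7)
The proof will mirror that of Lemma \ref{between_two_windows}, with the roles of $\textbf{C}^{(p)}$ and $\textbf{S}^{(p)}$ interchanged. Adopt the notation of Figure \ref{Fig: basicwindow} for the window $\mathcal{W}_2$, which now has side length $l = p^h - 2$, so the inner/outer-frame indices $k$ range over $0 \le k \le p^h - 1$; the goal is to compute the southern inner and outer frames $(D_k)$ and $(H_k)$.

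The hypotheses immediately give the northern data $P = r_A$, $A_k = a_A r_A^k$, and $E_k = a_2 r_2^k c_k^{(p)}$, together with the corner values $B_0 = a_A$ and $B_1 = a_1 s_0^{(p)} = a_1$ (since $s_0^{(p)} = 1$), whence $Q = a_1/a_A$ and $B_k = a_1^k a_A^{1-k}$. For the eastern frame, the corner where $\mathcal{W}_2$ and $\mathcal{W}_3$ meet gives $C_{p^h-1}$ and $C_{p^h-2}$; here I use the Singer symmetry $s_{p^h+1}^{(p)} = s_0^{(p)} = 1$ from Lemma \ref{sym_sing} together with Fermat's Little Theorem ($r_R^{p^h-1} = r_3^{p^h-1} = 1$) to obtain $C_{p^h-1} = a_R r_R^2$ and $C_{p^h-2} = a_3 r_3^2$, hence $R = a_R r_R^2/(a_3 r_3^2)$ and the full formula for $C_k$.

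The critical step is the computation of the western and eastern outer frames $F_k$ and $G_k$: the $p$-Cantor-specific Lemma \ref{lem: cant_inv_wall} used in the proof of Lemma \ref{between_two_windows} is replaced by the general inversion result Corollary \ref{fininv}. Since the south outer frame of $\mathcal{W}_1$ is a geometric transform of $\textbf{S}^{(p)}_h$ and by Lemma \ref{lem: inv_LS} the Laurent inverse of the $p$-Singer sequence is the $p$-Cantor sequence, applying Corollary \ref{fininv} (after the appropriate horizontal reflection, justified by Lemma \ref{lem: reflect_r_a}) yields
\[
F_k \;=\; a_A\, r_L\, c_k^{(p)} \left(\frac{-a_1 r_1}{a_A r_L}\right)^{\!k},
\]
and an entirely analogous computation using $\mathcal{W}_3$ in place of $\mathcal{W}_1$, together with the symmetry of $\textbf{S}^{(p)}_h$ from Lemma \ref{sym_sing}, gives a corresponding formula for $G_k$ also proportional to $c_k^{(p)}$.

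Finally, Theorem \ref{ratio ratio} with $l = p^h - 2$ (so $(-1)^l = -1$, since $p^h$ is odd) gives $S = -QR/P = -a_1 a_R r_R^2/(r_A a_3 a_A r_3^2)$, and combining with $D_0 = C_0 = a_R r_R^2 \cdot R^{-(p^h-1)} = a_R r_R^2$ (using that $R^{p^h-1} = 1$ by FLT) yields the geometric sequence claimed in the first bullet. Substituting $A_k$, $B_k$, $C_k$, $D_k$, $E_k$, $F_k$, $G_k$ and $P$, $Q$, $R$ into \hyperlink{FC3}{FC3} produces an expression from which $c_k^{(p)}$ factors out, proving the second bullet after a routine rearrangement directly parallel to equation (\ref{between_two_windows_result}). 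The main bookkeeping obstacle is the careful application of Fermat's Little Theorem at several boundary indices, combined with correctly identifying the initial data $s_0^{(p)} = s_{p^h+1}^{(p)} = 1$ and the Laurent-inversion identity $u_k = c_k^{(p)}$ when specialising Corollary \ref{fininv}; once these substitutions are made consistently, the remaining algebra is mechanical.
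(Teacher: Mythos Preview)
Your proposal is correct and follows essentially the same approach as the paper: mirror the proof of Lemma \ref{between_two_windows}, replacing the Cantor-specific Lemma \ref{lem: cant_inv_wall} by the general Corollary \ref{fininv} (so that the Laurent-inverse identity of Lemma \ref{lem: inv_LS} yields $c_k^{(p)}$ in place of $s_k^{(p)}$), then apply \hyperlink{FC3}{FC3}. The paper's own proof is in fact even more terse---it just states this substitution and lists the resulting frame values---so your write-up provides more detail, but the structure and the key ingredients are identical.
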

    \begin{figure}[H]
        \centering
        \includegraphics[width=1\linewidth]{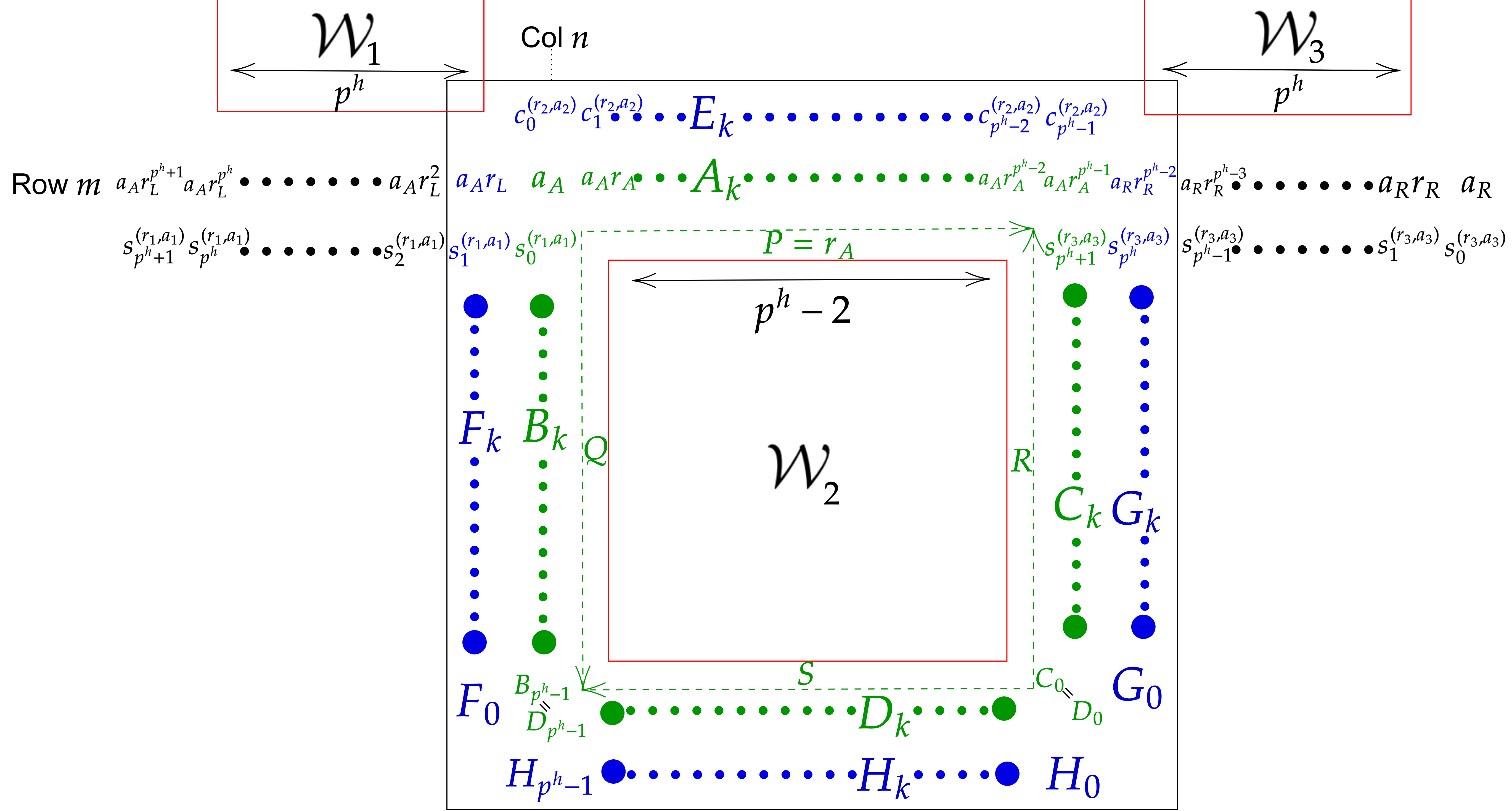}
        \caption{Each window is depicted with a red square. The inner and outer frame of $\mathcal{W}_2$ are coloured in green and blue, respectively. The southern outer frame of $\mathcal{W}_2$ is given by the sequence $(H_i)_{0\le i \le p^h+1}$. The green dotted arrows and their labels show the direction of the geometric sequences comprising the inner frame of $\mathcal{W}_2$.}
        \label{between_two_windows_singer_fig}
        \end{figure}
\begin{proof}
    The proof of Lemma \ref{between_to_windows_singer} is identical to that of Lemma \ref{between_two_windows}, with the exception of Lemma \ref{lem: cant_inv_wall} being replaced with Lemma \ref{fininv}. Hence, the values of the entries in the inner and outer frames are stated without proof. 
    \begin{align*}
        &A_k=a_Ar_A^k& &E_k=a_2r_2^k c_k^{(p)}& &P=r_A&\\
        &B_k=a_A\cdot \left(\frac{a_1}{a_A}\right)^k& &F_k=a_Ar_L \cdot\left(\frac{-a_1r_1}{r_La_A}\right)^kc_k^{(p)}& &Q=\frac{a_1}{a_A}&\\
        &C_k=a_Rr_R^2 \left(\frac{a_Rr_R^2}{a_3r_3^2}\right)^k& &G_k=a_Rr_R \cdot \left(\frac{-a_Rr_R}{a_3r_3}\right)\cdot c_k^{(p)}& &R=\frac{a_R r_R^2}{a_3r_3^2}&\\
        &D_k=a_Rr_R^2 \left(\frac{a_Ra_1r_R^2}{a_3a_Ar_3^2r_A}\right)^k& && &S=-\frac{a_Ra_1r_R^2}{a_3r_3^2a_Ar_A}\cdotp&\\
    \end{align*}
\end{proof}
\noindent Just as with Corollary \ref{between_two_windows_cor}, one can simplify Lemma \ref{between_to_windows_singer}.
\begin{corollary} \label{between_two_windows_singer_cor}
    Consider the same setup as in Lemma \ref{between_to_windows_singer}, but with the additional condition that there exists an $x\in\F_{p}\backslash\{0\}$ such that \begin{align*}
    \frac{r_1}{r_L}=\frac{r_2}{r_A}=\frac{r_3}{r_R}=x.
    \end{align*}
    \noindent Then, the southern outer frame of $\mathcal{W}_2$ (that is, the sequence $(H_i)_{0\le i \le p^h-1}$ from Figure \ref{between_two_windows_singer_fig}) is a geometric transform of $(c^{(p)}_i)_{0\le i \le p^h-1}$. In particular, \[H_k=c_k^{(p)}\cdot \left(\frac{-x\cdot a_3\cdot r_3^2\cdot r_A\cdot a_A}{a_1\cdot a_R\cdot r_R^2}\right)^k \cdot \left(a_3\cdot r_3^{2}+\frac{a_1\cdot a_R\cdot r_R}{ r_A\cdot a_A}\right).\]
\end{corollary}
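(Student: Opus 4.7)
The strategy mirrors that of Corollary \ref{between_two_windows_cor} precisely. Starting from the closed form for the south outer frame entries $H_k$ of $\mathcal{W}_2$ supplied by Lemma \ref{between_to_windows_singer}, the hypothesis $r_1/r_L = r_2/r_A = r_3/r_R = x$ forces each of the three summands in the numerator to share the common factor $x^k$. Factoring this out produces a scalar
\[
K' \;=\; \frac{a_1 a_2}{a_A^2} \,+\, r_A r_L \,+\, \frac{a_1 a_R r_R}{a_3 r_3^2 r_A a_A}
\]
that is independent of $k$. Dividing by the denominator of Lemma \ref{between_to_windows_singer}, which is itself a geometric sequence in $k$, yields an expression of the form $H_k = c_k^{(p)} \cdot (\text{base})^k \cdot \mathrm{const}$, with the base matching the one stated in the corollary after absorbing $x^k$ into the geometric factor inherited from the denominator.

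To identify the constant, specialise to $k = 0$. Since $c_0^{(p)} = 1$ and the geometric base evaluates to $1$ at $k = 0$, the constant equals $H_0$ itself. This value can be read off directly from the first Frame Constraint: $H_0$ sits immediately below the SE-corner entry $D_0$ of the inner frame of $\mathcal{W}_2$, with $C_1$ two rows above (on the east inner frame, necessarily nonzero so the recurrence is well defined), $D_1$ to its NW on the south inner frame, and $G_0$ to its NE on the east outer frame. Hence
\[
H_0 \;=\; \frac{D_0^2 - D_1 G_0}{C_1}.
\]
Each of $D_0, D_1, G_0, C_1$ has an explicit closed form already recorded in the proof of Lemma \ref{between_to_windows_singer}. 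Substituting these in, together with Fermat's Little Theorem to collapse stray $r_i^{p^h-1}$ factors to $1$, simplifies $H_0$ to the asserted value $a_3 r_3^2 + \tfrac{a_1 a_R r_R}{r_A a_A}$.

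The sole obstacle is algebraic bookkeeping: one must manipulate a handful of parameter-dependent fractions built from $a_1, a_2, a_3, a_A, a_R, r_A, r_L, r_R, r_3$, and verify that the rearrangement of $x^k$ together with the geometric factor inherited from Lemma \ref{between_to_windows_singer} produces exactly the base displayed in the corollary. No new number-wall theory is required beyond FC1 and the data already extracted in the proof of Lemma \ref{between_to_windows_singer}.
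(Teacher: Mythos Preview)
Your proposal is correct and follows essentially the same approach the paper uses for the analogous Corollary \ref{between_two_windows_cor}: factor the closed form from Lemma \ref{between_to_windows_singer} under the common-ratio hypothesis, identify the residual constant as $H_0$, and evaluate $H_0$ directly via \hyperlink{FC1}{FC1} using the explicit values of $C_1,D_0,D_1,G_0$ listed in the proof of Lemma \ref{between_to_windows_singer}. Your formula $H_0=(D_0^2-D_1G_0)/C_1$ is the correct FC1 rearrangement (the paper's displayed ``$D_0^2\cdot D_1G_0$'' in the proof of Corollary \ref{between_two_windows_cor} is a typo for $D_0^2-D_1G_0$).
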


\section{The Construction Lemmata}\label{Sect: 9}
\noindent The goal of this section is to provide two lemmata that combine to give the proof of Theorem \ref{thm: morphism}. These are known as the Construction Lemmata, and they build upon the results from the previous section. Before they are stated, some additional notation is defined.\\

\noindent \textbf{Notation:} For $h\in\N$ and $r,a\in\F_p$, let\begin{align}
    &\textbf{G}_{h,s}(r,a)=\left(a\cdot r^i\right)_{0\le i \le p^h-1}& &\textbf{G}_{h,\ell}(r,a)=\left(a\cdot r^i\right)_{0\le i \le p^h+1}&\nonumber
\end{align}
\noindent Above, the $s$ in $\textbf{G}_{h,s}(r,a)$ and the $\ell$ in $\textbf{G}_{h,l}(r,a)$ stand for ``short'' and ``long'', respectively. 
\subsection{Construction Lemma 1}\hfill\\
\noindent Recall that $p_2:=\frac{p-1}{2}$. Construction Lemma 1 combines Lemma \ref{invwind} and Lemma \ref{between_two_windows}. It is illustrated immediately after it is stated, and it is \textit{strongly} advised that the reader keep this figure in mind when reading the statement of Lemma \ref{constrution_lemma_1}.
\begin{lemma}[Construction Lemma \hypertarget{con_lem_1}{1}]\label{constrution_lemma_1}
    Let\begin{itemize}
        \item $h\ge1$ and $m\ge0$ be natural numbers,
        \item $\textbf{S}$ be a sequence over $\F_p$ of length $p^{h+1}$,
        \item $\widetilde{\textbf{S}}$ be the sequence of length $3p^{h+1}$ defined by $\widetilde{\textbf{S}}:=\{0\}_{0\le i <p^{h+1}}\oplus \textbf{S} \oplus\{0\}_{0\le i <p^{h+1}}$,
        \item $r_{S,i}\in \F_p\backslash\{0\}$ and $a_{S,i}\in\F_p$ for $0\le i <p_2$,
        \item $r_{C,i}\in\F_p\backslash\{0\}$ and  $a_{C,i}\in\F_p$ for $0 \le i \le p_2$,
        \item $a_0\in\F_p\backslash\{0\}$, and define \begin{equation}a_{i}:=\prod^{i-1}_{k=0}r_{S,k}^{2}.\label{eqn: a_i_chapt_9}\end{equation}
    \end{itemize} Assume that rows $m-2$, $m-1$ and $m$ of $W_p(\widetilde{\textbf{S}})$ have the following structure: \begin{itemize}
        \item Row $m-2$: \begin{itemize}
            \item $W_p(\widetilde{\textbf{S}})[m-2,p^{h+1}]\neq0$, and $ W_p(\widetilde{\textbf{S}})[m-2,2p^{h+1}-1]\neq 0$,
            \item $W_p(\widetilde{\textbf{S}})[m-2, p^{h+1}+2ip^h+j]_{1\le j \le p^h-2}=0$ for $0 \le i \le p_2$,
            \item $W_p(\widetilde{\textbf{S}})[m-2, p^{h+1}+(2i+1)p^h-1+j]=a_{S,i}r_{S,i}^j s^{(p)}_j$ for ${0\le j \le p^h+1}$ and $0 \le i \le p_2-1$.
        \end{itemize}
        \item Row $m-1$: \begin{itemize}
            \item $W_p(\widetilde{\textbf{S}})[m-1, p^{h+1}+2ip^h+j]=a_ir_{C,i}^{p^h-1-j}$ for $0 \le j \le p^h-1$ and $0 \le i \le p_2$,
            \item $W_p(\widetilde{\textbf{S}})[m-1, p^{h+1}+(2i+1)p^h-1+j]=a_ir_{S,i}^{j}$ for $0 \le j \le p^h+1$ and $0 \le i \le p_2-1$.
        \end{itemize}
        \item Row $m$:\begin{itemize}
            \item $W_p(\widetilde{\textbf{S}})[m,p^{h+1}+2ip^h+j]=a_{C,i}\cdot r_{C,i}^{p^h-1-j} \cdot c_j^{(p)}$ for $0\le j \le p^h-1$ and $0\le i \le p_2$,
            \item$W_p(\widetilde{\textbf{S}})[m,(2i+1)p^h+j]=0$ for $0\le j \le p^h-1$ and $0\le i \le p_2-1$.
        \end{itemize}
    \end{itemize} 
    Then, rows $m+p^h-1$, $m+p^h$ and $m+p^h+1$ have the following structure:
    \begin{enumerate}
       \item Row $m+p^h-1$:\begin{enumerate}
            \hypertarget{CL11a}{\item} $W_p(\widetilde{\textbf{S}})[m+p^h-1,p^{h+1}+2ip^h+j]=a_{C,i}\cdot r_{C,i}^{-j} \cdot c_j^{(p)}$ for $0\le j \le p^h-1$ and $0\le i \le p_2$,
            \hypertarget{CL11b}{\item}$W_p(\widetilde{\textbf{S}})[m+p^h-1,p^{h+1}+(2i+1)p^h+j]=0$ for $0\le j \le p^h-1$ and $0\le i \le p_2-1$.
        \end{enumerate}
        \item Row $m+p^h$: \begin{enumerate}
            \hypertarget{CL12a}{\item} $W_p(\widetilde{\textbf{S}})[m+p^h, p^{h+1}+2ip^h+j]=\frac{a_{C,i}^2}{a_i}\cdot \left(r_{C,i}\right)^{-j}$ for $0 \le j \le p^h-1$ and $0 \le i \le p_2$,
            \hypertarget{CL12b}{\item} $W_p(\widetilde{\textbf{S}})[m-1, p^{h+1}+(2i+1)p^h-1+j]= \frac{a_{C,i+1}^2}{a_{i+1}}\cdot \left(\frac{-a_{C,i}\cdot a_{i+1}}{r_{S,i}\cdot a_{C,i+1}\cdot a_i}\right)^{p^h+1-j}$ for ${0\le j \le p^h+1}$.
        \end{enumerate}
        \item Row $m+p^h+1$: \begin{enumerate}
            \hypertarget{CL13a}{\item} $W_p(\widetilde{\textbf{S}})[m-2,p^{h+1}]\neq0$ and $W_p(\widetilde{\textbf{S}})[m-2,2p^{h+1}-1]\neq 0$,
            \hypertarget{CL13b}{\item} $W_p(\widetilde{\textbf{S}})[m-2, p^{h+1}+(2i+1)p^h-1+j]=s^{(p)}_j\cdot  x_i \cdot \left(\frac{-a_{C,i}\cdot a_{i+1}}{r_{C,i+1}\cdot a_{C,i+1}\cdot a_i}\right)^{p^n+1-j}$, where $0\le j \le p^{h+1}$, $0\le i \le p_2-1$ and \[x_i=\frac{a_{C,i+1}^3}{a_{i+1}^2}\cdot\left(1+\frac{a_{C,i}\cdot a_{i+1}}{r_{S,i}\cdot a_i\cdot a_{C,i+1}\cdot r_{C,i+1}}\right);\]
            \hypertarget{CL13c}{\item}  $W_p(\widetilde{\textbf{S}})[m+p^h+1, p^{h+1}+2ip^h+j]_{1\le j \le p^h-2}=0$ for $0 \le i \le p_2$.
    \end{enumerate}
    \end{enumerate} 
\end{lemma}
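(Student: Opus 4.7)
The plan is to combine Lemma~\ref{invwind} with Corollary~\ref{between_two_windows_cor}: morally, the former handles the even blocks (those containing $p$-Cantor data on row $m$), while the latter handles the odd blocks (those containing zeros on row $m$). First I will reinterpret the hypotheses window-theoretically. The given data on rows $m{-}2, m{-}1, m$ force the existence of:
\begin{itemize}
    \item an \emph{even-block window} $\mathcal{W}_I^E$ of size $p^h-2$ for each $0\le I\le p_2$, whose south inner and outer frames lie along rows $m-1$ and $m$ respectively; and
    \item an \emph{odd-block window} $\mathcal{W}_I^O$ of size $p^h$ for each $0\le I\le p_2-1$, whose north inner and outer frames lie along rows $m-1$ and $m-2$ respectively.
\end{itemize}
Their existence is forced by the stipulated zero patterns, the Square Window Theorem (Theorem~\ref{window}), and the non-vanishing of the surrounding geometric and $p$-Cantor/$p$-Singer boundary data.

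For conclusions (1a), (1b), (2a), I apply Lemma~\ref{invwind} to each interior triple $(\mathcal{W}_{I-1}^O,\mathcal{W}_I^E,\mathcal{W}_I^O)$ with $1\le I\le p_2-1$, matching $\mathcal{W}_1,\mathcal{W}_2,\mathcal{W}_3$ in the lemma to $\mathcal{W}_{I-1}^O$, $\mathcal{W}_I^E$, $\mathcal{W}_I^O$ respectively. Conclusions (3)--(4) of that lemma produce a new size-$(p^h-2)$ window $\mathcal{W}_4^{(I)}$ just below, whose north outer and north inner frames are precisely the $p$-Cantor transform on row $m+p^h-1$ (giving (1a)) and the geometric sequence on row $m+p^h$ (giving (2a)). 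Conclusion (1b) is then immediate since row $m+p^h-1$ is the final row of $\mathcal{W}_I^O$ and therefore lies inside the window. The boundary even-blocks $I\in\{0,p_2\}$, where one of the flanking odd-block windows is replaced by the zero-padding region, are handled identically but with Lemma~\ref{lem: cant_inv_wall} taking the role of Lemma~\ref{invwind}.

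For conclusions (2b) and (3b), I apply Corollary~\ref{between_two_windows_cor} to each triple $(\mathcal{W}_I^E,\mathcal{W}_I^O,\mathcal{W}_{I+1}^E)$ for $0\le I\le p_2-1$, with $\mathcal{W}_1,\mathcal{W}_2,\mathcal{W}_3$ in that lemma matched to $\mathcal{W}_I^E, \mathcal{W}_I^O, \mathcal{W}_{I+1}^E$. Under the identifications $r_L=r_1=r_{C,I}$, $r_A=r_2=r_{S,I}$, $r_R=r_3=r_{C,I+1}$ (and corresponding $a$-identifications), the three ratios $r_1/r_L$, $r_2/r_A$, $r_3/r_R$ all equal $1$, so the simplifying hypothesis of the corollary holds with $x=1$. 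The corollary's formula, together with conclusion (1) of Lemma~\ref{between_two_windows}, then delivers the geometric sequence of (2b) (the south inner frame of $\mathcal{W}_I^O$) and the $p$-Singer transform of (3b) (its south outer frame). Conclusion (3c) is immediate from the newly-constructed windows $\mathcal{W}_4^{(I)}$ of the previous step having top row $m+p^h+1$, and (3a) follows from the boundary values $s_0^{(p)}=s_{p^h+1}^{(p)}=1$ (Lemma~\ref{sym_sing}) appearing in the adjacent odd blocks.

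The main obstacle is purely bookkeeping: verifying that the parameters produced by Lemma~\ref{invwind} and Corollary~\ref{between_two_windows_cor} collapse into the precise closed forms quoted in (1a)--(3b). The key simplifications rely on Fermat's Little Theorem (so that $r^{p^h-1}=1$ for every $r\in\F_p\setminus\{0\}$, neutralizing the large powers produced by reversed-index geometric transforms such as $r_{C,i}^{p^h-1-j}$) and on the defining relation $a_{I+1}=a_I\,r_{S,I}^2$, which compresses the rather long coefficient coming out of the corollary into the exact form required by (3b). The boundary analysis for $I\in\{0,p_2\}$ requires an analogous but separate computation, with no new ideas.
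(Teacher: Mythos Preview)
Your proposal is correct and follows essentially the same route as the paper: split row $m$ into overlapping pieces, apply Lemma~\ref{invwind} to each even block (with its two flanking odd-block windows) to obtain (1a), (1b), (2a), (3c), and apply Lemma~\ref{between_two_windows}/Corollary~\ref{between_two_windows_cor} with $x=1$ to each odd block (with its two flanking even-block windows) to obtain (2b), (3b). The parameter identifications you write down ($r_L=r_1=r_{C,I}$, $r_A=r_2=r_{S,I}$, $r_R=r_3=r_{C,I+1}$, etc.) are exactly the ones the paper uses.

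One small correction: your justification for (3a) is not quite right. The entries in question sit at columns $p^{h+1}$ and $2p^{h+1}-1$, which are the extreme columns of the non-padding region; there is no adjacent odd block on the outside, so the Singer endpoint values $s_0^{(p)},s_{p^h+1}^{(p)}$ do not land there. The paper's argument is simpler: those two entries lie on the eastern (respectively western) inner frame of the size-$p^{h+1}$ zero-padding windows, and inner-frame entries are nonzero by Theorem~\ref{ratio ratio}. Also, the paper does not separate out the boundary even blocks $I\in\{0,p_2\}$ as you do; it simply applies Lemma~\ref{invwind} uniformly, since a flanking window larger than $p^h$ still supplies the zeros that Lemma~\ref{invwind} actually uses. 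Neither point affects the substance of your argument.
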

\begin{figure}[H]
    \centering
    \includegraphics[width=1\linewidth]{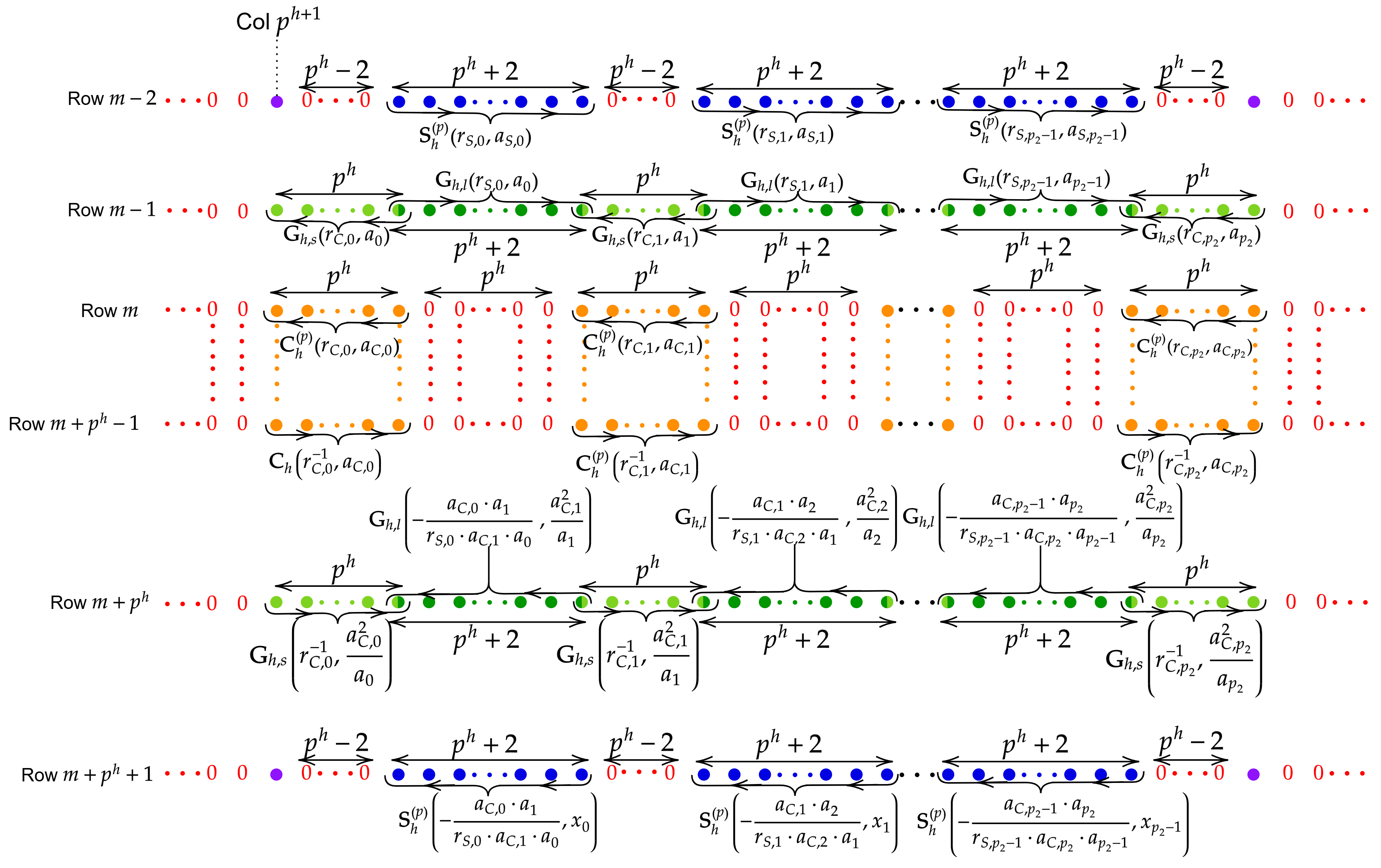}\label{fig: 9.1}
    \caption{The setup of Lemma \ref{constrution_lemma_1}, with the details described below. }
    \label{fig: construciton_lemma_1}
\end{figure}
\noindent Each dot in the above figure is an entry in $W_p(\widetilde{\textbf{S}})$, with the colour representing which sequence that entry belongs to. Some dots have multiple colours. Any geometric transform of the $p$-Singer ($p$-Cantor, respectively) sequence is in blue (orange, respectively). Geometric sequences are in green, where light (dark, respectively) green indicates a short (long, respectively) sequence. The two sided arrows denote the length of a sequence, whereas the one sided `braced' arrows denote the direction of indexing for the geometric sequence/transform.
\begin{remark}
    It is worth verifying that equation (\ref{eqn: a_i_chapt_9}) is consistent with Figure \ref{fig: construciton_lemma_1}. Indeed, since the geometric sequences on row $m-1$ overlap, one can observe that for $0\le i \le p_2$, $$a_i=a_{i-1}\cdot\left(\prod_{k=0}^{p^h+1}r_{S,i-1}\right)\cdot \left(\prod_{k=0}^{p^h-1} r_{C,i}^{-1}\right).$$ Two applications of \hyperlink{FLT}{FLT} and a simple induction on $i$ establishes equation (\ref{eqn: a_i_chapt_9}).
\end{remark}
\begin{proof}The proof is split into parts. 
\subsubsection*{\textbf{Splitting row }$m$}\hfill\\
\noindent First, row $m$ is split into overlapping parts so that each part matches the setup of either Lemma \ref{invwind} or Corollary \ref{between_two_windows_cor}. To this end, two extra variations of the $p$-Cantor sequence are defined. Let $a,r,a',r', a'', r''\in\F_q\backslash\{0\}$. \begin{itemize}
    \item $\widetilde{\textbf{C}}^{(p)}_{h}(r,a):=\{0\}_{0\le i <p^h}\oplus \textbf{C}_h^{(p)}{(r,a)}\oplus\{0\}_{0\le i <p^h}$;
    \item $\overline{\textbf{C}}^{(p)}_h(r',a',r'',a''):= \{0\}_{i=1,2}\oplus \textbf{C}^{(p)}_h(r',a')\oplus\{0\}_{0\le i <p^h}\oplus \textbf{C}^{(p)}_h(r'',a'')\oplus \{0\}_{i=1,2}$
\end{itemize}
\noindent Row $m$ of $W_p(\widetilde{\textbf{S}})$ is split into $p$ overlapping sections as follows:
\begin{figure}[H]
    \centering
    \includegraphics[width=1\linewidth]{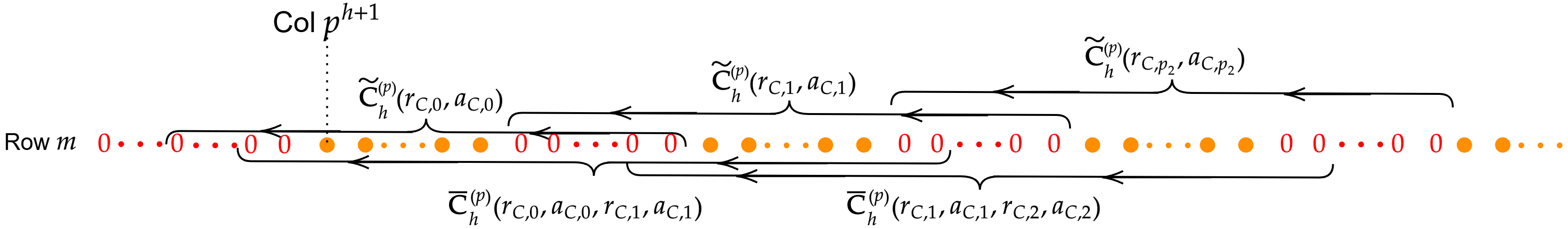}
    \caption{Row $m$ is split into overlapping copies of $\widetilde{\textbf{C}}_n^{(p)}(r_{C,i},a_{C,i})$ and $\overline{\textbf{C}}_n^{(p)}(r_{C,i},a_{C,i},r_{C,i+1},a_{C,i+1})$.}
\end{figure}
\subsubsection*{\textbf{Applying Lemma }\ref{invwind} \textbf{to} $\widetilde C_h^{(p)}(r_{C,i},a_{C,i})$}\hfill\\
\noindent For $0\le i \le p_2$, consider $(W_p(\widetilde{\textbf{S}})[m,p^{h+1}+2ip^h+j])_{-p^h\le j \le 2p^h-1}$ (which is equal to $\widetilde{\textbf{C}}_h^{(p)}(r_{C,i},a_{C,i})$) and $(W_p(\widetilde{\textbf{S}})[m-1,p^{h+1}+2ip^h+j])_{0\le j \le p^h-1}$ (which is equal to $\textbf{G}_{h,s}(r_{L,i},a_{L,i})$). This portion of the number wall satisfies the conditions of Lemma \ref{invwind} with the following values, where the notation on the left of the ``$\leftrightarrow$'' is from Lemma \ref{invwind}, and the notation on the right of the ``$\leftrightarrow$'' is from Lemma \ref{constrution_lemma_1}: \begin{align*}
    &a_0\leftrightarrow a_i& &r_0\leftrightarrow r_{C,i}& &a_1\leftrightarrow a_{C,i}& &r_1\leftrightarrow r_{C,i}& &m\leftrightarrow m.&
\end{align*}
\noindent Therefore, one has the following values on rows $m+p^h-1$ and $m+p^h$: \begin{enumerate}
     \item Row $m+p^h-1$:\begin{enumerate}
            \item[\hyperlink{CL11a}{(1a)}] By Lemma \ref{invwind} (\hyperlink{lem2.3}{3}), $W_p(\widetilde{\textbf{S}})[m+p^h-1,p^{h+1}+2ip^h+j]=a_{C,i}\cdot r_{C,i}^{-j} \cdot c_j^{(p)}$ for $0\le j \le p^h-1$ and $0\le i \le p_2$;
            \item[\hyperlink{CL11b}{(1b)}] By Theorem \ref{window}, $W_p(\widetilde{\textbf{S}})[m+p^h-1,p^{h+1}+(2i+1)p^h+j]=0$ for $0\le j \le p^h-1$ and $0\le i \le p_2-1$.
        \end{enumerate}
    \item Row $m+p^n$: \begin{enumerate}
            \item[\hyperlink{CL12a}{(2a)}] By Lemma \ref{invwind} (\hyperlink{lem2.4}{4}), $W_p(\widetilde{\textbf{S}})[m+p^h, p^{h+1}+2ip^h+j]=\frac{a_{C,i}^2}{a_i}\cdot \left(r_{C,i}\right)^{-j}$ for $0 \le j \le p^h-1$ and $0 \le i \le p_2$,
        \end{enumerate}
    \item Row $m+p^h+1$:  \begin{enumerate}\item[\hyperlink{CL13c}{(3c)}] By Lemma \ref{invwind} (\hyperlink{lem2.2}{2}), $W_p(\widetilde{\textbf{S}})[m+p^h+1, p^{h+1}+2ip^h+j]_{1\le j \le p^h-2}=0$ for $0 \le i \le p_2-1$.\end{enumerate}
\end{enumerate}
\noindent This is illustrated below:
\begin{figure}[H]
    \centering
    \includegraphics[width=1\linewidth]{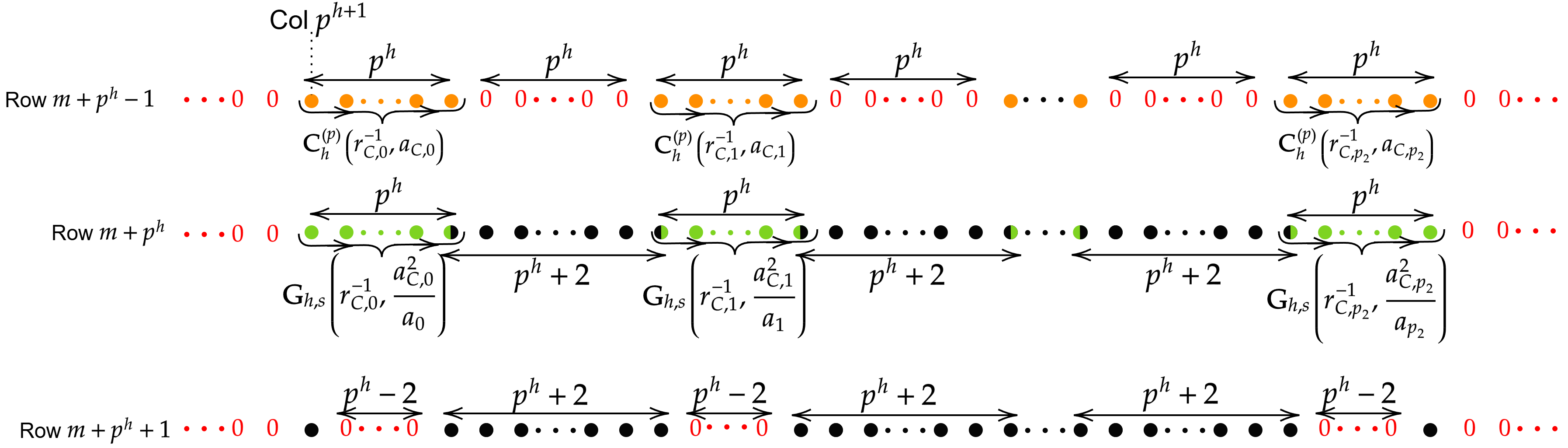}
    \caption{An illustration of rows $m+p^n-1$, $m+p^n$ and $m+p^n+1$ akin to Figure \ref{fig: construciton_lemma_1}. Here, the black dots represent entries that are yet to be calculated.}

\end{figure}
\subsubsection*{\textbf{Applying Lemma }\ref{between_two_windows} \textbf{and Corollary} \ref{between_two_windows_cor} \textbf{to} $\overline{\textbf{C}}^{(p)}_h(r_{C,i},a_{C,i},r_{C,i+1},a_{C,i+1})$}\hfill\\

\noindent Much like how Lemma \ref{invwind} was just applied to $\widetilde{\textbf{C}}_h^{(p)}(r_{C,i},a_{C,i})$, one now aims to utilise both Lemma \ref{between_two_windows} and Corollary \ref{between_two_windows_cor} with $\overline{\textbf{C}}^{(p)}_h(r_{C,i},a_{C,i},r_{C,i+1},a_{C,i+1})$. In particular, consider the following parts of $W_p(\widetilde{\textbf{S}})$: \begin{itemize}
    \item Row $m-2$:  \begin{itemize}
        \item $(W_p(\widetilde{\textbf{S}})[m-2, p^{h+1}+(2i+1)p^h-1+j])_{0\le j \le p^h+1}$. That is, the sequence $S^{(p)}_h(r_{S,i},a_{S,i})$,
    \end{itemize}
    \item Row $m-1$: \begin{itemize}
        \item $(W_p(\widetilde{\textbf{S}})[m-1,p^{h+1}+2ip^h+j])_{0\le j \le 3p^h-1}$. That is, the sequence \[r_{C,i}^{p^h-1}a_i~, \dots, ~r_{C,i}a_i, ~a_i, ~r_{S,i}a_i,~\cdots, ~r_{S,i}^{p^h+1}a_i=r_{C,i+1}^{p^h-1}a_{i+1},~\dots, ~a_{i+1}; \]
    \end{itemize}
    \item Row $m$: \begin{itemize}
        \item $(W_p(\widetilde{\textbf{S}})[m,p^{h+1}+2ip^h+j])_{0\le j \le 3p^h-1}$. That is, $\overline{\textbf{C}}^{(p)}_h(r_{C,i},a_{C,i},r_{C,i+1},a_{C,i+1}))$
    \end{itemize} 
\end{itemize}

\noindent This portion of the number wall satisfies the conditions of Corollary \ref{between_two_windows_cor} with the following values. Like before, the notation on the left is from Corollary \ref{between_two_windows_cor}, and the notation on the right is from Lemma \ref{constrution_lemma_1}.
\begin{align*}
    &r_L\leftrightarrow r_{C,i}& &r_A\leftrightarrow r_{S,i}& &a_A\leftrightarrow a_i& &r_R\leftrightarrow r_{C,i+1}& &a_R\leftrightarrow a_{i+1}&\\ &r_1\leftrightarrow r_{C,i}& &a_1\leftrightarrow a_{C,i}& &r_2\leftrightarrow r_{S,i}& &a_2\leftrightarrow a_{S,i}& &r_3\leftrightarrow r_{C,i+1}& &a_{3}\leftrightarrow a_{C,i+1}&.
\end{align*}
\noindent Then, by Corollary \ref{between_two_windows_cor}, one has that \begin{enumerate}
    \item[(2)] Row $m+p^h$: \begin{enumerate}
        \item[\hyperlink{CL12b}{(2b)}] By Lemma \ref{between_two_windows} (\hyperlink{lem3.1}{1}), $W_p(\widetilde{\textbf{S}})[m-1, p^{h+1}+(2i+1)p^h-1+j]= \frac{a_{C,i+1}^2}{a_{i+1}}\cdot \left(\frac{-a_{C,i}\cdot a_{i+1}}{r_{S,i}\cdot a_{C,i+1}\cdot a_i}\right)^{p^h+1-j}$ for ${0\le j \le p^h+1}$.
    \end{enumerate}
    \item[(3)] Row $m+p^h+1$: \begin{enumerate}
        \item[\hyperlink{CL13b}{(3b)}] By Corollary \ref{between_two_windows_cor}, $W_p(\widetilde{\textbf{S}})[m-2, p^{h+1}+(2i+1)p^h-1+j]=s^{(p)}_j\cdot x_i \cdot \left(\frac{-a_{C,i}\cdot a_{i+1}}{r_{C,i+1}\cdot a_{C,i+1}\cdot a_i}\right)^{p^h+1-j}$, where $0\le j \le p^{h}+1$ and \[x_i=\frac{a_{C,i+1}^3}{a_{i+1}^2}\cdot\left(1+\frac{a_{C,i}\cdot a_{i+1}}{r_{S,i}\cdot a_i\cdot a_{C,i+1}\cdot r_{C,i+1}}\right).\]
    \end{enumerate}
\end{enumerate}

\noindent Finally, \hyperlink{CL13a}{(3a)} follows because $W_p(\widetilde{\textbf{S}})[m-2,p^{h+1}]$ and $W_p(\widetilde{\textbf{S}})[m-2,2p^{h+1}-1]$ appear in the inner frame of the windows of size $p^{h+1}$ at the start and end of $\widetilde{\textbf{S}}$ respectively, they are both nonzero. This results in Figure \ref{fig: construciton_lemma_1} and completes the proof.
\end{proof}
\subsection{Construction Lemma 2}
\noindent This lemma is similar to Construction Lemma 1 (Lemma \ref{constrution_lemma_1}), but where the roles of the $p$-Cantor and $p$-Singer sequences are reversed. 
\begin{lemma}[Construction Lemma \hypertarget{con_lem_2}{2}]\label{constrution_lemma_2}
    Let\begin{itemize}
        \item $h\ge1$ be a natural number,
        \item $\textbf{S}$ be a sequence over $\F_p$ of length $p^h$,
        \item $\widetilde{\textbf{S}}$ be the sequence of length $3p^{h+1}$ defined as $$\widetilde{\textbf{S}}:=\{0\}_{0\le i <p^{h+1}}\oplus \textbf{S} \oplus \{0\}_{0\le i <p^{h+1}},$$
        \item $r_{S,i}\in \F_p\backslash\{0\}$ and $a_{S,i}\in\F_p$ for $0\le i <p_2$,
        \item $r_{C,i}\in\F_p\backslash\{0\}$ and  $a_{C,i}\in\F_p$ for $0 \le i \le p_2$,
        \item $a_0\in\F_p\backslash\{0\}$ and define \begin{equation}a_{i}:=\prod^{i-1}_{k=0}r_{S,k}^{-2}.\label{eqn: a_i_chapt_9}\end{equation}
    \end{itemize} Assume that rows $m-2$, $m-1$ and $m$ of $W_p(\widetilde{\textbf{S}})$ have the following structure: \begin{itemize}
        \item Row $m-2$:\begin{itemize}
            \item $W_p(\widetilde{\textbf{S}})[m-2,p^{h+1}+2ip^h+j]=a_{C,i}\cdot r_{C,i}^{j} \cdot c_j^{(p)}$ for $0\le j \le p^h-1$ and $0\le i \le p_2$;
            \item$W_p(\widetilde{\textbf{S}})[m-2,p^{h+1}+(2i+1)p^h+j]=0$ for $0\le j \le p^h-1$ and $0\le i \le p_2-1$.
        \end{itemize}

        \item Row $m-1$: \begin{itemize}
            \item $W_p(\widetilde{\textbf{S}})[m-1, p^{h+1}+2ip^h+j]=a_ir_{C,i}^{j}$ for $0 \le j \le p^h-1$ and $0 \le i \le p_2$,
            \item $W_p(\widetilde{\textbf{S}})[m-1, p^{h+1}+(2i+1)p^h-1+j]=a_{i+1}r_{S,i}^{p^h-1-j}$ for $0 \le j \le p^h+1$ and $0 \le i \le p_2-1$.
        \end{itemize}
        \item Row $m$: \begin{itemize}
            \item $W_p(\widetilde{\textbf{S}})[m,p^{h+1}]\neq0$ and  $W_p(\widetilde{\textbf{S}})[m-2,2p^{h+1}-1]\neq 0$,
            \item $W_p(\widetilde{\textbf{S}})[m, p^{h+1}+2ip^h+j]$ for ${1\le j \le p^h-2}=0$ and $0 \le i \le p_2$,
            \item $W_p(\widetilde{\textbf{S}})[m, p^{h+1}+(2i+1)p^h-1+j]=a_{S,i}r_{S,i}^{p^h+1-j} s^{(p)}_j$ for ${0\le j \le p^h+1}$ and $0 \le i \le p_2-1$.
        \end{itemize}
    \end{itemize} 
    \noindent Then, rows $m+p^h-3$, $m+p^h-2$ and $m+p^h-1$ have the following structure:
    \begin{enumerate}
    \item Row $m+p^h-3$: \begin{enumerate}
            \hypertarget{CL21a}{\item} $W_p(\widetilde{\textbf{S}})[m+p^h-3,p^{h+1}]\neq0$ and $W_p(\widetilde{\textbf{S}})[m-2,2p^{h+1}-1]\neq 0$,
            \hypertarget{CL21b}{\item} $W_p(\widetilde{\textbf{S}})[m+p^{h}-3, p^{h+1}+(2i+1)p^h-1+j]=s^{(p)}_j\cdot\frac{a_{i+1}^2\cdot r_{S,i}^2}{a_{S,i}}  \cdot r_{S,i}^{-j}$, with $0\le j \le p^{h}+1$ and $0\le i \le p_2-1$,
            \hypertarget{CL21c}{\item} $W_p(\widetilde{\textbf{S}})[m+p^n-3, p^{h+1}+2ip^h+j]=0$ for ${1\le j \le p^h-2}$ and $0 \le i \le p_2$.
    \end{enumerate}
        \item Row $m+p^h-2$: \begin{enumerate}
            \hypertarget{CL22a}{\item} $W_p(\widetilde{\textbf{S}})[m+p^h, p^{h+1}+2ip^h+j]=\left(-\frac{a_{S,i-1}\cdot a_{i+1}}{a_i\cdot a_{S,i}\cdot r_{C,i}}\right)^{p^h-1-j}\cdot a_{i+1}\cdot r_{S,i}^2$ for $0 \le j \le p^h-1$ and $0 \le i \le p_2$,
            \hypertarget{CL22b}{\item} $W_p(\widetilde{\textbf{S}})[m-1, p^{h+1}+(2i+1)p^h-1+j]= r_{S,i}^2\cdot a_{i+1}\cdot r_{S,i}^{-j}$ for ${0\le j \le p^h+1}.$
        \end{enumerate}
        \item Row $m+p^h-1$:\begin{enumerate}
            \hypertarget{CL23a}{\item}$W_p(\widetilde{\textbf{S}})[m+p^h-1,p^{h+1}+(2i+1)p^h+j]=0$ for $0\le j \le p^h-1$ and $0\le i \le p_2-1$,
            \hypertarget{CL23b}{\item} $W_p(\widetilde{\textbf{S}})[m+p^h-1,p^{h+1}+2ip^h+j]=y_i\cdot \left(-\frac{a_{S,i-1}\cdot a_{i+1}}{a_i\cdot a_{S,i}\cdot r_{C,i}}\right)^{p^h-1-j}\cdot c^{(p)}_{p^h-1-j}$ for $0\le j \le p^h-1$ and $0\le i \le p_2$, where \[y_i=r_{S,i}^2\cdot a_{S,i}\left(1+\frac{a_{S,i-1}\cdot a_{i+1}}{a_{S,i}\cdot r_{S,i}\cdot r_{C,i}\cdot a_{i}}\right).\]
            
        \end{enumerate}
        
    \end{enumerate} 
\end{lemma}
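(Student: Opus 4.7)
The proof will mirror that of Construction Lemma~1 (Lemma \ref{constrution_lemma_1}), interchanging the roles played by the $p$-Cantor and $p$-Singer sequences throughout. Accordingly, the two workhorses are now Lemma \ref{Singer_invwind} (which takes the place of Lemma \ref{invwind}) and Corollary \ref{between_two_windows_singer_cor} (which replaces Corollary \ref{between_two_windows_cor}).

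The first step will be to split row $m$ of $W_p(\widetilde{\mathbf{S}})$, which carries the geometric $p$-Singer transforms, into two families of overlapping finite sub-sequences. For each $i \in \{0,\dots,p_2-1\}$, an \emph{odd-block piece} consists of the $(r_{S,i}, a_{S,i})$-transform of $\mathbf{S}_h^{(p)}$ in odd block $i$, padded on both sides by the adjacent zero runs of length $p^h$; and for each $i \in \{0,\dots,p_2\}$, an \emph{even-block piece} joins the two $p$-Singer transforms flanking even block $i$ across the length-$p^h$ zero run between them (with the convention that for $i=0$ and $i=p_2$ the ``missing'' neighbour is supplied by the length-$p^{h+1}$ bookend window of zeros at the corresponding end of $\widetilde{\mathbf{S}}$).

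For each odd-block piece I will invoke Lemma \ref{Singer_invwind}, identifying the parameters $r_L, a_L, r_A, a_A, r_R, a_R, r_1, a_1, r_2, a_2, r_3, a_3$ with the appropriate combinations of $r_{S,i}, r_{C,i}, a_{S,i}, a_{C,i}$ and $a_{i}$ dictated by the hypothesis. Its three conclusions then furnish, respectively, the geometric south inner frame on row $m+p^h-2$ of claim \hyperlink{CL22b}{(2b)}, the new window of size $p^h$ in each odd block responsible for the row of zeros of claim \hyperlink{CL23a}{(3a)}, and the geometric $p$-Singer transform of claim \hyperlink{CL21b}{(1b)}. For each even-block piece I will invoke Corollary \ref{between_two_windows_singer_cor}: because the inner-frame ratios on row $m-1$ equal $r_{C,i}$ on even blocks and $r_{S,i}$ on odd blocks, while the outer-frame ratios carry the same values, the common-ratio hypothesis $r_1/r_L = r_2/r_A = r_3/r_R$ is automatically satisfied with common ratio $1$, and the corollary outputs the geometric south inner frame of \hyperlink{CL22a}{(2a)} together with the $p$-Cantor transform of \hyperlink{CL23b}{(3b)}. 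Claim \hyperlink{CL21a}{(1a)} is immediate because the stated entries lie in the inner frames of the length-$p^{h+1}$ bookend windows at either end of $\widetilde{\mathbf{S}}$, and claim \hyperlink{CL21c}{(1c)} follows from the Square Window Theorem (Theorem \ref{window}) applied to the new windows produced in each odd block.

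The principal obstacle lies not in the conceptual plan but in the detailed bookkeeping: one must track the four families $r_{S,i}, a_{S,i}, r_{C,i}, a_{C,i}$ together with the derived $a_i$ through both invocations and verify that each substitution reproduces the exact formulas claimed above, with repeated recourse to Fermat's Little Theorem (\hyperlink{FLT}{FLT}) to collapse terms of the form $r^{p^h-1}$ to $1$. One should also note the asymmetry in the row shift relative to Construction Lemma~1: the output rows here are $m+p^h-3$, $m+p^h-2$, $m+p^h-1$ rather than $m+p^h-1$, $m+p^h$, $m+p^h+1$, a reflection of the fact that $\mathbf{S}_h^{(p)}$ has length $p^h+2$ while $\mathbf{C}^{(p)}_h$ has length $p^h$, so that the auxiliary windows produced by Lemma \ref{Singer_invwind} in CL2 are of size $p^h$ rather than $p^h-2$.
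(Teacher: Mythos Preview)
Your proposal is correct and follows exactly the paper's own route: the paper's proof of Construction Lemma~2 is a single sentence stating that it is identical to the proof of Construction Lemma~1 with Lemma~\ref{Singer_invwind}, Lemma~\ref{between_to_windows_singer} and Corollary~\ref{between_two_windows_singer_cor} replacing Lemma~\ref{invwind}, Lemma~\ref{between_two_windows} and Corollary~\ref{between_two_windows_cor}. One tiny slip: claim \hyperlink{CL21c}{(1c)} concerns the \emph{even} blocks, and the zeros there are simply the final row of the size $p^h-2$ windows already assumed to begin on row $m$, not the new odd-block windows produced by Lemma~\ref{Singer_invwind}; this is a harmless misattribution.
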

\begin{figure}[H]
    \centering
    \includegraphics[width=1\linewidth]{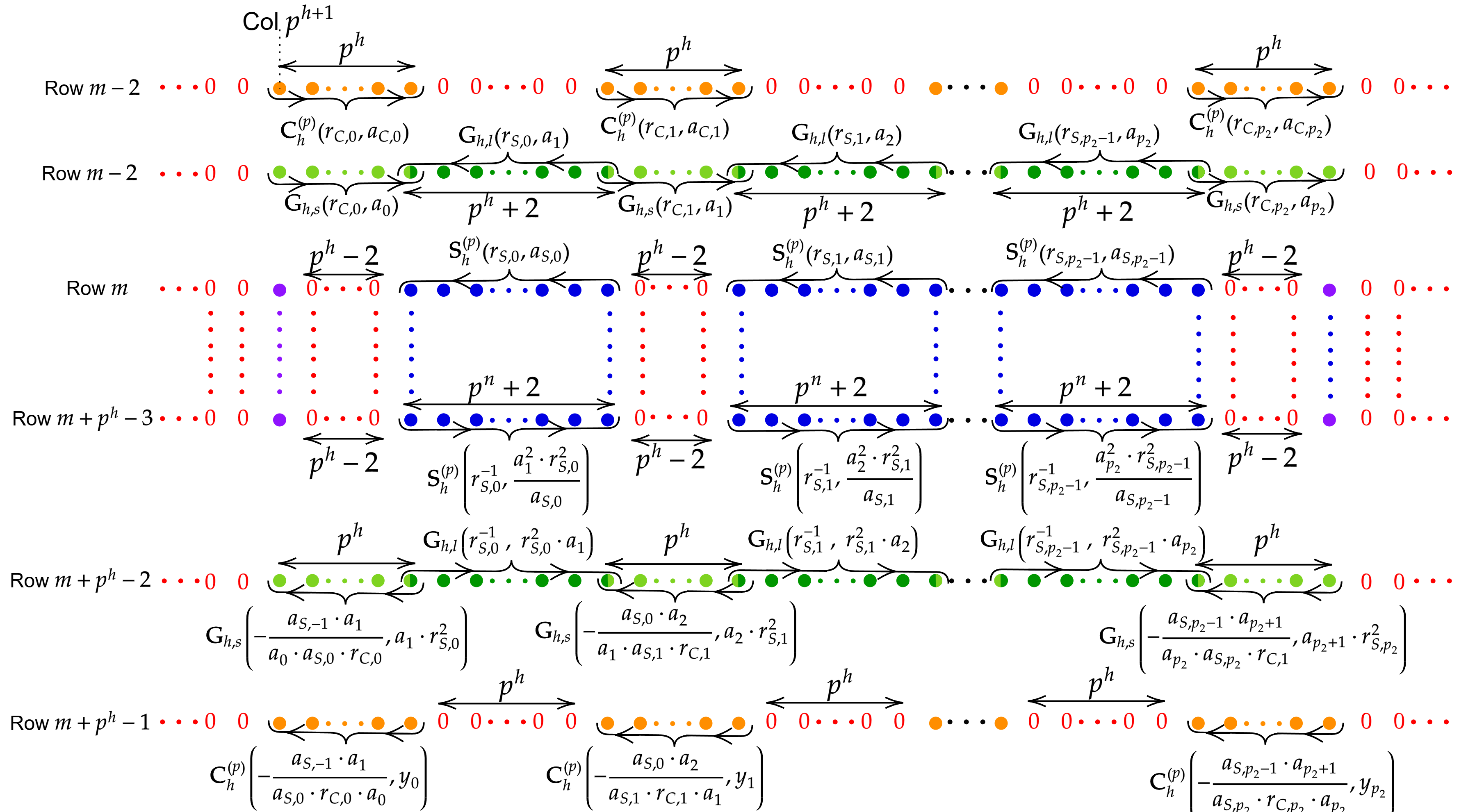}\label{fig: 9.4}
    \caption{The setup of Lemma \ref{constrution_lemma_1}. Each dot is an entry in $W_p(\widetilde{\textbf{S}})$, with the colour representing which sequence that entry belongs to. Some dots have multiple colours. Any geometric transform of the $p$-Singer ($p$-Cantor, respectively) sequence is in blue (orange, respectively). Geometric sequences are in green, with the shade being used to distinguish one geometric sequence from another. The two sided arrows denote the length of a sequence, whereas the one sided arrows denote the direction of indexing for the geometric sequence/transform.}
    \label{fig: construciton_lemma_2}
\end{figure}
\begin{proof}
    The proof is identical to that of Construction Lemma \hyperlink{con_lem_1}{1}, with Lemma \ref{Singer_invwind}, Lemma \ref{between_to_windows_singer} and Corollary \ref{between_two_windows_singer_cor} used in place of Lemma \ref{invwind}, Lemma \ref{between_two_windows} and Corollary \ref{between_two_windows_cor} respectively.
\end{proof}
\section{Proof of Theorem \ref{thm: morphism}} \label{Sect: 10}
\noindent Recall that  $\widetilde{\textbf{C}}_h^{(p)}:=\{0\}_{0\le i <p^h}\oplus \textbf{C}_h^{(p)}\oplus \{0\}_{0\le i <p^h}$. Theorem \ref{thm: morphism} is established by showing that for every $h\ge0$, one has \begin{equation}\label{eqn: sect_10_ind}\chi\left(\left(W_p\left(\widetilde {\textbf{C}}_h^{(p)}\right)[m,n]\right)_{0\le m < p^h,~ p^h\le n <2p^h}\right)=\Pi(\Phi^h(A)).\end{equation} This is done by induction on $h$.

\subsection{The Base Case}
\noindent When $h=0$, equation (\ref{eqn: sect_10_ind}) is trivial. In general, the $h=i$ case follows from applying Construction Lemmata \hyperlink{con_lem_1}{1} and \hyperlink{con_lem_2}{2} to the $h=i-1$ case. However, this does not hold in the $h=1$ case, since this would involve windows of size $-2$. Therefore, the $h=1$ case is established independently and serves as the base case for the induction. This is achieved by explicitly calculating the value of every entry in $\left(W_p\left(\widetilde {\textbf{C}}_1^{(p)}\right)[m,n]\right)_{-1\le m < p,~ p\le n <2p}$. \\

\noindent To this end, each entry in $W_p\left(\mathcal{C}_1\right)$ is categorised into one of four families of variables, depending on the parity of its row and column index modulo 2. In particular, let \begin{align*}
    &a_{i,j}:=W_p\left(\widetilde {\textbf{C}}_1^{(p)}\right)[2j-1,p+2i]~~\text{ for }~~0\le i\le p_2~~\text{ and }~~0\le j \le p_2, &\\
    &b_{i,j}:=W_p\left(\widetilde {\textbf{C}}_1^{(p)}\right)[2j-1,p+2i+1]~~\text{ for }~~0\le i\le p_2-1~~\text{ and }~~0\le j \le p_2, &\\
    &c_{i,j}:=W_p\left(\widetilde {\textbf{C}}_1^{(p)}\right)[2j,p+2i]~~\text{ for }~~0\le i \le p_2~~\text{ and }~~0\le j \le p_2, &\\
    &d_{i,j}:=W_p\left(\widetilde {\textbf{C}}_1^{(p)}\right)[2j,p+2i+1]~~\text{ for }~~0\le i\le p_2-1~~\text{ and }~~0\le j \le p_2. &
\end{align*}
\noindent That is, $ \left(W_p\left(\widetilde {\textbf{C}}_1{(p)}\right)[m,n]\right)_{-1\le m <p, ~p\le n <2p}$ appears as follows: \begin{equation*}
    \begin{matrix}
        \text{Row }-1&a_{0,0}&b_{0,0}&a_{1,0}&b_{1,0}&\cdots &a_{p_2-1,0}&b_{p_2-1,0}&a_{p_2,0}\\
        \text{Row }0&c_{0,0}&d_{0,0}&c_{1,0}&d_{1,0}&\cdots &c_{p_2-1,0}&d_{p_2-1,0}& c_{p_2,0}\\
        &a_{0,1}&b_{0,1}&a_{1,1}&b_{1,1}&\cdots &a_{p_2-1,1}&b_{p_2-1,1}&a_{p_2,1}\\
        &c_{0,1}&d_{0,1}&c_{1,1}&d_{1,1}&\cdots &c_{p_2-1,1}&d_{p_2-1,1}&c_{p_2,1}\\
        &\vdots&\vdots&\vdots&\vdots&&\vdots&\vdots&\vdots\\
        &a_{0,p_2-1}&b_{0,p_2-1}&a_{1,p_2-1}&b_{1,p_2-1}&\cdots &a_{p_2-1,p_2-1}&b_{p_2-1,p_2-1}&a_{p_2,p_2-1}\\
        &c_{0,p_2-1}&d_{0,p_2-1}&c_{1,p_2-1}&d_{1,p_2-1}&\cdots &c_{p_2-1,p_2-1}&d_{p_2-1,p_2-1}&c_{p_2,p_2-1}\\
        &a_{0,p_2}&b_{0,p_2}&a_{1,p_2}&b_{1,p_2}&\cdots &a_{p_2-1,p_2}&b_{p_2-1,p_2}&a_{p_2,p_2}\\
        \text{Row }p-1&c_{0,p_2}&d_{0,p_2}&c_{1,p_2}&d_{1,p_2}&\cdots &c_{p_2-1,p_2}&d_{p_2-1,p_2}&c_{p_2,p_2}
    \end{matrix}
\end{equation*}
\begin{lemma}
    Let $a_{i,j}$, $b_{i,j}$, $c_{i,j}$ and $d_{i,j}$ be as above and let $b_{-1,j},b_{p_2,j},d_{-1,j},d_{p_2,j}=0$ for all $0\le j \le p_2$. Then, \begin{itemize}
        \item For $0\le i\le p_2$ and $0\le j \le p_2$, \begin{equation}
            a_{i,j}= \left(\left(\prod_{k=0}^{j-1} {p_2+k \choose i+k}\right)\cdot \left(\prod_{k=1}^{j-1}\left(\frac{k}{p_2+k-i}\right)^{j-k}\right)\right)^2,\label{eqn: base_case_a}
        \end{equation}
        \item For $0\le i\le p_2-1$ and $0\le j \le p_2$,\begin{equation}
            b_{i,j}= (-1)^j\cdot\left(\prod_{k=0}^{j-1}{p_2+k\choose i+k+1}^2\cdot\frac{i+k+1}{(p_2-i+k)^{2(j-k)-1}}\right)\cdot \prod_{k=1}^{j-1} k^{2(j-k)},\label{eqn: base_case_b}
        \end{equation}
        \item For $0\le i\le p_2$ and $0\le j \le p_2$, \begin{equation}
            c_{i,j}= {p_2+j\choose i+j} \cdot \prod_{k=0}^{j-1} {p_2+k\choose i+k}^2\cdot \left(\frac{k+1}{p_2+k-i+1}\right)^{2(j-k)-1},\label{eqn: base_case_c}
        \end{equation}
        \item For $0\le i\le p_2-1$ and $0\le j \le p_2$, \begin{equation}
            d_{i,j}=0.\label{eqn: base_case_d}
        \end{equation}

    \end{itemize} 
\end{lemma}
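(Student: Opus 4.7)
The plan is to prove the four formulas simultaneously by induction on $j$. The base case $j=0$ is immediate: by Definition \ref{nw}, row $-1$ is identically $1$, giving $a_{i,0}=b_{i,0}=1$, while Definition \ref{pcant} gives $c_{i,0}=\binom{p_2}{i}$ and $d_{i,0}=0$ on row $0$. All four formulas specialise correctly to these values, since their products are empty when $j=0$.

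For the inductive step, the assertion $d_{i,j+1}=0$ is established directly, without using the induction hypothesis. Indeed, $d_{i,j+1}$ is the determinant of the $(2j+3)\times(2j+3)$ Toeplitz matrix $T_{\widetilde{\textbf{C}}_1^{(p)}}(p+2i+1;2j+2)$. Because $c_k^{(p)}$ vanishes for odd $k$, the nonzero entries of this matrix lie exactly at positions $(r,s)$ with $r+s$ odd. Permuting rows and columns so that even indices come first yields the block form $\bigl(\begin{smallmatrix}0 & A\\ B & 0\end{smallmatrix}\bigr)$, where $A$ is of size $(j+2)\times(j+1)$ and $B$ is of size $(j+1)\times(j+2)$. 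Both off-diagonal blocks are non-square, so the total rank is at most $2(j+1)<2j+3$ and the determinant vanishes. Armed with $d_{i,j}=0$ for every $j$, the FC1 numerator for $a_{i,j+1}$ collapses, producing $a_{i,j+1}=c_{i,j}^2/a_{i,j}$. Next, $b_{i,j+1}$ lies in the inner frame of the $1\times 1$ window at $d_{i,j}$. Applying Theorem \ref{ratio ratio} with $l=1$ to the four surrounding geometric sequences and using $PS/QR=-1$ causes the $a$-terms to cancel and yields $b_{i,j+1}=-c_{i,j}c_{i+1,j}/b_{i,j}$. Finally, FC1 applied at the position of $c_{i,j+1}$, together with the boundary convention $b_{-1,j+1}=b_{p_2,j+1}=0$ inherited from the infinite windows flanking $\widetilde{\textbf{C}}_1^{(p)}$, gives $c_{i,j+1}=(a_{i,j+1}^2-b_{i-1,j+1}b_{i,j+1})/c_{i,j}$.

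The remaining work is purely algebraic: verify that substituting the closed forms (\ref{eqn: base_case_a})--(\ref{eqn: base_case_c}) for index $j$ into the three recursions above reproduces the same closed forms for index $j+1$. The verifications for $a_{i,j+1}$ and $b_{i,j+1}$ are direct applications of the identity $\binom{p_2+k}{i+k+1}\cdot(i+k+1)/(p_2-i+k)=\binom{p_2+k}{i+k}$ together with elementary index shifts in the products, so the leading unsquared factor $\binom{p_2+j}{i+j}$ of $c_{i,j}$ becomes the new squared factor $\binom{p_2+j}{i+j}^2$ in $a_{i,j+1}$. The main obstacle is the recursion for $c_{i,j+1}$: the two terms $a_{i,j+1}^2$ and $b_{i-1,j+1}b_{i,j+1}$ share most of their large products, and after factoring out the common piece and dividing by $c_{i,j}$, the identity reduces to showing that a certain combination of binomials telescopes via Pascal's rule to promote the leading factor $\binom{p_2+j}{i+j}$ of $c_{i,j}$ into $\binom{p_2+j+1}{i+j+1}$ of $c_{i,j+1}$. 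I expect this to be the most computationally delicate portion of the argument, but to consist entirely of finite manipulations of standard binomial identities, and hence to follow without further structural input.
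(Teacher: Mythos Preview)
Your approach is essentially the paper's: induction on $j$, derive the recurrences $a_{i,j+1}=c_{i,j}^2/a_{i,j}$, $b_{i,j+1}=-c_{i,j}c_{i+1,j}/b_{i,j}$, $c_{i,j+1}=(a_{i,j+1}^2-b_{i-1,j+1}b_{i,j+1})/c_{i,j}$, and then verify the closed forms algebraically. The paper obtains all three recurrences from \hyperlink{FC1}{FC1} directly (noting that $d_{i,j-1}=0$ kills the numerator for $b$); your detour through Theorem~\ref{ratio ratio} for $b$ gives the same relation but needs the extra observation that the Square Window Theorem forces the window at $d_{i,j}$ to have size exactly~$1$ once its north, east, and west neighbours are known to be nonzero. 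Your direct rank argument for $d_{i,j}=0$ via the parity block structure of the Toeplitz matrix is genuinely different from the paper's route, which instead uses \hyperlink{FC3}{FC3} on the size-$1$ window at $d_{i,j-1}$ together with $E_1=F_1=G_1=0$ from the inductive hypothesis; your argument is cleaner and removes the need to carry $d_{i,j'}=0$ inductively.

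There is one omission you should patch. Every application of \hyperlink{FC1}{FC1} (and your use of Theorem~\ref{ratio ratio}) requires the relevant denominator $a_{i,j}$, $b_{i,j}$ or $c_{i,j}$ to be nonzero in $\F_p$; otherwise the recurrence is invalid and you cannot even assert that $d_{i,j}$ sits in a size-$1$ window. The paper carries this nonvanishing explicitly as part of the induction hypothesis and closes the loop at the end by observing that each of (\ref{eqn: base_case_a})--(\ref{eqn: base_case_c}) is a product of factors of the form $\binom{p_2+k}{i+k}$, $i+k+1$, $p_2-i+k$, $k$, all of which lie in $\{1,\dots,p-1\}$ for the stated ranges of $i,j,k$, hence are units in $\F_p$. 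You should insert this check; without it, the inductive step is not justified.
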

\begin{proof}

\noindent These formulae are proven simultaneously by induction on $j$, with the base case ($j=0$) given by  \begin{align*}
&a_{i,0}=1,& &b_{i,0}=1,& &c_{i,0}={p_2\choose i},& &d_{i,0}=0.&
\end{align*}The above values follow immediately from the definition of row $-1$ of a number wall (Definition \ref{nw}) and the definition of the $p$-Cantor sequence (Definition (\ref{pcant}).\\

\noindent For the induction step, let $0\le j\le p_2$ and assume that \begin{enumerate}
    \hypertarget{base_case_ind_1}{\item} equations (\ref{eqn: base_case_a}-\ref{eqn: base_case_d}) hold for all $0\le j'<j$,
    \hypertarget{base_case_ind_2}{\item} for all $0\le j'<j$ and $0\le i \le p_2$, $a_{i,j'}$ and $c_{i,j'}$ are \textit{nonzero},
    \hypertarget{base_case_ind_3}{\item} for all $0\le j'<j$ and $0\le i \le p_2-1$, $b_{i,j'}$ is \textit{nonzero}.
\end{enumerate} 

\noindent By assumptions (\hyperlink{base_case_ind_2}{2}) and (\hyperlink{base_case_ind_3}{3}), $ \left(W_p\left(\widetilde {\textbf{C}}_1^{(p)}\right)[m,n]\right)_{-1\le m <2j-1, ~p\le n <2p}$ appears as follows:
\begin{figure}[H]
    \centering
    \includegraphics[width=0.5\linewidth]{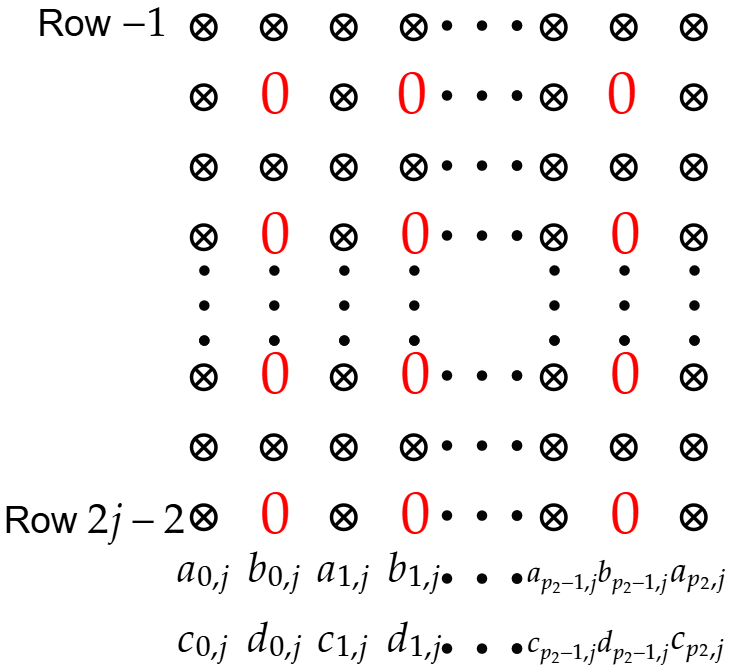}
    \caption{The number wall $W_p\left(\widetilde {\textbf{C}}_1^{(p)}\right)[m,n]$ for $-1 \le m \le j$ and $p\le n <2p$. The crossed dots indicate that an entry is nonzero.}
\end{figure}
\noindent As seen above, $d_{i,j}$ is in the southern outer frame of a window of size one, given by $d_{i,j-1}$. More specifically, $d_{i,j}=H_1$ (with notation from Figure \ref{window}). Furthermore, one has that \begin{align*}
    &E_1=d_{i,j-2}=0,& &F_1=d_{i-1,j-1}=0,& &G_1=d_{i+1,j-1}=0&
\end{align*}and hence, by \hyperlink{FC3}{FC3}, $d_{i,j}=0$. \\

\noindent Next, assumptions (\hyperlink{base_case_ind_2}{2}) and (\hyperlink{base_case_ind_3}{3}) also imply that $a_{i,j}$, $b_{i,j}$ and $c_{i,j}$ are all calculated using \hyperlink{FC1}{FC1}. Therefore, one obtains the following recurrence relations:
\begin{align}
        &a_{i,j}=\frac{c_{i,j-1}^2}{a_{i,j-1}},& &b_{i,j}=-\frac{c_{i,j-1}\cdot{c_{i+1,j-1}}}{b_{i,j-1}},& &c_{i,j}=\frac{a_{i,j}^2-b_{i,j}\cdot b_{i-1,j}}{c_{i,j-1}}\cdotp&\label{eqn: base_case_eqn}
    \end{align}
\noindent These are used to verify equations (\ref{eqn: base_case_a}), (\ref{eqn: base_case_b}) and (\ref{eqn: base_case_c}). As the calculations are elementary, only an outline is provided for each case.

\subsubsection*{\textbf{Verifying equation} (\ref{eqn: base_case_a}):}\hfill\\
\noindent By the induction hypothesis, equations (\ref{eqn: base_case_a}) and (\ref{eqn: base_case_c}), and the first part of equation (\ref{eqn: base_case_eqn}), \begin{align}
    a_{i,j}=\left(\prod_{k=0}^{j-1} {p_2+k\choose i+k}^2\right) \cdot \frac{\prod_{k=0}^{j-2}\left(\frac{k+1}{p_2-i+k+1}\right)^{4(j-1-k)-2}}{\prod_{k=1}^{j-2}\left(\frac{k}{p_2-i+k}\right)^{2(j-k)-2}}\label{eqn: base_canse_ind_a_1}
\end{align}
\noindent Separating the $k=j-2$ term of the numerator and re-indexing to match the denominator gives \begin{align*}
    (\ref{eqn: base_canse_ind_a_1})&=\left(\prod_{k=0}^{j-1} {p_2+k\choose i+k}^2\right) \cdot \left(\frac{j-1}{p_2-i+j-1}\right)^2 \cdot \prod_{k=0}^{j-2}\left(\frac{k}{p_2-i+k}\right)^{2(j-k)}
\end{align*} \noindent which is equal to $a_{i,j}$ upon combining the terms into one product.
\subsubsection*{\textbf{Verifying equation} (\ref{eqn: base_case_b}):}\hfill\\
\noindent By the induction hypothesis, equations (\ref{eqn: base_case_b}) and (\ref{eqn: base_case_c}), and the second part of equation (\ref{eqn: base_case_eqn}),
\begin{align}
    b_{i,j}&=(-1)^j\cdot {p_2+j-1\choose i+j}\nonumber\\
    &\cdot \underbrace{{p_2+j-1\choose i+j-1}\cdot \left(\prod_{k=0}^{j-2}{p_2+k\choose i+k}^2\cdot \frac{1}{(p_2+k-i+1)^{2(j-k)-3}}\cdot \frac{1}{i+k+1}\right)}_X\nonumber\\
    &\cdot \underbrace{\left(\prod_{k=1}^{j-2} k^{2(j-k)-2}\right)^{-1}\cdot \left(\prod_{k=0}^{j-2} (k+1)^{2(j-k)-3}\right)^2}_{Y}.\label{eqn: base_case_ind_b_1}
\end{align}
\noindent The part of the above equation labelled $X$ is dealt with first. Indeed, using the factorial definition of the choose function and factorising out every power of $p_2-i$ that appears, one has that \begin{align*}
    X&= \underbrace{\frac{1}{(p_2-i)^{2j-1}}\left( \prod_{k=0}^{j-2} \frac{1}{(p_2+k-i+1)^{(2j-k)-3}}\right)}_{X_1} \\&\cdot \underbrace{\frac{(p_2+j-1)!}{(i+j-1)!\cdot (p_2-i-1)!}
    \cdot \prod_{k=0}^{j-2} \left(\frac{(p_2+k)!}{(i+k)!\cdot (p_2-i-1)!}\right)^2\cdot\frac{1}{i+k+1}}_{X_2} .
\end{align*}
\noindent By re-indexing the product in $X_1$, one obtains \begin{equation}
    X_1=\prod_{k=0}^{j-1} \frac{1}{(p_2-i+k)^{2(j-k)-1}} \cdotp
\end{equation}
\noindent Next, note that \begin{align*}
    X_2&={p_2+j-1\choose i+j}\cdot (i+j) \cdot \prod_{k=0}^{j-2} {p_2+k\choose i+k+1}^2\cdot (i+k+1)
\end{align*} 
\noindent Now $Y$ is dealt with. Indeed, by re-indexing the second product to begin at $k=1$ one obtains that \begin{equation*}
    Y= \prod_{k=1}^{j-1}k^{2(j-k)}.
\end{equation*}
\noindent Inserting $X_1$, $X_2$ and $Y$ back into equation (\ref{eqn: base_case_ind_b_1}) completes the proof in this case.
\subsubsection*{\textbf{Verifying equation} (\ref{eqn: base_case_c}):}\hfill\\
\noindent By splitting the third part of equation (\ref{eqn: base_case_eqn}) into two summands, applying equations (\ref{eqn: base_case_a}), (\ref{eqn: base_case_b}) and (\ref{eqn: base_case_c}) and simplifying, one obtains
\begin{align}
    c_{i,j}&= {p_2+j-1 \choose i+j-1}^3\cdot \left(\prod_{k=0}^{j-2} {p_2+k\choose i+k}^2\right) \cdot \left(\prod_{k=1}^{j-1}\left(\frac{k}{p_2-i+k}\right)^{2(j-k)+1}\right)\label{eqn: base_case_ind_c_1}\\
    &-{p_2+j-1 \choose i+j-1}\cdot \frac{1}{(p_2-i)^{2j-1}}\cdot \left(\prod_{k=0}^{j-1} {p_2+k\choose i+k+1}^2 \cdot \frac{(i+k+1)(i+k)}{(p_2-i+k+1)^{2(j-k)-1}}\right)\cdot \prod_{k=1}^{j-1} k^{2(j-k)+1}.\nonumber
\end{align}
\noindent Next, factorise to get \begin{align}
    (\ref{eqn: base_case_ind_c_1})&= {p_2+j-1\choose i+j-1}\cdot \left(\prod_{k=1}^{j-1}\left(\frac{k}{p_2-i+k}\right)^{2(j-k)+1}\right)\label{eqn: base_case_ind_c_2}\\
    &\cdot\left(\left(\prod_{k=0}^{j-1} {p_2+k\choose i+k}^2\right)-\frac{1}{(p_2-i)^{2j-1}\cdot (p_2-i+j)}\cdot \left(\prod_{k=0}^{j-1} {p_2+k\choose i+k+1}^2 \cdot (i+k+1)(i+k)\right)\right)\nonumber
\end{align}
\noindent Applying the factorial definition of the choose function, one factorises further:
\begin{align}
    (\ref{eqn: base_case_ind_c_2}) &= {p_2+j-1\choose i+j-1}\cdot \left(\prod_{k=1}^{j-1}\left(\frac{k}{p_2-i+k}\right)^{2(j-k)+1}\right) \label{eqn: base_case_ind_c_3}\\&\cdot \left(\frac{1}{((p_2-i)!)^{2j-1}\cdot (p_2-i-1)!}\cdot\prod_{k=0}^{j-1}\left(\frac{(p_2+k)!}{(i+k)!}\right)^2\right)\cdot \left(\frac{1}{p_2-i}-\frac{i}{(i+j)\cdot (p_2-i+j)}\right)\nonumber
\end{align}
\noindent Using that \[\frac{1}{p_2-i}-\frac{i}{(i+j)\cdot (p_2-i+j)}=\frac{j\cdot(p_2+j)}{(p_2-i)\cdot (i+j)\cdot (p_2-i+j)},\]
one obtains \begin{align*}
    (\ref{eqn: base_case_ind_c_3}) &={p_2+j\choose i+j} \cdot \frac{j}{p_2-i+j} \cdot \left(\prod_{k=1}^{j-1}\left(\frac{k}{p_2-i+k}\right)^{2(j-k)+1}\right) \cdot \prod_{k=0}^{j-1}{p_2+k\choose i+k}^2.
\end{align*}
\noindent Re-indexing the first product above to begin at $k=0$ establishes equation (\ref{eqn: base_case_c}).

\subsubsection*{\textbf{Completing the Base Case}}\hfill\\
\noindent Finally, assumptions \hyperlink{base_case_ind_2}{2} and \hyperlink{base_case_ind_3}{3} are shown to hold. Indeed, $a_{i,j}$, $b_{i,j}$ and $c_{i,j}$ are all products of nonzero terms over $\F_p$ and are therefore nonzero themselves.
\end{proof}
\subsection{The Inductive Step}
\noindent Now that the base case has been established, the induction step is proved. 
\subsubsection{\textbf{Preliminaries to the Induction Step}}\hfill\\
\noindent Assume that (\ref{eqn: sect_10_ind}) holds up to some natural number $h$. The goal is now to show that \begin{align*}&\chi\left(\left(W_p\left(\widetilde {\textbf{C}}_{h+1}^{(p)}\right)[m,n]\right)_{0\le m < p^h,~ p^h\le n <2p^h}\right)=\Pi\left(\Phi_p^{h+1}(A)\right)\\&=\Pi\left(\begin{matrix}
        \Phi_p^{h}(A)&\Phi_p^{h}(0)&\Phi_p^{h}(A)&\Phi_p^{h}(0)&\Phi_p^{h}(A) &\cdots& \Phi_p^{h}(A)&\Phi_p^{h}(0)&\Phi_p^{h}(A)\\
        \Phi_p^h(F)&\Phi_p^h(B)&\Phi_p^h(F)&\Phi_p^h(B)&\Phi_p^h(F)&\cdots &\Phi_p^h(F)&\Phi_p^h(B)&\Phi_p^h(F)\\
       \Phi_p^{h}(A)&\Phi_p^{h}(0)&\Phi_p^{h}(A)&\Phi_p^{h}(0)&\Phi_p^{h}(A) &\cdots& \Phi_p^{h}(A)&\Phi_p^{h}(0)&\Phi_p^{h}(A)\\
        \Phi_p^h(F)&\Phi_p^h(B)&\Phi_p^h(F)&\Phi_p^h(B)&\Phi_p^h(F)&\cdots &\Phi_p^h(F)&\Phi_p^h(B)&\Phi_p^h(F)\\
        \Phi_p^{h}(A)&\Phi_p^{h}(0)&\Phi_p^{h}(A)&\Phi_p^{h}(0)&\Phi_p^{h}(A) &\cdots& \Phi_p^{h}(A)&\Phi_p^{h}(0)&\Phi_p^{h}(A)\\
        \vdots &\vdots&\vdots&\vdots&\vdots&&\vdots&\vdots&\vdots\\
        \Phi_p^{h}(A)&\Phi_p^{h}(0)&\Phi_p^{h}(A)&\Phi_p^{h}(0)&\Phi_p^{h}(A) &\cdots& \Phi_p^{h}(A)&\Phi_p^{h}(0)&\Phi_p^{h}(A)\\
        \Phi_p^h(F)&\Phi_p^h(B)&\Phi_p^h(F)&\Phi_p^h(B)&\Phi_p^h(F)&\cdots &\Phi_p^h(F)&\Phi_p^h(B)&\Phi_p^h(F)\\
        \Phi_p^{h}(A)&\Phi_p^{h}(0)&\Phi_p^{h}(A)&\Phi_p^{h}(0)&\Phi_p^{h}(A) &\cdots& \Phi_p^{h}(A)&\Phi_p^{h}(0)&\Phi_p^{h}(A)\\
    \end{matrix}\right)\end{align*}\noindent Recall the image of $0,F,E_N,E_W,E_S,E_E,C_{NE},C_{NW},C_{SW}$ and $C_{SE}$ under $\Phi_p$ from Section \ref{subsect: morphism}. An easy induction shows that for all $k\in\N$,\begin{figure}[H]
        \centering
        \includegraphics[width=1\linewidth]{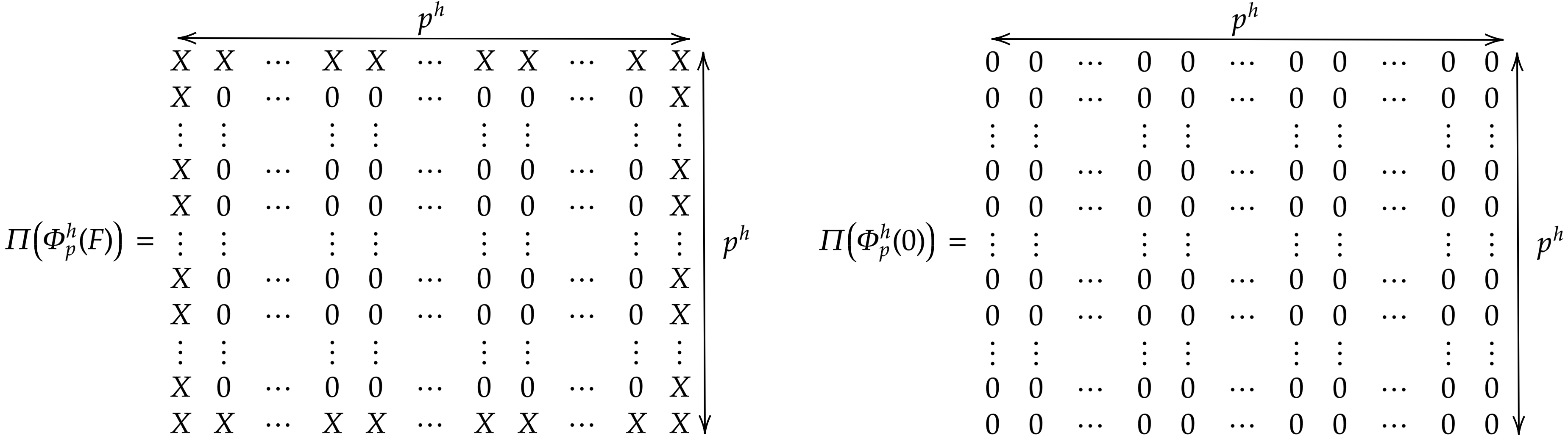}
    \end{figure}
 \noindent Recall $\rho$ from equation (\ref{def:rotate}). Another simple induction shows that \begin{equation*}
     \Pi\left(\Phi_p^h(A)\right)=\rho\left(\Pi\left(\Phi_p^h(B)\right)\right)~~~\text{ and }~~~\Pi\left(\Phi_p^h(B)\right)=\rho\left(\Pi\left(\Phi_p^h(A)\right)\right).
 \end{equation*} Furthermore, define \[\widetilde{\textbf{S}}_h^{(p)}:=\{0\}_{0\le i \le p^h+1}\oplus \textbf{S}_h^{(p)}\oplus \{0\}_{0\le i \le p^h+1}.\]Then, \begin{align}
     \Pi\left(\Phi_p^h(B)\right)&=\rho\left(\Pi\left(\Phi_p^h(A)\right)\right)\nonumber\\
     &=\rho\left(\chi\left(\left(W_p\left(\widetilde {\textbf{C}}_{h}^{(p)}\right)[m,n]\right)_{0\le m < p^h,~ p^h\le n <2p^h}\right)\right)\nonumber\\
     &\substack{\text{Corollary \ref{cor: rotate_profile}}\\=}\chi\left(\left(W_p\left(\widetilde{\textbf{S}}_h^{(p)}\right)[i,j]\right)_{-1\le i <p^h-1,~ p^h+3\le j <p^h+3}\right)\label{eqn: pink_square}
 \end{align}
\noindent In other words, the profile of the green square in Figure \ref{Fig: Green_square} (with $m=0$ and $n=p^h+2$) is given by $\Pi\left(\Phi_p^h(B)\right)$.
\subsubsection{\textbf{The Strategy for the Induction Step}}\hfill\\
\noindent The plan is to repeatedly apply Construction Lemma \hyperlink{con_lem_1}{1} followed by Construction Lemma \hyperlink{con_lem_1}{2}, where the output of Construction Lemma \hyperlink{con_lem_1}{1} (\hyperlink{con_lem_1}{2}, respectively) is the input for Construction Lemma \hyperlink{con_lem_1}{2} (\hyperlink{con_lem_1}{1}, respectively). To this end, define $$r_{S,i,j},~a_{S,i,j}\in\F_p ~~\text{ for }~~0\le i <p_2 ~~\text{ and }~~ 0\le j \le p$$ and $$r_{C,i,j},~a_{C,i,j},~\text{ and }a_{i,j}\in\F_p ~~\text{ for }~~0\le i \le p_2 ~~\text{ and }~~0\le j \le p.$$ These are intended to match the notation from Construction Lemmas \hyperlink{con_lem_1}{1} and \hyperlink{con_lem_1}{2}, with the third index denoting how many times either of the Construction Lemmas has been applied before. \begin{remark}
Throughout this proof, it is assumed that $r_{S,i,j},a_{S,i,j},a_i,r_{C,i,j}$ and $a_{C,i,j}$ are not equal to zero (with the exception of $a_{S,i,0}$). This is verified in Section \ref{Sect: 10.3}\label{remark: non_zero}\end{remark}
\subsubsection*{\textbf{Applying Construction Lemma 1}}
\noindent To begin, recall that ${p\choose i}=0$ for all $i\not\in\N$. Lemma \ref{pcant_geom} states that $$\textbf{C}^{(p)}_{h+1}=\bigoplus_{i=0}^{p-1}\textbf{C}_h^{(p)}{\left(1,{p_2\choose i/2}\right)}.$$ Therefore, rows $-2$, $-1$ and 0 of $W_p\left(\widetilde{\textbf{C}}_{h+1}^{(p)}\right)$ appear as follows:
\begin{figure}[H]
    \centering
    \includegraphics[width=1\linewidth]{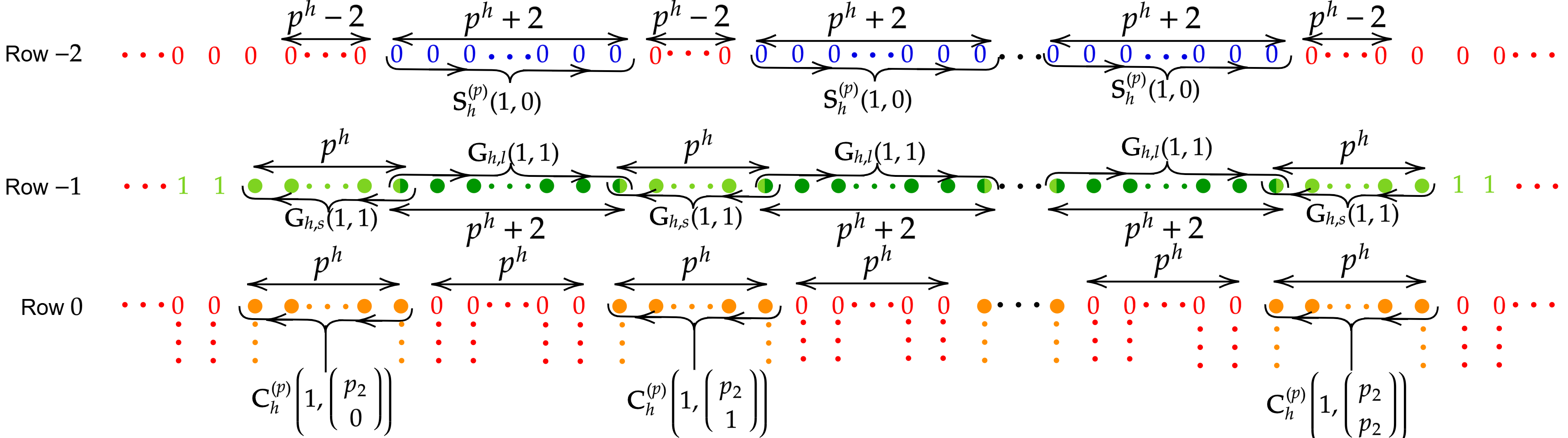}
    \caption{Rows $-2$, $-1$ and 0 of $W_p(\widetilde{\textbf{C}}_{h+1}(p))$, presented in the same form as in Construction Lemma 1. }
\end{figure}
\noindent Hence, the profile of rows $0$ to $p^h-1$ of $W_p(\widetilde{\textbf{C}}_{h+1}(p))$ is:\begin{figure}[H]
    \centering
    \includegraphics[width=1\linewidth]{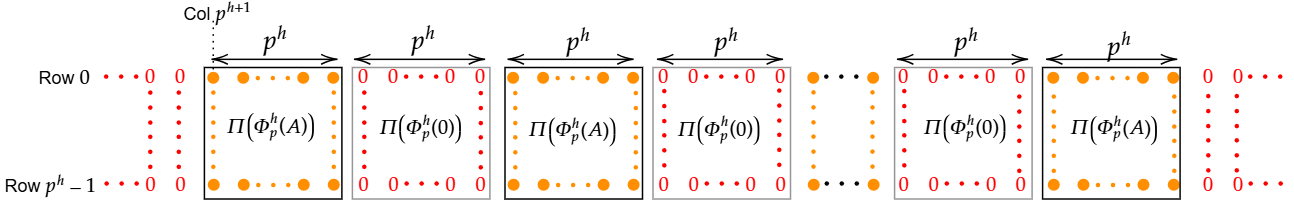}
    \caption{The profile of rows $0$ to $p^h-1$ of $W_p\left(\widetilde{\textbf{C}}_{h+1}^{(p)}\right)$, with colours matching the presentation of Construction Lemma \protect\hyperlink{con_lem_1}{1}. The black and grey squares denote the parts of the number wall whose profile is given by $\Pi\left(\Phi^k_p(A)\right)$ and $\Pi\left(\Phi^k_p(0)\right)$, respectively. }
    \label{fig: profile_rows}
\end{figure}
\noindent Above, \begin{itemize}
    \item the profile of the entries in the black squares from Figure \ref{fig: profile_rows} being $\Pi(\Phi_p^h(A))$ follows from Lemma \ref{invwind} and the induction hypothesis,
    \item the entries in the grey squares from Figure \ref{fig: profile_rows} being all zero follows from the Square Window Theorem (Theorem \ref{window}).
\end{itemize}

\noindent Furthermore, by Construction Lemma \hyperlink{con_lem_1}{1} with \begin{align*}
    &r_{S,i}\leftrightarrow r_{S,i,0}=1,& &a_{S,i}\leftrightarrow a_{S,i,0}=0,& &a_i\leftrightarrow a_{i,0}=1,& &r_{C,i}\leftrightarrow r_{C,i,0}=1,& &a_{C,i}\leftrightarrow a_{C,i,0}={p_2\choose i},& 
\end{align*} rows $p^h-1, p^h$ and $p^h+1$ appear as follows: 
\begin{figure}[H]
    \centering
    \includegraphics[width=1\linewidth]{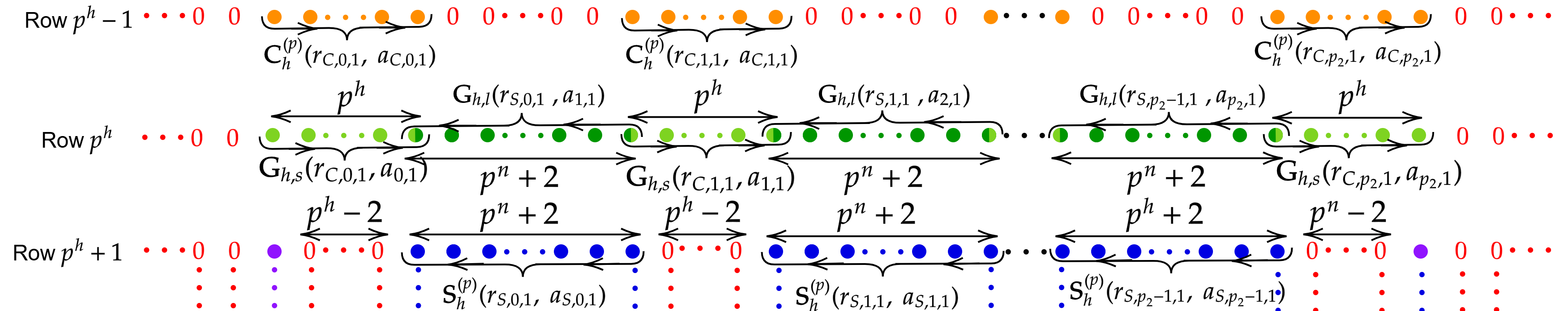}
    \caption{Rows $p^h-1$, $p^h$ and $p^h+1$ of $W_p\left(\widetilde{\textbf{C}}_{h+1}^{(p)}\right)$, presented in the same form as Construction Lemma \protect\hyperlink{con_lem_1}{1}}
\end{figure}
\noindent Here, by Construction Lemma \hyperlink{con_lem_1}{1},
\begin{align}
    &r_{S,i,1}\substack{\hyperlink{CL13b}{(3b)}\\=}-{p_2\choose i}{p_2\choose i+1}^{-1},& &a_{S,i,1}\substack{\hyperlink{CL13b}{(3b)}\\=}{p_2\choose i+1}^3\cdot \left(1+{p_2\choose i}{p_2 \choose i+1}^{-1}\right),& &a_{i,1}\substack{\hyperlink{CL12a}{(2a)}\\=}{p_2\choose i}^2,&\nonumber \\
    &r_{C,i,1}\substack{\hyperlink{CL11a}{(1a)}\\=}1,& &a_{C,i,1}\substack{\hyperlink{CL11a}{(1a)}\\=}{p_2 \choose i}.& \label{eqn: j=1}
\end{align}
\subsubsection{\textbf{Applying Construction Lemma 2}}\hfill\\
\noindent Now, apply Construction Lemma 2 with \begin{align*}
    &m\leftrightarrow p^h+1& &r_{S,i}\leftrightarrow r_{S,i,1},& &a_{S,i}\leftrightarrow a_{S,i,1},& &a_i\leftrightarrow a_{i,1},& &r_{C,i}\leftrightarrow r_{C,i,1},& &a_{C,i}\leftrightarrow a_{C,i,1},&
\end{align*} \noindent where the values on the right-hand side are from equation (\ref{eqn: j=1}).\\

\noindent Hence, $\left(W_p\left(\widetilde{\textbf{C}}^{(p)}\right)[m,n]\right)_{0\le n < p^{h+1}, p^h\le m \le 2p^h-1}$ appears as \begin{figure}[H]
    \centering
    \includegraphics[width=1\linewidth]{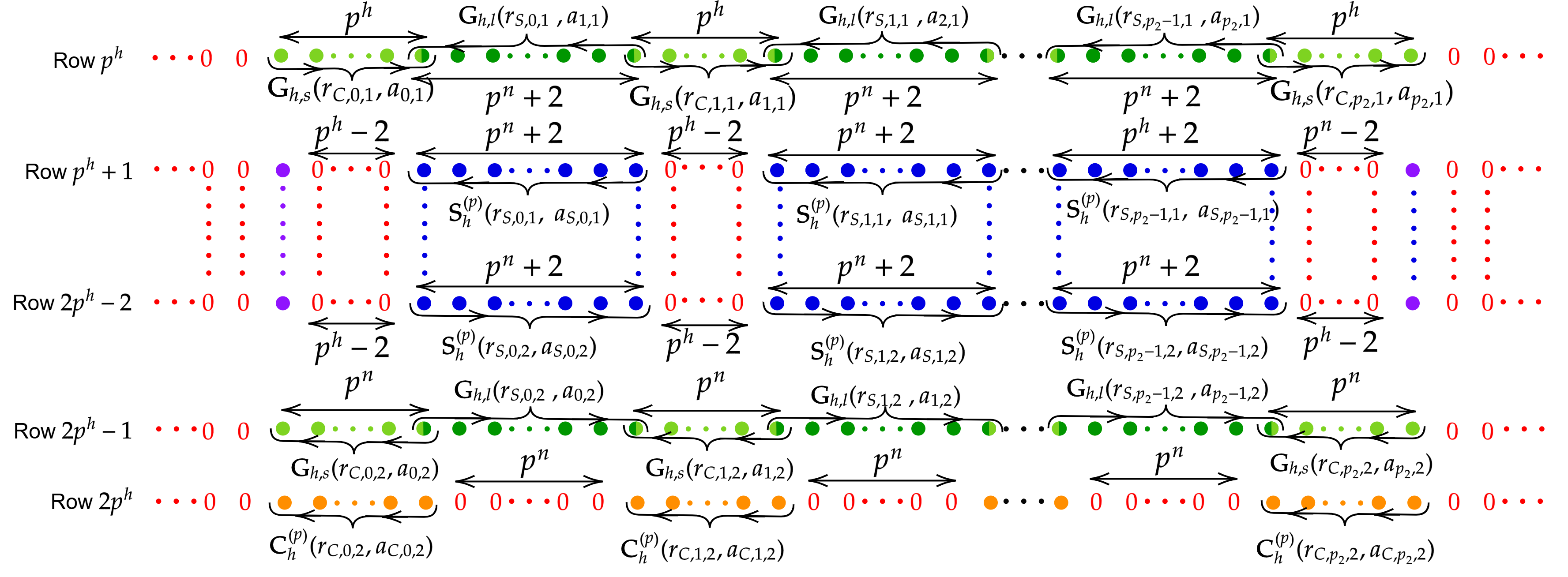}
    \caption{Rows $p^h$ to $2p^h$ of $W_p\left(\widetilde{\textbf{C}}_{h+1}^{(p)}\right)$, presented in the same form as Construction Lemma 2. The green, blue and orange dots represent geometric sequences, the $p$-Singer sequence and the $p$-Cantor sequence respectively.}
\end{figure}
\noindent where, by Construction Lemma \hyperlink{con_lem_2}{2},  \begin{align}\label{eqn: j=2}
    &r_{S,i,2}\substack{\hyperlink{CL21a}{(1a)}\\=}-{p_2\choose i+1}{p_2\choose i}^{-1}& &a_{S,i,2}\substack{\hyperlink{CL21a}{(1a)}\\=}{p_2\choose i}^1\cdot {p_2\choose i+1}^{-1}\cdot \left(1+{p_2\choose i}{p_2 \choose i+1}^{-1}\right)& &a_{i,2}\substack{\hyperlink{CL22a}{(2a)}\\=}{p_2\choose i}^2& \\
    &r_{C,i,2}\substack{\hyperlink{CL23b}{(3b)}\\=}\frac{-{p_2\choose i}\left(1+{p_2\choose i-1}{p_2\choose i}^{-1}\right)}{-{p_2\choose i+1}\left(1+{p_2\choose i}{p_2\choose i+1}^{-1}\right)}& &a_{C,i,2}\substack{\hyperlink{CL23b}{(3b)}\\=}{p_2 \choose i}\left({p_2\choose i}^2-{p_2\choose i-1}{p_2\choose i+1}\right).&\nonumber
\end{align}
\noindent In particular, the profile of rows $p^h$ to $2p^h-1$ of $W_p\left(\widetilde{\textbf{C}}_{h+1}^{(p)}\right)$ is given by \begin{figure}[H]
    \centering
    \includegraphics[width=1\linewidth]{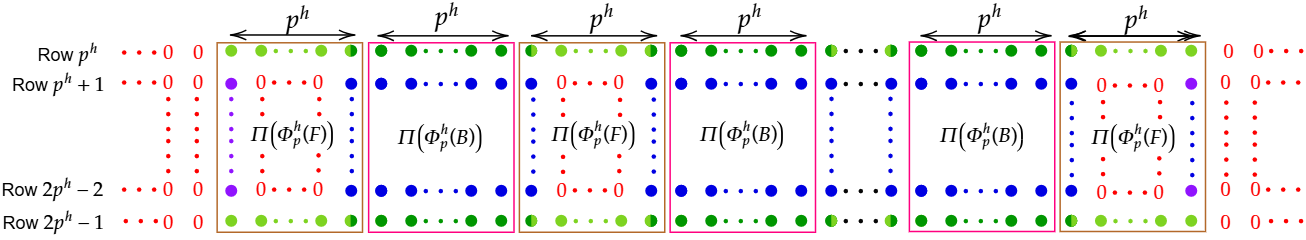}
    \caption{Rows $p^h$ to $2p^h-1$ of $W_p\left(\widetilde{\textbf{C}}^{(p)}_{h+1}\right)$, with colours matching that of Figure \ref{fig: construciton_lemma_2}. In particular, green denotes geometric sequences and blue denotes a geometric transform of the $p$-Singer sequence. }
    \label{fig: profile_rows_2}
\end{figure} \noindent where the contents of the pink squares follow from equation (\ref{eqn: pink_square}). Furthermore, note that rows $2p^h-2$, $2p^h-1$ and $2p^h$ of $W_p\left(\widetilde{\textbf{C}}_h^{(p)}\right)$ satisfy the conditions of Construction Lemma \hyperlink{con_lem_1}{1}.
\subsubsection{\textbf{Applying Construction Lemma \protect\hyperlink{con_lem_1}{1} and \protect\hyperlink{con_lem_2}{2} Repeatedly}}\hfill\\
\noindent One now applies Construction Lemma \hyperlink{con_lem_1}{1} and \hyperlink{con_lem_2}{2} in an alternating fashion, each time using the output of the previous Construction Lemma as the input of the next one. That is, for $0\le j \le p_2$, each time Construction Lemma \hyperlink{con_lem_2}{2} is applied, the variables $a_{S,i,2j}$, $r_{S,i,2j}$, $a_{i,2j}$, $a_{C,i,2j}$, and $r_{C,i,2j}$ are outputted. These are then are used in Construction Lemma \hyperlink{con_lem_1}{1} as \begin{align*}
    &m\leftrightarrow 2j\cdot p^h& &r_{S,i}\leftrightarrow r_{S,i,2j},& &a_{S,i}\leftrightarrow a_{S,i,2j},& &a_i\leftrightarrow a_{i,2j},& &r_{C,i}\leftrightarrow r_{C,i,2j},& &a_{C,i}\leftrightarrow a_{C,i,2j}.&
\end{align*}Similarly, for $0\le j < p_2$, each time Construction Lemma \hyperlink{con_lem_1}{1} is applied it outputs variables $a_{S,i,2j+1}$, $r_{S,i,2j+1}$, $a_{i,2j+1}$, $a_{C,i,2j+1}$, and $r_{C,i,2j+1}$ which are fed into Construction Lemma 2 as inputs \begin{align*}
    &m\leftrightarrow (2j+1)\cdot p^h +1& &r_{S,i}\leftrightarrow r_{S,i,2j+1},& &a_{S,i}\leftrightarrow a_{S,i,2j+1},&\\ &a_i\leftrightarrow a_{i,2j+1},& &r_{C,i}\leftrightarrow r_{C,i,2j+1},& &a_{C,i}\leftrightarrow a_{C,i,2j+1}.&
\end{align*} 
\noindent Furthermore, \begin{itemize}
    \item each time Construction Lemma \hyperlink{con_lem_1}{1} is applied, the profile of rows $2j\cdot p^h$ to $(2j+1)\cdot p^h-1$ are given by Figure \ref{fig: profile_rows},
    \item  each time Construction Lemma \hyperlink{con_lem_2}{2} is applied, the profile of rows $(2j+1)\cdot p^h$ to $(2j+2)\cdot p^h-1$ are given by Figure \ref{fig: profile_rows_2}.
\end{itemize}

\noindent The proof concludes when Construction Lemma 1 is applied for the $p_2^\nth$ time. This completes the proof of Theorem \ref{thm: morphism} modulo Remark \ref{remark: non_zero}.

\subsection{Verifying Remark \ref{remark: non_zero}}\label{Sect: 10.3}\hfill\\
\noindent Finally, it is necessary to verify that every time either Construction Lemma is applied, none of the inputs are zero (with the exception of $a_{S,i,0}$). In this subsection, the values of $a_{S,i,j}, r_{S,i,j},a_{i,j},r_{C,i,j}$ and $a_{C,i,j}$ are computed explicitly. \\

\noindent To begin, note that $a_{i,j}$ can be written in terms of $r_{S,i,j}$. Indeed, $a_{i,2j+1}$ only appears in the output to Construction Lemma \hyperlink{con_lem_1}{1} on row $(2j+1)\cdot p^h$ and similarly $a_{i,2j}$ only appears in the output of Construction Lemma \hyperlink{con_lem_2} on row $2j\cdot p^h-1$. These two rows appear as follows: \begin{figure}[H]
    \centering
    \includegraphics[width=1\linewidth]{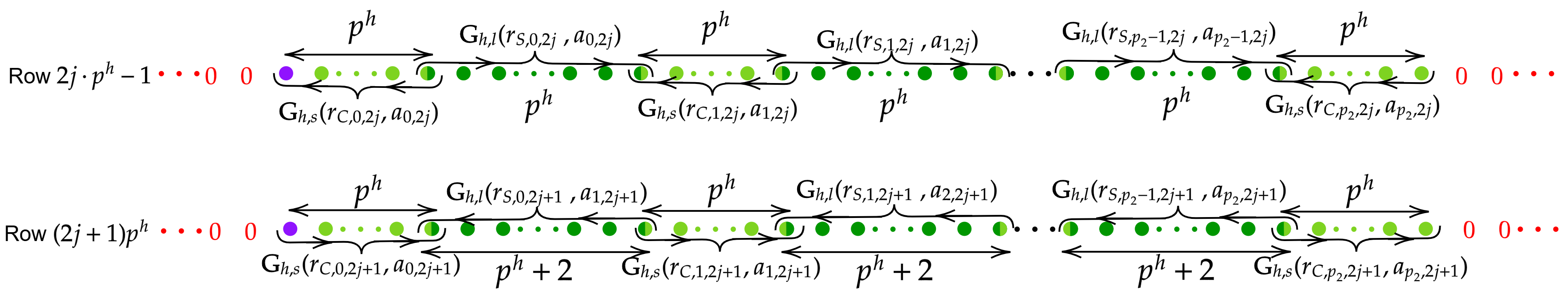}
    \caption{Rows $2jp^h-1$ ($(2j+1)p^h$, respectively) are the only rows in which $a_{i,2j}$ ($a_{i,2j+1}$, respectively) appear.}
\end{figure} \noindent Furthermore, $W_p(\widetilde{\textbf{C}}_h(p))[m,p^{h+1}]=1$ for all $-1\le m \le p^{h+1}$, as it is in the inner frame of the window generated by first $p^{h+1}$ zeroes of $W_p(\widetilde{\textbf{C}}_h(p))$. Therefore, by \hyperlink{FLT}{FLT},\begin{equation}a_{i,2j}=\prod_{k=0}^{i-1} r_{S,k,2j}^{2}~~~\text{ and }~~~a_{i,2j+1}=\prod^{i-1}_{k=0}r_{S,k,2j+1}^{-2}.\label{eqn: a_i}\end{equation}

\noindent Next, Construction Lemma 1 gives recurrence relations for $a_{S,i,2j+1}, r_{S,i,2j+1},r_{C,i,2j+1}$ and $a_{C,i,2j+1}$ in terms of $a_{S,i,2j}, r_{S,i,2j},a_{i,2j},r_{C,i,2j}$ and $a_{C,i,2j}$. Similarly, Construction Lemma 2 gives recurrence relations for $a_{S,i,2j}, r_{S,i,2j},r_{C,i,2j}$ and $a_{C,i,2j}$ in terms of $a_{S,i,2j+1}, r_{S,i,2j+1},a_{i,2j+1},r_{C,i,2j+1}$ and $a_{C,i,2j+1}$. Combining these with equation (\ref{eqn: a_i}) gives the following 8 recurrence relations:\begin{align}
&r_{C,i,2j+1}=r_{C,i,2j}^{-1}\label{eqn: recur_1}&\\ &r_{C,i,2j+2}=-\frac{a_{S,i-1,2j+1}}{a_{S,i,2j+1}\cdot r_{C,i,2j+1}\cdot r_{S,i,2j+1}^2}&\label{eqn: recur_2}\\
&a_{C,i,2j+1}=a_{C,i,2j}&\label{eqn: recur_3}\\ &a_{C,i,2j+2}=a_{S,i,2j+1}\cdot r_{S,i,2j+1}^2\cdot \left(1+\frac{a_{S,i-1,2j+1}}{r_{S,i,2j+1}^3\cdot a_{S,i,2j+1}\cdot r_{C,i,2j+1}}\right)&\label{eqn: recur_4}\\
&r_{S,i,2j+1}=-\frac{a_{C,i,2j}\cdot r_{S,i,2j}}{a_{C,i+1,2j}}&\label{eqn: recur_5}\\ &r_{S,i,2j+2}=r_{S,i,2j+1}^{-1}&\label{eqn: recur_6}\\
&a_{S,i,2j+1}=a_{C,i+1,2j}^3\cdot \left(\prod_{k=0}^ir_{S,k,2j}^{-4}\right)\cdot\left(1+\frac{a_{C,i,2j}\cdot r_{S,i,2j}}{a_{C,i+1,2j}\cdot r_{C,i+1,2j}}\right)& \label{eqn: recur_7}\\&a_{S,i,2j+2}=\frac{r_{S,i,2j+1}^2\cdot \prod_{k=0}^ir_{S,k,2j+1}^{-4}}{a_{S,i,2j+1}}&\label{eqn: recur_8}
\end{align}
with initial values \begin{align*}
    &r_{C,i,0}=1& &a_{C,i,0}={p_2\choose i}& &r_{S,i,0}=1& &a_{S,i,0}=0.&
\end{align*}
\noindent It is now established that the following formulae hold:

\begin{align} 
    &r_{C,i,2j}=(-1)^j\cdot \prod_{k=1}^j \frac{i+k}{p_2-i+k}&\label{eqn: j_recur_1}\\&r_{C,i,2j+1}=(-1)^j\cdot \prod_{k=1}^j \frac{p_2-i+k}{i+k}& \label{eqn: j_recur_2}\\
    &a_{C,i,2j+1}=a_{C,i,2j}={p_2+j\choose i+j}\cdot \left(\prod_{k=0}^{j-1}{p_2+k\choose i+k}^2\cdot \left(\frac{k+1}{p_2+1-i+k}\right)^{2j-1-2k}\right)&\label{eqn: j_recur_3}\\
    &r_{S,i,2j}=(-1)^j\cdot \left(\prod_{k=0}^{j-1}\frac{p_2-i+k}{i+1+k}
    \right)&\label{eqn: j_recur_4}\\
    &r_{S,i,2j+1}=(-1)^{j+1}\cdot \left(\prod_{k=0}^j \frac{i+k+1}{p_2-i+k}\right) &\label{eqn: j_recur_5}\\
    &a_{S,i,2j}=\begin{cases}0&\text{ if }j=0\\{p_2+j\choose i+j}^{-1}\cdot \left(\prod_{k=0}^{j-1}{p_2+k\choose i+k}\right)^2\cdot \left(\prod_{k=0}^{j-2} \left(\frac{k+1}{p_2+k-i+1}\right)^{2j-3-2k}\right)&\text{ otherwise}\end{cases}&\label{eqn: j_recur_6}\\
    &a_{S,i,2j+1}={p_2+j\choose i+j+1}\cdot \frac{p_2+j+1}{j+1}\cdot\left(\prod_{k=0}^j{p_2+k\choose i+k+1}^2\cdot \left(\frac{k+1}{p_2-i+k}\right)^{2j+1-2k}\right)\label{eqn: j_recur_7}&
\end{align}
\noindent It is easy to verify that each of these values is nonzero for $1\le j\le p_2$.
\subsubsection{\textbf{Establishing Equations (\ref{eqn: j_recur_1} - \ref{eqn: j_recur_7})}}\hfill\\
\noindent Each of the above formulae is proved by induction, where the bases cases are given by the $j=1$ case established in equations (\ref{eqn: j=1}) and (\ref{eqn: j=2}). In both the inductive step and the base case, the following elementary identity is used frequently: for $x,y\in\Z$ with $0\le y <x$ \begin{equation}
  {x\choose y}+{x\choose y+1}= {x+1\choose y+1}.\label{eqn: binom_identity}  
\end{equation}
\noindent With the exception of identity (\ref{eqn: binom_identity}), there is no special technique required to prove equations (\ref{eqn: j_recur_1} - \ref{eqn: j_recur_7}). Therefore, an abridged version of the proof is provided for each equation.\\

\noindent \textbf{Proof of Equation (\ref{eqn: j_recur_1}) and hence Equation (\ref{eqn: j_recur_2}}). \\
    \noindent By applying recurrence relation (\ref{eqn: recur_2}) and the induction hypothesis and then completing some elementary cancellations, one obtains\begin{align}
        r_{C,i,2j+2}&=(-1)^{j+1}\cdot \frac{{p_2+j\choose i+j}}{{p_2+j\choose i+j+1}}\left(\prod_{k=0}^j\frac{{p_2+k\choose i+k}^2(p_2-i+k)^{2j+1-2k}}{{p_2+k\choose i+k+1}^2(p_2-i+k+1)^{2j+1-2k}}\right)\nonumber\\ &~~~~~~~~~~~~~~~~~~~~\cdot \left(\prod^j_{k=1}\frac{i+k}{p_2-i+k}\right)\cdot\left(\prod_{k=0}^j\frac{p_2-i+k}{i+k+1}\right)^2 \label{r_{C,i,2n}_eqn_1}
    \end{align}
    \noindent Then, using the definition of the choose function and combining the products yields \begin{align}
        (\ref{r_{C,i,2n}_eqn_1})&=(-1)^{j+1}\cdot \frac{(i+j+1)}{(p_2-i)^{2j+1}} \left(\prod_{k=1}^{j}\left(\frac{(p_2-i+k)^{2j+2-2k}(i+k)}{(p_2-i+k+1)^{2j+1-2j}}\right)\right)\label{r_{C,i,2n}_eqn_2}
    \end{align} 
    \noindent Expanding out the product and making cancellations gives \begin{align*}
        (\ref{r_{C,i,2n}_eqn_2})&=(-1)^{j+1}\prod_{k=1}^{j+1}\frac{i+k}{p_2-i+k},
    \end{align*} establishing equation (\ref{eqn: j_recur_1}). Equation (\ref{eqn: j_recur_2}) follows immediately from recurrence relation (\ref{eqn: recur_1}).\\
    
    \noindent \textbf{Proof of Equation (\ref{eqn: j_recur_3}).} \\
    \noindent Using recurrence relation (\ref{eqn: recur_4}) and combining the products yields \begin{align}
        &a_{C,i,2j+2}={p_2+j\choose i+j+1}\cdot \frac{p_2+j+1}{n+1}\nonumber\\&\cdot \left(\prod_{k=0}^j {p_2+k\choose i+k+1}^2 \left(\frac{k+1}{p_2-i+k}\right)^{2j+1-2k}\left(\frac{i+k+1}{p_2-i+k}\right)^2\right)\nonumber\\ &\cdot \left(1-\frac{{p_2+j\choose i+j}}{{p_2+j\choose i+j+1}} \left(\prod_{k=0}^j \frac{{p_2+k\choose i+k}^2}{{p_2+k\choose i+k+1}^2}\left(\frac{p_2-i+k}{p_2-i+k+1}\right)^{2j+1-2k}\left(\frac{p_2-i+k}{i+k+1}\right)^3\right)\left(\prod_{k=1}^j \frac{i+1}{p_2-i+k}\right)\right)\label{a_{C,i,2n}_eqn_1}
    \end{align}
    \noindent Applying the definition of the choose function, factorising out the $k=0$ term of the second product and making cancellations gives
    \begin{align}
        (\ref{a_{C,i,2n}_eqn_1})&= \frac{(p_2+j)!}{(i+j+1)! (p_2-i-1)!}\cdot \frac{p_2+j+1}{j+1}\nonumber\\&\cdot\left(\prod_{k=0}^j \left(\frac{(p_2+k)!}{(i+k+1)! (p_2-i-1)!}\right)^2\frac{(k+1)^{2j+1-2k}(i+k+1)^2}{(p_2-i+k)^{2j+3-2k}}\right)\nonumber\\ &\cdot \left(1-\frac{(i+j+1)(p_2-i)}{(i+1)(p_2-i+1)^{2j+1}}\prod_{k=1}^j\frac{(p_2-i+k)^{2j+3-2k}(i+k)}{(p_2-i+k+1)^{2j+1-2k} (i+k+1)}\right)\nonumber\\
        &= {p_2+j+1\choose i+j+1} \frac{(p_2-i)^{2j+3}}{j+1} \cdot \left(\prod_{k=0}^j {p_2+k\choose i+k}^2\frac{(k+1)^{2j+1-2k}}{(p_2-i+k)^{2j+3-2k}}\right)\nonumber\\\cdot&\left(1-\frac{(i+j+1)(p_2-i)}{(i+1)(p_2-i+1)^{2j+1}}\prod_{k=1}^j\frac{(p_2-i+k)^{2j+3-2k}(i+k)}{(p_2-i+k+1)^{2j+1-2k} (i+k+1)}\right).\label{a_{C,i,2n}_eqn_2}
    \end{align}
    \noindent Next, factorise out the $k=0$ term from the denominator of the first product and reindex. Additionally, expand the second product and make cancellations. This yields \begin{align}
        (\ref{a_{C,i,2n}_eqn_2})&={p_2+j+1\choose i+j+1} \frac{p_2-i+j+1}{j+1} \cdot \left(\prod_{k=0}^j {p_2+k\choose i+k}^2\left(\frac{k+1}{p_2-i+k+1}\right)^{2j+1-2k}\right)\nonumber\\\cdot&\left(1-\frac{(p_2-i)}{(p_2-i+j+1)}\right)\nonumber\\
        &={p_2+j+1\choose i+j+1}\cdot \left(\prod_{k=0}^j {p_2+k\choose i+k}^2\left(\frac{k+1}{p_2-i+k+1}\right)^{2j+1-2k}\right),\nonumber
    \end{align}
    as required.
    \\~\\
    \noindent \textbf{Proof of equation (\ref{eqn: j_recur_4}) and (\ref{eqn: j_recur_5})}\\
    \\ By applying the recurrence relation (\ref{eqn: recur_5}), \begin{align}
        r_{S,i,2j+1}&=(-1)^{j+1}\cdot \frac{{p_2+j\choose i+j}}{{p_2+j\choose i+j+1}}\prod_{k=0}^{j-1} \frac{{p_2+k\choose i+k}^2}{{p_2+k\choose i+k+1}^2} \frac{p_2-i+k}{i+k+1} \left(\frac{p_2-i+k}{p_2-i+k+1}\right)^{2j-1-2k}.\label{r_{S,i,2n+1}_eqn_1}
    \end{align}
    \noindent Applying the definition of the choose function and simplifying gives \begin{align}
        (\ref{r_{S,i,2n+1}_eqn_1})&= (-1)^{j+1} \frac{i+j+1}{p_2-i} \prod_{k=0}^{j-1} \frac{(i+k+1)(p_2-i+k)}{(p_2-i)^2}\cdot \left(\frac{p_2-i+k}{p_2-i+k+1}\right)^{2j-1-2k}.\label{r_{S,i,2n+1}_eqn_2}
    \end{align} 
    \noindent Expanding the above product gives \begin{equation*}
        (\ref{r_{S,i,2n+1}_eqn_2})=(-1)^{j+1}\prod_{k=0}^j \frac{i+k+1}{p_2-i-k},
    \end{equation*} as required. Equation (\ref{eqn: j_recur_6}) follows from equation (\ref{eqn: recur_5}) and equation (\ref{eqn: j_recur_4}).\\

    \noindent \textbf{Proof of Equation (\ref{eqn: j_recur_6})}. \\
    \noindent Using recurrence relation (\ref{eqn: recur_8}), one obtains
    \begin{align}
        a_{S,i,2j+2}&= \frac{\left(\prod_{k=0}^j \frac{i+k+1}{p_2-i+k}\right)^2 \cdot \overbrace{\left(\prod_{h=0}^i\prod_{k=0}^j \frac{h+k+1}{p_2-h+j}\right)^{-4}}^{(\times)}}{{p_2+j\choose i+j+1}\cdot \frac{p_2+j+1}{j+1}\cdot \left(\prod_{k=0}^j {p_2+k\choose i+k+1}^2\cdot \left(\frac{k+1}{p_2-i+k}\right)^{2j+1-2k}\right)}\label{a_{S,i,2n+2}_eqn_1}
    \end{align}
    \noindent Similarly to the $a_{S,i,2j+1}$ calculation, one has that \begin{align*}
        (\times)&=\left(\prod_{k=0}^j \frac{(i+k+1)!(p_2-i+k-1)!}{(p_2+k)!k!}\right)^{-4}\\
        &=\left(\prod_{k=0}^j {p_2+k\choose i+k}^{-1}\cdot (i+1+k)\cdot \frac{(p_2-i+k-1)!}{(p_2-i)!k!}\right)^{-4}\\
        &= \left(\prod_{k=0}^j {p_2+k\choose i+k}^{-1}\cdot (i+1+k)\right)^{-4}\cdot \left(\frac{1}{p_2-i}\left(\prod_{k=1}^{j-1} (p_2-i+k)^{j-k}\right) \cdot \left(\prod_{k=0}^{j-1} (k+1)^{-(j-k)}\right)\right)^{-4}
    \end{align*}
    \noindent Reinserting this into equation (\ref{a_{S,i,2n+2}_eqn_1}) gives \begin{align}
        (\ref{a_{S,i,2n+2}_eqn_1})&=\frac{(p_2-i)^4\prod_{k=0}^j \left(\frac{1}{p_2-i+k}\right)^2 \cdot {p_2+k\choose i+k}^4 (i+k+1)^{-2}}{{p_2+j\choose i+j+1}\cdot \frac{p_2+j+1}{j+1}\cdot \left(\prod_{k=0}^j {p_2+k\choose i+k+1}^2\cdot \left(\frac{k+1}{p_2-i+k}\right)^{2j+1-2k}\right)}\nonumber\\ \cdot& \frac{1}{\left(\prod_{k=1}^{j-1} (p_2-i+k)^{4j-4k}\right) \cdot \left(\prod_{k=0}^{j-1} (k+1)^{-(4j-4k)}\right)}\nonumber\\
        &=Z\cdot W\nonumber
    \end{align}
    where \begin{align*}
        Z&= \frac{(p_2-i)^4\prod_{k=0}^j {p_2+k\choose i+k}^4 (i+k+1)^{-2}}{{p_2+j\choose i+j+1}\cdot (p_2+j+1)\cdot \left(\prod_{k=0}^j {p_2+k\choose i+k+1}^2\right)}\\
        &=(p_2-i)^{-2j+1}{p_2+j+1\choose i+j+1}^{-1}\prod_{k=0}^j{p_2+k\choose i+k}^2
    \end{align*}
    and \begin{align*}
        W&=\frac{(j+1)\prod_{k=0}^j \frac{(p_2-i+k)^{2j-1-2k}}{(k+1)^{2j+1-k}} }{\left(\prod_{k=1}^{j-1} (p_2-i+k)^{4j-4k}\right) \cdot \left(\prod_{k=0}^{j-1} (k+1)^{-(4j-4k)}\right)}\nonumber\\
        &=\left(\prod_{k=0}^{j-1} (k+1)^{2j-1-2k}\right) \cdot \left(\prod_{k=1}^{j} (p_2-i+k)^{-(2j-1-2k)}\right) \cdot (p_2-i)^{2j-1}.
    \end{align*}
    Reindexing the latter product in $W$ to go from $k=0$ to $j-1$ and multiplying $Z$ and $W$ together completes the proof.\\

    \noindent \textbf{Proof of Equation (\ref{eqn: j_recur_7})}. \\
    \noindent Using recurrence relation (\ref{eqn: recur_7}), one obtains \begin{align}
        a_{S,i,2j+1}&= X\cdot Y,\nonumber\end{align} \noindent where \begin{align}X&={p_2+j \choose i+j+1}^3 \cdot \left(\prod_{k=0}^{j-1} {p_2+k \choose i+k+1}^6\cdot \left(\frac{k+1}{p_2-i+k}\right)^{6j-3-6k}\right)\cdot \underbrace{\left(\prod_{h=0}^i \prod_{k=0}^{j-1} \frac{p_2-h+k}{h+k+1}\right)^{-4}}_{(*)}\nonumber\\Y&=\left(1+\frac{{p_2+j\choose i+j}\left(\prod_{k=0}^{j-1}{p_2+k\choose i+k}^2\left(\frac{k+1}{p_2-i+k+1}\right)^{2j-1-2k}\right)}{{p_2+j\choose i+j+1}\left(\prod_{k=0}^{j-1}{p_2+k\choose i+k+1}^2\left(\frac{k+1}{p_2-i+k}\right)^{2j-1-2k}\right)}\frac{(-1)^j\cdot \prod_{k=0}^{j-1} \frac{p_2-i+k}{i+k+1}}{(-1)^j\cdot \prod_{k=1}^{j}\frac{i+k+1}{p_2-i+k-1}}\right)
    \end{align}
    \noindent The values of $X$ and $Y$ are now simplified separately, with the former being done first. Indeed, one swaps the order of the products in $(*)$ to get \begin{align}
        (*)&= \left(\prod_{k=0}^{j-1}\prod_{h=0}^i\frac{p_2-h+k}{h+k+1}\right)^{-4}\nonumber\\&=\left(\prod_{k=0}^{j-1} \frac{(p_2+k)!\cdot k!}{(p_2-i+k-1)!(i+k+1)!}\right)^{-4}\nonumber\\
        &=\left(\prod_{k=0}^{j-1}{p_2+k\choose i+k+1}\cdot \frac{k! (p_2-i-1)!}{(p_2-i+k-1)!}\right)^{-4} \label{*}
    \end{align}
    \noindent Reinserting equation (\ref{*}) into $X$ gives \begin{align}
        X&={p_2+j \choose i+j+1}^3  \left(\prod_{k=0}^{j-1} {p_2+k \choose i+k+1}^2\right)\left(\prod_{k=0}^{j-1} \left(\frac{k+1}{p_2-i+k}\right)^{6j-3-6k}\right)\left(\underbrace{\prod_{k=0}^{j-1}\left(\frac{k! (p_2-i-1)!}{(p_2-i+k-1)!}\right)^{-4}}_{(**)}\right)\nonumber
    \end{align}
    \noindent Next, by noting that the first term of $(**)$ is equal to $1$, reindexing and combing powers of like terms from the factorials in $(**)$, one arrives at \begin{align}
        (**)&=\prod_{j=0}^{n-2} \left(\frac{(j+1)! (p_2-i-1)!}{(p_2-i+j)!}\right)^{-4}\nonumber\\
        &=\left(\prod_{j=0}^{n-2} \left(\frac{j+1}{p_2-i+j}\right)^{n-1-j}\right)^{-4} 
    \end{align}
    \noindent Reinserting $(**)$ into $X$ and using that $6n-3-6j=2n+1-2j$ when $j=n-1$, one has\begin{align}
        X&={p_2+j \choose i+j+1}^3  \left(\prod_{k=0}^{j-1} {p_2+k \choose i+k+1}^2\right) \left(\prod_{k=0}^{j-1}\left(\frac{k+1}{p_2-i+k}\right)^{2j+1-2k}\right).\nonumber
    \end{align}
    \noindent Now $Y$ is dealt with. The first step towards this is to apply the factorial definition of the choose function and make some cancellations: \begin{align}
    Y&= 1+\left(\frac{i+j+1}{p_2-i}\left(\prod_{k=0}^{j-1}\left(\frac{i+k+1}{p_2-i}\right)^2\cdot \left(\frac{p_2-i+k}{p_2-i+k+1}\right)^{2j-2k-1}\right)\right.\nonumber\\&\cdot \left.\left(\prod_{k=1}^{j-1}\frac{(p_2-i+k)(p_2-i+k+1)}{(i+k+1)^2}\right)\frac{(p_2-i)(p_2-i+j-1)}{(i+1)(i+j+1)}\right)\nonumber\\ &=1+\left(\frac{(p_2-i+j-1)(i+1)}{(p_2-i)(p_2+1-i)^{2j-1}}\cdot \prod_{k=1}^{j-1} \frac{(p_2-i+k)^{2j-2k}\cdot(p_2-i+k-1)}{(p_2+1-i+k)^{2j-1-2k}}\right)\nonumber\\
    &=1+\frac{i+1}{p_2-i+j}\cdotp\nonumber
    \end{align}
    \noindent Finally, combining $X$ and $Y$ gives \begin{align*}
        a_{S,i,2j+1}&={p_2+j \choose i+j+1}  \left(\prod_{k=0}^j {p_2+k \choose i+k+1}^2\right) \left(\prod_{k=0}^{j-1}\left(\frac{k+1}{p_2-i+j}\right)^{2j+1-2k}\right) \cdot \frac{p_2+j+1}{p_2-i+j}\\&=\frac{p_2+j+1}{j+1}{p_2+j \choose i+j+1}  \left(\prod_{k=0}^j {p_2+k \choose i+k+1}^2\left(\frac{k+1}{p_2-i+k}\right)^{2j+1-2k}\right) 
    \end{align*}
    as required.\\~\\
\bibliography{Ref}
\bibliographystyle{amsalpha}
\newpage
\section*{Appendix A: Variations on a Sequence}\label{Sect: Appendix_A}
\noindent Throughout this work, numerous versions of any given sequence are used. The purpose of this appendix is to help the reader keep track of them.

 \subsubsection*{\textbf{Transformations on Sequences of Arbitrary Length}}\hfill\\

\noindent Let $n\in\N\cup\infty$ and let $\textbf{S}=(s_i)_{0\le i \le n}$ be a sequence over some integral domain $\mathcal{R}$. Then, \begin{itemize}
    \item For $k\in\mathcal{R}$, $k\cdot \mathbf{S}=(k\cdot s_i)_{0\le i \le n}$,
\end{itemize} 

 \subsubsection*{\textbf{Transformations of finite sequences}}\hfill \\
 \noindent Let $\textbf{S}=(s_i)_{0\le i \le l}$ be finite sequence of length $l+1\in\N$ over some integral domain $\id$. Furthermore, recall that for $\oplus$ denotes concatenation of sequences. Then, \begin{itemize}

    \item $\widetilde{\textbf{S}}:=\{0\}_{0\le i \le l}\oplus \textbf{S}\oplus\{0\}_{0\le i \le l}$;
    \item $\overset{\text{\tiny$\bm\leftrightarrow$}}{\textbf{\textup{S}}}=(\overset{\text{\tiny$\bm\leftrightarrow$}}{s}_{i})_{0\le i \le n}$, where $\overset{\text{\tiny$\bm\leftrightarrow$}}{s}_i=s_{l-i}$.
\end{itemize}
 \subsubsection*{\textbf{Transformations of Infinite Sequence}}\hfill\\ 
 \noindent Let $\textbf{R}=(r_i)_{i\ge0}$ be an infinite sequence over some integral domain $\id$. Then, the following variations on $\textbf{R}$ are defined:\begin{itemize}\item $\textbf{R}^L:= (x_i)_{i\in\Z}$ such that $x_i=\begin{cases}
    r_i &\text{ if }i\ge0\\ 0&\text{ otherwise,}
\end{cases}$
\item $\textbf{C}^{(p,L)}=\left(c_i^{(p)}\right)_{i\in\Z}$ where $c^{(p)}_i=0$ for all $i<0$.
\end{itemize}
\subsubsection*{\textbf{Geometric Sequences}}\hfill\\
\noindent For $h\in\N$ and $r,a\in\F_p$, let \begin{itemize}
    \item $\textbf{G}_{h,s}(r,a)=\left(a\cdot r^i\right)_{0\le i \le p^h-1}$,
    \item $\textbf{G}_{h,\ell}(r,a)=\left(a\cdot r^i\right)_{0\le i \le p^h+1}.$
\end{itemize}
 \subsubsection*{\textbf{Transformation of the }$p$\textbf{-Cantor Sequence}}\hfill\\
 \noindent Some variations and transformations only apply to the $p$-Cantor sequence, $\textbf{C}(p)=\left(c^{(p)}_i\right)_{i\ge0}$. Let $r,a\in\F_p$ and let $h\in\N$. Then, let\begin{itemize}
    \item $c_i^{(r,a)}=a\cdot r^i\cdot c_i^{(p)}$,
    \item   $\textbf{C}^{(p)}_h{(r,a)}:= \left(ar^ic^{(p)}\right)_{0\le i <p^h}$  
    \item  $\textbf{C}_h^{(p)}=\textbf{C}^{(p)}_h{(1,1)}$,
    \item  $\widetilde{ \mathbf{C}}_h^{(p)}{(r,a)}:=\{0\}_{0\le i \le p^h-1}\oplus \{a\cdot r^i\cdot c^{(p)}_i\}_{0 \le i <p^h} \oplus\{0\}_{0\le i \le p^h-1}$,
    \item  $\widetilde{ \mathbf{C}}_h^{(p)}=\widetilde{ \mathbf{C}}_h^{(p)}{(1,1)}$.
    \item $\overline{\textbf{C}}^{(p)}_h(r',a',r'',a''):= \{0\}_{i=1,2}\oplus \textbf{C}^{(p)}_h(r',a')\oplus\{0\}_{0\le i <p^h}\oplus \textbf{C}^{(p)}_h(r'',a'')\oplus \{0\}_{i=1,2}$
\end{itemize}

\subsubsection*{\textbf{Transformation of the} $p$\textbf{-Singer Sequence}}\hfill\\
\noindent Similarly to above, the following notation is specific to the $p$-Singer sequence. \begin{itemize}
\item $s_i^{(r,a)}=a\cdot r^i\cdot s_i^{(p)}$,
    \item $\textbf{S}_h^{(p)}=(s^{(p)}_i)_{0\le i <p^h+2}.$
\item   $\textbf{S}^{(p)}_h{(r,a)}:= \left(ar^is^{(p)}\right)_{0\le i <p^h+2}$  
\item $\widetilde{\textbf{S}}_h^{(p)}:=\{0\}_{0\le i \le p^h+1}\oplus \textbf{S}_h^{(p)}\oplus \{0\}_{0\le i \le p^h+1}.$
\end{itemize} 
\newpage
\section*{Appendix B: Number Wall Pictures}\label{Sect: Appendix_B}
\noindent See below for pictures of the number wall of the $p$-Cantor sequence for $p=3$, $5$ and $7$. Specifically, each picture is of $W_p\left(\textbf{C}^{(p,L)}\right)[m,n]$ for $-1 \le m \le p^h$ and $0\le n \le p^h$ where $h$ is some small natural number. Furthermore, the same colour scheme is used in each instance: zero entries in yellow, nonzero entries in a shade of blue in order from darkest to lightest.  
\begin{figure}[H]
    \centering
    \includegraphics[width=0.475\linewidth]{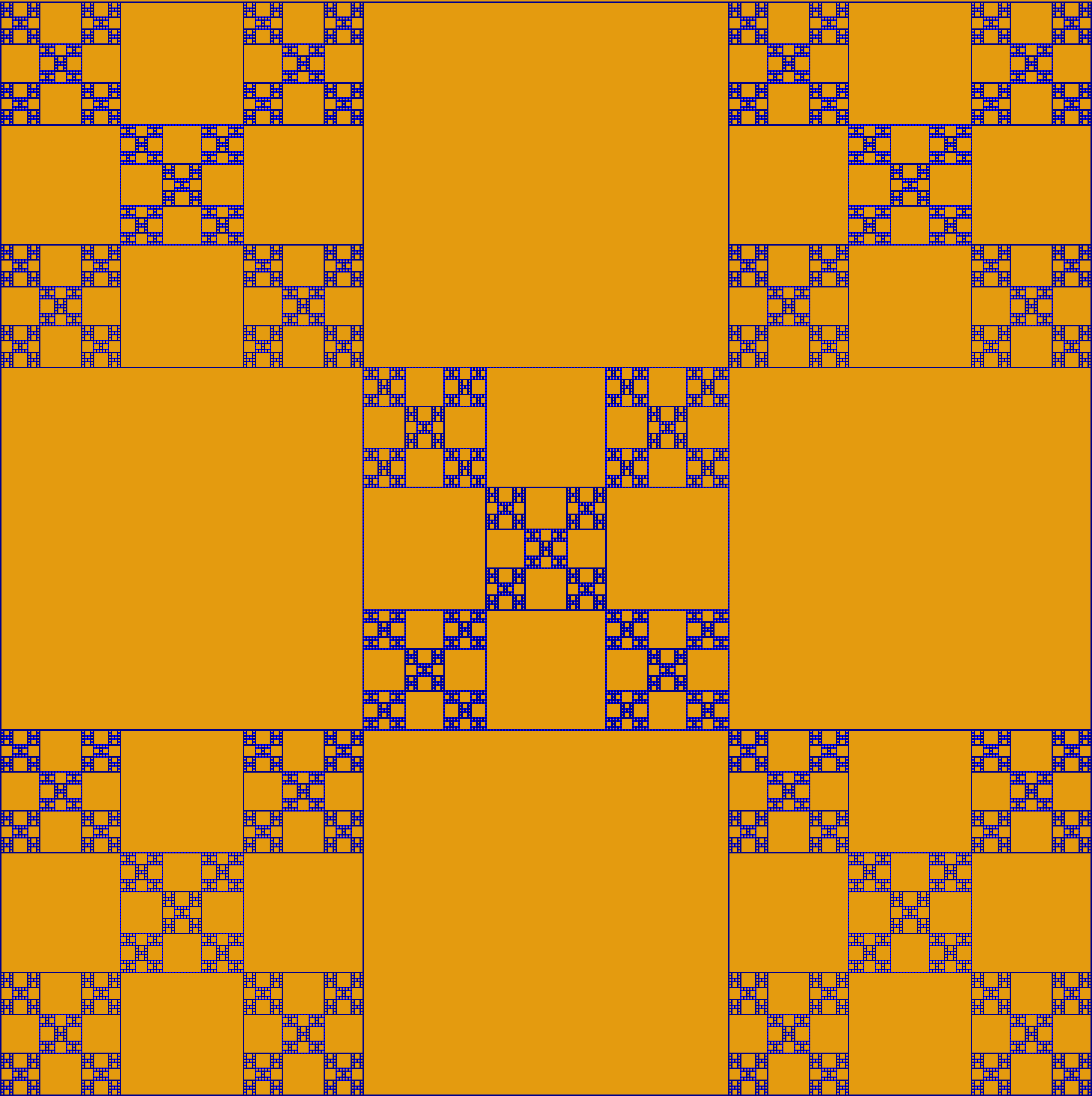}
    \caption{$W_3\left(\textbf{C}^{(3,L)}\right)[m,n]$ for $-2 \le m \le 3^6$ and $0\le n \le 3^6$}
\end{figure}
\begin{figure}[H]
    \centering
    \includegraphics[width=0.475\linewidth]{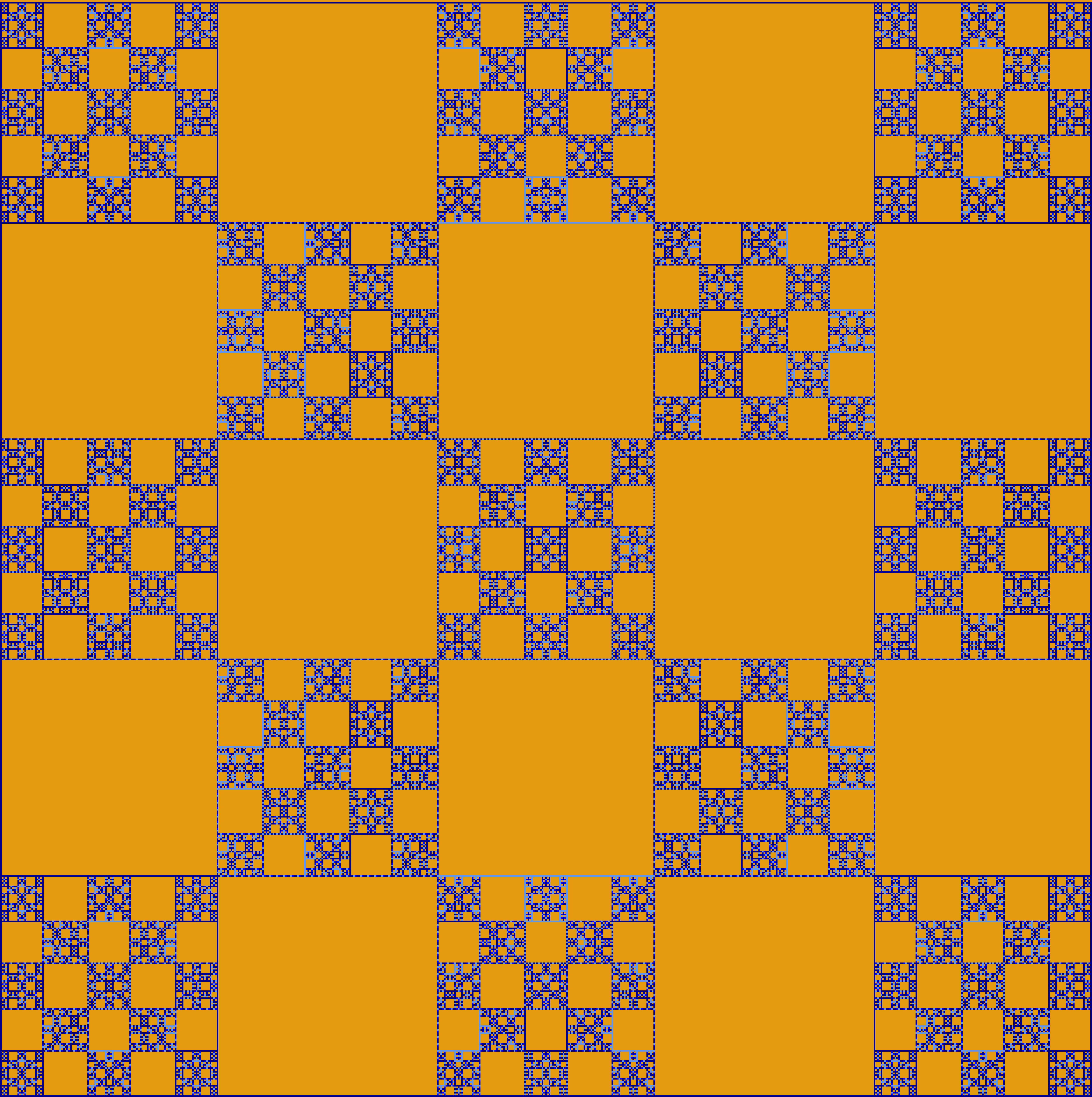}
    \caption{$W_5\left(\textbf{C}^{(5,L)}\right)[m,n]$ for $-2 \le m \le 5^4$ and $0\le n \le 5^4$}
\end{figure}
\begin{figure}[H]
    \centering
    \includegraphics[width=0.475\linewidth]{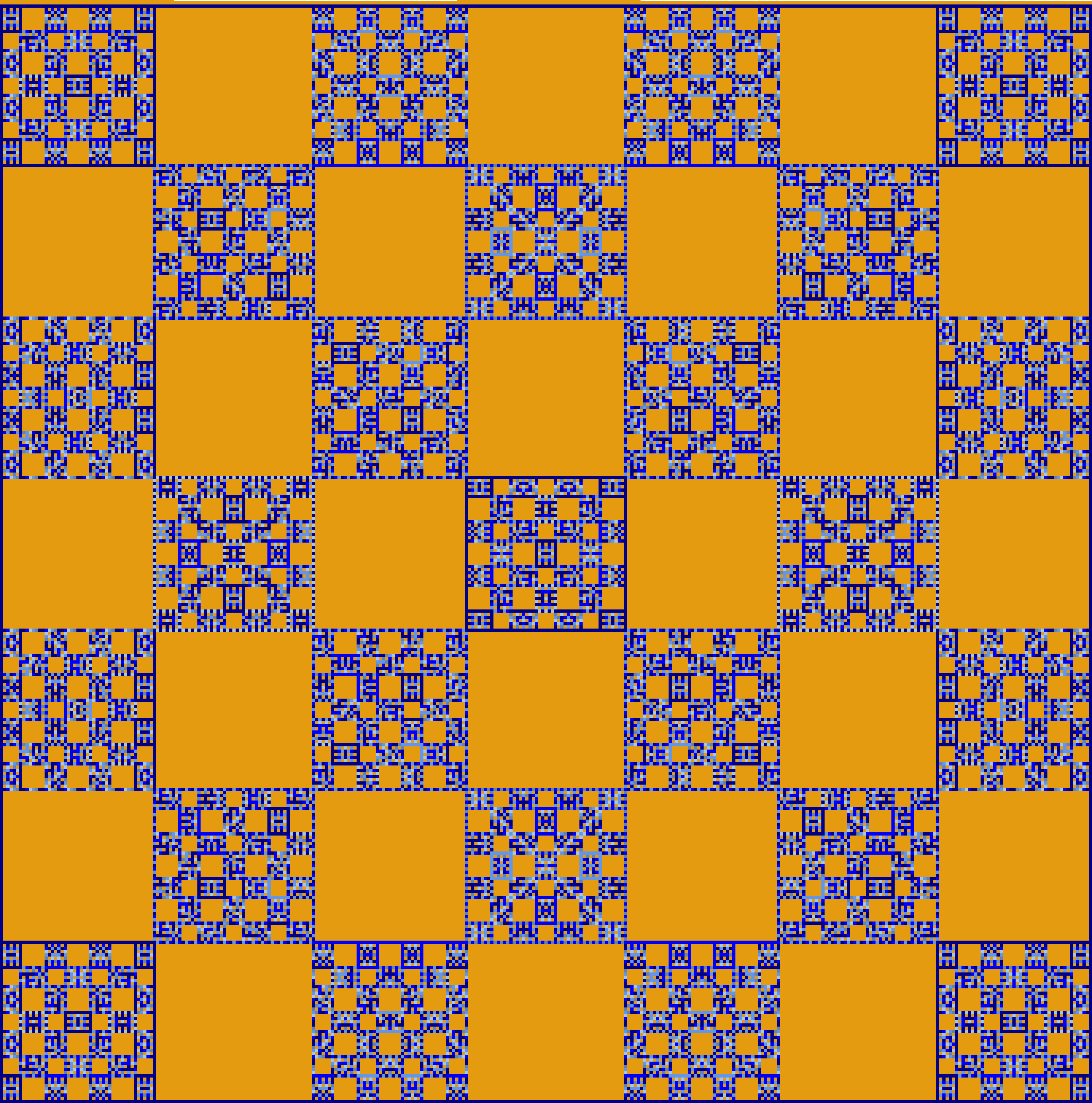}
    \caption{$W_7\left(\textbf{C}^{(7,L)}\right)[m,n]$ for $-2 \le m \le 7^3$ and $0\le n \le 7^3$}
\end{figure}
\end{document}